\newlength{\enummargin}
\newcounter{CounterEnumi}
\numberwithin{equation}{section}
\newtheorem{thm}{Theorem}[section]
\newtheorem{prop}[thm]{Proposition}
\newtheorem{lemma}[thm]{Lemma}
\newtheorem{cor}[thm]{Corollary}
\newtheorem*{thm*}{Theorem}
\newtheorem*{prop*}{Proposition}
\newtheorem*{cor*}{Corollary}
\newtheorem*{conj*}{Conjecture}
\theoremstyle{definition}
\newtheorem{definition}[thm]{Definition}
\theoremstyle{remark}
\newtheorem{rmk}[thm]{Remark}
\newcommand{\itref}[1]{(\ref{#1})}
\newcommand{\Hom}{\operatorname{Hom}}
\newcommand{\End}{\operatorname{End}}
\newcommand{\ms}{\mathscr}
\newcommand{\sF}{\ms F}
\newcommand{\sM}{\ms M}
\newcommand{\sE}{\ms E}
\newcommand{\mc}{\mathcal}
\newcommand{\cI}{\mc I}
\newcommand{\cH}{\mc H}
\newcommand{\cL}{\mc L}
\newcommand{\cO}{\mc O}
\newcommand{\cR}{\mc R}
\newcommand{\cS}{\mc S}
\newcommand{\cV}{\mc V}
\newcommand{\cX}{\mc X}
\newcommand{\cY}{\mc Y}
\newcommand{\la}{\langle}
\newcommand{\ra}{\rangle}
\newcommand{\pa}{\partial}
\newcommand{\bpa}{{}^\bl\!\pa}
\newcommand{\tn}{\textnormal}
\newcommand{\R}{\mathbb R}
\newcommand{\HH}{\mathbb H}
\newcommand{\C}{\mathbb C}
\newcommand{\N}{\mathbb N}
\newcommand{\Z}{\mathbb Z}
\newcommand{\Sphere}{\mathbb S}
\newcommand{\eps}{\epsilon}
\newcommand{\supp}{\operatorname{supp}}
\newcommand{\F}{\digamma}
\newcommand{\xra}{\xrightarrow}
\renewcommand{\Im}{\operatorname{Im}}
\renewcommand{\Re}{\operatorname{Re}}
\newcommand{\CI}{\mc C^\infty}
\newcommand{\CnI}{\mc C^{-\infty}}
\newcommand{\CIc}{\mc C^\infty_\cl}
\newcommand{\CIdot}{\dot{\mc C}^\infty}
\newcommand{\CIdotc}{\CIdot_\cl}
\newcommand{\Diff}{\mathrm{Diff}}
\newcommand{\Diffb}{\Diff_\bl}
\newcommand{\bl}{{\mathrm{b}}}
\newcommand{\cl}{{\mathrm{c}}}
\newcommand{\Hb}{H_{\bl}}
\newcommand{\Hbloc}{H_{\bl,\loc}}
\newcommand{\Tb}{{}^{\bl}T}
\newcommand{\rcTb}{{}^{\bl}\overline{T}}
\newcommand{\CTb}{{}^{\C\bl}T}
\newcommand{\Sb}{{}^{\bl}S}
\newcommand{\loc}{{\mathrm{loc}}}
\newcommand{\WF}{\mathrm{WF}}
\newcommand{\WFb}{\WF_{\bl}}
\newcommand{\Psib}{\Psi_{\bl}}
\newcommand{\Psibc}{\Psi_{\bl,\cl}}
\newcommand{\fpsi}{\Psi}
\newcommand{\fpsib}{\Psi_\bl}
\newcommand{\Vb}{\cV_\bl}
\newcommand{\Db}{{}^{\bl}\!D}
\newcommand{\lb}{\mathrm{lb}}
\newcommand{\rb}{\mathrm{rb}}
\newcommand{\Rhalf}{{\overline{\R_+}}}
\newcommand{\Rnhalf}{{\overline{\R^n_+}}}
\newcommand{\ftrans}{\;\!\widehat{\ }\;\!}
\newcommand{\sym}{\mathrm{sym}}
\newcommand{\dv}{\mathrm{div}}
\newcommand{\dS}{{\mathrm{dS}}}
\newcommand{\sfa}{\mathsf{a}}
\newcommand{\sfb}{\mathsf{b}}
\newcommand{\sfe}{\mathsf{e}}
\newcommand{\sff}{\mathsf{f}}
\newcommand{\sfp}{\mathsf{p}}
\newcommand{\sfH}{\mathsf{H}}
\newcommand{\wt}{\widetilde}
\newcommand{\wh}{\widehat}
\newcommand{\ft}{\mathfrak{t}}
\newcommand{\bdiff}{{}^\bl\!d}
\newcommand{\Sym}{S}
\newcommand{\Sph}{\mathbb S}
\newcommand{\psdo}{ps.d.o.}
\newcommand{\wozero}{\setminus o}
\newcommand{\weakto}{\rightharpoonup}
\begin{document}
\title[Quasilinear wave equations]{Global analysis of quasilinear wave equations on asymptotically de Sitter spaces}

\author{Peter Hintz}

\address{Department of Mathematics, Stanford University, CA 94305-2125, USA}
\email{phintz@math.stanford.edu}

\address{Department of Mathematics, University of California, Berkeley, CA 94720-3840, USA}
\email{phintz@berkeley.edu}

\date{November 26, 2013. Final revision: October 6, 2015.}
\subjclass[2010]{Primary 35L70; Secondary 35B40, 35S05, 58J47}

%nonlinear second order PDE of hyperbolic type: 35L70
%asymptotic behavior of solutions: 35B40
%General Theory of PsDO: 35S05
%propagation of singularities; initial value problems: 58J47

\begin{abstract}
  We establish the small data solvability of suitable quasilinear wave and Klein-Gordon equations in high regularity spaces on a geometric class of spacetimes including asymptotically de Sitter spaces. We obtain our results by proving the \emph{global} invertibility of linear operators with coefficients in high regularity $L^2$-based function spaces and using iterative arguments for the nonlinear problems. The linear analysis is accomplished in two parts: Firstly, a regularity theory is developed by means of a calculus for pseudodifferential operators with non-smooth coefficients, similar to the one developed by Beals and Reed, on manifolds with boundary. Secondly, the asymptotic behavior of solutions to linear equations is studied using resonance expansions, introduced in this context by Vasy using the framework of b-analysis.
\end{abstract}

\maketitle

%%%%%%%%%%%%%%%%%%%%%%%%%%%%%%%%%%%%%%%%%%%%%%%%%%%%%%%%%%%%%%%%%%%%%
\section{Introduction}
\label{SecIntroduction}

We study quasilinear wave equations on spacetimes for which infinity has a structure generalizing that of static de Sitter space. We prove global existence and exponential decay to constants of scalar quasilinear waves. For concreteness, we state our results in the special case of scalar waves on static de Sitter space, but it is important to keep in mind that the geometric and analytic settings are more general, see also the discussion further below.

The region of de Sitter space we are working on is a (non-compact) $n$-dimensional manifold
\[
  M^\circ = \R_{t_*} \times \{|Z|<1+2\delta\},\quad\delta>0, Z\in\R^{n-1},
\]
which extends past the cosmological horizon $|Z|=1$ of static de Sitter space. The manifold $M^\circ$ is equipped with a stationary Lorentzian metric $g_\dS$ (i.e.\ $\pa_{t_*}$ is a Killing vector field) which depends on the cosmological constant $\Lambda>0$, though we drop this in the notation. Concretely, in static coordinates $(t,r,\omega)$ on $\R_t\times(0,1)_r\times\Sphere^{n-2}_\omega$, the static de Sitter metric, see e.g.\ \cite[\S4.2]{VasyMicroKerrdS},\footnote{Our $t,t_*$ are denoted $\tilde t,t$ in \cite{VasyMicroKerrdS}.} does not extend to the cosmological horizon $r=1$ due to a coordinate singularity, but introducing $t_*=t+h(r)$ with a suitable function $h$ as in \cite[\S4.3]{VasyMicroKerrdS}, the metric does extend smoothly to $r=1$ and beyond.

In order to set up our problem, see Figure~\ref{FigDS} for an illustration, we consider the domain
\[
  \Omega^\circ = [0,\infty)_{t_*}\times\{|Z|\leq 1+\delta\} \subset M^\circ,
\]
which is a submanifold with corners with two boundary hypersurfaces, which are the intersections of
\[
  H_1 = \{t_*=0\}, \quad H_2 = \{|Z|=1+\delta\}
\]
with $\Omega^\circ$. Thus, $H_1$ is a Cauchy hypersurface, and $H_2$ has two connected components, both spacelike hypersurfaces. We are interested in solving the forward problem for wave-like equations in $\Omega^\circ$, i.e.\ imposing vanishing Cauchy data at $H_1$; initial value problems with general Cauchy data can always be converted into an equation of this type.

\begin{figure}[!ht]
  \centering 
  \includegraphics{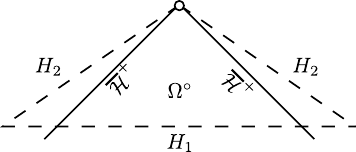}
  \caption{Penrose diagram of the domain $\Omega^\circ$, bounded by the dashed lines, on which we solve quasilinear wave equations. $\overline{\cH}^+$ is the cosmological horizon. Moreover, $H_1$ is a Cauchy hypersurface, where we impose vanishing Cauchy data, and $H_2$ has two spacelike connected components.}
\label{FigDS}
\end{figure}

The simplest (even though not the most natural from a geometric perspective, see below) wave equations we consider are of the form
\[
  \Box_{g(u)}u = f+q(u,du),
\]
$u$ real-valued, where $g(0)=g_\dS$ is the static de Sitter metric, and at each $p=(t_*,Z)\in M^\circ$, the metric $g(u)$ is $g_p(u(p))$, where
\[
  g_p\colon\R \to S^2 T^*_p M
\]
depends smoothly on $p$, and in fact only depends on $Z$, \emph{but not on $t_*$}; further
\[
  q(u,du) = \sum_{j=1}^{N'} a_j u^{e_j}\prod_{k=1}^{N_j} X_{jk}u, \quad e_j,N_j\in\N_0,\ N_j+e_j\geq 2,
\]
with $a_j\in\CI(M^\circ)$ and $X_{jk}\in\cV(M^\circ)$ real and independent of $t_*$. (Here $a_j$ is only relevant if $N_j=0$, since it can otherwise be absorbed into $X_{j1}$.)

Using the stationary nature of the spacetime further, we will use the Sobolev spaces $H^s$ on $M^\circ=\R_{t_*}\times X$, with $X=\{|Z|<1+2\delta\}$ considered as an open subset of $\R^{n-1}_Z$; thus, the norm of $u\in H^s$ is defined by
\begin{equation}
\label{EqKdSSobolev}
  \|u\|_{H^s}^2 = \sum_{j+|\beta|\leq s} \|\pa_{t_*}^j \pa_y^\beta u\|_{L^2(M^\circ,|dg_\dS|)}^2
\end{equation}

Our central result in the form which is easiest to state is:

\begin{thm}
\label{ThmIntroBaby}
  For $g(0)=g_\dS$ a static de Sitter metric as above, and for $0<\alpha>1$ and $f\in\CIc(M^\circ)$ real-valued with sufficiently small $H^{n/2+8}$-norm, the wave equation
  \begin{equation}
  \label{EqIntroPDE}
    \Box_{g(u)} u=f+q(u,du)
  \end{equation}
  in $\Omega^\circ$, with vanishing Cauchy data, and with $q$ as above with $N_j\geq 1$ for all $j$, has a unique real smooth (in $\Omega^\circ$) global forward solution of the form $u=u_0+\wt u$, $e^{\alpha t_*}\wt u$ bounded, $u_0=c\chi$, $c\in\R$, where $\chi=\chi(t_*)$ is identically $1$ for large $t_*$. More precisely, we have $e^{\alpha t_*}\wt u\in H^\infty$.
\end{thm}

Theorem~\ref{ThmIntroBaby} follows from Theorem~\ref{ThmIntroFull} below which is in a much more general geometric setting and also allows for a larger class of nonlinearities. See Theorem~\ref{ThmAppWavePoly} for the full statement of Theorem~\ref{ThmIntroBaby} in the more general setting, in particular for statements regarding stability and higher regularity, and the subsequent Remark~\ref{RmkPreciseExpansion} for more precise asymptotics. One can also consider equations on natural vector bundles; see the discussion later in the introduction. In a different direction, we can also solve \emph{backward} problems in spaces with fast exponential decay as $t_*\to\infty$, see Theorem~\ref{ThmAppWaveBack}, where we can in fact replace $\Box_{g(u)}$ by $\Box_{g(u)}+L$ for any first order operator $L$.

It is important to note that the presence of a cosmological horizon and the asymptotically hyperbolic nature of the (cosmological) end of the spacetime lead to dispersive properties of waves which are much stronger than in the asymptotically flat case, and they lead to very good low frequency behavior: In particular, there are \emph{no} conditions on the nonlinear term $q(u,du)$ other than the (essentially necessary) requirement that it vanish on constants, see \cite[Remark~2.31]{HintzVasySemilinear}, and the dimension of the spacetime is arbitrary; this is in stark contrast to the case of quasilinear wave equations on asymptotically flat spacetimes, where a `null condition' needs to be imposed in low spacetime dimensions, including the physical case of $3+1$ dimensions, to guarantee global existence even for semilinear equations; see \cite{KlainermanNullCondition} and further references below. In addition, the dispersive behavior of de Sitter space leads to \emph{exponential} decay to constants and a (partial) mode expansion as in the above theorem, as opposed to merely \emph{polynomial} decay for (quasi)linear waves on Minkowski space and other asymptotically flat spacetimes.

The novelty of our analysis of quasilinear wave and Klein-Gordon equations lies in combining the methods used by Vasy and the author \cite{HintzVasySemilinear} to treat semilinear equations on static asymptotically de Sitter (and more general) spaces with the technology of pseudodifferential operators with non-smooth coefficients in the spirit of Beals and Reed \cite{BealsReedMicroNonsmooth}, which is used to understand the regularity properties of operators like $\Box_{g(u)}$ in the above theorem. Our approach, appropriately adapted, also works in a variety of other settings, in particular on asymptotically Kerr-de Sitter spaces, where however a much more delicate analysis is necessary in view of issues coming from trapping.\footnote{In fact, making heavy use of the machinery developed in the present paper, Vasy and the author \cite{HintzVasyQuasilinearKdS} recently completed the required analysis for nonlinear problems on spaces with normally hyperbolic trapping, thereby in particular obtaining global well-posedness results for quasilinear wave equations on asymptotically Kerr-de Sitter spaces; the class of equations considered there is in fact even more general than \eqref{EqIntroPDE} in that the metric is also allowed to depend on derivatives of $u$.} In a different direction, asymptotically Minkowski spaces in the sense of Baskin, Vasy and Wunsch \cite{BaskinVasyWunschRadMink} should be analyzable as well using similar methods.

Concretely, as in \cite{HintzVasySemilinear}, rather than solving an evolution equation for a short amount of time, controlling the solution using (almost) conservation laws and iterating, we use a different iterative procedure, where at each step we solve a linear equation, with non-smooth coefficients, of the form
\begin{equation}
\label{EqIntroPDEIter}
  P_{u_k}u_{k+1}\equiv \Box_{g(u_k)}u_{k+1}=f+q(u_k,du_k)
\end{equation}
\emph{globally} on $L^2$-based spacetime Sobolev spaces or analogous spaces that encode partial expansions. Since the non-linearity $q$ (as well as $g$) must be well-behaved relative to these, we work on high regularity spaces; recall here that $H^s(\R^n)$ is an algebra for $s>n/2$. Moreover, we need to prove decay (or at least non-growth) for solutions of \eqref{EqIntroPDEIter} so that $q$ can be considered a perturbation. Since linear scalar waves on static de Sitter space decay exponentially fast to constants, the metric $g(u_k)$ in \eqref{EqIntroPDEIter} will similarly be equal to a stationary metric $g_{k,0}$ plus an exponentially decaying remainder $\wt g_k$; the asymptotics of solutions to the linear equation \eqref{EqIntroPDEIter} are then dictated by the \emph{stationary} part $N(P_{u_k}):=\Box_{g_{k,0}}$, also called the \emph{normal operator}, of the operator $P_{u_k}$. Since $g_{k,0}$ is a smooth stationary metric, we can appeal to the results of Vasy \cite{VasyMicroKerrdS,VasyWaveOndS} to understand linear waves globally on the (approximately) static de Sitter space described by the metric $g_{k,0}$: In particular, \emph{resonances} (also known as quasinormal modes) and the associated resonant states describe the asymptotics of linear waves. Just as in the semilinear setting, we need to require the resonances to lie in the `unphysical half-plane' $\Im\sigma<0$ (a simple resonance at $0$ is fine as well), since resonances in the `physical half-plane' $\Im\sigma>0$ would allow growing solutions to the equation, making the non-linearity non-perturbative and thus causing our method to fail. The linear analysis of equations like \eqref{EqIntroPDEIter} is carried out in \S\ref{SecGlobalForSecond} in two steps: the invertibility on high regularity spaces which however contain functions that are growing as $t_*\to\infty$ (Theorem~\ref{ThmWaveGlobalSolveCont}) and the proof of decay corresponding to the location of resonances (Theorem~\ref{ThmExistenceAndExpansion}).

In the iteration scheme \eqref{EqIntroPDEIter}, notice that if $u_k$ is in a (weighted) spacetime $H^s$ space, then the right hand side is in $H^{s-1}$. Now $P_{u_k}$ has leading order coefficients in $H^s$ and subprincipal terms with regularity $H^{s-1}$; therefore, in order to have a well-defined iteration scheme, we need the solution operator for $P_{u_k}$ to map $H^{s-1}$ into $H^s$, the loss of one derivative being standard for hyperbolic problems. In other words, there is a delicate balance of the regularities involved; at the heart of this paper thus lies a robust regularity theory for operators like $P_{u_k}$. We will achieve this by adapting a number of methods of microlocal analysis to the non-smooth setting we are interested in here.

In order to conveniently encode the asymptotic structure of static de Sitter space and its perturbations, we compactify $M^\circ$ and $\Omega^\circ$ at future infinity by introducing the function $\tau:=e^{-t_*}$ (whose real powers are thus naturally used to measure growth/decay at infinity) and adding $\tau=0$ to the spacetime; thus, we let
\[
  M = [0,\infty)_\tau \times \{|Z|<1+2\delta\},\quad \Omega=[0,1]_\tau\times\{|Z|\leq 1+\delta\}\subset M,
\]
which introduces an `ideal boundary' $Y=\pa M=\{\tau=0\}$.

Compactifying the spacetime at infinity puts equation \eqref{EqIntroPDE} into the framework of \emph{b-analysis}. Here `b' refers to analysis based on vector fields tangent to the boundary of the (compactified) space, so b-vector fields are spanned by $\tau\pa_\tau$ and $\pa_Z$, and b-differential operators are linear combinations of products of these. Note that $\tau\pa_\tau=-\pa_{t_*}$, and smoothness (resp.\ the better, invariant, notion of conormality) in $\tau$ near $\tau=0$ corresponds to smoothness (resp.\ conormality) in $e^{-t_*}$ as $t_*\to\infty$; thus, the use of the language of b-geometry and b-analysis is a very economic way for deal with asymptotically stationary problems. (The b-analysis originates in Melrose's work on the propagation of singularities on manifolds with smooth boundary; Melrose described a systematic framework for elliptic b-equations in \cite{MelroseAPS}. We will give more details later in the introduction.) A first indication for this is that the normal operator of $P_{u_k}$ is now obtained in a natural way by freezing the coefficients of $P_{u_k}$ at the boundary $\tau=0$ as a b-operator. More importantly though, in this b-framework, the PDE \eqref{EqIntroPDE} reveals a rich \emph{microlocal structure}: For instance, one of the central features is that the null-geodesic flow for the unperturbed wave operator $\Box_{g_\dS}$ extends smoothly to a flow within $\tau=0$, more precisely within the b-cotangent bundle (see below), which can be thought of as a uniform version of the standard cotangent bundle all the way up to future infinity; and this flow has saddle points where the cosmological horizon intersects future infinity. Microlocally speaking, the Hamilton vector field of $\Box_{g_\dS}$ is \emph{radial} there, i.e.\ a multiple of the generator of dilations in the fibers of the b-cotangent bundle; hence the standard propagation of singularities result by Duistermaat and H\"ormander \cite{DuistermaatHormanderFIO2} does not yield any regularity information there, and we must appeal to more refined results on the propagation of singularities near radial points, dating back to Melrose \cite{MelroseEuclideanSpectralTheory}; see \S\ref{SecPropagation} for further references. See Figure~\ref{FigRadial}.

\begin{figure}[!ht]
  \centering
  \includegraphics{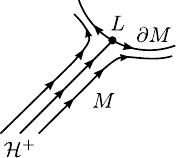}
  \caption{Illustration of the null-geodesic flow near the cosmological horizon, and its extension to the boundary $\pa M=\{\tau=0\}$ at future infinity. No null-geodesic starting in $M^\circ=\{\tau>0\}$ can reach $\tau=0$ in finite time, but the special structure of the flow near the radial set $L$ at the (conormal bundle of the) horizon allows for refined microlocal regularity results at $L$.}
  \label{FigRadial}
\end{figure}

In order to emphasize the generality of the method, let us point out that \emph{given an appropriate structure of the null-geodesic flow at $\infty$, for example radial points as above, the only obstruction to the solvability of quasilinear equations are growing modes and bounded but oscillatory modes}; in particular, in the presence of resonances in the upper half plane, our methods cannot be applied.

The main ingredient of the framework in which will analyze b-operators with non-smooth coefficients on manifolds with boundary is a partial calculus for what we call \emph{b-Sobolev b-pseudodifferential operators}; for brevity, we will refer to these as `non-smooth operators' to distinguish them from `smooth operators,' by which we mean standard b-pseudodifferential operators, recalled below. b-Sobolev b-\psdo{}s are (generalizations of) b-\psdo{}s with coefficients in b-Sobolev spaces, which partly extends a corresponding partial calculus on manifolds without boundary in the form developed by Beals and Reed \cite{BealsReedMicroNonsmooth}. (Beals and Reed consider coefficients in microlocal Sobolev spaces; this generality is not needed for our purposes, even though including it in our general calculus would only require more care in bookkeeping.) This calculus allows us to prove microlocal regularity results -- which are standard in the smooth setting -- for b-Sobolev b-\psdo{}s, namely elliptic regularity, real principal type propagation of singularities, including with (microlocal) complex absorbing potentials, and propagation near radial points; see \S\ref{SecPropagation}. We only develop a local theory since this is all we need for the purposes of our application.\footnote{We refer the reader to the paper \cite[Remark~4.6]{HintzVasyQuasilinearKdS}, which appeared after the first version of the present manuscript, for a partition of unity argument showing that even in more complicated geometries, the local non-smooth theory suffices.} The exposition of the calculus and its consequences in \S\S\ref{SecBFunctionAndSymbol}--\ref{SecPropagation} comprises the bulk of the paper.

To set up the main theorem, recall from \cite{VasyWaveOndS} that an asymptotically de Sitter space $\wt M$ is an appropriate generalization of the Riemannian conformally compact spaces of Mazzeo and Melrose \cite{MazzeoMelroseHyp} to the Lorentzian setting, namely a smooth manifold with boundary, with the interior of $\wt M$ equipped with a Lorentzian signature (taken to be $(1,n-1)$) metric $\wt g$, and with a boundary defining function $\tau$ (i.e.\ $\tau=0$ defines the boundary, and $d\tau\neq 0$ there) such that $\wh g=\tau^2\wt g$ is a smooth symmetric 2-tensor of signature $(1,n-1)$ up to the boundary of $\wt M$, and $\wh g(d\tau,d\tau)=1$ so that the boundary defining function is timelike and the boundary itself is spacelike. In addition, $\pa\wt M$ has two components $X_\pm$, each of which may be a union of connected components, with all null-geodesics $\gamma(s)$ tending to $X_\pm$ as $s\to\pm\infty$.

We now blow up a point $p\in X_+$, which amounts to introducing polar coordinates around $p$, and obtain a manifold with corners $[\wt M;p]$, with a blow-down map $[\wt M;p]\to\wt M$. The backward light cone from $p$ lifts to a smooth manifold transversal to the front face of $[\wt M;p]$ and intersects the front face in a sphere. The interior of this backward light cone, at least near the front face, is a generalization of the static model of de Sitter space; we will refer to a neighborhood $M$ of the closure of the interior of the backward light cone from $p$ in $[\wt M;p]$ that only intersects the boundary of $[\wt M;p]$ in the interior of the front face as the \emph{static asymptotically de Sitter model}, with boundary $Y$ (which is non-compact) and a boundary defining function $\tau$; see also \cite{VasyWaveOndS,VasyMicroKerrdS} for more on the relation of the `global' and `static' problems. Since we are interested in \emph{forward} problems for wave and Klein-Gordon equations and therefore work with energy estimates, we consider a compact region $\Omega\subset M$, bounded by (a part of) $Y$ and two `artificial' spacelike hypersurfaces $H_1$ and $H_2$, see Figures~\ref{FigDS} and \ref{FigStaticDS}. For definiteness, let us assume $H_1=\{\tau=1\}$. We will demonstrate this construction explicitly for exact (static) de Sitter space in \S\ref{SubsecStaticdS}. 

\begin{figure}[!ht]
  \centering
  \includegraphics{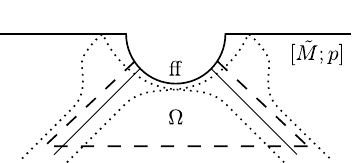}
  \caption{Geometric setup of the static (asymptotically) de Sitter problem. Indicated are the blow-up of $\wt M$ at $p$ and the front face of the blow-up, further the lift of the backward light cone to $[\wt M;p]$ (solid), and lifts of backward light cones from points near $p$ (dotted); moreover, $\Omega$ is bounded by the front face and the dashed spacelike boundaries.}
  \label{FigStaticDS}
\end{figure}

On $M$, we naturally have the b-tangent bundle $\Tb M$, whose sections are the b-vector fields $\Vb(M)$, i.e.\ vector fields tangent to the boundary; in local coordinates $\tau,y$ near the boundary, $\Tb M$ is spanned by $\tau\pa_\tau$ and $\pa_y$. The enveloping algebra of $\Vb$ of b-differential operators is denoted $\Diffb^*(M)$. The b-cotangent bundle, the dual of $\Tb M$, is denoted $\Tb^* M$ and spanned by $\frac{d\tau}{\tau}$ and $dy$, and we have the b-differential $\bdiff\colon\CI(M;\C)\xra{d}\CI(M;T^*M)\to\CI(M;\Tb^* M)$, where the last map comes from the natural map $\Tb M\to TM$. Now, the metric $g$ on $M$ is a smooth, symmetric, Lorentzian signature section of the second tensor power of $\Tb M$. The associated d'Alembertian (or wave operator) $\Box_g$ thus is an element of $\Diffb^2(M)$ and therefore naturally acts on weighted b-Sobolev spaces $\Hb^{s,\alpha}(M)=\tau^\alpha\Hb^s(M)$, where we define
\begin{align*}
  \Hb^s(M)=\{u\in L^2(M,vol_{g_0})\colon X_1\cdots X_s &u\in L^2(M,vol_{g_0}), \\
    & X_1,\ldots,X_s\in\Vb(M)\}
\end{align*}
for $s\in\N_0$, and for general $s\in\R$ using duality and interpolation. Denote by $\Hb^{s,\alpha}(\Omega)^{\bullet,-}$ the space of restrictions of $\Hb^{s,\alpha}(M)$-functions with support in $\{\tau\leq 1\}$ to $\Omega$; that is, elements of $\Hb^{s,\alpha}(\Omega)^{\bullet,-}$ are supported at $H_1$ and extendible at $H_2$ in the sense of H\"ormander \cite[Appendix~B]{HormanderAnalysisPDE}. Finally, let $\cX^{s,\alpha}$ be the space of all $u$ which near $\tau=0$ asymptotically look like a constant plus an $\Hb^{s,\alpha}$-function, i.e.\ for some $c\in\C$, $u'=u-c\chi(\tau)\in\Hb^{s,\alpha}(\Omega)^{\bullet,-}$, where $\chi\in\CI_\cl(\R)$, $\chi\equiv 1$ near $0$, is a cutoff near $Y$; for such a function $u$, define its squared norm by
\[
  \|u\|_{\cX^{s,\alpha}}^2=|c|^2+\|u'\|_{\Hb^{s,\alpha}(\Omega)^{\bullet,-}}^2.
\]
Our main theorem then is:

\begin{thm}
\label{ThmIntroFull}
  Let $s>n/2+7$ and $0<\alpha<1$. Assume that for $j=0,1$,
  \begin{gather*}
    g\colon\cX^{s-j,\alpha}\to(\CI+\Hb^{s-j,\alpha})(M;\Sym^2\Tb^*M), \\
	q\colon\cX^{s-j,\alpha}\times\Hb^{s-1-j,\alpha}(\Omega;\Tb^*_\Omega M)^{\bullet,-}\to\Hb^{s-1-j,\alpha}(\Omega)^{\bullet,-}
  \end{gather*}
  are continuous, $g$ is Lipschitz near $0$, and
  \[
    \|q(u,\bdiff u)-q(v,\bdiff v)\|_{\Hb^{s-1-j,\alpha}(\Omega)^{\bullet,-}}\leq L_q(R)\|u-v\|_{\cX^{s-j,\alpha}}
  \]
  for $u,v\in\cX^{s-j,\alpha}$ with norm $\leq R$, where $L_q\colon\R_{\geq 0}\to\R$ is continuous and non-decreasing. Then there is a constant $C_L>0$ so that the following holds: If $L_q(0)<C_L$, then for small $R>0$, there is $C_f>0$ such that for all $f\in\Hb^{s-1,\alpha}(\Omega)^{\bullet,-}$ with norm $\leq C_f$, there exists a unique solution $u\in\cX^{s,\alpha}$ of the equation
  \[
    \Box_{g(u)}u=f+q(u,\bdiff u)
  \]
  with norm $\leq R$, and in the topology of $\cX^{s-1,\alpha}$, $u$ depends continuously of $f$.
\end{thm}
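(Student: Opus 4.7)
The plan is to solve the quasilinear problem by Banach fixed point iteration, defining the solution map $\Phi\colon v\mapsto u$ where $u\in\cX^{s,\alpha}$ is the unique forward solution of the linear equation
\[
  P_v u := \Box_{g(v)} u = f + q(v,\bdiff v),
\]
and showing that, for $R>0$ and $\|f\|_{\Hb^{s-1,\alpha}(\Omega)^{\bullet,-}}$ sufficiently small, $\Phi$ maps the closed ball $B_R\subset\cX^{s,\alpha}$ into itself and is a contraction in a weaker topology. First I would check that for every $v\in B_R$ the operator $P_v=\Box_{g_0}+(\Box_{g(v)}-\Box_{g_0})$ fits the framework of the linear theory: the hypothesis on $g$ gives $g(v)=g_0+h_v$ with $h_v\in\Hb^{s,\alpha}(M;\Sym^2\Tb M)$, so $P_v$ is a b-Sobolev b-pseudodifferential operator whose principal symbol and radial point/resonance structure are small perturbations of those of $\Box_{g_0}$ once $R$ is small. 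Consequently Theorem~\ref{ThmExistenceAndExpansion} applies to $P_v$ and produces a forward solution $u=c\chi+u'\in\cX^{s,\alpha}$ of $P_v u=F$ together with a uniform (in $v\in B_R$) estimate
\[
  \|u\|_{\cX^{s,\alpha}}\leq C_0\,\|F\|_{\Hb^{s-1,\alpha}(\Omega)^{\bullet,-}}.
\]

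Next I would verify the self-map property. Using continuity of $q$ together with the Lipschitz bound specialized to $v=0$,
\[
  \|q(v,\bdiff v)\|_{\Hb^{s-1,\alpha}(\Omega)^{\bullet,-}}\leq \|q(0,0)\|+L_q(R)\|v\|_{\cX^{s,\alpha}}\leq \|q(0,0)\|+L_q(R)R,
\]
so $\|\Phi(v)\|_{\cX^{s,\alpha}}\leq C_0(\|f\|+\|q(0,0)\|+L_q(R)R)$. The constant $C_L$ in the theorem is chosen to be $1/C_0$; then the smallness hypothesis $L_q(0)<C_L$ implies $L_q(R)<C_L$ for $R$ small by continuity of $L_q$, hence $C_0 L_q(R)<1$ and one can close the inequality $\|\Phi(v)\|\leq R$ by choosing $R$ and $C_f=\|f\|$ small (and noting that the assumptions force $q(0,0)=0$ up to a term that is absorbed by shrinking $R$, since $q$ is a "nonlinear" remainder—this is made precise by the hypothesis via $L_q$).

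The core of the argument is the contraction step, done in $\cX^{s-1,\alpha}$, which is exactly why the hypotheses are formulated for $j=0$ \emph{and} $j=1$. For $v_1,v_2\in B_R$, writing $w=\Phi(v_1)-\Phi(v_2)$,
\[
  P_{v_1}w=\bigl(\Box_{g(v_2)}-\Box_{g(v_1)}\bigr)\Phi(v_2)+\bigl(q(v_1,\bdiff v_1)-q(v_2,\bdiff v_2)\bigr).
\]
Applying the linear solution estimate at regularity $s-1$ and using the local Lipschitz property of $g$ (at level $s-1$) to bound the first term by $C\|v_1-v_2\|_{\cX^{s-1,\alpha}}\|\Phi(v_2)\|_{\cX^{s,\alpha}}$, together with the hypothesized Lipschitz estimate on $q$ at level $s-1-j$ to bound the second term by $L_q(R)\|v_1-v_2\|_{\cX^{s-1,\alpha}}$, yields
\[
  \|\Phi(v_1)-\Phi(v_2)\|_{\cX^{s-1,\alpha}}\leq (CR+C_0 L_q(R))\|v_1-v_2\|_{\cX^{s-1,\alpha}},
\]
which is a strict contraction for $R$ small enough. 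Since $B_R$ is closed in $\cX^{s,\alpha}$ (hence weakly sequentially closed), iterating $\Phi$ from $0$ produces a sequence $u_k\in B_R$ which is Cauchy in $\cX^{s-1,\alpha}$ and uniformly bounded in $\cX^{s,\alpha}$; a standard weak-$*$ compactness/lower semicontinuity argument identifies the $\cX^{s-1,\alpha}$-limit $u$ as an element of $B_R\subset\cX^{s,\alpha}$ which is a fixed point of $\Phi$ and hence solves the quasilinear equation. Uniqueness in $B_R$ follows from the same contraction inequality, and continuous dependence on $f$ in the $\cX^{s-1,\alpha}$ topology is immediate from applying the linear estimate to the difference of two solutions.

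The main obstacle is the delicate regularity balance in the linear step: $P_v$ has top-order coefficients with only $\Hb^{s,\alpha}$ regularity and lower-order coefficients with only $\Hb^{s-1,\alpha}$ regularity, yet the forward solution operator must map $\Hb^{s-1,\alpha}$ into $\cX^{s,\alpha}$ with the standard hyperbolic loss of exactly one derivative, uniformly for $v$ in $B_R$. This is precisely what the non-smooth b-pseudodifferential calculus of Sections~\ref{SecBFunctionAndSymbol}--\ref{SecPropagation} and the global solvability/resonance analysis of Section~\ref{SecGlobalForSecond} are designed to deliver; the requirement $s>n/2+7$ quantifies the number of derivatives spent in that non-smooth propagation theory, and the smallness $L_q(0)<C_L$ is exactly the condition that the linearization at $v=0$ produces a contractive Picard map.
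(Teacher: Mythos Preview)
Your proposal is correct and follows essentially the same approach as the paper's proof (given in Theorem~\ref{ThmAppWaveExistence}): Picard iteration $u_{k+1}=S_{g(u_k)}(f+q(u_k,\bdiff u_k))$ bounded in $\cX^{s,\alpha}$, contraction in $\cX^{s-1,\alpha}$ via the difference equation you wrote (the paper phrases the same computation as the resolvent identity $S_{g(u)}-S_{g(v)}=S_{g(u)}(\Box_{g(v)}-\Box_{g(u)})S_{g(v)}$), and weak compactness to place the limit back in $\cX^{s,\alpha}$. Your identification of $s>n/2+7$ as exactly what is needed to run the linear theory at level $s-1$ matches the paper's reasoning.
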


See Theorem~\ref{ThmAppWaveExistence}. Another case we study is $g(u)=\mu(u)g$, i.e.\ we only allow conformal changes of the metric; here, one can partly improve the above theorem, in particular allow non-linearities of the form $q(u,\bdiff u,\Box_{g(u)}u)$; see \S\ref{SubsecAppConformal}. The point of the Lipschitz assumptions on $q$ in all these cases is to ensure that $q(u,\bdiff u)$ has a sufficient order of vanishing at $u=0$ so that $q(u,\bdiff u)$ can be considered a perturbation of $\Box_{g(u)}$; quadratic vanishing is enough, but slightly less (simple vanishing will small Lipschitz constant near or at $0$) also suffices.

Similar results hold for quasilinear Klein-Gordon equations with positive mass, where the asymptotics of solutions, hence the function spaces used, are different, namely the leading order term is now decaying; see \S\ref{SubsecQlKleinGordon} for details.

In \S\ref{SubsecBackward} finally, we will discuss backward problems; it is expected that the results there extend to the setting of Einstein's equations (after fixing a gauge) on static de Sitter and even on Kerr-de Sitter spacetimes, thus enabling constructions of dynamical black hole spacetimes in the spirit of recent work by Dafermos, Holzegel and Rodnianski \cite{DafermosHolzegelRodnianskiKerrBw}, however the issue of constructing appropriate initial data sets is rather subtle.

While all results were stated for scalar equations, corresponding results hold for operators acting on natural vector bundles, provided that all resonances lie in the unphysical half-plane $\Im\sigma<0$ (with a simple resonance at $0$ being fine as well): Indeed, the linear arguments go through in general for operators with scalar principal symbols; only the numerology of the needed regularities depends on estimates of the subprincipal symbol at (approximate) radial points.

Lastly, let us mention that paradifferential methods would give sharper results with respect to the regularity of the spaces in which we solve equation~\eqref{EqIntroPDE}, and correspondingly we have not made any efforts here to push the regularity down. However, our entirely $L^2$-based method is both conceptually and technically relatively straightforward, powerful enough for our purposes, and lends itself very easily to generalizations in other contexts.

Non-linear wave and Klein-Gordon equations on asymptotically de Sitter spacetimes (or static patches thereof) have been studied in various contexts: Friedrich \cite{FriedrichStability,FriedrichDeSitterPastSimple} proved the global nonlinear stability of 4-dimensional asymptotically de Sitter spaces using a conformal method, see also \cite{FriedrichConformalStructure} for a discussion of more recent developments; also in four dimensions, Rodnianski and Speck \cite{RodnianskiSpeckEulerEinsteinDS} proved the stability of the Euler-Einstein system. Anderson \cite{AndersonStabilityEvenDS} proved the nonlinear stability of all even-dimensional asymptotically de Sitter spaces by generalizing Friedrich's argument. On the semilinear level, Baskin \cite{BaskinStrichartzDeSitterShort,BaskinStrichartzDeSitterLong} established Strichartz estimates for the linear Klein-Gordon equation using his parametrix construction \cite{BaskinParamKGondS} and used them to prove global well-posedness results for classes of semilinear equations with no derivatives; Yagdjian and Galstian \cite{YagdjianGalstianFundamentalSolKG} derived explicit formulas for the fundamental solution of the Klein-Gordon equation on exact de Sitter spaces, which were subsequently used by Yagdjian \cite{YagdjianSemilinearKG,YagdjianGlobalScalarField} to solve semilinear equations with no derivatives. Vasy and the author \cite{HintzVasySemilinear} proved global well-posedness results for a large class of semilinear wave and Klein-Gordon equations on (static) asymptotically de Sitter spaces, where the non-linearity can also involve derivatives; however, just as in the present paper, the (b-)microlocal, high regularity approach used does not apply to low-regularity non-linearities covered by the results of Baskin and Yagdjian. There is more work on the \emph{linear} problem in de Sitter spaces; see e.g.\ the bibliography of \cite{VasyWaveOndS}.

The study of \psdo{}s with non-smooth coefficients has a longer history: Beals and Reed \cite{BealsReedMicroNonsmooth} developed a partial calculus with coefficients in $L^2$-based Sobolev spaces on Euclidean space, which is the basis for our extension to manifolds with boundary. Marschall \cite{MarschallPseudo} gave an extension of the calculus to $L^p$-based Sobolev spaces (and even more general spaces) and in addition proved the invariance of certain classes of non-smooth operators under changes of coordinates. Witt \cite{WittCalculusForNonsmooth} extended the $L^2$-based calculus to contain elliptic parametrices. Pseudodifferential calculi for coefficients in $C^k$ spaces have been studied by Kumano-go and Nagase \cite{KumanogoNagasePsdo}. In a slightly different direction, paradifferential operators, pioneered by Bony \cite{BonyPropagation} and Meyer \cite{MeyerRegularity}, are a widely used tool in nonlinear PDE; see e.g.\ H\"ormander \cite{HormanderNonlinearLectures} and Taylor \cite{TaylorPDE,TaylorNonlinearPDE} and the references therein.

%%%%%%%%%%%%%%%%%%%%%%%%%%%%%%%%%%%%%%%%%%%%%%%%%
\subsection{b-preliminaries and outline of the paper}

We will now give some background on b-pseudodifferential operators and microlocal regularity results along with indications as to how to generalize them to the non-smooth setting, thereby giving a brief, mostly chronological, outline of some of the technical aspects of the paper.

We recall from Melrose \cite{MelroseAPS} that the small calculus of b-\psdo{}s on a compact manifold $M$ with boundary is the microlocalization of the algebra of b-differential operators on $M$, and the kernels of b-\psdo{}s are conceptually best described as conormal distributions on a certain blow-up $M^2_\bl$ of $M\times M$, smooth up to the front face, and vanishing to infinite order at the left/right boundary faces. More prosaically, using local coordinates $(x,y)\in\Rnhalf:=[0,\infty)_x\times\R^{n-1}_y$ near the boundary of $M$, i.e.\ $x$ is a local boundary defining function, and using the corresponding coordinates $\lambda,\eta$ in the fibers of $\Tb^*M$, i.e.\ writing b-covectors as
\[
  \lambda\,\frac{dx}{x}+\eta\,dy,
\]
the action of a b-ps.d.o $A\in\Psib^m$ of order $m$ on $u\in\CIdotc(\Rnhalf)$, the dot referring to infinite order of vanishing at the boundary, is computed by
\begin{equation}
\label{EqIntroBpsdoAction}
  Au(x,y)=\int_{\R\times\R^{n-1}\times(0,\infty)\times\R^{n-1}} e^{i(y-y')\eta}s^{i\lambda}a(x,y,\lambda,\eta)u(x/s,y')\,d\lambda\,d\eta\,\frac{ds}{s}\,dy',
\end{equation}
where $a(x,y,\lambda,\eta)$, the full symbol of $A$ in the local coordinate chart, lies in the symbol class $S^m(\Tb^*\Rnhalf)$, i.e.\ satisfies the symbolic estimates
\[
  |\pa_{x,y}^\alpha\pa_{\lambda,\eta}^\beta a(x,y,\lambda,\eta)|\leq C_{\alpha\beta}\la\lambda,\eta\ra^{m-|\beta|}\tn{ for all multiindices }\alpha,\beta.
\]
We say that $A$ is a left quantization of $a$. Using the formula for the behavior of the full symbol under a coordinate change, one finds that one can invariantly define a \emph{principal symbol}
\[
  \sigma_\bl^m(A)\in S^m(\Tb^* M)/S^{m-1}(\Tb^* M)
\]
of $A$, which is locally just given by (the equivalence class of) $a$. If the principal symbol admits a homogeneous representative $a_m$, meaning $a_m(z,\lambda\zeta)=\lambda^m a_m(z,\zeta)$ for $\lambda\geq 1$, then we say that $A$ has a \emph{homogeneous principal symbol} and, by a slight abuse of notation, set $\sigma_\bl^m(A)=a_m$. We will sometimes identify homogeneous functions on $\Tb^*M\wozero$ with functions on the unit cosphere bundle $\Sb^*M$, viewed as the boundary of the fiber-radial compactification $\rcTb^*M$ of $\Tb^*M$.\footnote{Strictly speaking, this identification is only well-defined for functions which are homogeneous of order $0$; in the general case, one should identify homogeneous functions with sections of a natural line bundle on $\Sb^* M$ which encodes the differential of a boundary defining function of fiber infinity.} The first key point now is that there is a symbolic calculus for b-\psdo{}s, with the most important features being that for $A\in\Psib^m(M),B\in\Psib^{m'}(M)$,
\[
  \sigma_\bl^0(I)=1,\quad\sigma_\bl^m(A^*)=\overline{\sigma_\bl^m(A)},\quad \sigma_\bl^{m+m'}(A\circ B)=\sigma_\bl^m(A)\sigma_\bl^{m'}(B),
\]
where we fixed a b-density on $M$, which in local coordinates is of the form $a\left|\frac{dx}{x}\,dy\right|$ with $a>0$ smooth down to $x=0$, to define the adjoint. For local computations, it is very useful to have the asymptotic expansion
\begin{equation}
\label{EqIntroAsymp}
  \sigma_{\tn{full}}(A\circ B)(z,\zeta)\sim\sum_{\beta\geq 0} \frac{1}{\beta!} (\pa_\zeta^\beta a \Db_z^\beta b)(z,\zeta)
\end{equation}
for the full symbol of a composition of b-\psdo{}s, where $a$ and $b$ are the full symbols of $A$ and $B$, and $\Db_z=(xD_x,D_y)$, where $D=-i\pa$; the notation `$\sim$' means that the difference of the left hand side and the sum on the right hand side, restricted to $|\beta|<N$, lies in $S^{m+m'-N}(\Tb^*\Rnhalf)$, for any $N$. In particular, this gives that for $A\in\Psib^m,B\in\Psib^{m'}$ with principal symbols $a,b$, the principal symbol of the commutator is $\sigma_\bl^{m+m'-1}([A,B])=\frac{1}{i}H_a b$, where
\[
  H_a=(\pa_\lambda a)x\pa_x+(\pa_\eta a)\pa_y - (x\pa_x a)\pa_\lambda-(\pa_y a)\pa_\eta.
\]
This follows from the expansion \eqref{EqIntroAsymp} if we keep track of terms up to first order. The vector field $H_a$ is in fact the smooth extension to the boundary of the standard Hamilton vector field $H_a\in\CI(T^*M^\circ,TT^*M^\circ)$ of $a\in\CI(T^*M^\circ)$.

The second key point for us is that b-\psdo{}s naturally act on weighted b-Sobolev spaces $\Hb^{s,\alpha}(M)$, defined above:
\[
  \Psib^m(M)\ni A\colon\Hb^{s,\alpha}(M)\to\Hb^{s-m,\alpha}(M),\quad s,\alpha\in\R.
\]
We will collect some more information on b-Sobolev spaces and b-\psdo{}s in \S\ref{SecBFunctionAndSymbol}.

The analogous `non-smooth' operators that play the starring role in this paper, b-Sobolev b-\psdo{}s, are locally defined by \eqref{EqIntroBpsdoAction}, but we now allow the symbol $a$ to be less regular. As an example, for many remainder terms in our computations, it will suffice to merely have
\begin{equation}
\label{EqIntroNonsmoothEx}
  \left\|\frac{a(z,\zeta)}{\la\zeta\ra^m}\right\|_{\Hb^s((\Rnhalf)_z)}\leq C,\tn{ uniformly in }\zeta\in\R^n,
\end{equation}
which already implies that $A=a(z,\Db_z)$ defines a continuous map
\begin{equation}
\label{EqIntroNonsmoothMap}
  A\colon\Hb^{s'}\to\Hb^{s'-m}, \quad s\geq s'-m,s>n/2+\max(0,m-s');
\end{equation}
see Proposition~\ref{PropOpCont}. Assuming more regularity of the symbols in $\zeta$, we can study compositions of such non-smooth operators; the main tool here is the asymptotic expansion \eqref{EqIntroAsymp}, which must be cut off after finitely many terms in view of the limited regularity of the symbols, and the remainder term will be estimated carefully. In \S\ref{SecCalculus}, we will develop the (partial) calculus of b-Sobolev b-\psdo{}s as far as needed for the remainder of the paper, in particular for the proofs of microlocal regularity results, which will be essential for the linear analysis of equation~\eqref{EqIntroPDEIter}.

Let us briefly recall a few such regularity results in the smooth setting, working on unweighted b-Sobolev spaces for brevity. First, we define the \emph{b-wavefront set} $\WFb^s(u)\subset\Tb^*M\wozero$ of $u\in\Hb^{-\infty}(M)$ as the complement of the set of all $\omega\in\Tb^*M\wozero$ such that $Au\in L^2_\bl(M)=\Hb^0(M)$ for some $A\in\Psib^s(M)$ elliptic at $\omega$; recall that a b-ps.d.o $A\in\Psib^s(M)$ with homogeneous principal symbol $a$ is \emph{elliptic} at $\omega\in\Tb^*M\wozero$ iff $|a(\lambda\omega)|\geq c|\lambda|^m$ for $\lambda\geq 1$, where we let $\R_+$ act on $\Tb^*M\wozero$ by dilations in the fiber. We informally say that $u$ is in $\Hb^s$ microlocally at $\omega$ iff $\omega\notin\WFb^s(u)$. By definition, the wavefront set is closed and conic, thus we can view it as a subset of $\Sb^*M$; moreover, it can capture global $\Hb^s$-regularity in the sense that $\WFb^s(u)=\emptyset$ implies $u\in\Hb^s(M)$ (and vice versa). \emph{Elliptic regularity} then states that if $u\in\Hb^{-\infty}$ satisfies $Pu\in\Hb^{\sigma-m}$ for $P\in\Psib^m$ which is elliptic at $\omega$, then $u$ is in $\Hb^\sigma$ microlocally at $\omega$. The proof is an easy application of the symbolic calculus -- one essentially takes the reciprocal of the symbol of $P$ near $\omega$ to obtain an approximate inverse of $P$ there -- and readily generalizes to the non-smooth setting as shown in \S\ref{SecEllipticReg}; the main technical task is to understand reciprocals of non-smooth symbols, which we will deal with in \S\ref{SecHsRecAndComp}.

Next, given an operator $P\in\Psib^m$ with real homogeneous principal symbol $p$, we need to study the singularities for solutions $u\in\Hb^{-\infty}$ of $Pu=f\in\Hb^{\sigma-m+1}$ within the characteristic set $\Sigma=p^{-1}(0)$ of $u$; note that elliptic regularity gives $u\in\Hb^{\sigma+1}$ microlocally off $\Sigma$. Let us assume that $dp\neq 0$ at $\Sigma$ so that $\Sigma$ is a smooth conic codimension $1$ submanifold of $\Tb^*M\wozero$. The \emph{real principal type propagation of singularities}, in the setting of closed manifolds originally due to Duistermaat and H\"ormander \cite{DuistermaatHormanderFIO2}, then states that $\WFb^\sigma(u)$ is invariant under the flow of the Hamilton vector field $H_p$ of $p$. In other words, $\WFb^\sigma(u)$ is the union of maximally extended \emph{null-bicharacteristics} of $P$, which are by definition flow lines of $H_p$. We recall the key idea of the proof using a positive commutator argument in \S\ref{SubsecRealPrType}. In this section, we will then generalize this statement and the commutator proof to the case of non-smooth $P$. Since $P$ now only acts on a certain range of b-Sobolev spaces, the allowed degrees $\sigma$ of regularity that we can propagate have bounds both from above and from below in terms of the regularity $s$ of the coefficients of $P$; also, since non-smooth operators like the ones given by symbols as in \eqref{EqIntroNonsmoothEx} have very restricted mapping properties on low or negative order spaces, see~\eqref{EqIntroNonsmoothMap}, we need to assume higher regularity $\Hb^s$ of the coefficients of $P$ when we want to propagate low regularity $\Hb^\sigma$ of solutions $u$. The main bookkeeping overhead of the proof of the propagation of singularities thus comes from the need to make sense of all compositions, dual pairings, adjoints and actions of non-smooth operators that appear in the course of the positive commutator argument.

In order to complete the microlocal picture, we also need to consider the propagation of singularities near \emph{radial points}, which are points in the b-cotangent bundle where the Hamilton vector field $H_p$ is radial, i.e.\ a multiple of the generator of dilations in the fiber. The above propagation of singularities statement does not give any information at radial points. Now, in many geometrically interesting cases, the Hamilton flow near the set of radial points has a lot of structure, e.g.\ if the radial set is a set of sources/sinks/saddle points for the flow. The proof of a microlocal estimate near a class of radial points in \S\ref{SubsecRadialPoints} (see the introduction to \S\ref{SecPropagation} for references in the smooth setting) again proceeds via positive commutators, thus similar comments about the interplay of regularities as in the real principal type setting apply.

In \S\ref{SubsecHypotheses}, we will combine the microlocal regularity results with standard energy estimates for second order hyperbolic equations from \S\ref{SubsecEnergy}, see e.g.\ H\"ormander \cite[Chapter~XXIII]{HormanderAnalysisPDE} or \cite[\S{2}]{HintzVasySemilinear}, and prove the existence and higher regularity of global forward solutions to linear wave equations with non-smooth coefficients under certain geometric and dynamical assumptions, in particular non-trapping. The idea is to start off with forward solutions in a space $\Hb^{0,r}$, $r\ll 0$, obtain higher regularity at elliptic points, propagate higher regularity (from the `past,' where the solution vanishes) using the real principal type propagation of singularities, propagate this regularity into radial points, which lie over the boundary, and propagate from there within the boundary; the non-trapping assumption guarantees that by piecing together all such microlocal regularity statements, we get a global membership in a high regularity b-Sobolev space, however still with weight $r\ll 0$. To improve the decay of the solution, we use a contour deformation argument using the normal operator family as in \cite[\S{3}]{VasyMicroKerrdS}.

Finally, to apply the machinery developed thus far to quasilinear wave and Klein-Gordon equations on `static' asymptotically de Sitter spaces, we check in \S\ref{SecApp} that they fit into the framework of \S\ref{SubsecHypotheses}, thereby proving Theorems~\ref{ThmIntroBaby} and \ref{ThmIntroFull}. To keep the discussion in \S\ref{SecApp} simple, we will in fact only consider quasilinear equations on static patches of de Sitter space explicitly, but the reader should keep in mind that the arguments apply in more general settings; see \S\ref{SecApp}, in particular Remark~\ref{RmkBoxOnBundles2}, for further details.

%%%%%%%%%%%%%%%%%%%%%%%%%%%%%%%%%%%%%%%%%%%%%%%%%
\section*{Acknowledgments}

I am very grateful to my Ph.D.\ advisor Andr\'as Vasy for suggesting the problem, for countless invaluable discussions, for providing some of the key ideas and for constant encouragement throughout this project. I would also like to thank Kiril Datchev for several very helpful discussions and for carefully reading parts of the manuscript. Thanks also to Dean Baskin and Xinliang An for their interest and support. I am very grateful to an anonymous referee whose detailed suggestions significantly improved the manuscript.

I gratefully acknowledge partial support from a Gerhard Casper Stanford Graduate Fellowship, the German National Academic Foundation and Andr\'as Vasy's National Science Foundation grants DMS-0801226 and DMS-1068742.

%%%%%%%%%%%%%%%%%%%%%%%%%%%%%%%%%%%%%%%%%%%%%%%%%%%%%%%%%%%%%%%%%%%%%
\section{Function and symbol spaces for local b-analysis}
\label{SecBFunctionAndSymbol}

We work on an $n$-dimensional manifold $M$ with boundary $\partial M$. Since almost all results we will describe are local, we consider a product decomposition $\Rnhalf=(\overline{\R_+})_x\times\R_y^{n-1}$ near a point on $\partial M$. Whenever convenient, we will assume that all distributions and kernels of all operators we consider have compact support. Whenever the distinction between $x$ and $y$ (or their dual variables, $\lambda$ and $\eta$) is unimportant, we also write $z=(x,y)$ (or $\zeta=(\lambda,\eta)$).

On $\cS(\R_y^{n-1})$, we have the Fourier transform $(\sF v)(\eta)=\int e^{-iy\eta}v(y)\,dy$ with inverse $(\sF^{-1}v)(y)=\int e^{iy\eta}v(\eta)\,d\eta$, where we normalize the measure $d\eta$ to absorb the factor $(2\pi)^{-(n-1)}$. Likewise, on $\CIdotc(\Rhalf)$, i.e.\ functions vanishing to infinite order at $0$ with compact support, we have the Mellin transform $(\sM u)(\lambda)=\int_0^\infty x^{-i\lambda}u(x)\,\frac{dx}{x}$ with inverse $(\sM_\alpha^{-1}u)(x)=\int_{\Im\lambda=-\alpha} x^{i\lambda}u(\lambda)\,d\lambda$, where $\alpha\in\R$ is arbitrary; here, we also normalize $d\lambda$ to absorb the factor $(2\pi)^{-1}$. For any function $u=u(x,y)\in\CIdotc(\Rnhalf)$, we shall write 
\[
  \wh u(\lambda,\eta)=(\sM_{x\to\lambda}\sF_{y\to\eta} u)(\lambda,\eta).
\]
Weighted b-Sobolev spaces on $\Rnhalf$ can then be defined by
\[
  u\in\Hb^{s,\alpha}(\Rnhalf)\equiv\tau^\alpha\Hb^s(\Rnhalf) \iff \la\zeta\ra^s(\sM|_{\Im\lambda=-\alpha}\sF u)(\zeta)\in L^2(\R^n_\zeta),
\]
where the restriction to $\Im\lambda=-\alpha$ effectively removes the weight $x^\alpha$. We will also write $L_\bl^2(\Rnhalf)=\Hb^0(\Rnhalf)$, which agrees with the usual definition of $L_\bl^2(\Rnhalf)=L^2(\Rnhalf,\frac{dx}{x}\,dy)$, since $\ms M\ms F\colon L^2(\Rnhalf,\frac{dx}{x}\,dy)\to L^2(\R^n_\zeta)$ is an isometric isomorphism by Plancherel's theorem.

As in the introduction, we define the b-wavefront set of $u\in\Hb^{-\infty}(\Rnhalf)$ by
\[
  (z_0,\zeta_0)\notin\WFb^s(u) \iff \exists A\in\Psibc^s(\Rnhalf),\sigma_\bl^s(A)(z_0,\zeta_0)\neq 0\tn{ s.t. }Au\in L^2_\bl(\Rnhalf).
\]
Here, $\Psibc^*$ consists of operators with compactly supported Schwartz kernel, and we write $A=A(z,\Db_z)\equiv A(x,y,xD_x,D_y)$. The b-wavefront set in a weighted b-Sobolev sense is defined by
\[
  \WFb^{s,\alpha}(u):=\WFb^s(x^{-\alpha}u),\quad u\in\Hb^{-\infty,\alpha}(\Rnhalf).
\]

There is the following simple characterization of $\WFb^s(u)$.

\begin{lemma}
\label{LemmaWFbCharacterization}
  Let $u\in\Hb^{-\infty}(\Rnhalf)$. Then $(z_0,\zeta_0)\notin\WFb^s(u)$ if and only if there exists $\phi\in\CI_\cl(\Rnhalf),\phi(z_0)\neq 0$, and a conic neighborhood $K$ of $\zeta_0$ in $\R^n$ such that
  \begin{equation}
  \label{EqBSobolevCondition}
    \chi_K(\zeta)\la\zeta\ra^s\widehat{\phi u}\in L^2(\R^n),
  \end{equation}
  where $\chi_K$ is the characteristic function of $K$.
\end{lemma}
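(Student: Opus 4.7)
Both implications reduce, via Plancherel, to the identity that a Fourier-Mellin multiplier $T=\psi(\Db_z)\la\Db_z\ra^s$ satisfies $T(\phi u)\in L^2_\bl(\Rnhalf)$ if and only if $\psi(\zeta)\la\zeta\ra^s\widehat{\phi u}\in L^2(\R^n)$. The plan is to build such an operator $T$ on each side with suitable cone and spatial cutoffs, and then read \eqref{EqBSobolevCondition} off Plancherel.

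For the direction $(\Rightarrow)$, I would start from $A\in\Psibc^s$ elliptic at $(z_0,\zeta_0)$ with $Au\in L^2_\bl$, then pick a compactly supported cutoff $\phi\in\CI_\cl$ near $z_0$ with $\phi(z_0)\neq 0$ together with a smooth nonnegative cone cutoff $\psi\in\CI(\R^n)$, homogeneous of degree $0$ at infinity, with $\psi\equiv 1$ on $K$ and $\supp\psi\subset K'\Supset K$; the supports are shrunk so that $\supp\phi\times K'$ lies in the ellipticity region of $A$. A standard b-elliptic parametrix writes $B:=T\phi=QA+R'$ with $Q\in\Psib^0$ and $R'\in\Psib^{-\infty}$, so $Bu=QAu+R'u\in L^2_\bl$; Plancherel gives $\psi\la\zeta\ra^s\widehat{\phi u}\in L^2$, and the pointwise bound $\chi_K\leq\psi$ then yields \eqref{EqBSobolevCondition}.

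For $(\Leftarrow)$, given $\phi,K$ from the hypothesis I choose $\psi\in\CI(\R^n)$, homogeneous of degree $0$ at infinity, supported in a smaller cone $K_2\Subset K$ with $\psi(\zeta_0)\neq 0$, and set $T=\psi(\Db_z)\la\Db_z\ra^s$; the pointwise bound $|\psi\la\zeta\ra^s\widehat{\phi u}|\leq C\chi_K\la\zeta\ra^s|\widehat{\phi u}|$ and Plancherel give $T(\phi u)\in L^2_\bl$. As a candidate operator I take $A=\chi T\chi'\in\Psibc^s$ with compactly supported cutoffs $\chi,\chi'\in\CI_\cl$ near $z_0$ satisfying $\supp\chi'\subset\{\phi\neq 0\}$ and $\chi(z_0)\chi'(z_0)\neq 0$; this $A$ is elliptic at $(z_0,\zeta_0)$, and writing $g=\chi'/\phi\in\CI_\cl$ compactly supported yields $Au=\chi T(g\phi u)$.

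The main obstacle is showing $\chi T(g\phi u)\in L^2_\bl$, since multiplication by $g$ blurs the Fourier-Mellin localization of $\phi u$. I would resolve this by decomposing $\phi u=v_1+v_2$ through an intermediate cutoff $\psi_1\in\CI(\R^n)$, homogeneous of degree $0$ at infinity, supported in $K$ and equal to $1$ on a cone $K_3$ with $K_2\Subset K_3\Subset K$: the hypothesis and $\psi_1\leq\chi_K$ give $v_1:=\psi_1(\Db_z)(\phi u)\in\Hb^s$ globally, whence $\chi T(gv_1)\in L^2_\bl$ by the mapping property $T\colon\Hb^s\to\Hb^0$; meanwhile the Fourier-Mellin transform of $v_2:=(1-\psi_1)(\Db_z)(\phi u)$ is supported in $\supp(1-\psi_1)\subset K_3^c$, so $v_2$, and hence $gv_2$ (multiplication by smooth $g$ being a zero-order b-ps.d.o.\ preserving microlocal regularity), is microlocally in $\Hb^\infty$ in every direction inside $K_3$, and since $\WFb'(T)\subset\supp\psi\subset K_2\Subset K_3$ the composition $T(gv_2)$ has empty b-wavefront set, so $\chi T(gv_2)\in\Hb^\infty\subset L^2_\bl$ after the compactly supported cutoff $\chi$. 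Summing the two contributions gives $Au\in L^2_\bl$, completing the proof.
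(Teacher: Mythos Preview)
Your $(\Rightarrow)$ direction follows the paper's approach (elliptic parametrix $B=CA+R'$). There is, however, one technical point you gloss over in both directions: you treat $T=\psi(\Db_z)\la\Db_z\ra^s$, and hence $T\phi$ and $\chi T\chi'$, as b-pseudodifferential operators, but the symbol $\psi(\zeta)\la\zeta\ra^s$ lies in $S^s$ and not a priori in $S^s_\bl$. The distinction matters in this paper's setup: the kernel of a Fourier--Mellin multiplier with symbol $a(\lambda,\eta)$ vanishes to infinite order at the side faces $\lb,\rb$ of $M^2_\bl$ (as required for membership in $\Psib^s$) exactly when $\sF^{-1}_{\lambda\to t}a$ is super-exponentially decaying in $t$, which is the defining condition for $S^s_\bl$. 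The paper addresses this right here by invoking Lemma~\ref{LemmaBQuantization}: one adds $r(\zeta)\in S^{-\infty}$ so that $\tilde\chi_K(\zeta)\la\zeta\ra^s+r(\zeta)\in S^s_\bl$, and since $r(\Db)\colon\Hb^{-\infty}\to\Hb^\infty$ the correction is analytically harmless. With that adjustment your argument goes through.

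For $(\Leftarrow)$ your route is valid but more elaborate than needed. The paper simply takes $A\colon v\mapsto(\tilde\chi_K(\Db)\la\Db\ra^s+r(\Db))(\phi v)$; this is already in $\Psibc^s$, elliptic at $(z_0,\zeta_0)$, and $Au\in L^2_\bl$ follows at once from the hypothesis by Plancherel together with $r(\Db)(\phi u)\in\Hb^\infty$. By sandwiching $T$ between spatial cutoffs $\chi,\chi'$ you create a mismatch that you then repair via $g=\chi'/\phi$ and a cone decomposition $v_1+v_2$ with a microlocal argument for $T(gv_2)$. This machinery is correct but unnecessary: multiplication by $\phi\in\CI_\cl$ already provides compact support of the Schwartz kernel, so the composition of the multiplier with $\phi$ is itself the natural candidate for $A$, and no outer cutoffs or decomposition are needed.
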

\begin{proof}
  It suffices to prove the lemma when $\chi_K$ is replaced by $\wt\chi_K\in\CI(\R^n)$, where $\wt\chi_K\equiv 1$ on the half line $\R_{\geq 1}\zeta_0$, and $\wt\chi_K$ is homogeneous of degree $0$ in $|\zeta|\geq 1$. Given such a $\wt\chi_K$ and $\phi\in\CI_\cl(\Rnhalf)$ so that \eqref{EqBSobolevCondition} holds (with $\chi_K$ replaced by $\wt\chi_K$), the map
  \[
    A\colon v\mapsto (\wt\chi_K(\Db)\la\Db\ra^s+r(\Db))(\phi v)
  \]
  is an element of $\Psibc^s(\Rnhalf)$ for an appropriate choice of $r(\zeta)\in S^{-\infty}$ (see Lemma~\ref{LemmaBQuantization}). Since $r(\Db)\colon\Hb^{-\infty}(\Rnhalf)\to\Hb^\infty(\Rnhalf)$, we conclude that $\widehat{Au}\in L^2(\R^n)$, which by Plancherel's theorem gives $(z_0,\zeta_0)\notin\WFb^s(u)$, as desired.

 For the converse direction, given $A\in\Psibc^s(\Rnhalf)$, $\sigma_{\bl}^s(A)(z_0,\zeta_0)\neq 0$, take $\phi\in\CI_\cl(\Rnhalf)$ and $\wt\chi_K\in\CI(\R^n)$ with $\phi(z_0)\neq 0,\wt\chi_K(\zeta_0)\neq 0$ such that $A$ is elliptic on $\WFb'(B)$, where $B=(\wt\chi_K(\Db)\la\Db\ra^s+r(\Db))\phi\in\Psibc^s(\Rnhalf)$, again with an appropriately chosen $r\in S^{-\infty}$. A straightforward application of the symbol calculus gives the existence of $C\in\Psibc^0(\Rnhalf),R'\in\Psibc^{-\infty}(\Rnhalf)$ such that $B=CA-R'$; thus $Bu=C(Au)-R'u\in L^2_\bl(\Rnhalf)$. Since $r(\Db)\colon\Hb^{-\infty}\to\Hb^\infty$, we conclude that $\chi_K(\zeta)\la\zeta\ra^s\widehat{\phi u}\in L^2(\R^n)$, and the proof is complete.
\end{proof}

It is convenient to build up the calculus of smooth b-\psdo{}s on $M$ using the kernels of b-\psdo{}s explicitly, as done by Melrose \cite{MelroseAPS}: On the one hand, they are conormal distributions, namely the partial Fourier transform of a symbol\footnote{For clarity, the semicolon `;' will often be used to separate base and fiber variables (resp.\ differential operators) in symbols (resp.\ operators).} $a(x,y;\lambda,\eta)$ near the diagonal of the b-stretched product $M^2_\bl$, smoothly up to the front face, and on the other hand, they vanish to infinite order at the two boundaries $\lb(M^2_\bl)$ and $\rb(M^2_\bl)$, which in particular ensures that b-\psdo{}s act on weighted spaces. However, we will refrain from describing the kernels of the non-smooth b-operators to be considered later and rather keep track of more information on the symbol $a$, wherever this is necessary. The idea is the following: Given a conormal distribution
\[
  \wt I_a(s):=\int e^{i\lambda\log s}a(\lambda)\,d\lambda,\quad a\in S^m(\R),
\]
The function $I_a(t):=\wt I_a(e^t)$ is rapidly decaying as $|t|\to\infty$. If we require however that $\wt I_a(s)$ be rapidly decaying as $s\to 0$ and $s\to\infty$, i.e. $I_a$ is super-exponentially decaying as $|t|\to\infty$, it turns out that the symbol $a(\lambda)$ can be extended to an entire function of $\lambda$ with symbol bounds in $\Re\lambda$ which are locally uniform in $\Im\lambda$; see Lemma~\ref{LemmaBSymbolFT} below.

\begin{definition}
  Let $m\in\R$. Then $S^m_\bl((\overline{\R_+})_x\times\R^{n-1}_y;\R_\lambda\times\R^{n-1}_\eta)$ is the space of all symbols $a\in S^m((\Rnhalf)_z;\R_\zeta^n)$ such that the partial inverse Fourier transform $\sF_{\lambda\to t}^{-1}a$ is super-exponentially decaying as $|t|\to\infty$, i.e. for all $\mu\in\R$, there is $C_\mu<\infty$ such that $|\sF_{\lambda\to t}^{-1}a(x,y;t,\eta)|\leq C_\mu e^{-\mu|t|}$ for $|t|>1$.
\end{definition}

\begin{lemma}
\label{LemmaBSymbolFT}
 Let $m\in\R$. Then $a(\lambda)\in S^m_\bl(\R_\lambda)$ if and only if $a$ extends to an entire function, also denoted $a(\lambda)$, which for all $N,K\in\N$ satisfies an estimate
 \begin{equation}
 \label{EqBSymbolFTEstimate}
   |D_\lambda^k a(\lambda)| \leq C_{N,K}\la\lambda\ra^{m-k},\quad |\Im\lambda|\leq N, k\leq K.
 \end{equation}
 for a constant $C_{N,K}<\infty$.
\end{lemma}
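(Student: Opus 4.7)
The plan is to prove both directions by combining integration by parts in $\lambda$ with contour deformation, exploiting holomorphy on horizontal strips.

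For the $(\Leftarrow)$ direction, given $a$ entire with the stated symbol bounds, setting $N=0$ immediately yields $a\in S^m(\R)$. For super-exponential decay of $\sF^{-1}a$ on $|t|>1$, I would fix $t>0$ and $N\in\N$, and (after picking $k>m+1$) integrate by parts in $\lambda$ to write $(it)^k\sF^{-1}a(t)=(-1)^k\int_\R e^{it\lambda}D_\lambda^k a(\lambda)\,d\lambda$ with $D_\lambda^k a\in L^1(\R)$. Then shift the contour up to $\Im\lambda=N$: the shift is justified by holomorphy on the strip together with the bound $|D_\lambda^k a(\mu+iN)|\le C_{N,k}\la\mu\ra^{m-k}$, which is $L^1$ in $\mu$ and kills the vertical pieces at $\Re\lambda=\pm R$ as $R\to\infty$. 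The shifted integral gains a factor $e^{-tN}$, yielding $|\sF^{-1}a(t)|\le C_{N,k}|t|^{-k}e^{-tN}\le C_N e^{-tN}$ for $t>1$ and arbitrary $N$. The case $t<-1$ is symmetric.

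For the $(\Rightarrow)$ direction, assume $a\in S^m_\bl(\R)$ with $b:=\sF^{-1}a$ super-exponentially decaying on $|t|>1$. First, I would extend $a$ to an entire function: pick $\chi\in C_c^\infty(\R)$ with $\chi\equiv 1$ near $0$ and set
\[
  a(\lambda):=\la\chi b,\,e^{-it\lambda}\ra+\int_\R(1-\chi(t))b(t)e^{-it\lambda}\,dt,\qquad\lambda\in\C.
\]
The first term is the pairing of a compactly supported distribution with a smooth function, entire in $\lambda$ by Paley-Wiener-Schwartz; the second is absolutely convergent since the super-exponential decay of $(1-\chi)b$ dominates the bound $|e^{-it\lambda}|\le e^{|t||\Im\lambda|}$, and is entire by differentiation under the integral. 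Both definitions agree with $a$ on $\R$.

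Next, observe the crucial reduction: $D_\lambda^k a\in S^{m-k}_\bl(\R)$ for every $k\ge 0$, because $\sF^{-1}(D_\lambda^k a)(t)=(-t)^k b(t)$ is super-exponentially decaying on $|t|>1$ since $b$ is. Consequently, it suffices to establish the $k=0$ bound $|a(\lambda)|\le C_N\la\lambda\ra^m$ on $|\Im\lambda|\le N$ for arbitrary $m\in\R$; applying this to $D_\lambda^k a$ in place of $a$ then gives the full claim. To prove the $k=0$ bound, decompose $a=\sF(\chi b)+\sF((1-\chi)b)=\hat\chi\ast a+\sF((1-\chi)b)$, with both pieces extended by analyticity. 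For the first piece, the Paley-Wiener-Schwartz bound $|\hat\chi(\xi+ic)|\le C_Ke^{R|c|}\la\xi\ra^{-K}$ (for all $K$) together with $a\in S^m(\R)$ gives $|(\hat\chi\ast a)(\mu+ic)|\le \int C_Ke^{R|c|}\la\mu-\nu\ra^{-K}\la\nu\ra^m\,d\nu\le C_N\la\mu\ra^m$ on $|\Im\lambda|\le N$ by a standard estimate. For the second piece, integration by parts gives $(-i\lambda)^K\sF((1-\chi)b)(\lambda)=\sF(\pa_t^K((1-\chi)b))(\lambda)$; the Leibniz expansion of $\pa_t^K((1-\chi)b)$ produces compactly supported terms which are harmless, plus the term $(1-\chi)\pa_t^K b$, which I would show is super-exponentially decaying on $|t|>1$ by applying the contour-shift argument of $(\Leftarrow)$ to the inverse Fourier representation $\pa_t^K b=\sF^{-1}((i\mu)^K a)$.

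The main obstacle — and where care is needed to avoid circularity — is precisely this last contour shift, since it uses holomorphy of $a$ on strips with polynomial control. The resolution is that the extension formula of Step 1 gives, directly and without any symbol improvement, a crude polynomial bound $|a(\lambda)|\le C_N\la\lambda\ra^P$ on $|\Im\lambda|\le N$ (with $P$ reflecting the distributional order of $\chi b$, via Paley-Wiener-Schwartz on the first summand, and boundedness of the second summand); this polynomial bound is sufficient to justify the contour shift after enough integrations by parts in $\mu$, yielding super-exponential decay of $\pa_t^K b$ on $|t|>1$ and hence $|\sF((1-\chi)b)(\lambda)|\le C_{N,K}\la\lambda\ra^{-K}$ for all $K$ on strips. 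Combining the two estimates yields $|a(\lambda)|\le C_N\la\lambda\ra^m$ (absorbing the rapidly decaying contribution by taking $K\ge -m$), completing the proof.
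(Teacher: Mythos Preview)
Your overall architecture matches the paper's: the $(\Leftarrow)$ direction is the same integration-by-parts plus contour shift, and for $(\Rightarrow)$ you use the same decomposition $a=\sF(\chi b)+\sF((1-\chi)b)$, handling the compactly supported piece via the convolution $\hat\chi\ast a$, which is exactly the paper's direct integral estimate in different notation. Your reduction to $k=0$ via $\sF^{-1}(D_\lambda^k a)=(-t)^k b$ is also what the paper does for the $a_0$-piece.

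There is, however, a genuine gap in your treatment of the tail $a_1=\sF((1-\chi)b)$. You correctly identify that you need $(1-\chi)\pa_t^K b$ to be super-exponentially decaying, and you propose to get this by applying the $(\Leftarrow)$ contour shift to $\sF^{-1}((i\mu)^K a)$, justified by the crude bound $|a(\lambda)|\le C_N\langle\lambda\rangle^P$ on strips. This does not work: Cauchy's formula only gives $|D_\lambda^j a(\lambda)|\le C_{N,j}\langle\lambda\rangle^P$ on strips with \emph{no} gain in the exponent, so after your integrations by parts the integrand $(-D_\lambda)^L[(i\lambda)^K a]$ is still only $O(\langle\Re\lambda\rangle^{P+K})$ on $\Im\lambda=N$. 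It is never $L^1$ there, and the vertical pieces at $\Re\lambda=\pm R$ do not vanish; the contour shift is therefore not justified. Taking $L$ larger helps only on the real axis, not on the strip.

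The paper avoids this by deferring to \cite[Theorem~5.1]{MelroseAPS}. The underlying mechanism, which you can supply directly, is interpolation rather than contour shifting: from $a\in S^m$ one gets $|\pa_t^{K+1}b(t)|\le C_K$ for $|t|>1$ (polynomial bound via oscillatory integrals), and combining this with the hypothesis $|b(t)|\le C_M e^{-M|t|}$ in a Taylor/finite-difference formula with step $h=h(t)\to 0$ exponentially yields $|\pa_t^K b(t)|\le C_{N,K}e^{-N|t|}$ for all $N$. Once $\pa_t^K b$ is known to be super-exponentially decaying, your integration-by-parts argument for $a_1$ goes through and the proof is complete.
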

\begin{proof}
  Given $a\in S^m_\bl$, we write $a=a_0+a_1$, where for $\phi\in\CI_\cl(\R)$, $\phi\equiv 1$ near $0$,
  \[
    a_0=\sF(\phi\sF^{-1}a),\quad a_1=\sF((1-\phi)\sF^{-1} a).
  \]
  Since $\sF^{-1} a_1\in\CI(\R)$ is super-exponentially decaying, we easily get the estimate \eqref{EqBSymbolFTEstimate} for $a_1$ (in fact, the estimate holds for arbitrary $m$); see e.g. \cite[Theorem~5.1]{MelroseAPS}. Next, $\phi\sF^{-1} a\in\sE'(\R)$, thus $a_0$ is entire, and we write for $\lambda,\mu\in\R$:
  \begin{align*}
    a_0(\lambda+i\mu)&=\iint_{\R^2} e^{i(\sigma-\lambda-i\mu)x}\phi(x) a(\sigma)\,d\sigma\,dx \\
	  &=\int_\R a(\sigma+\lambda)\left(\int_\R e^{i\sigma x}e^{\mu x}\phi(x)\,dx\right) d\sigma
  \end{align*}
  Since $e^{\mu x}\phi(x)$ is a locally bounded (in $\mu$) family of Schwartz functions, we have for $|\mu|\leq N$, $N\in\N$ arbitrary,
  \begin{align}
\label{EqBSymbolBoundPf1}&|a_0(\lambda+i\mu)|\leq C_N\int\la\sigma+\lambda\ra^m\la\sigma\ra^{-N}\,d\sigma \\
\label{EqBSymbolBoundPf2}&\quad = C_N\left(\int_{|\sigma|\leq |\lambda|}\la\sigma+\lambda\ra^m\la\sigma\ra^{-N}\,d\sigma+\int_{|\sigma|>|\lambda|}\la\sigma+\lambda\ra^m\la\sigma\ra^{-N}\,d\sigma\right).
  \end{align}
  First, we consider the case $m\geq 0$. Then the first integral in \eqref{EqBSymbolBoundPf2} is bounded by
  \[
    \int_\R\la\lambda\ra^m\la\sigma\ra^{-N}\,d\sigma\leq C_N\la\lambda\ra^m
  \]
  for $N>1$, and the second integral is bounded by
  \[
    \int_\R \la\sigma\ra^{-N+m}\,d\sigma\leq C_{N,m}\leq C_N\la\lambda\ra^m
  \]
  for $-N+m<-1$ in view of $m\geq 0$; thus we obtain \eqref{EqBSymbolFTEstimate} for $k=0$.

  Next, we consider the case $m<0$. The integral in \eqref{EqBSymbolBoundPf1} is dominated by
  \begin{align*}
    \la\lambda\ra^m \int\frac{\la\lambda\ra^{-m}}{\la\sigma+\lambda\ra^{-m}\la\sigma\ra^{-m}}\la\sigma\ra^{-N-m}\,d\sigma & \leq \la\lambda\ra^m \int_\R C_m\la\sigma\ra^{-N-m}\,d\sigma \\
	  &\leq C_{N,m}\la\lambda\ra^m.
  \end{align*}
  This proves \eqref{EqBSymbolFTEstimate} for $k=0$. To get the estimate for the derivatives of $a_0$, we compute
  \begin{align*}
    D_\lambda^k a_0(\lambda)&=\iint (-x)^k e^{i(\sigma-\lambda-i\mu)x}\phi(x)a(\sigma)\,d\sigma\,dx \\
	 &=\iint (-D_\sigma)^k e^{i(\sigma-\lambda-i\mu)x}\phi(x)a(\sigma)\,d\sigma\,dx \\
	 &=\iint e^{i(\sigma-\lambda-i\mu)x}\phi(x)D_\sigma^k a(\sigma)\,d\sigma\,dx,
  \end{align*}
  and since $|D_\sigma^k a(\sigma)|\leq C_k\la\sigma\ra^{m-k}$, the above estimates yield \eqref{EqBSymbolFTEstimate} for arbitrary $K$.

  For the converse direction, it suffices to prove the super-exponential decay of $\sF^{-1} a$. Fix $\mu\in\R$. Then for $|x|>1$, $k\in\N$, we compute
  \[
    e^{x\mu}\sF^{-1} a(x)=e^{x\mu}\int_\R e^{ix\lambda}a(\lambda)\,d\lambda=\frac{e^{x\mu}}{x^k}\int_\R e^{ix\lambda}(-D_\lambda)^k a(\lambda)\,d\lambda.
  \]
  Choose $k$ such that $m-k<-1$, then we can shift the contour of integration to $\Im\lambda=\mu$, thus
  \[
    |e^{x\mu}\sF^{-1} a(x)|\leq |x|^{-k}\int_\R |D_\lambda^k a(\lambda+i\mu)|\,d\lambda\in L^\infty_x.
  \]
  Since this holds for any $\mu\in\R$, this gives the super-exponential decay of $\sF^{-1}a$ for $|x|\to\infty$, and the proof is complete.
\end{proof}

In particular, the operator with full symbol $\la\zeta\ra^s$ is not a b-\psdo{}\ unless $s\in 2\N$. However, we can fix this by changing $\la\zeta\ra^s$ by a symbol of order $-\infty$; more generally:

\begin{lemma}
\label{LemmaBQuantization}
  For any symbol $a\in S^m((\overline{\R_+})_x\times\R^{n-1}_y;\R_\lambda\times\R^{n-1}_\eta)$, there is a symbol $\wt a\in S_\bl^m$ with $a-\wt a\in S^{-\infty}$.
\end{lemma}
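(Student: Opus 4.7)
The strategy is to construct $\tilde a$ by cutting off the inverse Fourier transform of $a$ in the $\lambda$-variable so that it has compact support in the dual variable $t$. Specifically, let $\phi\in\CIc(\R_t)$ satisfy $\phi\equiv 1$ near $0$, and set
\[
  \tilde a(x,y,\lambda,\eta) := \sF_{t\to\lambda}\bigl(\phi(t)\,\sF^{-1}_{\lambda'\to t} a(x,y,t,\eta)\bigr) = (\hat\phi *_\lambda a)(x,y,\lambda,\eta),
\]
so that $\sF^{-1}_{\lambda\to t}\tilde a = \phi(t)\,\sF^{-1}_{\lambda'\to t}a$ has compact support in $t$, and $a - \tilde a = \sF_{t\to\lambda}\bigl((1-\phi)(t)\,\sF^{-1}_{\lambda'\to t} a\bigr)$. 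There are then three things to check: (i) $\sF^{-1}\tilde a$ is super-exponentially decaying in $t$ (trivial, since it is compactly supported); (ii) $\tilde a\in S^m$, so that combined with (i) it lies in $S^m_\bl$; and (iii) $a-\tilde a\in S^{-\infty}$.

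For (ii), the convolution formula $\tilde a = \hat\phi*_\lambda a$ writes $\tilde a$ as the convolution in $\lambda$ of the symbol $a\in S^m$ with the Schwartz function $\hat\phi$, and all derivatives in $(x,y,\eta)$ commute with this convolution while $\pa_\lambda^k \tilde a = (\pa_\lambda^k\hat\phi)*_\lambda a$. Using Peetre's inequality $\la\lambda\ra^m \leq C\la\lambda-\lambda'\ra^{|m|}\la\lambda'\ra^m$ together with the Schwartz decay of $\hat\phi$, convolution preserves the symbol estimates in $\lambda$ (uniformly in the other variables), giving $\tilde a\in S^m$.

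The main substantive point is (iii). The key observation is that for any symbol $a\in S^m(\R_\lambda)$, the inverse Fourier transform $\sF^{-1}a$ is $C^\infty$ and rapidly decreasing away from $t=0$: integrating by parts,
\[
  \pa_t^j \sF^{-1}a(t) = (it)^{-k}\int e^{it\lambda}(-D_\lambda)^k\bigl(\lambda^j a(\lambda)\bigr)\,d\lambda,
\]
and since $\lambda^j a\in S^{m+j}$, the integral converges absolutely once $m+j-k<-1$, producing bounds $|\pa_t^j\sF^{-1}a(t)|\le C_{j,k,N}|t|^{-k}$ for any $k$ on $|t|\geq 1$. Including the parameters $(x,y,\eta)$ just adds uniform factors of $\la\eta\ra$, so $(1-\phi)(t)\,\sF^{-1}_{\lambda'\to t}a(x,y,t,\eta)$ is a Schwartz function of $t$ with symbolic dependence on $(x,y,\eta)$. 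Taking $\sF_{t\to\lambda}$ and differentiating in $\lambda$ (which pulls down factors of $t$ that are absorbed by the Schwartz decay), one obtains $a-\tilde a\in S^{-\infty}$.

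The only step requiring real care is (iii); the rest is bookkeeping. In particular, one should track that the Schwartz-type estimates on $(1-\phi)\sF^{-1}a$ and the resulting symbol estimates on $a-\tilde a$ hold uniformly in $(x,y,\eta)$ with the correct $\la\eta\ra^{-N}$ decay, which is immediate from the symbol estimates of $a$ together with the identity $\pa_\eta^\beta(1-\phi)\sF^{-1}_{\lambda'\to t}a = (1-\phi)\sF^{-1}_{\lambda'\to t}\pa_\eta^\beta a$ and $\pa_\eta^\beta a\in S^{m-|\beta|}$.
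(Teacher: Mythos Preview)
Your proposal is correct and follows exactly the same construction as the paper: the paper defines $\tilde a$ identically, invokes the proof of Lemma~\ref{LemmaBSymbolFT} (whose $a_0$/$a_1$ decomposition is precisely your convolution argument for $\tilde a\in S^m$ plus the analysis of the Schwartz tail), and then observes that $\sF_{\lambda\to t}^{-1}(a-\tilde a)$ is smooth and rapidly decaying. One small point of presentation in (iii): the $\la\eta\ra^{-N}$ decay of $a-\tilde a$ does not come from taking $\eta$-derivatives of $a$ as your final sentence suggests, but from the same integration-by-parts in $\lambda$ you already used --- the bound $\int \la\lambda',\eta\ra^{m+j-k}\,d\lambda'\lesssim\la\eta\ra^{m+j-k+1}$ gives arbitrary $\eta$-decay once $k$ is large, so your ``adds uniform factors of $\la\eta\ra$'' already contains what is needed.
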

\begin{proof}
  Fix $\phi\in\CI_\cl(\R)$ identically $1$ near $0$ and put
  \[
    \wt a(x,y;\lambda,\eta)=\sF_{\lambda\to t}\bigl((\sF_{\lambda\to t}^{-1}a)(x,y;t,\eta)\phi(t)\bigr).
  \]
  Then $\wt a\in S_\bl^m$ by the proof of Lemma~\ref{LemmaBSymbolFT}. Moreover, $\sF_{\lambda\to t}^{-1}(a-\wt a)$ is smooth and rapidly decaying, thus the lemma follows.
\end{proof}

\begin{cor}
\label{CorBStdOp}
  For each $s\in\R$, there is $\Lambda_s\in\Psib^s(\Rnhalf)$ with full symbol $\lambda_s\in S_\bl^s$, $\lambda_s(\zeta)\neq 0$ for all $\zeta\in\R^n$, such that $\lambda_s-\la\zeta\ra^s\in S^{-\infty}$.
\end{cor}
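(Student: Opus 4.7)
The plan is to apply Lemma~\ref{LemmaBQuantization} to the symbol $\la\zeta\ra^s \in S^s$ (no $z$-dependence), producing some $\tilde a \in S^s_\bl$ with $r := \tilde a - \la\zeta\ra^s \in S^{-\infty}$. Since here $S^{-\infty}$ reduces to the Schwartz class on $\R^n_\zeta$, one has $|r(\zeta)| \leq C_N \la\zeta\ra^{-N}$ for every $N$. Taking $N > |s|$ and $|\zeta| \geq R_0$ with $R_0$ sufficiently large yields $|r(\zeta)| \leq \tfrac{1}{2}\la\zeta\ra^s$, so that $\Re\tilde a(\zeta) \geq \tfrac{1}{2}\la\zeta\ra^s > 0$ there. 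Hence $\tilde a$ can only vanish inside the compact set $\{|\zeta|\leq R_0\}$, and what remains is to eliminate any such zeros by adding a correction in $S^s_\bl \cap S^{-\infty}$.

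The main (minor) obstacle is that this intersection is quite restrictive: by Lemma~\ref{LemmaBSymbolFT}, its elements extend to entire functions in $\lambda$ with symbol bounds on every horizontal strip, so cutoff functions are not allowed. A Gaussian does the job, however: I would set $g(\zeta) := e^{-|\zeta|^2}$, whose partial inverse Fourier transform in $\lambda$ is again a Gaussian in $t$ (times $e^{-|\eta|^2}$), hence super-exponentially decaying in $t$; thus $g \in S^m_\bl$ for every $m$. Setting $M := \sup_{|\zeta|\leq R_0}|\tilde a(\zeta)|$ and $C := 2Me^{R_0^2}$, I would define
\[
  \lambda_s(\zeta) := \tilde a(\zeta) + Cg(\zeta).
\]

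By construction $\lambda_s \in S^s_\bl$ and $\lambda_s - \la\zeta\ra^s = r + Cg \in S^{-\infty}$. Since $Cg$ is real and positive, $\Re\lambda_s(\zeta) = \Re\tilde a(\zeta) + Cg(\zeta)$; on $|\zeta|\leq R_0$ this is at least $-M + Ce^{-R_0^2} = M > 0$, and on $|\zeta|>R_0$ it is at least $\tfrac{1}{2}\la\zeta\ra^s + Cg(\zeta) > 0$. In particular $\lambda_s$ is nowhere zero. Finally, $\Lambda_s$ can then be taken as the left quantization of $\lambda_s$, which lies in $\Psib^s(\Rnhalf)$ by definition of the b-calculus in terms of $S_\bl$ symbols.
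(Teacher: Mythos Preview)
Your proof is correct and follows essentially the same route as the paper's: apply Lemma~\ref{LemmaBQuantization} to $\la\zeta\ra^s$, observe that the resulting symbol can only fail to be nonzero on a compact set, and correct this by adding a large multiple of the Gaussian $e^{-|\zeta|^2}\in S_\bl^{-\infty}$. One trivial caveat: your explicit choice $C=2Me^{R_0^2}$ breaks down in the degenerate case $M=0$ (then $C=0$ and $\lambda_s=\tilde a$ still vanishes on the ball); taking e.g.\ $C=(2M+1)e^{R_0^2}$, or simply ``$C$ sufficiently large'' as the paper does, fixes this.
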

\begin{proof}
  The only statement left to be proved is that $\lambda_s$ can be arranged to be non-vanishing. Let $\wt\lambda_s\in S_\bl^s$ be the symbol constructed in Lemma~\ref{LemmaBQuantization}. Since $\wt\lambda_s$ differs from the positive function $\la\zeta\ra^s\in S^s\setminus S^{s-1}$ by a symbol of order $S^{-\infty}$, it is automatically positive for large $|\zeta|$; thus we can choose $C=C(s)$ large such that $\lambda_s(\zeta)=\wt\lambda_s(\zeta)+C(s)e^{-\zeta^2}$ is positive for all $\zeta\in\R^n$. Since $e^{-\zeta^2}\in S_\bl^{-\infty}$, the proof is complete.
\end{proof}

%%%%%%%%%%%%%%%%%%%%%%%%%%%%%%%%%%%%%%%%%%%%%%%%%%%%%%%%%%%%%%%%%%%%%
\section{A calculus for operators with b-Sobolev coefficients}
\label{SecCalculus}

We continue to work in local coordinates on $M$. To analyze the action of operators with non-smooth coefficients on b-Sobolev functions, we need a convenient formula. Given $A\in\Psib^m(M)$ with full symbol $a(x,y;\lambda,\eta)\in S^m(\Tb^*M)$, compactly supported in $x,y$, we have for $u\in\CIdot(M)$
\begin{align*}
  Au(x,y)&=\iiiint e^{i\lambda\log(x/x')}e^{i\eta(y-y')}a(x,y;\lambda,\eta)u(x',y')\,\frac{dx'}{x'}\,dy'\,d\lambda\,d\eta \\
    &=\iint x^{i\lambda}e^{i\eta y}a(x,y;\lambda,\eta)\wh u(\lambda,\eta)\,d\lambda\,d\eta.
\end{align*}
Writing $\wh a$ for the Mellin transform in $x$ and the Fourier transform in $y$, we obtain
\begin{align}
\label{EqOperatorStep1}
  \widehat{Au}(\sigma,\gamma)&=\iiiint x^{-i(\sigma-\lambda)}e^{-i(\gamma-\eta)y}a(x,y;\lambda,\eta)\wh u(\lambda,\eta)\,d\lambda\,d\eta\,\frac{dx}{x}\,dy \\
    &=\iint\wh a(\sigma-\lambda,\gamma-\eta;\lambda,\eta)\wh u(\lambda,\eta)\,d\lambda\,d\eta.\nonumber
\end{align}
Even though this makes sense as a distributional pairing, it is technically inconvenient to use directly: The problem is that if $a$ does not vanish at $x=0$, then $\wh a(\sigma,\gamma;\lambda,\eta)$ has a pole at $\sigma=0$ (cf. \cite[Proposition~5.27]{MelroseAPS}). This is easily dealt with by decomposing
\begin{equation}
\label{EqDecomposition}
  a=a^{(0)}(y;\lambda,\eta)+a^{(1)}(x,y;\lambda,\eta),
\end{equation}
where $a^{(0)}(y;\lambda,\eta)=a(0,y;\lambda,\eta)$ and $a^{(1)}(x,y;\lambda,\eta)=x\wt a^{(1)}(x,y;\lambda,\eta)$ with $\wt a^{(1)}\in S^m$. (Of course, $a^{(0)}$ in general no longer has compact support; however, this will be completely irrelevant for the analysis, due to the fact that $a^{(0)}$ has `nice' behavior in $y$, \emph{independently} in $x$.) Then $\widehat{a^{(1)}}(\sigma,\gamma;\lambda,\eta)$ is smooth and rapidly decaying in $(\sigma,\gamma)$, and we write
\begin{equation}
\label{EqOperatorAction}
  (A^{(1)} u)\ftrans(\zeta)=\int\widehat{a^{(1)}}(\zeta-\xi;\xi)\wh u(\xi)\,d\xi.
\end{equation}
For $A^{(0)}=a^{(0)}(y,\Db)$, we obtain
\begin{equation}
\label{EqInvariantOpAction}
  (A^{(0)}u)\ftrans(\sigma,\gamma)=\int\sF a^{(0)}(\gamma-\eta;\sigma,\eta)\wh u(\sigma,\eta)\,d\eta,
\end{equation}
and $\sF a^{(0)}(\gamma;\sigma,\eta)$ is rapidly decaying in $\gamma$.

\begin{rmk}
\label{RmkMellinOfConst}
  Either we read off equation \eqref{EqInvariantOpAction} directly from equation \eqref{EqOperatorStep1}, where we observe that the symbol $a^{(0)}$ is independent of $x$, thus the integrals over $x$ and $\lambda$ are Mellin transform and inverse Mellin transform, respectively, and therefore cancel; or we observe that, with $a^{(0)}(x,y;\lambda,\eta):=a^{(0)}(y;\lambda,\eta)$, we have $\widehat{a^{(0)}}(\sigma-\lambda,\gamma-\eta;\lambda,\eta)=2\pi\delta_{\sigma=\lambda}\sF a^{(0)}(0,\gamma-\eta;\lambda,\eta)$. The second argument also shows that many manipulations on integrals that compute $A^{(1)}u$ (or compositions of b-operators) also apply to the computation of $A^{(0)}u$ if one reads integrals as appropriate distributional pairings.
\end{rmk}

Notice that \eqref{EqOperatorAction} is, with the change in meaning of $\widehat{a^{(1)}}$ and $\wh u$ and keeping in mind that $a^{(1)}=x\wt a^{(1)}$ is a rather special symbol, the same formula as for pseudodifferential operators on a manifold without boundary used by Beals and Reed \cite{BealsReedMicroNonsmooth}. Since also the characterization of $H^s_\bl$ functions in terms of their mixed Mellin and Fourier transform (Lemma~\ref{LemmaWFbCharacterization}) is completely analogous to the characterization of $H^s$ functions in terms of their Fourier transform, the arguments presented in \cite{BealsReedMicroNonsmooth} carry over to this restricted b-setting. In order to introduce necessary notation and construct a (partial) calculus in the full b-setting, containing weights, we will go through most arguments of \cite{BealsReedMicroNonsmooth}, extending and adapting them to the b-setting; and of course we will have to treat the term $A_0$ separately.

The class of operators we are interested in are b-differential operators whose coefficients lie in (weighted) b-Sobolev spaces of high order. Let us remark that we do not attempt to develop an invariant calculus that can be transferred to a manifold; in particular, all definitions are on $\Rnhalf$, see also the beginning of \S\ref{SecBFunctionAndSymbol}. We thus define the following classes of non-smooth symbols:

\begin{definition}
\label{DefProdS}
  For $m,s\in\R$, define the spaces of symbols
  \[
    \Hb^s S_{(\bl)}^m=\Bigl\{\sum_{\tn{finite}}a_j(z)p_j(z,\zeta)\colon a_j\in\Hb^s,p_j\in S_{(\bl)}^m\Bigr\},
  \]
  and denote by $\Hb^s\Psi_{(\bl)}^m$ the corresponding spaces of operators, i.e.
  \[
    \Hb^s\Psi_{(\bl)}^m=\{a(z,\Db)\colon a(z,\zeta)\in\Hb^s S_{(\bl)}^m\}.
  \]
  Moreover, let $\Psi^m=\{a(z,\Db)\colon a(z,\zeta)\in S^m\}$.
\end{definition}

\begin{rmk}
\label{RmkNobNotation}
  In this paper, we will only deal with operators that are quantizations of symbols on the b-cotangent bundle, and thus with $\Psi^m$ we will \emph{always} mean the space defined above.
\end{rmk}

\begin{rmk}
  In a large part of the development of the calculus for non-smooth b-\psdo{}s in this section, we will keep track of additional information on the symbols of most \psdo{}s, encoded in the space of symbols $S_\bl^*$, in order to ensure that they act on weighted b-Sobolev spaces. Although this requires a small conceptual overhead, it simplifies some computations later on.
\end{rmk}

The spaces $\Hb^*\Psi_{(\bl)}^*$ are not closed under compositions, in fact they are not even left $\Psib^*$-modules. To get around this, which will be necessary in order to develop a sufficiently powerful calculus, we will consider less regular spaces, which however are still small enough to allow for good analytic (i.e.\ mapping and composition) properties.

\begin{definition}
\label{DefFS}
  For $s,m\in\R,k\in\N_0$, define the space
  \begin{align*}
	S^{m;0}\Hb^s&=\Bigl\{p(z,\zeta)\colon p\in\la\zeta\ra^m L^\infty_\zeta\left((\Hb^s)_z\right)\Bigr\} \\
	  &=\Bigl\{p(z,\zeta)\colon \frac{\la\eta\ra^s\wh p(\eta;\zeta)}{\la\zeta\ra^m}\in L^\infty_\zeta L^2_\eta\Bigr\}.
  \end{align*}
  Let $S_\bl^{m;0}\Hb^s$ be the space of all symbols $p(x,y;\lambda,\eta)\in S^{m;0}\Hb^s$ which are entire in $\lambda$ with values in $\la\eta\ra^m L^\infty_\eta\left((\Hb^s)_z\right)$ such that for all $N$ the following estimate holds:
  \begin{equation}
  \label{EqBSymbolDefEstimate}
    \|p(z;\lambda+i\mu,\eta)\|_{\Hb^s} \leq C_N\la\lambda,\eta\ra^m,\quad |\mu|\leq N.
  \end{equation}

  Finally, define the spaces
  \[
	S_{(\bl)}^{m;k}\Hb^s=\Bigl\{p(z,\zeta)\colon \pa_\zeta^\beta p\in S_{(\bl)}^{m-|\beta|;0}\Hb^s,|\beta|\leq k\Bigr\}.
  \]
  The spaces of operators which are left quantizations of these symbols are denoted by $\fpsi^{m;0}\Hb^s$, $\fpsib^{m;0}\Hb^s$ and $\fpsi_{(\bl)}^{m;k}\Hb^s$, respectively.
  
  Weighted versions of these spaces, involving $\Hb^{s,\alpha}$ for $\alpha\in\R$, are defined analogously.
\end{definition}

We can also define similar symbol and operator classes for operators acting on bundles: Let $E,F,G$ be the trivial (complex or real) vector bundles over $\Rnhalf$ of ranks $d_E,d_F,d_G$, respectively, equipped with a smooth metric (Hermitian for complex bundles) on the fibers which is the standard metric on the fibers over the complement of a compact subset of $\Rnhalf$, then we can define
\[
  \Hb^s S^m(\Rnhalf;G):=\{(a_i)_{1\leq i\leq d_G}\colon a_i\in\Hb^s S^m\}.
\]
We then define the space $\Hb^s\Psi^m(\Rnhalf;E,F)$ to consist of left quantizations of symbols in $\Hb^s S^m(\Rnhalf;\Hom(E,F))$; likewise for all other symbol and operator classes.\footnote{Since we are only concerned with local constructions, we use the somewhat imprecise notation just introduced; the proper class that the symbol of a b-pseudodifferential operator (with smooth coefficients), mapping sections of $E$ to sections of $F$, lies in, is $S^m(\Tb^*M;\pi^*\Hom(E,F))$, where $\pi\colon\Tb^*M\to M$ is the projection; see \cite{MelroseAPS}.} We shall also write $\Hb^s\Psi^m(\Rnhalf;E):=\Hb^s\Psi^m(\Rnhalf;E,E)$.

\begin{rmk}
  If we considered, as an example, the wave operator corresponding to a non-smooth metric acting on differential forms, the natural metric on the fibers of the form bundle would be non-smooth. Even though this could be dealt with directly in this setting, we simplify our arguments by choosing an `artificial' smooth metric to avoid regularity considerations when taking adjoints, etc.
\end{rmk}

The first step is to prove mapping properties of operators in the classes just defined; compositions will be discussed in \S\ref{SubsecComp}.

%%%%%%%%%%%%%%%%%%%%%%%%%%%%%%%%%%%%%%%%%%%%%%%%%
\subsection{Mapping properties}
\label{SubsecMapping}

The mapping properties of operators in $\fpsi^{m;0}\Hb^s$ are easily proved using the following simple integral operator estimate.

\begin{lemma}
\label{LemmaIntegralEstimate}
  (Cf.\ \cite[Lemma~1.4]{BealsReedMicroNonsmooth}.) Let $g(\eta,\xi)\in L^\infty_\xi L^2_\eta$ and $G(\eta,\xi)\in L^\infty_\eta L^2_\xi$. Then the operator
  \[
    Tu(\eta)=\int G(\eta,\xi)g(\eta-\xi,\xi)u(\xi)\,d\xi
  \]
  is bounded on $L^2$ with operator norm $\leq\|G\|_{L^\infty_\eta L^2_\xi}\|g\|_{L^\infty_\xi L^2_\eta}$.
\end{lemma}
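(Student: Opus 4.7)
The plan is to apply a Schur-test style argument: write $T$ as an integral operator with kernel $K(\eta,\xi) = G(\eta,\xi)\,g(\eta-\xi,\xi)$ and use Cauchy--Schwarz in the $\xi$ variable, pulling off one factor of $G$ to exploit the $L^2_\xi$ control on $G$, and leaving the $g$ factor (together with $u$) to be handled by Fubini and a change of variables.

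Concretely, for fixed $\eta$ I would estimate
\[
  |Tu(\eta)|^2 \leq \Bigl(\int |G(\eta,\xi)|^2\,d\xi\Bigr)\Bigl(\int |g(\eta-\xi,\xi)|^2|u(\xi)|^2\,d\xi\Bigr),
\]
and the first factor is at most $\|G\|_{L^\infty_\eta L^2_\xi}^2$ uniformly in $\eta$. Integrating in $\eta$ and applying Fubini gives
\[
  \|Tu\|_{L^2}^2 \leq \|G\|_{L^\infty_\eta L^2_\xi}^2 \int |u(\xi)|^2 \Bigl(\int |g(\eta-\xi,\xi)|^2\,d\eta\Bigr) d\xi.
\]
Now for each fixed $\xi$, the inner integral, after the change of variable $\eta \mapsto \eta + \xi$, equals $\|g(\cdot,\xi)\|_{L^2_\eta}^2 \leq \|g\|_{L^\infty_\xi L^2_\eta}^2$, and one concludes by pulling this out.

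I do not expect any serious obstacle: the only subtlety is making sure that the pairing of norms is done in the correct order (the hypothesis puts the $L^\infty$ on $\xi$ for $g$ and on $\eta$ for $G$, which matches exactly the way the factors arise after applying Cauchy--Schwarz in $\xi$ and Fubini in $\eta$). One should also note that formally the integrand is nonnegative measurable, so Fubini--Tonelli applies without any integrability caveats, and the argument extends to general $u \in L^2$ by first assuming $u \geq 0$ with compact support and then approximating.
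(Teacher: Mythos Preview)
Your proof is correct and follows essentially the same approach as the paper's own proof: apply Cauchy--Schwarz in $\xi$ to split off the $G$ factor, integrate in $\eta$, swap the order of integration by Fubini--Tonelli, and use the translation invariance of Lebesgue measure to bound the remaining $g$ integral uniformly in $\xi$. The paper's proof is just a slightly more compressed version of exactly this argument.
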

\begin{proof}
  Cauchy-Schwartz gives
  \begin{align*}
    \|Tu\|_{L^2}^2&\leq\int\left(\int|G(\eta,\xi)|^2\,d\xi\right)\left(\int|g(\eta-\xi,\xi)u(\xi)|^2\,d\xi\right)\,d\eta \\
	  &\leq\|G\|_{L^\infty_\eta L^2_\xi}^2\int\left(\int|g(\eta-\xi,\xi)|^2\,d\eta\right)|u(\xi)|^2\,d\xi \\
	  &\leq\|G\|_{L^\infty_\eta L^2_\xi}^2\|g\|_{L^\infty_\xi L^2_\eta}^2\|u\|_{L^2}^2.\qedhere
  \end{align*}
\end{proof}

The most common form of $G$ in this paper is given by and estimated in the following lemma. We use the notation
\begin{equation}
\label{EqPositivePart}
  a_+:=\max(a,0),\quad a\in\R.
\end{equation}

\begin{lemma}
\label{LemmaFractionEstimate}
  Suppose $s,r\in\R$ are such that $s\geq r,s>n/2+(-r)_+$, then
  \[
    G(\eta,\xi)=\frac{\la\eta\ra^r}{\la\eta-\xi\ra^s\la\xi\ra^r}\in L^\infty_\eta(\R^n;L^2_\xi(\R^n)).
  \]
\end{lemma}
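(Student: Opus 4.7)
The goal is to prove the uniform bound $\sup_\eta \int_{\R^n}|G(\eta,\xi)|^2\,d\xi<\infty$. The strategy I would follow is to reduce the estimate to integrals of the form $\int\la\tau\ra^{-2\sigma}\,d\tau$ via pointwise bounds on $G$, but these bounds must be tailored to the sign of $r$: the hypothesis $s>n/2+(-r)_+$ is substantially weaker than what a naive application of Peetre's inequality delivers when $r\geq 0$, so I would split into two cases.

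\emph{Case $r<0$.} Here the hypothesis is $s>n/2+|r|$. From the elementary inequality $\la\xi\ra\leq\sqrt{2}\la\eta\ra\la\eta-\xi\ra$ (which follows from $|\xi|\leq|\eta|+|\eta-\xi|$ upon squaring and using $\la a\ra^2\la b\ra^2\geq 1+|a|^2+|b|^2$), one obtains the Peetre-type bound $\la\eta\ra^r\leq C\la\xi\ra^r\la\eta-\xi\ra^{|r|}$. Inserting this into the definition of $G$ yields the pointwise estimate $|G(\eta,\xi)|\leq C\la\eta-\xi\ra^{|r|-s}$, after which the change of variables $\tau=\eta-\xi$ reduces matters to $\int\la\tau\ra^{2(|r|-s)}\,d\tau$, finite exactly when $2(s-|r|)>n$.

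\emph{Case $r\geq 0$.} Here the hypothesis is just $s\geq r$ and $s>n/2$, and the straightforward Peetre bound $|G|\leq C\la\eta-\xi\ra^{r-s}$ is too lossy—it would force the stronger condition $s>r+n/2$. Instead I would split the $\xi$-integration domain into $R_1=\{\la\eta-\xi\ra\leq\la\xi\ra\}$ and $R_2=\{\la\eta-\xi\ra\geq\la\xi\ra\}$, and in each region use the triangle-inequality consequence $\la\eta\ra\leq 2\max(\la\xi\ra,\la\eta-\xi\ra)$ to retain only the dominant factor. On $R_1$ this gives $\la\eta\ra^r\leq 2^r\la\xi\ra^r$, hence $|G|^2\leq 4^r\la\eta-\xi\ra^{-2s}$, integrable since $2s>n$. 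On $R_2$ it gives $\la\eta\ra^r\leq 2^r\la\eta-\xi\ra^r$, so $|G|^2\leq 4^r\la\eta-\xi\ra^{-2(s-r)}\la\xi\ra^{-2r}$; since $\la\eta-\xi\ra\geq\la\xi\ra\geq 1$ and $2(s-r)\geq 0$, the factor $\la\eta-\xi\ra^{-2(s-r)}$ is bounded by $\la\xi\ra^{-2(s-r)}$, leaving $|G|^2\leq 4^r\la\xi\ra^{-2s}$, again in $L^1(\R^n)$.

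The main obstacle—and the only nonroutine step—is the case $r\geq 0$: one must exploit that at each point $\xi$ either $\la\xi\ra$ or $\la\eta-\xi\ra$ dominates $\la\eta\ra$, rather than apply a single global Peetre inequality, in order not to waste a factor of $\la\eta-\xi\ra^r$. Once the splitting is identified, the two pieces collapse to the scalar Bessel-potential integrability criterion $2s>n$, which is exactly the content of the hypothesis in this range of $r$.
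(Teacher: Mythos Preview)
Your proof is correct and follows essentially the same approach as the paper's: both split into the cases $r\geq 0$ and $r<0$, and in the former case both decompose the domain according to which of $\la\xi\ra$, $\la\eta-\xi\ra$ dominates, reducing everything to the integrability of $\la\cdot\ra^{-2s}$. The only cosmetic difference is that the paper first bounds $\la\eta\ra^{2r}\lesssim\la\eta-\xi\ra^{2r}+\la\xi\ra^{2r}$ (its Lemma~\ref{LemmaBracketEstimate}) and then splits the domain, whereas you split first and then use $\la\eta\ra\leq 2\max(\la\xi\ra,\la\eta-\xi\ra)$; for $r<0$ your single Peetre step is slightly more direct than the paper's two-term bound, but the content is the same.
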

\begin{proof}
  First, suppose $r\geq 0$. Then we obtain
  \[
    G(\eta,\xi)^2\leq\frac{1}{\la\eta-\xi\ra^{2(s-r)}\la\xi\ra^{2r}}+\frac{1}{\la\eta-\xi\ra^{2s}}.
  \]
  Since $s>n/2$, the $\xi$-integral of the second fraction is finite and $\eta$-independent. For the $\xi$-integral of the first fraction, we split the domain of integration into two parts and obtain
  \begin{align*}
    \int_{|\xi|\leq|\eta-\xi|}&\frac{1}{\la\eta-\xi\ra^{2(s-r)}\la\xi\ra^{2r}}\,d\xi+\int_{|\eta-\xi|\leq|\xi|}\frac{1}{\la\eta-\xi\ra^{2(s-r)}\la\xi\ra^{2r}}\,d\xi \\
	&\leq\int\frac{1}{\la\xi\ra^{2s}}\,d\xi+\int\frac{1}{\la\eta-\xi\ra^{2s}}\,d\xi\in L^\infty_\eta.
  \end{align*}
  Next, if $r<0$, then we estimate
  \[
    G(\eta,\xi)^2=\frac{\la\xi\ra^{-2r}}{\la\eta-\xi\ra^{2s}\la\eta\ra^{-2r}}\leq\frac{1}{\la\eta-\xi\ra^{2(s-(-r))}}+\frac{1}{\la\eta-\xi\ra^{2s}},
  \]
  where in the first fraction, we discarded the term $\la\eta\ra^{-2r}\geq 1$. Since $s-(-r)>n/2$, the integrals of both fractions are finite, and the proof is complete.
\end{proof}

\begin{prop}
\label{PropOpCont}
  Let $m\in\R$. Suppose $s\geq s'-m$ and $s>n/2+(m-s')_+$. Then every $A=a(z,\Db)\in\fpsi^{m;0}\Hb^s(\Rnhalf;E,F)$ is a bounded operator $\Hb^{s'}(\Rnhalf;E)\to \Hb^{s'-m}(\Rnhalf;F)$. If $A\in\fpsib^{m;0}\Hb^s(\Rnhalf;E,F)$, then $A$ is also a bounded operator $\Hb^{s',\alpha}(\Rnhalf;E)\to\Hb^{s'-m,\alpha}(\Rnhalf;F)$ for all $\alpha\in\R$.
\end{prop}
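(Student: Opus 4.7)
The plan is to reduce everything to an $L^2$ kernel estimate built from Lemmas~\ref{LemmaIntegralEstimate} and~\ref{LemmaFractionEstimate}. Since $E$ and $F$ are trivial with smooth fiber metrics (standard outside a compact set), matters reduce componentwise to the scalar case. The computation leading to~\eqref{EqOperatorAction} extends directly to $a\in S^{m;0}\Hb^s$: because $a$ is an $L^\infty_\zeta$-family of $\Hb^s$-functions in $z$, the mixed Mellin-Fourier transform $\hat a(\tilde\zeta;\zeta)$ in the $z$-variable is well defined as an $L^2_{\tilde\zeta}$-family, with no boundary-value delta contribution to be split off, and one obtains
\[
  (Au)\ftrans(\zeta)=\int\hat a(\zeta-\xi;\xi)\,\hat u(\xi)\,d\xi.
\]

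By Plancherel, the desired bound $\|Au\|_{\Hb^{s'-m}}\leq C\|u\|_{\Hb^{s'}}$ is equivalent to $L^2$-boundedness of the integral operator with kernel
\[
  K(\zeta,\xi)=\frac{\la\zeta\ra^{s'-m}}{\la\xi\ra^{s'}}\,\hat a(\zeta-\xi;\xi)=G(\zeta,\xi)\,g(\zeta-\xi,\xi),
\]
where I would factor
\[
  G(\zeta,\xi)=\frac{\la\zeta\ra^{s'-m}}{\la\zeta-\xi\ra^{s}\la\xi\ra^{s'-m}},\qquad g(\eta,\xi)=\frac{\la\eta\ra^{s}\,\hat a(\eta;\xi)}{\la\xi\ra^{m}}.
\]
Lemma~\ref{LemmaFractionEstimate} applied with $r=s'-m$ then yields $G\in L^\infty_\zeta L^2_\xi$ precisely under the hypotheses $s\geq s'-m$ and $s>n/2+(m-s')_+$, while $g\in L^\infty_\xi L^2_\eta$ is exactly the defining condition for $a\in S^{m;0}\Hb^s$ from Definition~\ref{DefFS}. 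Lemma~\ref{LemmaIntegralEstimate} then delivers the asserted mapping property.

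For the weighted statement, the plan is to conjugate by $x^\alpha$: since $a\in S_\bl^{m;0}\Hb^s$ is entire in $\lambda$ with the uniform symbol bounds~\eqref{EqBSymbolDefEstimate} on every strip $|\Im\lambda|\leq N$, a Mellin contour shift gives $x^{-\alpha}Ax^\alpha=\tilde a(z,\Db)$ with $\tilde a(z;\lambda,\eta)=a(z;\lambda-i\alpha,\eta)$. Using $\la\lambda-i\alpha,\eta\ra\sim\la\lambda,\eta\ra$ for fixed $\alpha$, one checks $\tilde a\in S^{m;0}\Hb^s$. Since $u\mapsto x^{-\alpha}u$ is an isometry $\Hb^{s',\alpha}\to\Hb^{s'}$ (and similarly at order $s'-m$), the unweighted result applied to $\tilde A$ gives the weighted bound for $A$.

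The only delicate point I expect is the index bookkeeping that matches the hypotheses of Lemma~\ref{LemmaFractionEstimate} with the $L^\infty_\zeta L^2_\eta$-condition built into Definition~\ref{DefFS}; once the factorization $K=G\cdot g$ is written down, both lemmas apply without further work. There is no substantive obstacle: the argument is the direct b-analogue of the classical Beals--Reed boundedness theorem, and the holomorphy built into $S_\bl^{m;0}\Hb^s$ handles the weight.
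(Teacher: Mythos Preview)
Your proposal is correct and matches the paper's own proof essentially line by line: the same factorization $K=G\cdot g$ with $r=s'-m$ fed into Lemmas~\ref{LemmaFractionEstimate} and~\ref{LemmaIntegralEstimate} for the unweighted bound, and the same contour shift $\lambda\mapsto\lambda-i\alpha$ (justified by the holomorphy and strip bounds in Definition~\ref{DefFS}) to reduce the weighted case to the unweighted one. The only difference is cosmetic: the paper spells out the contour-shift justification in slightly more detail (checking absolute convergence of the $\lambda$-integral before shifting), which you may want to add, but there is no substantive gap.
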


Note that this proposition also deals with `low' regularity in the sense that negative b-Sobolev orders are permitted in the target space. We shall have occasion to use this in arguments involving dual pairings in \S\ref{SecPropagation}.

\begin{proof}[Proof of Proposition~\ref{PropOpCont}.]
  Let us first prove the statement without bundles, i.e.\ for complex-valued symbols and functions. Let $u\in\Hb^{s'}$ be given. Then
  \[
    \la\zeta\ra^{s'-m}\widehat{Au}(\zeta)=\int\frac{\la\zeta\ra^{s'-m}\la\xi\ra^m}{\la\zeta-\xi\ra^s \la\xi\ra^{s'}} a_0(\zeta-\xi;\xi)u_0(\xi)\,d\xi
  \]
  for $a_0(\zeta;\xi)\in L^\infty_\xi L^2_\zeta$, $u_0\in L^2$. Lemma~\ref{LemmaFractionEstimate} ensures that the fraction in the integrand is an element of $L^\infty_\zeta L^2_\xi$, and then Lemma~\ref{LemmaIntegralEstimate} implies $\la\zeta\ra^{s'-m}\widehat{Au}(\zeta)\in L^2_\zeta$.

  In order to prove the second statement, we write for $u\in\CIdotc$
  \begin{align*}
    a(x,y,xD_x,D_y)u(x,y)&=\iint_{\Im\lambda=0} e^{i\lambda\log x}e^{i\eta y}a(x,y;\lambda,\eta)\wh u(\lambda,\eta)\,d\lambda\,d\eta \\
      &=\iint_{\Im\lambda=0}\wt a(\lambda)(\eta;x,y)\wh u(\lambda,\eta)\,d\lambda\,d\eta,
  \end{align*}
  where
  \[
    \wt a(\lambda)(\eta;x,y)=x^{i\lambda}e^{i\eta y}a(x,y;\lambda,\eta);
  \]
  we want to shift the contour of integration to $\Im\lambda=-\alpha$. Assuming that $\supp_{x,y}a$ is compact, we have that for any $N$,
  \[
    \|\wt a(\lambda)(\eta,\cdot,\cdot)\|_{\Hb^{s,-N}}\leq C_N\la\lambda,\eta\ra^{m+s},\quad |\Im\lambda|<N,
  \]
  and $\wt a(\lambda)$ is holomorphic in $\lambda$ with values in $\Hb^{s,-N}$ for fixed $\eta$. Since $\wh u(\lambda,\eta)$ is rapidly decaying, we infer for all sufficiently large $M>0$
  \begin{align*}
    \int \|\wt a(\lambda)(\eta,\cdot,\cdot)\|_{\Hb^{s,-N}}|\wh u(\lambda,\eta)|\,d\eta & \leq C_N\int\la\lambda,\eta\ra^{m+s-M}\,d\eta \\
	  &=C_{NM}\la\lambda\ra^{m+s-M+n-1},
  \end{align*}
  thus
  \[
    \wt a'(\lambda)(x,y):=\int\wt a(\lambda)(\eta;x,y)\wh u(\lambda,\eta)\,d\eta \in \la\lambda\ra^{-M}L^\infty_\lambda(\Hb^{s,-N})
  \]
  for all $M>0$, and $\wt a'\colon\C\to \Hb^{s,-N}$ is holomorphic. Therefore, if we choose $N>|\alpha|$, we can shift the contour of integration to the horizontal line $\R-i\alpha$:
  \begin{equation}
  \begin{split}
  \label{EqAContourShift}
    a(x,y&,xD_x,D_y)u(x,y)=\int_{\Im\lambda=-\alpha} \wt a'(\lambda)(x,y)\,d\lambda \\
	  &=x^\alpha\iint_{\Im\lambda=0}e^{i\lambda\log x}e^{i\eta y}a(x,y;\lambda-i\alpha,\eta)(x^{-\alpha}u)\ftrans(\lambda,\eta)\,d\lambda\,d\eta.
  \end{split}
  \end{equation}
  By definition, $a|_{\Im\lambda=-\alpha}$ satisfies symbolic bounds just like $a|_{\Im\lambda=0}$, thus we are done by the first half of the proof.

  Adding bundles is straightforward: Write $A\in\fpsi^{m;0}\Hb^s(\Rnhalf;E,F)$ as $A=(A_{ij})$, $A_{ij}\in\fpsi^{m;0}\Hb^s(\Rnhalf)$ and $u\in\Hb^{s'}(\Rnhalf;E)$ as $u=(u_j)$, $u_j\in\Hb^{s'}(\Rnhalf)$. Then $Au=(\sum_{j=1}^{d_E}A_{ij}u_j)$, thus $Au\in\Hb^{s'-m}(\Rnhalf;F)$ follows by component-wise application of what we just proved.
\end{proof}

\begin{cor}
\label{CorHbModule}
  Let $s>n/2$. Then $\Hb^s(\Rnhalf;\End(E))$ is an algebra. Moreover, $\Hb^{s'}(\Rnhalf;\Hom(E,F))$ is a left $\Hb^s(\Rnhalf;\End(E))$- and a right $\Hb^s(\Rnhalf;\End(F))$-module for $|s'|\leq s$.
\end{cor}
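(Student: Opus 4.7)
The plan is to identify multiplication by a section $a$ with a non-smooth b-pseudodifferential operator of order zero and then invoke Proposition~\ref{PropOpCont}. For $a\in\Hb^s(\Rnhalf;\End(E))$, the $\zeta$-independent symbol $a(z)$ trivially satisfies the estimate \eqref{EqBSymbolDefEstimate} with $m=0$ (uniformly in $\Im\lambda$, since there is no $\lambda$-dependence) and is vacuously entire in $\lambda$. Hence the multiplication operator $M_a=a(z,\Db)$ lies in $\fpsib^{0;0}\Hb^s(\Rnhalf;\End(E))$, and likewise $M_a\in\fpsib^{0;0}\Hb^s(\Rnhalf;\Hom(E,F))$ when $a\in\Hb^s(\Rnhalf;\Hom(E,F))$.

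For the range $0\le s'\le s$, Proposition~\ref{PropOpCont} applies directly with $m=0$: the hypotheses $s\ge s'$ and $s>n/2+(-s')_+=n/2$ are both ensured by $s>n/2$, and thus $M_a\colon\Hb^{s'}(\Rnhalf;E)\to\Hb^{s'}(\Rnhalf;F)$ is bounded, with operator norm controlled by $\|a\|_{\Hb^s}$. Taking $E=F$ and $s'=s$ gives the algebra property, and the same argument (applied to symbols valued in $\Hom(E,F)$ rather than $\End(E)$) produces the module statement in the nonnegative range of $s'$.

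The case $-s\le s'<0$ cannot be obtained directly from Proposition~\ref{PropOpCont}, because its second hypothesis would then read $s>n/2+|s'|$, which is strictly stronger than $|s'|\le s$. Here I would argue by $L^2_\bl$-duality: $\Hb^{s'}$ is the dual of $\Hb^{-s'}$ under the $L^2_\bl$-pairing, and the formal $L^2_\bl$-adjoint of $M_a$ is $M_{a^*}$, where $a^*$ denotes the pointwise (fiberwise) adjoint section, which lies in the corresponding $\Hb^s$-space with the same norm. Since $-s'\in(0,s]$, the first step shows $M_{a^*}$ is bounded on $\Hb^{-s'}$ with norm $\lesssim\|a\|_{\Hb^s}$, and dualizing yields the desired boundedness of $M_a$ on $\Hb^{s'}$. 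The analogous duality argument for symbols valued in $\Hom(E,F)$ (with adjoint valued in $\Hom(F,E)$) handles the mixed-bundle module statement in the negative range.

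The main point to verify carefully is the duality step, i.e.\ that the $L^2_\bl$-adjoint of multiplication by $a$ on the relevant b-Sobolev scale is indeed multiplication by $a^*$, and that the resulting norm bound passes through the dual pairing. This reduces to the $s'=0$ case of the first step (boundedness on $L^2_\bl$), which already follows from Proposition~\ref{PropOpCont}, together with the density of smooth compactly supported sections; no further microlocal input is needed. The rest of the proof consists only of collecting the bounds and is straightforward bookkeeping.
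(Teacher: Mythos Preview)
Your proof is correct and follows essentially the same route as the paper: view an $\Hb^s$ section as an order-zero operator in $\fpsib^{0;0}\Hb^s$, invoke Proposition~\ref{PropOpCont} for $0\le s'\le s$, and handle $-s\le s'<0$ by $L^2_\bl$-duality. The paper's proof is a two-line sketch of exactly this argument; your version simply fills in the details (the verification that $M_a\in\fpsib^{0;0}\Hb^s$, the explicit check of the hypotheses of Proposition~\ref{PropOpCont}, and the identification of the adjoint in the duality step).
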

\begin{proof}
  As in the proof of Proposition~\ref{PropOpCont}, we can reduce the proof to the case of complex-valued functions. For $s'\geq 0$, the claim follows from $\Hb^{s'}\subset\fpsib^{0;0}\Hb^{s'}$ and the previous Proposition. For $s'\leq 0$, use duality.
\end{proof}

%%%%%%%%%%%%%%%%%%%%%%%%%%%%%%%%%%%%%%%%%%%%%%%%%
\subsection{Compositions}
\label{SubsecComp}

The basic idea is to mimic the formula for the asymptotic expansion of the full symbol of an operator which is the composition of $P=p(z,\Db)\in\Psib^m$ and $Q=q(z,\Db)\in\Psib^{m'}$, namely
\[
  \sigma_{\tn{full}}(P\circ Q)(z,\zeta)\sim\sum_{\beta\geq 0} \frac{1}{\beta!} (\pa_\zeta^\beta p \Db_z^\beta q)(z,\zeta).
\]
If $p$ or $q$ only have limited regularity in $\zeta$ or $z$, we only keep finitely many terms of this expansion and estimate the resulting remainder term carefully. More precisely, keeping Remark~\ref{RmkMellinOfConst} in mind, we compute for $u\in\CIdotc$
\begin{align}
  (PQ&u)\ftrans(\eta)=\iint\wh p(\eta-\xi;\xi)\wh q(\xi-\zeta;\zeta)\wh u(\zeta)\,d\zeta\,d\xi \nonumber\\
\label{EqPQu}&=\int\left(\int\wh p(\eta-\zeta-\xi;\zeta+\xi)\wh q(\xi;\zeta)\,d\xi\right)\wh u(\zeta)\,d\zeta,
\end{align}
and
\begin{align*}
  [(\pa_\zeta^\beta p&\Db_z^\beta q)(z,\Db)u]\ftrans(\eta)=\int(\pa_\zeta^\beta p \Db_z^\beta q)\ftrans(\eta-\zeta;\zeta)\wh u(\zeta)\,d\zeta \\
    &=\int\left(\int\pa_\zeta^\beta\wh p(\eta-\zeta-\xi;\zeta)\xi^\beta\wh q(\xi;\zeta)\,d\xi\right)\wh u(\zeta)\,d\zeta.
\end{align*}
We now apply Taylor's theorem to the second argument of $\wh p$ at $\xi=0$ in the inner integral in \eqref{EqPQu}, keeping track of terms up to order $k-1$ which we assume to be $\geq 0$ (the remaining case $k=0$ is handled easily), and obtain a remainder
\[
  \wh r(\eta-\zeta;\zeta)=\sum_{|\beta|=k}\frac{k}{\beta!}\int\left(\int_0^1(1-t)^{k-1}\pa_\zeta^\beta\wh p(\eta-\zeta-\xi;\zeta+t\xi)\,dt\right)\xi^\beta\wh q(\xi;\zeta)\,d\xi,
\]
corresponding to the operator
\begin{equation}
\label{EqRemainderOp}
  r(z,\Db)=P\circ Q-\sum_{|\beta|<k}\frac{1}{\beta!}(\pa_\zeta^\beta p \Db_z^\beta q)(z,\Db).
\end{equation}
We rewrite the remainder as
\begin{equation}
\label{EqRemainder}
  \wh r(\eta;\zeta)=\sum_{|\beta|=k}\frac{k}{\beta!}\int\left(\int_0^1(1-t)^{k-1}\pa_\zeta^\beta\wh p(\eta-\xi;\zeta+t\xi)\,dt\right)(\Db_z^\beta q)\ftrans(\xi;\zeta)\,d\xi.
\end{equation}

We will start by analyzing the terms in an expansion like \eqref{EqRemainderOp} when the symbols involved are not smooth. When we deal with smooth b-operators by using the decomposition \eqref{EqDecomposition} of their symbols, we will need multiple sets of dual variables of $x$ and $y$. For clarity, we will stick to the following names for them:
\begin{align*}
  & \tn{(Mellin-)dual variables of }x\colon \sigma,\lambda,\rho, \\
  & \tn{(Fourier-)dual variables of }y\colon \gamma,\eta,\theta.
\end{align*}

\begin{lemma}
\label{LemmaSymbolMult}
  Let $s,s',m,m'\in\R$ be such that $s>n/2$, $|s'|\leq s$. Then
  \begin{gather*}
    S^{m;0}\Hb^s\cdot S^{m';0}\Hb^{s'}\subset S^{m+m';0}\Hb^{s'}, \\
	S^m\cdot S^{m';0}\Hb^{s'}\subset S^{m+m';0}\Hb^{s'}.
  \end{gather*}
  The same statements are true if all symbol classes are replaced by the corresponding b-symbol classes.
\end{lemma}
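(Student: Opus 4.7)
The plan is to reduce both inclusions to the module statement in Corollary~\ref{CorHbModule}, applied at each frequency $\zeta$ separately.

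First I would unpack the definitions: a symbol $p\in S^{m;0}\Hb^s$ is characterized by
\[
  \|p(\cdot,\zeta)\|_{\Hb^s}\leq C\la\zeta\ra^m\quad\text{uniformly in }\zeta,
\]
and analogously $\|q(\cdot,\zeta)\|_{\Hb^{s'}}\leq C'\la\zeta\ra^{m'}$ for $q\in S^{m';0}\Hb^{s'}$. For the first inclusion, I would fix $\zeta\in\R^n$ and apply Corollary~\ref{CorHbModule} (which is applicable since $s>n/2$ and $|s'|\leq s$) to multiply $p(\cdot,\zeta)\in\Hb^s$ with $q(\cdot,\zeta)\in\Hb^{s'}$, obtaining
\[
  \|p(\cdot,\zeta)q(\cdot,\zeta)\|_{\Hb^{s'}}\lesssim \|p(\cdot,\zeta)\|_{\Hb^s}\,\|q(\cdot,\zeta)\|_{\Hb^{s'}}\lesssim\la\zeta\ra^{m+m'},
\]
with a constant uniform in $\zeta$. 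This is exactly the defining estimate for $pq\in S^{m+m';0}\Hb^{s'}$.

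For the second inclusion, let $p\in S^m$ be smooth in $z$. Recalling the local convention that all symbols are assumed compactly supported in $z$, the estimates $|\pa_z^\alpha p(z,\zeta)|\lesssim_\alpha\la\zeta\ra^m$ yield $\|p(\cdot,\zeta)\|_{\Hb^s}\lesssim\la\zeta\ra^m$ (with constant depending on $s$ and the $z$-support). Thus $S^m\subset S^{m;0}\Hb^s$, and the first inclusion applies.

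For the b-versions, I would check that the product of $p\in S^{m;0}_\bl\Hb^s$ with $q\in S^{m';0}_\bl\Hb^{s'}$ inherits entirety in $\lambda$ together with the strip estimate \eqref{EqBSymbolDefEstimate}. Entirety follows from the fact that $\Hb^s\times\Hb^{s'}\to\Hb^{s'}$ is a continuous bilinear map by Corollary~\ref{CorHbModule}, so the pointwise product of two $\Hb^{s'}$-valued (resp.\ $\Hb^s$-valued) holomorphic functions of $\lambda$ is $\Hb^{s'}$-valued holomorphic. The strip estimate
\[
  \|(pq)(\cdot,\lambda+i\mu,\eta)\|_{\Hb^{s'}}\lesssim_N \la\lambda,\eta\ra^{m+m'},\quad |\mu|\leq N,
\]
then follows by applying the module inequality inside each strip $|\Im\lambda|\leq N$ and using the individual strip bounds for $p$ and $q$. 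The only genuine work is the module inequality itself, which is Corollary~\ref{CorHbModule}; everything else is bookkeeping and uniformity in $\zeta$. I do not foresee any real obstacle beyond ensuring this uniformity and the holomorphicity bookkeeping in the b-case.
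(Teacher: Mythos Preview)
Your argument for the first inclusion is correct and is exactly what the paper does: apply Corollary~\ref{CorHbModule} fiberwise in $\zeta$. Your treatment of the b-case for the first inclusion (holomorphicity via continuous bilinearity, strip bounds via the module estimate) is also fine.

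The gap is in your second inclusion. You claim that compact support in $z$ together with $|\pa_z^\alpha p(z,\zeta)|\lesssim\la\zeta\ra^m$ forces $p(\cdot,\zeta)\in\Hb^s$, hence $S^m\subset S^{m;0}\Hb^s$. This is false on $\Rnhalf$: a smooth symbol need not vanish at $x=0$, and any function with $p(0,y,\zeta)\neq 0$ fails to lie in $L^2_\bl=L^2(\frac{dx}{x}\,dy)$, let alone $\Hb^s$. Boundedness of b-derivatives does not compensate for this; the constant function $1$ already has all b-derivatives bounded but is not locally in $L^2_\bl$ near $x=0$. So the reduction to the first inclusion does not go through.

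The paper fixes this in two (equivalent) ways. The short version simply observes that for $p\in S^0$, the family $p(\cdot,\zeta)$ is a \emph{uniformly bounded family of multipliers} on $\Hb^{s'}$ (not elements of $\Hb^s$): smooth functions with all derivatives bounded act boundedly on $\Hb^{s'}$ for every $s'$, with operator norm controlled by finitely many $C^k$-seminorms, and here those seminorms are uniform in $\zeta$. The longer version makes this explicit via the decomposition \eqref{EqDecomposition}: write $p=p^{(0)}+p^{(1)}$ with $p^{(1)}=x\tilde p^{(1)}\in S^{m;0}\Hb^\infty$ (this piece \emph{does} reduce to the first inclusion), while the $x$-independent boundary part $p^{(0)}(y;\lambda,\eta)$ is handled by a direct Fourier-side estimate using rapid decay of $\sF p^{(0)}$ in the dual variable of $y$. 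Either route avoids the false containment $S^m\subset S^{m;0}\Hb^s$.
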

\begin{proof}
  In light of the definitions of the symbol classes, we can assume $m=m'=0$. The first statement then is an immediate consequence of Corollary~\ref{CorHbModule}. In order to prove the second statement, we simply observe that, given $p\in S^0$, $p(\cdot;\zeta)$ is a uniformly bounded family of multipliers on $\Hb^{s'}$. A direct proof of the sort that we will use in the sequel goes as follows: Decompose the symbol $p$ as in \eqref{EqDecomposition}. The part $p^{(1)}\in S^{0;0}\Hb^\infty$ can then be dealt with using the first statement. Thus, we may assume $p=p^{(0)}$, i.e.\ $p=p(y;\lambda,\eta)$ is $x$-independent. Let $q\in S^{0;0}\Hb^{s'}$ be given. Choose $N$ large and put
  \begin{align*}
    p_0(\gamma;\lambda,\eta)&=\la\gamma\ra^N|\sF p(\gamma;\lambda,\eta)|,\quad q_0(\sigma,\gamma;\lambda,\eta)=\la\sigma,\gamma\ra^{s'}|\wh q(\sigma,\gamma;\lambda,\eta)|, \\
	r_0(\sigma,\gamma;\lambda,\eta)&=\la\sigma,\gamma\ra^{s'}|\widehat{pq}(\sigma,\gamma;\lambda,\eta)|.
  \end{align*}
  Then
  \begin{align*}
    \iint r_0(\sigma,&\gamma;\lambda,\eta)^2\,d\sigma\,d\gamma \\
	  &\leq\iint\left(\int\frac{\la\sigma,\gamma\ra^{s'}}{\la\gamma-\theta\ra^N\la\sigma,\theta\ra^{s'}}p_0(\gamma-\theta;\lambda,\eta)q_0(\sigma,\theta;\lambda,\eta)\,d\theta\right)^2 d\sigma\,d\gamma \\
	  &\lesssim \|p_0(\gamma;\lambda,\eta)\|_{L^\infty_{\lambda,\eta}L^2_\gamma}^2\|q_0(\sigma,\theta;\lambda,\eta)\|_{L^\infty_{\lambda,\eta}L^2_{\sigma,\theta}}^2
  \end{align*}
  by Cauchy-Schwartz.
\end{proof}

Recall Remark~\ref{RmkNobNotation} for the notation used in the following theorem on the composition properties of non-smooth operators:

\begin{thm}
\label{ThmComp}
  Let $m,m',s,s'\in\R,k,k'\in\N_0$. For two operators $P=p(z,\Db)$ and $Q=q(z,\Db)$ of orders $m$ and $m'$, respectively, let
  \[
    R=P\circ Q-\sum_{|\beta|<k}\frac{1}{\beta!} (\pa_\zeta^\beta p \Db_z^\beta q)(z,\Db).
  \]
  Denote the sum of the terms in the expansion for which $|\beta|=j$ by $E_j$.
  \begin{enumerate}[leftmargin=\enummargin]
    \item\label{ThmCompRRAndSRR} Composition of non-smooth operators, $k\geq m+k'$, $k\geq k'$.
	  \begin{enumerate}[leftmargin=\enummargin]
	    \item \label{ThmCompRR} Suppose $s>n/2$ and $s\leq s'-k$ $[s\leq s'-2k+m+k']$. If $P\in\fpsi^{m;k}\Hb^s$, $Q\in\fpsi^{m';0}\Hb^{s'}$, then $E_j\in\fpsi^{m+m'-j;0}\Hb^s$ and $R\in\fpsi^{m'-k';0}\Hb^s$ $[\fpsi^{m+m'-k;0}\Hb^s]$.
		\item \label{ThmCompSRR} If $P\in\fpsi^{m;k}\Hb^\infty$, $Q\in\fpsi^{m';0}\Hb^{s'}$, then $E_j\in\fpsi^{m+m'-j;0}\Hb^{s'-j}$ and $R\in\fpsi^{m'-k';0}\Hb^{s'-k}\cap\fpsi^{m+m'-k;0}\Hb^{s'-2k+m+k'}$.
	  \end{enumerate}
	\item\label{ThmCompBRAndRB} Composition of smooth with non-smooth operators.
	   \begin{enumerate}[leftmargin=\enummargin]
		 \item \label{ThmCompBR} Suppose $k\geq m+k'$, $k\geq k'$. If $P\in\Psi^m$, $Q\in\fpsi^{m';0}\Hb^{s'}$, then $E_j\in\fpsi^{m+m'-j;0}\Hb^{s'-j}$ and $R\in\fpsi^{m'-k';0}\Hb^{s'-k}\cap\fpsi^{m+m'-k;0}\Hb^{s'-2k+m+k'}$.
		 \item \label{ThmCompRB} Suppose $k\leq k'$ and $k'\geq m$. If $P\in\fpsi^{m;k'}\Hb^s$, $Q\in\Psi^{m'}$, then $E_j\in\fpsi^{m+m'-j;0}\Hb^s$ and $R\in\fpsi^{m+m'-k;0}\Hb^s$.
	  \end{enumerate}
	\item \label{ThmCompBRFunny} Composition of smooth with non-smooth operators, $k\leq m+k'$, $k\geq k'$. If $P\in\Psi^m$, $Q\in\fpsi^{m';0}\Hb^{s'}$, then $E_j\in\fpsi^{m+m'-j;0}\Hb^{s'-j}$ and $R=R_1\Lambda_{m+k'-k}+\Lambda_{m+k'-k}R_2$, where $R_1,R_2\in\fpsi^{m'-k';0}\Hb^{s'-k}$.
  \end{enumerate}
  Moreover, (1)-(2) hold as well if all operator spaces are replaced by the corresponding b-spaces. Also, all results hold, \emph{mutatis mutandis}, if $P$ maps sections of $F$ to sections of $G$, and $Q$ maps sections of $E$ to sections of $F$.
\end{thm}
\begin{proof}
  The statements about the $E_j$ follow from Lemma~\ref{LemmaSymbolMult}. It remains to analyze the remainder operators. We will only treat the case $k>0$; the case $k=0$ is handled in a similar way. We prove parts \itref{ThmCompRRAndSRR}, \itref{ThmCompBR} and \itref{ThmCompBRFunny} of the theorem for $k'=0$ first.

  \itref{ThmCompRR}. Consider the case $s\leq s'-k$. We use formula \eqref{EqRemainder} and define
  \begin{gather*}
    p_0(\eta,\xi;\zeta)=\sum_{|\beta|=k}\frac{k}{\beta!}\la\eta\ra^s\int_0^1|\pa_\zeta^\beta\wh p(\eta;\zeta+t\xi)|\,dt, \\
	q_0(\xi;\zeta)=\frac{\la\xi\ra^{s'-k}|(\Db_z^k q)\ftrans(\xi;\zeta)|}{\la\zeta\ra^{m'}},
  \end{gather*}
  where $\Db_z^k$ denotes the vector $(\Db_z^\beta)_{|\beta|=k}$. Since $p_0\in L^\infty_{\zeta,\xi}L^2_\eta$ in view of $k\geq m$, i.e.\ $\pa_\zeta^\beta p$ is a symbol of order $m-k\leq 0$, and $q_0\in L^\infty_\zeta L^2_\xi$, we obtain
  \[
    \frac{\la\eta\ra^s|\wh r(\eta;\zeta)|}{\la\zeta\ra^{m'}}\leq\int\frac{\la\eta\ra^s}{\la\eta-\xi\ra^s\la\xi\ra^{s'-k}}p_0(\eta-\xi,\xi;\zeta)q_0(\xi;\zeta)\,d\xi\in L^\infty_\zeta L^2_\eta
  \]
  by Lemma~\ref{LemmaIntegralEstimate}, as claimed. Next, if $s\leq s'-2k+m$, we instead define
  \begin{equation}
  \label{EqP0StrongerNorm}
    p_0(\eta,\xi;\zeta)=\sum_{|\beta|=k}\frac{k}{\beta!}\la\eta\ra^s\int_0^1\la\zeta+t\xi\ra^{k-m}|\pa_\zeta^\beta\wh p(\eta;\zeta+t\xi)|\,dt \in L^\infty_{\zeta,\xi}L^2_\eta,
  \end{equation}
  thus
  \begin{align*}
    \frac{\la\eta\ra^s|\wh r(\eta;\zeta)|}{\la\zeta\ra^{m+m'-k}}&\leq\int\frac{\la\eta\ra^s}{\la\eta-\xi\ra^s\la\xi\ra^{s'-k}}\cdot\frac{\la\zeta\ra^{k-m}}{\inf_{0\leq t\leq 1}\la\zeta+t\xi\ra^{k-m}} \\
	  &\hspace{25ex}\times p_0(\eta-\xi,\xi;\zeta)q_0(\xi;\zeta)\,d\xi
  \end{align*}
  with $q_0\in L^\infty_\zeta L^2_\xi$ as above. Now
  \begin{equation}
  \label{EqInfEstimate}
    \frac{\la\zeta\ra^{k-m}}{\inf_{0\leq t\leq 1}\la\zeta+t\xi\ra^{k-m}}\lesssim\la\xi\ra^{k-m},
  \end{equation}
  since for $|\xi|\leq|\zeta|/2$, the left hand side is uniformly bounded, and for $|\zeta|\leq 2|\xi|$, we estimate the infimum from below by $1$ and the numerator from above by $\la\xi\ra^{k-m}$. Therefore, we get $r_0\in L^\infty_\zeta L^2_\eta$ in this case as well.

  \itref{ThmCompSRR}. This is proved similarly: Define $q_0(\xi;\zeta)$ as above, and choose $N$ large and put
  \[
    p_0(\eta,\xi;\zeta)=\sum_{|\beta|=k}\frac{k}{\beta!}\la\eta\ra^N\int_0^1|\pa_\zeta^\beta\wh p(\eta;\zeta+t\xi)|\,dt.
  \]
  Then
  \[
    \frac{\la\eta\ra^{s'-k}|\wh r(\eta;\zeta)|}{\la\zeta\ra^{m'}}\leq\int\frac{\la\eta\ra^{s'-k}}{\la\eta-\xi\ra^N\la\xi\ra^{s'-k}}p_0(\eta-\xi,\xi;\zeta)q_0(\xi;\zeta)\,d\xi,
  \]
  and the fraction in the integrand is an element of $L^\infty_\eta L^2_\xi$ by Lemma~\ref{LemmaFractionEstimate}, thus an application of Lemma~\ref{LemmaIntegralEstimate} yields $R\in\fpsi^{m';0}\Hb^{s'-k}$. In a similar manner, now using \eqref{EqInfEstimate}, we obtain $R\in\fpsi^{m+m'-k;0}\Hb^{s'-2k+m}$.

  \ \\
  \itref{ThmCompBRAndRB}. Decomposing the smooth operator as in \eqref{EqDecomposition}, the $x$-dependent part has coefficients in $\Hb^\infty$, thus we can apply part \itref{ThmCompRRAndSRR}. Therefore, we may assume that the smooth operator is $x$-independent in both cases.

  \itref{ThmCompBR}. The remainder is
  \begin{align*}
    \wh r(\sigma,\gamma;\lambda,\eta)&=\sum_{|\beta|=k}\frac{k}{\beta!}\int\left(\int_0^1(1-t)^{k-1}\pa_{\lambda,\eta}^\beta\sF p(\gamma-\theta;\lambda+t\sigma,\eta+t\theta)\,dt\right) \\
	  &\hspace{15ex}\times (\Db_z^\beta q)\ftrans(\sigma,\theta;\lambda,\eta)\,d\theta;
  \end{align*}
  therefore, choosing $N$ large and defining
  \begin{align*}
	p_0(\gamma,\sigma,\theta;\lambda,\eta)&=\sum_{|\beta|=k}\frac{k}{\beta!}\la\gamma\ra^N\int_0^1|\pa_{\lambda,\eta}^\beta\sF p(\gamma;\lambda+t\sigma,\eta+t\theta)|\,dt \in L^\infty_{\sigma,\theta,\lambda,\eta}L^2_\gamma, \\
	q_0(\sigma,\theta;\lambda,\eta)&=\frac{\la\sigma,\theta\ra^{s'-k}|(\Db_z^\beta q)\ftrans(\sigma,\theta;\lambda,\eta)|}{\la\lambda,\eta\ra^{m'}}\in L^\infty_{\lambda,\eta}L^2_{\sigma,\theta},
  \end{align*}
  we get
  \begin{align*}
    &\frac{\la\sigma,\gamma\ra^{s'-k}|\wh r(\sigma,\gamma;\lambda,\eta)|}{\la\lambda,\eta\ra^{m'}} \\
	  &\qquad\leq\int\frac{\la\sigma,\gamma\ra^{s'-k}}{\la\gamma-\theta\ra^N\la\sigma,\theta\ra^{s'-k}}p_0(\gamma-\theta,\sigma,\theta;\lambda,\eta)q_0(\sigma,\theta;\lambda,\eta)\,d\theta,
  \end{align*}
  which is an element of $L^\infty_{\lambda,\eta}L^2_{\sigma,\gamma}$ by Lemmas~\ref{LemmaFractionEstimate} and~\ref{LemmaIntegralEstimate}. This proves $R\in\fpsi^{m';0}\Hb^{s'-k}$, and in a similar way we obtain $R\in\fpsi^{m+m'-k;0}\Hb^{s'-2k+m}$.

  \itref{ThmCompRB}. Here, the remainder is
  \begin{align*}
    \wh r(\sigma,\gamma;\lambda,\eta)&=\sum_{|\beta|=k}\frac{k}{\beta!}\int\left(\int_0^1(1-t)^{k-1}\pa_{\lambda,\eta}^\beta\wh p(\sigma,\gamma-\theta;\lambda,\eta+t\theta)\,dt\right) \\
	  &\hspace{15ex}\times \sF(\Db_z^\beta q)(\theta;\lambda,\eta)\,d\theta,
  \end{align*}
  and arguments similar to those used in (a) give the desired conclusion if $k=k'$. If $k<k'$, we just truncate the expansion after $E_{k-1}$ and note that the resulting remainder term, which is the sum of the remainder term after expanding to order $k'$ and the expansion terms $E_k,\ldots,E_{k'-1}$, indeed lies in $\fpsi^{m+m'-k;0}\Hb^s$.

  \ \\
  \itref{ThmCompBRFunny}. We again use formula \eqref{EqRemainder} for the remainder term and put
  \[
    \wh r_1(\eta;\zeta)=\frac{\wh r(\eta;\zeta)\chi(|\zeta|\geq|\eta+\zeta|)}{\lambda_{m-k}(\zeta)},\quad \wh r_2(\eta;\zeta)=\frac{\wh r(\eta;\zeta)\chi(|\zeta|<|\eta+\zeta|)}{\lambda_{m-k}(\eta+\zeta)},
  \]
  the point being that, by equation \eqref{EqOperatorAction}, for any $u\in\CIdotc$,
  \begin{align*}
    (r&(z,\Db)u)\ftrans(\eta)=\int\wh r(\eta-\zeta,\zeta)\wh u(\zeta)\,d\zeta \\
	  &=\int\wh r_1(\eta-\zeta,\zeta)(\Lambda_{m-k}u)\ftrans(\zeta)\,d\zeta+\lambda_{m-k}(\eta)\int\wh r_2(\eta-\zeta,\zeta)\wh u(\zeta)\,d\zeta \\
	  &=(r_1(z,\Db)\Lambda_{m-k}u)\ftrans(\eta)+(\Lambda_{m-k}r_2(z,\Db)u)\ftrans(\eta).
  \end{align*}
  It remains to prove that $r_1(z,\Db),r_2(z,\Db)\in\fpsi^{m';0}\Hb^{s'-k}$. First, we treat the case $P\in x\Psi^m$. Then for any $N\in\N$, we obtain, using
  \[
    \sup_{0\leq t\leq 1}\la\zeta+t\xi\ra^{m-k}\lesssim\la\zeta\ra^{m-k}+\la\xi\ra^{m-k},
  \]
  that
  \begin{align*}
    \frac{\la\eta\ra^{s'-k}|\wh r_1(\eta,\zeta)|}{\la\zeta\ra^{m'}}&\lesssim\int\frac{\la\eta\ra^{s'-k}(1+\la\xi\ra^{m-k}/\la\zeta\ra^{m-k})}{\la\eta-\xi\ra^N\la\xi\ra^{s'-k}}\chi(|\zeta|\geq|\eta+\zeta|) \\
	 &\hspace{12ex}\times p_0(\eta-\xi,\xi;\zeta)q_0(\xi;\zeta)\,d\xi \\
	 &\equiv\int G(\eta,\xi;\zeta)p_0(\eta-\xi,\xi;\zeta)q_0(\xi;\zeta)\,d\xi,
  \end{align*}
  where $p_0(\eta,\xi;\zeta)\in L^\infty_{\xi,\zeta}L^2_\eta$ is defined as in \eqref{EqP0StrongerNorm} (with $s$ replaced by $N$) and $q_0\in L^\infty_\zeta L^2_\xi$ as before. We have to show $G(\eta,\xi;\zeta)\in L^\infty_{\eta,\zeta}L^2_\xi$ in order to be able to apply Lemma~\ref{LemmaIntegralEstimate}. For $|\xi|\geq 2|\eta|$, we immediately get, for $N$ large enough,
  \[
    G(\eta,\xi;\zeta)\lesssim\frac{1}{\la\xi\ra^{N'}}\left(1+\frac{\la\xi\ra^{m-k}}{\la\zeta\ra^{m-k}}\right)\in L^\infty_{\eta,\zeta}L^2_\xi(|\xi|\geq 2|\eta|),
  \]
  where $N'=N-(k-s')_+$. On the other hand, if $|\xi|<2|\eta|$, we estimate
  \[
    G(\eta,\xi;\zeta)\lesssim\frac{\la\eta\ra^{s'-k}}{\la\eta-\xi\ra^N\la\xi\ra^{s'-k}}\left(1+\frac{\la\eta\ra^{m-k}}{\la\zeta\ra^{m-k}}\right)\chi(|\zeta|\geq|\eta+\zeta|)
  \]
  and use that $|\zeta|\geq|\eta+\zeta|$ implies $|\eta|\leq|\eta+\zeta|+|-\zeta|\leq 2|\zeta|$, hence the product of the last two factors is uniformly bounded, giving $G(\eta,\xi;\zeta)\in L^\infty_{\eta,\zeta}L^2_\xi(|\xi|<2|\eta|)$ by Lemma~\ref{LemmaFractionEstimate}.

  In the case $P=p(0,y;xD_x,D_y)$, we get the estimate
  \[
    \frac{\la\sigma,\gamma\ra^{s'-k}|\wh r_1(\sigma,\gamma;\lambda,\eta)|}{\la\lambda,\eta\ra^{m'}}\leq\int G(\sigma,\gamma,\theta;\lambda,\eta)p_0(\gamma-\theta,\sigma,\theta;\lambda,\eta)q_0(\sigma,\theta;\lambda,\eta)\,d\theta,
  \]
  where $p_0(\gamma,\sigma,\theta;\lambda,\eta)\in L^\infty_{\sigma,\theta,\lambda,\eta}L^2_\gamma$, $q_0(\sigma,\theta;\lambda,\eta)\in L^\infty_{\lambda,\eta}L^2_{\sigma,\theta}$, and
  \begin{align*}
    G(\sigma,\gamma,\theta;\lambda,\eta)&=\frac{\la\sigma,\gamma\ra^{s'-k}}{\la\gamma-\theta\ra^N\la\sigma,\theta\ra^{s'-k}}\left(1+\frac{\la\sigma,\theta\ra^{m-k}}{\la\lambda,\eta\ra^{m-k}}\right) \\
	 &\hspace{12ex}\times\chi\Bigl(|(\lambda,\eta)|\geq|(\sigma,\gamma)+(\lambda,\eta)|\Bigr).
  \end{align*}
  As above, separating the cases $|(\sigma,\theta)|\geq 2|(\sigma,\gamma)|$ and $|(\sigma,\theta)|<2|(\sigma,\gamma)|$, one obtains $G\in L^\infty_{\sigma,\gamma,\lambda,\eta}L^2_\theta$, and we can again apply Lemma~\ref{LemmaIntegralEstimate}.
  
  The second remainder term $r_2$ is handled in the same way.

  \ \\
  Next, we prove that \itref{ThmCompRRAndSRR}-\itref{ThmCompBRAndRB} also hold for the corresponding b-operator spaces. Using exactly the same estimates as above, one obtains the respective symbolic bounds for the remainders on each line $\Im\lambda=\alpha_0$. What remains to be shown is the holomorphicity of the remainder operator in $\lambda$. This is a consequence of the fact that the derivatives $\pa_\lambda\pa_\zeta^\beta p$, $|\beta|=k$, and $\pa_\lambda q$, satisfy the same (in the case of symbols of smooth b-\psdo{}s, even better by one order) symbol estimates as $\pa_\zeta^\beta p$ and $q$, respectively. Indeed, for \itref{ThmCompRR}, i.e.\ for non-smooth b-symbols, this follows from the Cauchy integral formula, which for $\pa_\lambda q$ gives
  \[
    \pa_\lambda q(z;\lambda,\eta)=\frac{1}{2\pi i}\oint_{\gamma(\lambda)} \frac{q(z;\sigma,\eta)}{(\sigma-\lambda)^2}\,d\sigma
  \]
  where $\gamma(\lambda)$ is the circle around $\lambda$ with radius $1$. Namely, since $|\sigma-\lambda|=1$ for $\sigma\in\gamma(\lambda)$, we get the desired estimate for $\pa_\lambda q$ from the corresponding estimate for $q$ itself. We handle $\pa_\lambda\pa_\zeta^\beta p$ similarly. \itref{ThmCompSRR} and \itref{ThmCompBRAndRB} for b-operators follow in the same way.

  \ \\
  Finally, let us prove \itref{ThmCompRRAndSRR}, \itref{ThmCompBR} and \itref{ThmCompBRFunny} for $k'>0$ following the argument of Beals and Reed in \cite[Corollary~1.6]{BealsReedMicroNonsmooth}, starting with \itref{ThmCompRR}: Choose a partition of unity on $\R^n$ consisting of smooth non-negative functions $\chi_0,\ldots,\chi_n$ with $\supp\chi_0\subset\{|\zeta|\leq 2\}$, and $|\zeta_l|\geq 1$ on $\supp\chi_l$. Then
  \[
    P\circ Q\chi_0(\Db)\in\fpsi^{m;k}\Hb^s\circ\fpsi^{-\infty;0}\Hb^{s'}
  \]
  can be treated using (1a) with $k'=0$, taking an expansion up to order $k\geq m+k'\geq m$; all terms in the expansion as well as the remainder term are elements of $\fpsi^{-\infty;0}\Hb^s$, hence $P\circ Q\chi_0(\Db)\in\fpsi^{-\infty;0}\Hb^s$ can be put into the remainder term of the claimed expansion.
  
  Let us now consider $P\circ Q\chi_l(\Db)$. For brevity, let us replace $Q$ by $Q\chi_l(\Db)$ and thus assume $|\zeta_l|\geq 1$ on $\supp q(z,\zeta)$. Then by the Leibniz rule,
  \[
    P\circ Q\Db_{z_l}^{k'}=P\Db_{z_l}^{k'}\circ Q-\sum_{j=1}^{k'} c_{jk'} P\Db_{z_l}^{k'-j}\circ(\Db_{z_l}^j q)(z,\Db)
  \]
  for some constants $c_{jk'}\in\R$. Composing on the right with\footnote{To be precise, one should take $\Db_{z_l}^{-k'}\wt\chi_l(\Db)$, where $\wt\chi_l\equiv 1$ on $\supp\chi_l$ and $|\zeta_l|\geq 1/2$ on $\supp\wt\chi_l$.} $\Db_{z_l}^{-k'}$ thus shows that $P\circ Q$ is an element of the space
  \[
    \sum_{j=0}^{k'} \fpsi^{m+k'-j;0}\Hb^s\circ\fpsi^{m'-k'}\Hb^{s'-j}.
  \]
  In view of the part of \itref{ThmCompRR} already proved, the $j$-th summand has an expansion to order $k-j\geq m+k'-j$ with error term in $\fpsib^{m'-k';0}\Hb^s\ [\fpsib^{m+m'-k;0}\Hb^s]$, where we use $k-j\geq k-k'\geq 0$ and $s\leq(s'-j)-(k-j)$ [$s\leq(s'-j)-2(k-j)+(m+k'-j)$]. Using the same idea, one can prove \itref{ThmCompSRR}, \itref{ThmCompBR} and \itref{ThmCompBRFunny}.
\end{proof}

Notice that we do not claim in \itref{ThmCompBRFunny} that $R_1$ and $R_2$ lie in b-operator spaces if $q$ does. The issue is that $1/\lambda_m(\zeta)$ in general has singularities for non-real $\zeta$. In applications later in this paper, we will only need the proposition as stated, with the additional assumption that $p$ is a b-symbol, since instead of letting the operators in the expansion and the remainder operator act on weighted spaces, we will conjugate $P$ and $Q$ by the weight before applying the theorem.

%%%%%%%%%%%%%%%%%%%%%%%%%%%%%%%%%%%%%%%%%%%%%%%%%
\section{Reciprocals of and compositions with $\Hb^s$ functions}
\label{SecHsRecAndComp}

In this section, we recall some basic results about $1/u$ and, more generally, $F(u)$, for $u$ in appropriate b-Sobolev spaces on an $n$-dimensional compact manifold with boundary $M$, and smooth/analytic functions $F$.

\begin{rmk}
\label{RmkNoMoser}
  We will give direct proofs which in particular do not give Moser-type bounds; see \cite[\S\S{13.3, 13.10}]{TaylorPDE} for examples of the latter. However, at least special cases of the results below (e.g.\ when $\CI(M)$ is replaced by $\C$ or $\R$) can easily be proved in a way as to obtain such bounds: The point is that the analysis can be localized and thus reduced to the case $M=\Rnhalf$; a logarithmic change of coordinates then gives an isometric isomorphism of $\Hb^s(\Rnhalf)$ and $H^s(\R^n)$, and on the latter space, Moser-type reciprocal/composition results are standard, see \cite{TaylorPDE}.
\end{rmk}

%%%%%%%%%%%%%%%%%%%%%%%%%%%%%%%%%%%%%%%%%%%%%%%%%
\subsection{Reciprocals}
\label{SubsecHsRec}

Let $M$ be a compact $n$-dimensional manifold with boundary.

\begin{lemma}
\label{LemmaRecHs}
  Let $s>n/2+1$. Suppose $u,w\in\Hb^s(M)$ and $a\in\CI(M)$ are such that $|a+u|\geq c_0>0$ on $\supp w$. Then $w/(a+u)\in\Hb^s(M)$, and one has an estimate
  \begin{equation}
  \label{EqHsReciprocalEstimate}
    \left\|\frac{w}{a+u}\right\|_{\Hb^s}\leq C_K\|w\|_{\Hb^s}\left(1+\|u\|_{\Hb^s}\right)^{\lceil s\rceil}\left(1+\left\|\frac{1}{a+u}\right\|_{L^\infty(K)}\right)^{\lceil s\rceil+1}
  \end{equation}
  for any neighborhood $K$ of $\supp w$.
\end{lemma}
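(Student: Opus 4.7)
The plan is to reduce the claim to a reciprocal-in-$\Hb^s$ statement for a globally bounded-below function, and then invoke the algebra structure of $\Hb^s$ recorded in Corollary~\ref{CorHbModule}.

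First I would localize. Since $s>n/2$, Sobolev embedding gives $u\in L^\infty$, so $a+u$ is continuous, and $\{|a+u|\geq c_0/2\}$ is an open neighborhood of $\supp w$; after replacing $K$ by its intersection with this set I may assume $|a+u|\geq c_0/2$ on all of $K$. Choose cutoffs $\chi,\chi'\in\CI(M)$ with $\chi\equiv 1$ on $\supp w$, $\chi'\equiv 1$ on $\supp\chi$, and $\supp\chi'\subset K$, and define
\[
  \tilde f := \chi'(a+u) + (1-\chi')C,
\]
with $C>0$ large enough that $\tilde f\geq\min(c_0/2,C)>0$ globally on $M$. Then $\tilde f-C\in\CI(M)+\Hb^s(M)$, and $\|\tilde f^{-1}\|_{L^\infty(M)}$ is bounded by a constant plus $\|(a+u)^{-1}\|_{L^\infty(K)}$. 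Since $\chi\tilde f = \chi(a+u)$ and $w$ is supported where $\chi\equiv 1$, we have $w/(a+u)=w\cdot(\chi\tilde f^{-1})$, so it suffices to prove $\tilde f^{-1}\in\Hb^s(M)$, after which the algebra property immediately gives $w/(a+u)\in\Hb^s$.

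Second, I would prove $\tilde f^{-1}\in\Hb^s(M)$ for integer $s=k$ by direct Faà di Bruno: for $X_1,\ldots,X_k\in\Vb(M)$,
\[
  X_1\cdots X_k(\tilde f^{-1})=\sum_{\pi}c_\pi\,\tilde f^{-(|\pi|+1)}\prod_{B\in\pi}\bigl(X_{B,1}\cdots X_{B,|B|}\tilde f\bigr),
\]
the sum being over set-partitions $\pi$ of $\{1,\ldots,k\}$. Each inverse factor lies in $L^\infty$ with norm controlled by $\|\tilde f^{-1}\|_{L^\infty}$. For the product of derivatives, each factor $X^{|B|}\tilde f\in\Hb^{s-|B|}$; since $s-1>n/2$, $\Hb^{s-1}$ is an algebra by Corollary~\ref{CorHbModule}, and grouping all but the factor of maximal $|B|$ in $\Hb^{s-1}$ and using the $\Hb^{s-1}$-module structure of $\Hb^{s-|B_{\max}|}$ (valid since $|B_{\max}|\leq k=s$) puts the product in $\Hb^0=L^2_\bl$. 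Hence $\tilde f^{-1}\in\Hb^k$, with the norm bounded by $(1+\|\tilde f^{-1}\|_{L^\infty})^{k+1}(1+\|u\|_{\Hb^s})^k$ times a geometric constant. The non-integer case is then handled by interpolating between $\lfloor s\rfloor$ and $\lfloor s\rfloor+1$.

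Finally, multiplying by $w\cdot\chi$ and again invoking Corollary~\ref{CorHbModule} adds a factor of $\|w\|_{\Hb^s}$ and produces \eqref{EqHsReciprocalEstimate}, with the exponents $\lceil s\rceil$ and $\lceil s\rceil+1$ matching, respectively, the number of derivatives distributed among factors of $\tilde f$ and the maximal number $|\pi|+1$ of inverse powers appearing. I expect the main technical nuisance to be the non-integer case: the interpolation must be set up so that the resulting estimate has the same polynomial form in $\|u\|_{\Hb^s}$ and $\|(a+u)^{-1}\|_{L^\infty(K)}$; alternatively, one can prove it directly via a paradifferential Faà di Bruno identity, avoiding interpolation altogether. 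The combinatorial bookkeeping of constants through the partition sum is routine but requires a little care to land on the stated exponents.
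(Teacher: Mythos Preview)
Your Fa\`a di Bruno strategy has two genuine gaps. First, $\tilde f^{-1}\notin\Hb^s(M)$: the b-density $\frac{dx}{x}\,dy$ has infinite volume near $\partial M$, and $\tilde f^{-1}=1/C$ outside $K$, so $\tilde f^{-1}\notin L^2_\bl$; you must carry the factor $w$ (or a cutoff) through the Leibniz expansion rather than argue about $\tilde f^{-1}$ alone. (Also, your $\tilde f$ interpolates between $a+u$ and the positive constant $C$ and may vanish if $a+u$ is negative or complex on $K$; the hypothesis only gives $|a+u|\geq c_0$.) Second, and more seriously, your product estimate is not what Corollary~\ref{CorHbModule} provides: you assert that the factors $X^{|B|}\tilde f$ with $|B|<|B_{\max}|$ lie in $\Hb^{s-1}$, but $X^{|B|}\tilde f\in\CI+\Hb^{s-|B|}$, and for $|B|\geq 2$ the Sobolev inclusion goes the other way, $\Hb^{s-1}\subset\Hb^{s-|B|}$. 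For a partition with two blocks of size $\geq 2$, neither factor is in $\Hb^{s-1}$, and Corollary~\ref{CorHbModule} does not give $\Hb^{s-a}\cdot\Hb^{s-b}\subset L^2_\bl$ for $a,b\geq 2$. You would need a genuine multilinear estimate $\prod_j\Hb^{s-a_j}\subset L^2_\bl$ for $\sum a_j\leq s$; this is true but requires a separate argument (Sobolev embedding and H\"older), not just the algebra/module structure you cite. Finally, interpolation at $\lceil s\rceil$ needs $\lceil s\rceil$ derivatives of $u$, which you do not have.

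The paper's proof is quite different and sidesteps all of this: it bootstraps the regularity of $w/(a+u)$ one order at a time. Assuming $w/(a+u)\in\Hb^{s'-1}$, one writes $\psi(a+u)\Lambda_{s'}\tfrac{w}{a+u}=\psi\Lambda_{s'}w-\psi[\Lambda_{s'},a+u]\tfrac{w}{a+u}$ and divides by $a+u$ in $L^\infty$ on $\supp\psi$; the key technical input is the commutator bound $[\Lambda_{s'},u]\colon\Hb^{s'-1}\to L^2_\bl$ for $u\in\Hb^s$, $s>n/2+1$, proved by a direct Mellin--Fourier kernel estimate (a first-order Taylor expansion of $\lambda_{s'}$). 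This single commutator estimate replaces the multilinear product bound, handles non-integer $s$ directly in the final iteration step, and produces the polynomial dependence in~\eqref{EqHsReciprocalEstimate} straight from the recursion.
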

\begin{proof}
  We can assume that $\supp w$ and $\supp u$ lie in a coordinate patch of $M$. Note that clearly $w/(a+u)\in L^2_\bl$. We will give an iterative argument that improves on the regularity of $w/(a+u)$ by (at most) $1$ at each step, until we can eventually prove $\Hb^s$-regularity.
  
  To set this up, let us assume $w/(a+u)\in\Hb^{s'-1}$ for some $1\leq s'\leq s$. Recall the operator $\Lambda_{s'}=\lambda_{s'}(\Db)$ from Corollary~\ref{CorBStdOp}, and choose $\psi_0,\psi\in\CI(M)$ such that $\psi_0\equiv 1$ on $\supp w$, $\psi\equiv 1$ on $\supp\psi_0$, and such that moreover $|a+u|\geq c_0'>0$ on $\supp\psi$, which can be arranged since $u\in\Hb^s\subset C^0$. Then for $K=\supp\psi$,
  \begin{align}
    \Bigl\|&\Lambda_{s'}\frac{w}{a+u}\Bigr\|_{L^2_\bl}\leq\Bigl\|(1-\psi)\Lambda_{s'}\frac{\psi_0 w}{a+u}\Bigr\|_{L^2_\bl}+\Bigl\|\psi \Lambda_{s'}\frac{\psi_0 w}{a+u}\Bigr\|_{L^2_\bl} \nonumber\\
	    &\lesssim \Bigl\|\frac{w}{a+u}\Bigr\|_{L^2_\bl}+\Bigl\|\frac{1}{a+u}\Bigr\|_{L^\infty(K)}\Bigl\|\psi(a+u)\Lambda_{s'}\frac{w}{a+u}\Bigr\|_{L^2_\bl} \nonumber\\
	    &\lesssim \Bigl\|\frac{w}{a+u}\Bigr\|_{L^2_\bl}+\Bigl\|\frac{1}{a+u}\Bigr\|_{L^\infty(K)}\left(\|\psi \Lambda_{s'}w\|_{L^2_\bl}+\Bigl\|\psi[\Lambda_{s'},a+u]\frac{w}{a+u}\Bigr\|_{L^2_\bl}\right) \nonumber\\
      \begin{split}
	  \label{EqHsRecEllEstimate}
	    &\lesssim \Bigl\|\frac{w}{a+u}\Bigr\|_{L^2_\bl}+\Bigl\|\frac{1}{a+u}\Bigr\|_{L^\infty(K)} \\
		&\hspace{20ex}\times\biggl(\|w\|_{\Hb^{s'}}+\Bigl\|\frac{w}{a+u}\Bigr\|_{\Hb^{s'-1}}+\Bigl\|\psi[\Lambda_{s'},u]\frac{w}{a+u}\Bigr\|_{L^2_\bl}\biggr),
      \end{split}
  \end{align}
  where we used that the support assumptions on $\psi_0$ and $\psi$ imply $(1-\psi)\Lambda_{s'}\psi_0\in\Psi^{-\infty}$, and $\psi[\Lambda_{s'},a]\in\Psi^{s'-1}$. Hence, in order to prove that $w/(a+u)\in\Hb^{s'}$, it suffices to show that $[\Lambda_{s'},u]\colon \Hb^{s'-1}\to L^2_\bl$. Let $v\in\Hb^{s'-1}$. Since
  \begin{align*}
    (\Lambda_{s'}uv)\ftrans(\zeta)&=\int \lambda_{s'}(\zeta)\wh u(\zeta-\eta)\wh v(\eta)\,d\eta \\
	(u\Lambda_{s'}v)\ftrans(\zeta)&=\int\wh u(\zeta-\eta)\lambda_{s'}(\eta)\wh v(\eta)\,d\eta,
  \end{align*}
  we have, by taking a first order Taylor expansion of $\lambda_{s'}(\zeta)=\lambda_{s'}(\eta+(\zeta-\eta))$ around $\zeta=\eta$,
  \[
    ([\Lambda_{s'},u]v)\ftrans(\zeta)=\sum_{|\beta|=1}\int\left(\int_0^1\pa_\zeta^\beta \lambda_{s'}(\eta+t(\zeta-\eta))\,dt\right)(\Db_z^\beta u)\ftrans(\zeta-\eta)\wh v(\eta)\,d\eta.
  \]
  We will to prove that this is an element of $L^2_\zeta$ using Lemma~\ref{LemmaIntegralEstimate}. Since for $|\beta|=1$,
  \[
    |\pa_\zeta^\beta \lambda_{s'}(\eta+t(\zeta-\eta))|\lesssim\la\eta+t(\zeta-\eta)\ra^{s'-1}, \\
  \]\[
    |(\Db_z^\beta u)\ftrans(\zeta-\eta)|=\frac{u_0(\zeta-\eta)}{\la\zeta-\eta\ra^{s-1}},\quad |\wh v(\eta)|=\frac{v_0(\eta)}{\la\eta\ra^{s'-1}}
  \]
  for $u_0,v_0\in L^2$, it is enough to observe that
  \[
    \frac{\la\eta+t(\zeta-\eta)\ra^{s'-1}}{\la\zeta-\eta\ra^{s-1}\la\eta\ra^{s'-1}}\lesssim\frac{1}{\la\zeta-\eta\ra^{s-1}}+\frac{1}{\la\zeta-\eta\ra^{s-s'}\la\eta\ra^{s'-1}}\in L^\infty_\zeta L^2_\eta,
  \]
  uniformly in $t\in[0,1]$, since $s-1>n/2$.

  To obtain the estimate \eqref{EqHsReciprocalEstimate}, we proceed inductively, starting with the obvious estimate
  \[
    \|w/(a+u)\|_{L^2_\bl}\leq\|w\|_{L^2_\bl}\|1/(a+u)\|_{L^\infty(K)}\leq \|w\|_{\Hb^s}\left(1+\Bigl\|\frac{1}{a+u}\Bigr\|_{L^\infty(K)}\right).
  \]
  Then, assuming that for integer $1\leq m\leq s$, one has
  \[
    \|w/(a+u)\|_{H^{m-1}_\bl}\lesssim \|w\|_{\Hb^s}\left(1+\Bigl\|\frac{1}{a+u}\Bigr\|_{L^\infty(K)}\right)^m\left(1+\|u\|_{\Hb^s}\right)^{m-1}
  \]
  we conclude, using the estimate \eqref{EqHsRecEllEstimate},
  \begin{align*}
    \Bigr\|&\frac{w}{a+u}\Bigr\|_{\Hb^m} \\
	  &\lesssim\Bigl\|\frac{w}{a+u}\Bigr\|_{L^2_\bl}+\Bigl\|\frac{1}{a+u}\Bigr\|_{L^\infty(K)}\left(\|w\|_{H^s_\bl}+(1+\|u\|_{\Hb^s})\Bigl\|\frac{w}{a+u}\Bigr\|_{\Hb^{m-1}}\right) \\
	  &\lesssim \|w\|_{\Hb^s}\left(1+\Bigl\|\frac{1}{a+u}\Bigr\|_{L^\infty(K)}\right)^{m+1}\left(1+\|u\|_{\Hb^s}\right)^m.
  \end{align*}
  Thus, one gets such an estimate for $m=\lfloor s\rfloor$; then the same type of estimate gives \eqref{EqHsReciprocalEstimate}, since one has control over the $H^{s-1}_\bl$-norm of $w/(a+u)$ in view of $s-1<\lfloor s\rfloor$ and the bound on $\|w/(a+u)\|_{\Hb^{\lfloor s\rfloor}}$.
\end{proof}

In particular:

\begin{cor}
\label{CorReciprocal}
  Let $s>n/2+1$.
  \begin{enumerate}[leftmargin=\enummargin]
    \item If $u\in\Hb^s(M)$ does not vanish on $\supp\phi$, where $\phi\in\CI_\cl(M)$, then $\phi/u\in\Hb^s(M)$.
	\item Let $\alpha\geq 0$. If $u\in\Hb^{s,\alpha}(M)$ is bounded away from $-1$, then $1/(1+u)\in 1+\Hb^{s,\alpha}(M)$.
  \end{enumerate}
\end{cor}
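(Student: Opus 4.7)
The plan is to reduce both parts of the corollary to a direct application of Lemma~\ref{LemmaRecHs}.

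For part (1), I would take $a = 0 \in \CI(M)$ and $w = \phi$ in the lemma, noting that $\phi \in \CI_\cl(M) \subset \Hb^s(M)$. Since $s > n/2 + 1$ implies the Sobolev embedding $\Hb^s(M) \hookrightarrow C^0(M)$, the function $u$ is continuous, so the non-vanishing assumption on $\supp \phi$ upgrades to a uniform positive lower bound $|u| \geq c_0 > 0$ on a neighborhood of $\supp \phi$. Lemma~\ref{LemmaRecHs} then yields $\phi/u \in \Hb^s(M)$ directly.

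For part (2), I would use the definition $\Hb^{s,\alpha}(M) = \tau^\alpha \Hb^s(M)$ to write $u = \tau^\alpha u_0$ with $u_0 := \tau^{-\alpha} u \in \Hb^s(M)$; as $\alpha \geq 0$ and $\tau$ is bounded on $M$, this in particular forces $u \in \Hb^s(M)$ itself. Rearranging,
\[
\frac{1}{1+u} - 1 = -\frac{u}{1+u} = -\tau^\alpha\cdot\frac{u_0}{1+u},
\]
I would then apply Lemma~\ref{LemmaRecHs} with $a = 1 \in \CI(M)$, $w = -u_0 \in \Hb^s(M)$, and the given $u$ playing its namesake role in the lemma. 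The hypothesis that $u$ is bounded away from $-1$ supplies the lower bound $|1+u| \geq c_0 > 0$ globally on $M$, hence in particular on $\supp w$. The lemma then gives $u_0/(1+u) \in \Hb^s(M)$, and multiplying back by $\tau^\alpha$ places $1/(1+u) - 1$ in $\tau^\alpha \Hb^s(M) = \Hb^{s,\alpha}(M)$, which is the desired conclusion.

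There is no real obstacle here, since both parts are essentially cosmetic rearrangements that put the object of interest into the exact form handled by Lemma~\ref{LemmaRecHs}. The only point requiring a moment of care is the weight bookkeeping in part (2): one must factor the weight $\tau^\alpha$ out of the numerator \emph{before} invoking the unweighted reciprocal lemma, so that the resulting $\Hb^s$-control on the quotient can be re-weighted to yield membership in $\Hb^{s,\alpha}$ rather than merely in $\Hb^s$.
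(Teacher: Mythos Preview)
Your approach is correct and matches the paper's: part~(1) is an immediate application of Lemma~\ref{LemmaRecHs} with $a=0$, $w=\phi$, and part~(2) follows from $1-1/(1+u)=u/(1+u)\in\Hb^{s,\alpha}$, where you have correctly spelled out the weight bookkeeping that the paper leaves implicit. One small imprecision: the inclusion $\CI_\cl(M)\subset\Hb^s(M)$ is false as stated (e.g.\ $1\notin L^2_\bl(M)$ since $\int_0^1\frac{dx}{x}=\infty$); what actually makes $\phi\in\Hb^s(M)$ here is that the hypothesis forces $\supp\phi\subset M^\circ$, since $u\in\Hb^s(M)$ vanishes at $\partial M$ by Riemann--Lebesgue and hence cannot be nonvanishing on any set meeting the boundary.
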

\begin{proof}
  The second statement follows from
  \[
    1-\frac{1}{1+u}=\frac{u}{1+u}\in\Hb^{s,\alpha}(M).\qedhere
  \]
\end{proof}

We also obtain the following result on the inversion of non-smooth elliptic symbols:

\begin{prop}
\label{PropSymbolDivision}
  Let $s>n/2+1$, $m\in\R$, $k\in\N_0$.
  \begin{enumerate}
    \item Suppose $p(z,\zeta)\in S^{m;k}\Hb^s(\Rnhalf;\Hom(E,F))$ and $a(z,\zeta)\in S^0$ are such that $\|p(z,\zeta)^{-1}\|_{\Hom(F,E)}\leq c_0\la\zeta\ra^{-m}$, $c_0<\infty$, on $\supp a$. Then
	  \[
	    ap^{-1}\in S^{-m;k}\Hb^s(\Rnhalf;\Hom(F,E)).
	  \]
	 \item Let $\alpha\geq 0$. Suppose that $p'(z,\zeta)\in S^{m;k}\Hb^{s,\alpha}(\Rnhalf;\Hom(E,F))$, $p''(z,\zeta)\in S^m(\Rnhalf;\Hom(E,F))$ and $a(z,\zeta)\in S^0$ are such that
	 \[
	   \|(p'')^{-1}\|_{\Hom(F,E)},\|(p'+p'')^{-1}\|_{\Hom(F,E)}\leq c_0\la\zeta\ra^{-m}\tn{ on }\supp a.
	 \]
	 Then
	 \[
	   a(p'+p'')^{-1}\in a(p'')^{-1}+S^{-m;k}\Hb^{s,\alpha}(\Rnhalf;\Hom(F,E)).
	 \]
  \end{enumerate}
\end{prop}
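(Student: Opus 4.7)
The overall plan is to prove (1) by an elliptic-inversion argument that reduces pointwise in $\zeta$ to the reciprocal result Lemma~\ref{LemmaRecHs}, and to extend to higher derivatives in $\zeta$ by an induction using the Leibniz-type multiplication of Lemma~\ref{LemmaSymbolMult}. Part (2) should then follow from the algebraic resolvent identity
\[
  a(p'+p'')^{-1} - a(p'')^{-1} = -a(p'')^{-1}\,p'\,(p'+p'')^{-1}
\]
combined with (1) and the weighted analogue of the multiplication rules.

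For the base case $k=0$ of (1), I would normalize $\tilde p := \la\zeta\ra^{-m}p$, so that $\tilde p\in L^\infty_\zeta(\Hb^s_z)$ and $\|\tilde p^{-1}\|\leq c_0$ uniformly on $\supp a$. In the scalar case, Lemma~\ref{LemmaRecHs} applied at each fixed $\zeta$ with $u=\tilde p(\cdot,\zeta)$, $w=a(\cdot,\zeta)$ (and vanishing smooth part) immediately yields $\|a\tilde p^{-1}(\cdot,\zeta)\|_{\Hb^s}\lesssim 1$ uniformly in $\zeta$, i.e.\ $ap^{-1}\in S^{-m;0}\Hb^s$. For matrix-valued $p$ I would reduce to the scalar case via $p^{-1}=(\det p)^{-1}\operatorname{adj}(p)$: the inequality $|\det p|\geq c\la\zeta\ra^{md}$ follows from the operator-norm hypothesis, and the entries of $\operatorname{adj}(p)$ are polynomial in entries of $p$, hence lie in $\Hb^s$ by the algebra property of Corollary~\ref{CorHbModule}.

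The induction on $|\beta|\leq k$ then uses the standard formula obtained by differentiating $pp^{-1}=I$ and applying Leibniz,
\[
  \pa_\zeta^\beta(p^{-1}) = -p^{-1}\sum_{\substack{\beta_1+\beta_2=\beta\\ \beta_1\neq 0}}\binom{\beta}{\beta_1}(\pa_\zeta^{\beta_1}p)\,\pa_\zeta^{\beta_2}(p^{-1}),
\]
together with Lemma~\ref{LemmaSymbolMult} to conclude $\pa_\zeta^\beta(ap^{-1})\in S^{-m-|\beta|;0}\Hb^s$. To justify these manipulations at points where $p$ may not be invertible, I would fix a cutoff $\chi\in S^0$ identically $1$ on $\supp a$ and supported where $p$ is still uniformly invertible, so that $\chi p^{-1}$ is globally well-defined by zero extension and $ap^{-1}=a\cdot\chi p^{-1}$.

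For (2), the resolvent identity reduces the claim to showing that
\[
  a(p'')^{-1}\,p'\,\chi(p'+p'')^{-1}\in S^{-m;k}\Hb^{s,\alpha}
\]
for an appropriate cutoff $\chi\equiv 1$ on $\supp a$. Here $a(p'')^{-1}$ is smooth in $S^{-m}$, $\chi(p'+p'')^{-1}\in S^{-m;k}\Hb^s$ by (1) applied to $p'+p''$, and $p'\in S^{m;k}\Hb^{s,\alpha}$ carries the weight; the weight then propagates through the product because $\Hb^s\cdot\Hb^{s,\alpha}\subset\Hb^{s,\alpha}$, which follows from writing $v=\tau^\alpha v_0$ and using the algebra property of $\Hb^s$. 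I expect the main obstacle to be the bookkeeping in the $k\geq 1$ case: tracking how the weight $\tau^\alpha$ and the correct symbol orders are preserved through the recursive three-factor formula for $\pa_\zeta^\beta[a(p'+p'')^{-1}]$, ensuring that at every step the $\tau^\alpha$ factor contributed by $p'$ survives the chain of multiplications and that the appropriate hypotheses of the weighted version of Lemma~\ref{LemmaSymbolMult} hold for each term in the resulting Leibniz expansion.
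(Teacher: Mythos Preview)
Your approach to part (1) is correct and essentially identical to the paper's: pointwise reduction to Lemma~\ref{LemmaRecHs}, Cramer's rule for the bundle case, and Leibniz-type recursion for the $\zeta$-derivatives.

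For part (2) there is a genuine gap. You write ``$\chi(p'+p'')^{-1}\in S^{-m;k}\Hb^s$ by (1) applied to $p'+p''$,'' but part (1) requires the symbol being inverted to lie in $S^{m;k}\Hb^s$, and $p'+p''$ does not: the smooth part $p''\in S^m$ is \emph{not} in $\Hb^s$ (for $s>n/2$, elements of $\Hb^s$ vanish at $x=0$ by Riemann--Lebesgue, while $p''$ typically does not). So part (1) simply does not apply here, and $\chi(p'+p'')^{-1}$ is not in $S^{-m;k}\Hb^s$---it is only in $S^{-m}+S^{-m;k}\Hb^{s,\alpha}$, which is precisely the content of part (2) you are trying to prove.

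The fix is easy and is what the paper does: factor $a(p'+p'')^{-1}=\bigl(a(p'')^{-1}\bigr)\bigl(I+p'(p'')^{-1}\bigr)^{-1}$, reduce to $p''=I$, and then analyze $(I+p')^{-1}-I=-p'(I+p')^{-1}$. The point is that Lemma~\ref{LemmaRecHs} is stated for denominators of the form $a+u$ with $a\in\CI$ and $u\in\Hb^s$, so it handles $I+p'$ directly (this is Corollary~\ref{CorReciprocal}(2) in the scalar case), whereas your invocation of part (1) only covers denominators purely in $\Hb^s$. Alternatively, you can keep your resolvent identity but analyze the product $p'\,(p'+p'')^{-1}$ directly via Lemma~\ref{LemmaRecHs} with $a=p''$, $u=w=p'$, rather than trying to control $(p'+p'')^{-1}$ on its own.
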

\begin{proof}
  By multiplying the symbols $p$ and $p'$ by $\la\zeta\ra^{-m}$, we may assume that $m=0$.

  \begin{enumerate}[leftmargin=\enummargin]
    \item Let us first treat the case of complex-valued symbols. By Corollary~\ref{CorReciprocal}, $a(\cdot,\zeta)/p(\cdot,\zeta)\in\Hb^s$, uniformly in $\zeta$; thus $a/p\in S^{0;0}\Hb^s$. Moreover, for $|\alpha|\leq k$,
	  \[
	    \pa_\zeta^\alpha\left(\frac{a}{p}\right)=\sum c_{\beta_1\cdots\gamma_\nu}\frac{\prod_{j=1}^\mu \pa_\zeta^{\beta_j}a \prod_{l=1}^\nu \pa_\zeta^{\gamma_l}p}{p^{\nu+1}},
	  \]
	  where the sum is over all $\beta_1+\cdots+\beta_\mu+\gamma_1+\cdots+\gamma_\nu=\alpha$ with $|\gamma_j|\geq 1$, $1\leq j\leq\nu$. Hence, using that $\Hb^s$ is an algebra and that the growth order of the numerator is $-|\alpha|$, we conclude, again by Corollary~\ref{CorReciprocal}, that $\pa_\zeta^\alpha(a/p)\in S^{-|\alpha|;0}\Hb^s$; thus $a/p\in S^{0;k}\Hb^s$.

	  If $p$ is bundle-valued, we obtain $ap^{-1}\in S^{0;0}\Hb^s(\Rnhalf;\Hom(F,E))$ using the explicit formula for the inverse of a matrix and Corollaries~\ref{CorHbModule} and \ref{CorReciprocal}; then, by virtue of
	  \[
	    \pa_\zeta(ap^{-1})=(\pa_\zeta a-ap^{-1}(\pa_\zeta p))p^{-1},
	  \]
	  similarly for higher derivatives, we get $ap^{-1}\in S^{0;k}\Hb^s(\Rnhalf;\Hom(F,E))$.
	
	\item Since $a(p'+p'')^{-1}=\bigl(a(p'')^{-1}\bigr)\bigl(I+p'(p'')^{-1}\bigr)^{-1}$, we may assume $p''=I$, $a\in S^0(\Rnhalf;\Hom(F,E))$ and $p'\in S^{0;k}\Hb^{s,\alpha}(\Rnhalf;\End(F))$, and we need to show
	\[
	  (I+p')^{-1}-I\in S^{0;k}\Hb^{s,\alpha}(\Rnhalf;\End(F)).
	\]
	But we can write
	\[
	  (I+p')^{-1}-I=-p'(I+p')^{-1},
	\]
	which is an element of $S^{0;0}\Hb^{s,\alpha}(\Rnhalf;\End(F))$ by assumption. Then, by an argument similar to the one employed in the first part, we obtain the higher symbol estimates.\qedhere
  \end{enumerate}
\end{proof}

%%%%%%%%%%%%%%%%%%%%%%%%%%%%%%%%%%%%%%%%%%%%%%%%%
\subsection{Compositions}
\label{SubsecHsComp}

Using the results of the previous subsection and the Cauchy integral formula, we can prove several results on the regularity of $F(u)$ for $F$ smooth or holomorphic and $u$ in a weighted b-Sobolev space. The main use of such results for us will be that they allow us to understand the regularity of the coefficients of wave operators associated to non-smooth metrics.

In all results in this section, we shall assume that $M$ is a compact $n$-dimensional manifold with boundary, $s>n/2+1$, and $\alpha\geq 0$.

\begin{prop}
\label{PropCompWithAnalytic}
  Let $u\in\Hb^{s,\alpha}(M)$. If $F\colon\Omega\to\C$ is holomorphic in a simply connected neighborhood $\Omega$ of $u(M)$, then $F(u)-F(0)\in\Hb^{s,\alpha}(M)$. Moreover, there exists $\eps>0$ such that $F(v)-F(0)\in\Hb^{s,\alpha}(M)$ depends continuously on $v\in\Hb^{s,\alpha}(M)$, $\|u-v\|_{\Hb^{s,\alpha}}<\eps$.
\end{prop}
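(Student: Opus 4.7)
Since $s>n/2+1$, the Sobolev embedding $\Hb^s(M)\hookrightarrow C^0(M)$ together with the boundedness of $\tau$ on the compact manifold $M$ gives $\Hb^{s,\alpha}(M)\hookrightarrow C^0(M)$ for $\alpha\geq 0$, so $u$ is continuous and $K:=u(M)$ is a compact subset of $\Omega$. The plan is to represent $F(u)-F(0)$ through Cauchy's integral formula and interpret the right-hand side as a Bochner integral taking values in $\Hb^{s,\alpha}(M)$. Assuming $0\in\Omega$ (automatic if $\alpha>0$, as then $u|_{\pa M}=0$ and hence $0\in K$), choose a smooth simple closed contour $\gamma\subset\Omega\setminus K$ not passing through $0$ whose bounded component contains $K\cup\{0\}$; then for every $x\in M$,
\[
  F(u(x))-F(0) = \frac{1}{2\pi i}\oint_\gamma F(w)\left(\frac{1}{w-u(x)}-\frac{1}{w}\right)dw.
\]

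For each $w\in\gamma$, the function $-u/w\in\Hb^{s,\alpha}(M)$ is bounded away from $-1$ uniformly in $w\in\gamma$ (the compact sets $K$ and $\gamma$ have positive distance), so Corollary~\ref{CorReciprocal}(2) yields
\[
  R(w) := \frac{1}{w-u}-\frac{1}{w} = \frac{1}{w}\left(\frac{1}{1-u/w}-1\right)\in\Hb^{s,\alpha}(M),
\]
and the explicit bound of Lemma~\ref{LemmaRecHs} gives $\sup_{w\in\gamma}\|R(w)\|_{\Hb^{s,\alpha}}<\infty$. Continuity of $w\mapsto R(w)$ as an $\Hb^{s,\alpha}$-valued map follows from the resolvent identity
\[
  R(w)-R(w') = (w'-w)\left(\frac{1}{(w-u)(w'-u)}-\frac{1}{ww'}\right)
\]
by expanding each factor $1/(w-u)$ as $1/w$ plus an $\Hb^{s,\alpha}$ remainder and invoking the fact that $\Hb^{s,\alpha}$ is an algebra for $\alpha\geq 0$ (via Corollary~\ref{CorHbModule} and $\Hb^{s,2\alpha}\subset\Hb^{s,\alpha}$). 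Hence $w\mapsto F(w)R(w)$ is a continuous map $\gamma\to\Hb^{s,\alpha}(M)$ of compact support, and its Bochner integral is an element of $\Hb^{s,\alpha}(M)$ which agrees pointwise with $F(u)-F(0)$.

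For continuous dependence, choose $\eps>0$ small enough that $\|u-v\|_{\Hb^{s,\alpha}}<\eps$ forces $v(M)$ to remain inside the bounded component of $\C\setminus\gamma$; this is possible by the $C^0$-embedding. The identity
\[
  F(v)-F(u) = \frac{1}{2\pi i}\oint_\gamma F(w)\frac{v-u}{(w-u)(w-v)}\,dw,
\]
together with the same decomposition of $1/(w-u)$ and $1/(w-v)$ into constant plus $\Hb^{s,\alpha}$ remainders and the uniform Lemma~\ref{LemmaRecHs} bounds, yields
\[
  \|F(v)-F(u)\|_{\Hb^{s,\alpha}} \leq C\|v-u\|_{\Hb^{s,\alpha}},
\]
which is the desired (in fact Lipschitz) continuous dependence.

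The main technical obstacle is organizing the multiplicative estimates so that every product and reciprocal remains in $\Hb^{s,\alpha}(M)$ with bounds that are uniform both over the compact contour parameter $w\in\gamma$ and over $v$ in an $\eps$-neighborhood of $u$. This is handled by systematically writing each reciprocal $1/(w-u)$ or $1/(w-v)$ as $1/w$ plus an $\Hb^{s,\alpha}$ remainder controlled by Lemma~\ref{LemmaRecHs}, and then exploiting that $\alpha\geq 0$ guarantees the stability of $\Hb^{s,\alpha}$ under such products (Corollary~\ref{CorHbModule}). Once these uniform estimates are in place, the remainder of the argument is a standard Bochner integral calculation in the Banach space $\Hb^{s,\alpha}(M)$.
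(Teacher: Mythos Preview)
Your proof is correct and follows essentially the same route as the paper: a Cauchy integral representation combined with Lemma~\ref{LemmaRecHs}/Corollary~\ref{CorReciprocal} to show the integrand is a continuous $\Hb^{s,\alpha}$-valued function of the contour parameter, then a Bochner integral. The only cosmetic difference is that the paper first factors $F(z)-F(0)=zF_1(z)$ and writes the integrand as $F_1(\zeta)\,u/(\zeta-u)$, applying Lemma~\ref{LemmaRecHs} directly with $w=u$, whereas you keep $F$ and use the difference $\tfrac{1}{w-u}-\tfrac{1}{w}=\tfrac{u}{w(w-u)}$; these are the same object up to the harmless scalar factor $1/w$. Your parenthetical about $0\in\Omega$ being automatic for $\alpha>0$ is in fact valid for $\alpha=0$ as well, since $\Hb^s$-functions with $s>n/2$ already vanish at $\pa M$ by Riemann--Lebesgue.
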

\begin{proof}
  Observe that $u(M)$ is compact. Let $\gamma\subset\C$ denote a smooth contour which is disjoint from $u(M)$, has winding number $1$ around every point in $u(M)$, and lies within the region of holomorphicity of $F$. Then, writing $F(z)-F(0)=zF_1(z)$ with $F_1$ holomorphic in $\Omega$, we have
  \[
    F(u)-F(0)=\frac{u}{2\pi i}\oint_\gamma F_1(\zeta)\frac{1}{\zeta-u}\,d\zeta,
  \]
  Since $\gamma\ni\zeta\mapsto u/(\zeta-u)\in\Hb^{s,\alpha}(M)$ is continuous by Lemma~\ref{LemmaRecHs}, we obtain the desired conclusion $F(u)-F(0)\in\Hb^{s,\alpha}$.
  
  The continuous dependence of $F(v)-F(0)$ on $v$ near $u$ is a consequence of Lemma~\ref{LemmaRecHs} and Corollary~\ref{CorHbModule}.
\end{proof}

\begin{prop}
\label{PropCompWithAnalytic2}
  Let $u'\in\CI(M)$, $u''\in\Hb^{s,\alpha}(M)$; put $u=u'+u''$. If $F\colon\Omega\to\C$ is holomorphic in a simply connected neighborhood $\Omega$ of $u(M)$, then $F(u)\in\CI(M)+\Hb^{s,\alpha}(M)$; in fact, $F(v)$ depends continuously on $v$ in a neighborhood of $u$ in the topology of $\CI(M)+\Hb^{s,\alpha}(M)$.
\end{prop}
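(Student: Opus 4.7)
The plan is to mimic the Cauchy integral argument of Proposition~\ref{PropCompWithAnalytic}, but with an extra subtraction that peels off the smooth part. Since $u(M)$ is compact and contained in $\Omega$, I pick a smooth contour $\gamma\subset\Omega$ disjoint from $u(M)$, with winding number $1$ about each point of $u(M)$, and write
\[
  F(u)=F(u')+\bigl(F(u)-F(u')\bigr)=F(u')+\frac{1}{2\pi i}\oint_\gamma F(\zeta)\Bigl(\frac{1}{\zeta-u}-\frac{1}{\zeta-u'}\Bigr)d\zeta.
\]
The first summand $F(u')$ is smooth by composition of smooth maps, so it lives in $\CI(M)$. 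The difference of reciprocals equals $u''/\bigl((\zeta-u)(\zeta-u')\bigr)$, so it remains to show that this integrand is a continuous $\Hb^{s,\alpha}(M)$-valued function of $\zeta\in\gamma$; then the contour integral lies in $\Hb^{s,\alpha}(M)$.

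To prove the $\Hb^{s,\alpha}$-membership, I factor
\[
  \frac{u''}{(\zeta-u)(\zeta-u')}=\frac{u''}{(\zeta-u')^{2}}\cdot\frac{1}{1-u''/(\zeta-u')}.
\]
The first factor is $\Hb^{s,\alpha}(M)$, continuously in $\zeta$, because $1/(\zeta-u')^2\in\CI(M)$ depends smoothly on $\zeta\in\gamma$ and $\Hb^{s,\alpha}(M)$ is a $\CI(M)$-module. For the second factor, the function $w_\zeta:=u''/(\zeta-u')\in\Hb^{s,\alpha}(M)$ is continuous in $\zeta$, and $1-w_\zeta=(\zeta-u)/(\zeta-u')$ is bounded away from $0$ uniformly on $\gamma\times M$ by the choice of $\gamma$; Corollary~\ref{CorReciprocal}(2) (applied with the small weight $\alpha\geq 0$) then gives $1/(1-w_\zeta)\in 1+\Hb^{s,\alpha}(M)$, depending continuously on $\zeta$ thanks to the Lipschitz estimate \eqref{EqHsReciprocalEstimate} of Lemma~\ref{LemmaRecHs}. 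Multiplying the two factors and using that $\Hb^{s,\alpha}\cdot\bigl(1+\Hb^{s,\alpha}\bigr)\subset\Hb^{s,\alpha}$ (by Corollary~\ref{CorHbModule}, since $\Hb^{s,\alpha}\subset\Hb^{s,0}$ for $\alpha\geq 0$) completes this step.

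For the continuity assertion, suppose $v=v'+v''$ is close to $u=u'+u''$ in the sense that $v'\to u'$ in $\CI(M)$ and $v''\to u''$ in $\Hb^{s,\alpha}(M)$. Then $v(M)$ stays inside the region enclosed by $\gamma$ for $v$ sufficiently near $u$, so the same contour may be used. The smooth summand $F(v')\to F(u')$ in $\CI(M)$ by smoothness of $F$ and of the composition map on smooth functions. For the $\Hb^{s,\alpha}$ summand, the integrand depends continuously on $(v',v'')$ into $\Hb^{s,\alpha}(M)$ by the same estimates above (again invoking \eqref{EqHsReciprocalEstimate} and Corollary~\ref{CorHbModule}), uniformly in $\zeta\in\gamma$; integrating over the compact contour preserves this continuity.

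The only step requiring care is verifying that $1/(1-w_\zeta)-1\in\Hb^{s,\alpha}(M)$ depends continuously on $(\zeta,u'')$ with values in $\Hb^{s,\alpha}(M)$, as opposed to merely $\Hb^{s,0}(M)$; this is where the decay order $\alpha$ must be tracked. The cleanest way is to write $1/(1-w_\zeta)-1=w_\zeta/(1-w_\zeta)$ and note that multiplication by the bounded $\Hb^s$-function $1/(1-w_\zeta)$ preserves $\Hb^{s,\alpha}$ by Lemma~\ref{LemmaRecHs}, whose estimate is manifestly continuous in the data. No step is genuinely hard; the main bookkeeping is showing uniformity in $\zeta$ and continuity in the data, which follows from the explicit estimate \eqref{EqHsReciprocalEstimate}.
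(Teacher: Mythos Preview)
Your approach is essentially the same Cauchy-integral strategy as the paper's, but there is a genuine gap: you assume that $F(u')$ is defined and smooth, and that $1/(\zeta-u')\in\CI(M)$ for $\zeta\in\gamma$. Both require $u'(M)\subset\Omega$ and $\gamma\cap u'(M)=\emptyset$, and neither is guaranteed by the hypotheses. The contour $\gamma$ is chosen disjoint from $u(M)$, not from $u'(M)$; since $u''$ need only vanish at $\partial M$, in the interior of $M$ the smooth part $u'=u-u''$ can take values far outside $\Omega$. In that case $F(u')$ is not even defined, and $(\zeta-u')^{-1}$ can blow up on $M$.

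The paper fixes exactly this by localizing: pick $\phi\in\CI(M)$ with $\phi\equiv 1$ near $\partial M$ and support so close to $\partial M$ that $u''$ is uniformly small there, hence $\gamma$ is disjoint from $\overline{u'(\supp\phi)}$. On $\supp\phi$ your computation then goes through verbatim and gives $\phi F(u)=\phi F(u')+\Hb^{s,\alpha}$. On $\supp(1-\phi)\subset M^\circ$, one instead replaces $u$ by $\tilde\phi u\in\Hb^s$ (with $\tilde\phi\in\CI_c(M^\circ)$, $\tilde\phi\equiv 1$ on $\supp(1-\phi)$) and applies the Cauchy integral directly to $(1-\phi)/(\zeta-\tilde\phi u)$, which lands in $\Hb^{s,\alpha}$ without reference to $u'$. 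Adding this localization step to your argument makes it correct; without it, the factorization $u''/(\zeta-u')^2\cdot 1/(1-u''/(\zeta-u'))$ is not justified globally on $M$.
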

\begin{proof}
  Let $\gamma\subset\C$ denote a smooth contour which is disjoint from $u(M)$, has winding number $1$ around every point in $u(M)$, and lies within the region of holomorphicity of $F$. Since $u''=0$ at $\pa M$ and $u''$ is continuous by the Riemann-Lebesgue lemma, we can pick $\phi\in\CI(M)$, $\phi\equiv 1$ near $\pa M$, such that $\gamma$ is disjoint from $\overline{u'(\supp\phi)}$. Then
  \begin{align*}
    \phi F(u)&=\frac{1}{2\pi i}\oint_\gamma \phi\frac{F(\zeta)/(\zeta-u')}{1-u''/(\zeta-u')}\,d\zeta \\
	  &=\frac{1}{2\pi i}\oint_\gamma \phi\frac{F(\zeta)}{\zeta-u'}\,d\zeta+\frac{1}{2\pi i}\oint_\gamma \phi\frac{(F(\zeta)/(\zeta-u'))u''}{(\zeta-u')-u''}\,d\zeta;
  \end{align*}
  the first term equals $\phi F(u')$, and the second term is an element of $\Hb^{s,\alpha}$ by Corollary~\ref{CorReciprocal}. Next, let $\wt\phi\in\CI(M)$ be identically equal to $1$ on $\supp(1-\phi)$, and $\wt\phi\equiv 0$ near $\pa M$. Then $\wt\phi u\in\Hb^s$; in fact, it lies in any weighted such space. Thus,
  \[
    (1-\phi)F(u)=\frac{1}{2\pi i}\oint_\gamma\frac{(1-\phi)F(\zeta)}{\zeta-\wt\phi u}\,d\zeta \in\Hb^{s,\alpha},
  \]
  and the proof is complete.
\end{proof}

If we only consider $F(u)$ for real-valued $u$, it is in fact sufficient to assume $F\in\CI(\R;\C)$ using almost analytic extensions, see e.g.\ Dimassi and Sj\"ostrand \cite[Chapter~8]{DimassiSpectralAsymptotics}: For any such function $F$ and an integer $N\in\N$, let us define
\begin{equation}
\label{EqAlmostAnalytic}
  \wt F_N(x+iy)=\sum_{k=0}^N\frac{(iy)^k}{k!}(\pa_x^k F)(x)\chi(y),\quad x,y\in\R,
\end{equation}
where $\chi\in\CI_\cl(\R)$ is identically $1$ near $0$. Then, writing $z=x+iy$, we have for $y$ close to $0$:
\begin{equation}
\label{EqAlmostAnalyticBar}
  \pa_{\bar z}\wt F_N(z)=\frac{1}{2}(\pa_x+i\pa_y)\wt F_N(z)=\frac{(iy)^N}{2N!}(\pa_x^{N+1}F)(x)\chi(y)=\cO(|\Im z|^N).
\end{equation}
Observe that all $u\in\CI(M)+\Hb^{s,\alpha}(M)$ are bounded, hence in analyzing $F(u)$, we may assume without restriction that $F\in\CI_\cl(\R;\C)$.

\begin{prop}
\label{PropCompWithSmooth}
  Let $F\in\CI_\cl(\R;\C)$. Then for $u\in\Hb^{s,\alpha}(M;\R)$, we have $F(u)-F(0)\in\Hb^{s,\alpha}(M)$; in fact, $F(u)-F(0)$ depends continuously on $u$.
\end{prop}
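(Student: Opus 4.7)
The plan is to reduce to the holomorphic case (Proposition~\ref{PropCompWithAnalytic}) via almost analytic extensions. First write $F(z) - F(0) = z G(z)$ for some $G \in \CI_\cl(\R;\C)$; by factoring out the weight from $u$, the module property of $\Hb^s$ over itself (Corollary~\ref{CorHbModule}) implies $\Hb^{s,\alpha}(M)$ is an $\Hb^s(M)$-module, so it suffices to prove $G(u) \in \Hb^s(M)$ with continuous dependence on $u \in \Hb^{s,\alpha}(M;\R)$, since multiplication by $u$ then puts $F(u)-F(0) = u\cdot G(u)$ into $\Hb^{s,\alpha}(M)$.

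Since $s > n/2$, Sobolev embedding makes $u$ continuous, so $u(M)$ is compact in $\R$. Choose a smoothly bounded domain $D \Subset \C$ with $u(M) \subset D$, set $\gamma = \pa D$, and let $\tilde G_N$ be the almost analytic extension \eqref{EqAlmostAnalytic} of $G$ for an integer $N$ to be determined. The Cauchy--Pompeiu formula applied to $\tilde G_N$ at each point $u(p) \in D$ gives the pointwise identity
\[
  G(u) = \frac{1}{2\pi i}\oint_\gamma \frac{\tilde G_N(\zeta)}{\zeta - u}\,d\zeta - \frac{1}{\pi}\iint_D \frac{\pa_{\bar\zeta}\tilde G_N(\zeta)}{\zeta - u}\,dL(\zeta).
\]

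The contour integral is handled as in Proposition~\ref{PropCompWithAnalytic}: on $\gamma$ the distance from $\zeta$ to $u(M)$ is bounded below, so Lemma~\ref{LemmaRecHs} (applied with $w \equiv 1$ and $a = \zeta$; note $1 \in \Hb^s(M)$ by compactness of $M$) shows $\zeta \mapsto 1/(\zeta-u) \in \Hb^s(M)$ continuously, producing a Bochner integral in $\Hb^s(M)$. The main obstacle is the area integral, where $\zeta$ may approach the real axis and hence $u(M)$: Lemma~\ref{LemmaRecHs} combined with $\|1/(\zeta-u)\|_{L^\infty(M)} \leq 1/|\Im\zeta|$ (valid because $u$ is real) yields the bound
\[
  \|1/(\zeta-u)\|_{\Hb^s(M)} \leq C(\|u\|_{\Hb^s})\,|\Im\zeta|^{-(\lceil s\rceil+1)},
\]
uniformly for $u$ in bounded subsets of $\Hb^s(M)$, while \eqref{EqAlmostAnalyticBar} gives $|\pa_{\bar\zeta}\tilde G_N(\zeta)| = \cO(|\Im\zeta|^N)$. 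Taking $N \geq \lceil s\rceil + 2$ makes the $\Hb^s$-norm of the area integrand integrable on $D$, so this term too is a Bochner integral in $\Hb^s(M)$, completing the existence part. Continuous dependence of $G(u)$ on $u \in \Hb^{s,\alpha}(M;\R)$ follows from dominated convergence using the same majorant, together with the continuous dependence asserted in Lemma~\ref{LemmaRecHs}.
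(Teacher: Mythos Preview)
There is a genuine gap: your reduction to showing $G(u)\in\Hb^s(M)$ fails because constants are \emph{not} in $\Hb^s(M)$ on a compact manifold with boundary. The b-density $\frac{dx}{x}\,dy$ has infinite volume near $\pa M$, so $1\notin L^2_\bl(M)=\Hb^0(M)$, and hence $1\notin\Hb^s(M)$ for any $s\geq 0$. Consequently, Lemma~\ref{LemmaRecHs} cannot be applied with $w\equiv 1$, and in fact $1/(\zeta-u)$ is not in $\Hb^s(M)$ for any $\zeta$: since $u\to 0$ at $\pa M$ by Riemann--Lebesgue, $1/(\zeta-u)\to 1/\zeta\neq 0$ there, so it fails to be $L^2_\bl$. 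The same issue contaminates the area integral. More fundamentally, $G(u)$ itself is typically not in $\Hb^s(M)$: near $\pa M$ it approaches the nonzero constant $G(0)=F'(0)$. (A minor secondary slip: $G$ need not lie in $\CI_\cl$, since for large $|x|$ one has $G(x)=-F(0)/x$; this is harmless for your argument on the bounded domain $D$, but worth noting.)

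The paper's proof avoids this by never separating off the factor $u$: it applies Lemma~\ref{LemmaRecHs} with $w=u\in\Hb^{s,\alpha}$ to the quotient $u/(\zeta-u)$ directly, which \emph{is} in $\Hb^{s,\alpha}$ since the numerator provides the required decay at $\pa M$. The paper also integrates over all of $\C$ (using that $\pa_{\bar\zeta}(\tilde F_1)_N$ is compactly supported), so no boundary contour term appears at all. Your argument can be repaired along these lines: keep the factor $u$ inside the integrand, so that every occurrence of $1/(\zeta-u)$ is replaced by $u/(\zeta-u)$, and then both your contour and area integrals become Bochner integrals in $\Hb^{s,\alpha}$.
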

\begin{proof}
  Write $F(x)-F(0)=xF_1(x)$. Then, with $(\wt F_1)_N$ defined as in~\eqref{EqAlmostAnalytic}, the Cauchy-Pompeiu formula gives the pointwise identity
  \[
    F(u)-F(0)=-\frac{u}{\pi}\int_\C\frac{\pa_{\bar\zeta}(\wt F_1)_N(\zeta)}{\zeta-u}\,dx\,dy,\quad \zeta=x+iy.
  \]
  Here, note that the integrand is compactly supported, and $1/(\zeta-u(z))$ is locally integrable for all $z$. In particular, we can rewrite
  \begin{equation}
  \label{EqFuF0Limit}
    F(u)-F(0)=-\frac{1}{\pi}\lim_{\delta\searrow 0}\int_{|\Im\zeta|>\delta}\pa_{\bar\zeta}(\wt F_1)_N(\zeta)\frac{u}{\zeta-u}\,dx\,dy.
  \end{equation}
  Now Lemma~\ref{LemmaRecHs} gives
  \[
    \Bigl\|\frac{u}{\zeta-u}\Bigr\|_{\Hb^{s,\alpha}}\lesssim C(\|u\|_{\Hb^{s,\alpha}})|\Im\zeta|^{-s-2},
  \]
  since $u$ is real-valued. Thus, if we choose $N\geq s+2$, then
  \[
    \C\setminus\R\ni\zeta\mapsto\pa_{\bar\zeta}(\wt F_1)_N(\zeta)\frac{u}{\zeta-u}\in\Hb^{s,\alpha}
  \]
  is bounded by~\eqref{EqAlmostAnalyticBar}, hence integrable, and therefore the limit in~\eqref{EqFuF0Limit} exists in $\Hb^{s,\alpha}$, proving the proposition.
\end{proof}

We also have an analogue of Proposition~\ref{PropCompWithAnalytic2}.
\begin{prop}
\label{PropCompWithSmooth2}
  Let $F\in\CI_\cl(\R;\C)$, and $u'\in\CI(M;\R),u''\in\Hb^{s,\alpha}(M;\R)$; put $u=u'+u''$. Then $F(u)\in\CI(M)+\Hb^{s,\alpha}(M)$; in fact, $F(u)$ depends continuously on $u$.
\end{prop}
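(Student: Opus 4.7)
The strategy is to combine the almost-analytic-extension representation from the proof of Proposition~\ref{PropCompWithSmooth} with the partial fraction trick from Proposition~\ref{PropCompWithAnalytic2}. Let $\tilde F_N$ be an almost analytic extension of $F$ as in~\eqref{EqAlmostAnalytic}. Since $F\in\CI_\cl(\R;\C)$, the Cauchy--Pompeiu formula gives the pointwise identity
\[
  F(u(z)) = -\frac{1}{\pi}\int_\C \pa_{\bar\zeta}\tilde F_N(\zeta)\,\frac{1}{\zeta-u(z)}\,dx\,dy, \qquad z\in M,
\]
where, just as in the proof of Proposition~\ref{PropCompWithSmooth}, $1/(\zeta-u(z))$ is locally integrable in $\zeta\in\C$.

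Inserting the algebraic identity
\[
  \frac{1}{\zeta-u} = \frac{1}{\zeta-u'} + \frac{u''}{(\zeta-u')(\zeta-u)}
\]
under the integral, the first summand contributes exactly $F(u')\in\CI(M)$ via Cauchy--Pompeiu applied to the smooth real-valued function $u'$. It remains to show that
\[
  F(u) - F(u') = -\frac{1}{\pi}\int_\C\pa_{\bar\zeta}\tilde F_N(\zeta)\,h_\zeta(z)\,dx\,dy, \qquad h_\zeta(z) := \frac{u''(z)}{(\zeta-u'(z))(\zeta-u(z))},
\]
defines an element of $\Hb^{s,\alpha}(M)$.

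For fixed $\zeta$ with $\Im\zeta\ne 0$, factor $h_\zeta = (1/(\zeta-u'))\cdot(u''/(\zeta-u))$ and write $u''=\tau^\alpha w_0$ with $w_0\in\Hb^s(M;\R)$. Since $u$ is real, $|\zeta-u(z)|\geq|\Im\zeta|>0$ uniformly in $z\in M$, so Lemma~\ref{LemmaRecHs} applied with smooth part $a=\zeta-u'$ and perturbation $v=-u''$ gives $w_0/(\zeta-u)\in\Hb^s(M)$ with norm bounded polynomially in $|\Im\zeta|^{-1}$; consequently $u''/(\zeta-u)\in\Hb^{s,\alpha}(M)$ with the same type of bound. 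The smooth multiplier $1/(\zeta-u')$ has $\CI$-seminorms likewise polynomially bounded in $|\Im\zeta|^{-1}$, and preserves the weighted b-Sobolev space by Corollary~\ref{CorHbModule}. This yields $\|h_\zeta\|_{\Hb^{s,\alpha}(M)}\leq C(u)\,|\Im\zeta|^{-M}$ for some $M=M(s)$, while $\pa_{\bar\zeta}\tilde F_N(\zeta)=\cO(|\Im\zeta|^N)$ has compact $\zeta$-support by~\eqref{EqAlmostAnalyticBar}. Taking $N$ sufficiently large makes the integrand Bochner-integrable with values in $\Hb^{s,\alpha}(M)$, so the integral converges there and $F(u)-F(u')\in\Hb^{s,\alpha}(M)$, completing the existence claim.

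The continuous dependence of $F(u)$ on $u=u'+u''$ follows by running the same estimates on differences: $u'\mapsto F(u')$ is continuous on $\CI(M)$, and the integrand $h_\zeta$ depends continuously on $(u',u'')$ thanks to the Lipschitz-type bounds in Lemma~\ref{LemmaRecHs} and Corollary~\ref{CorHbModule}, which are locally uniform in the data. The main obstacle is securing the polynomial $|\Im\zeta|^{-M}$ bound on the \emph{weighted} $\Hb^{s,\alpha}$-norm of $h_\zeta$ as $\Im\zeta\to 0$; this is why the factorization $u''=\tau^\alpha w_0$ is crucial, reducing the weighted estimate to the unweighted reciprocal bound already supplied by Lemma~\ref{LemmaRecHs}.
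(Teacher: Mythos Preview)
Your proof is correct and follows essentially the same route as the paper: both use the Cauchy--Pompeiu formula with an almost analytic extension, split off $F(u')\in\CI(M)$ via the identity $\frac{1}{\zeta-u}=\frac{1}{\zeta-u'}+\frac{u''}{(\zeta-u')(\zeta-u)}$, and show the remaining integral converges in $\Hb^{s,\alpha}$ by balancing the $\cO(|\Im\zeta|^N)$ vanishing of $\pa_{\bar\zeta}\tilde F_N$ against the polynomial blow-up of $\|h_\zeta\|_{\Hb^{s,\alpha}}$ from Lemma~\ref{LemmaRecHs}. The only cosmetic differences are that the paper makes the multiplier bound on $1/(\zeta-u')$ explicit via interpolation between integer orders (yielding the exponent $|\Im\zeta|^{-s-1}$) while you are content with an unspecified polynomial bound, and that your factorization $u''=\tau^\alpha w_0$ reduces the weighted estimate to the unweighted Lemma~\ref{LemmaRecHs} more explicitly than the paper does; one minor quibble is that Corollary~\ref{CorHbModule} is about $\Hb^s$-multipliers rather than smooth ones, so the boundedness of $1/(\zeta-u')$ on $\Hb^{s,\alpha}$ is better justified directly from its $C^k$-seminorms.
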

\begin{proof}
  As in the proof of the previous proposition, we have the pointwise identity
  \begin{align*}
    F(u'&+u'')-F(u') \\
	  &=-\frac{1}{\pi}\lim_{\delta\searrow 0}\int_{|\Im\zeta|>\delta}\pa_{\bar\zeta}(\wt F_1)_N(\zeta)\biggl(\frac{1}{\zeta-u'-u''}-\frac{1}{\zeta-u'}\biggr)\,dx\,dy \\
	  &=-\frac{1}{\pi}\lim_{\delta\searrow 0}\int_{|\Im\zeta|>\delta}\frac{\pa_{\bar\zeta}(\wt F_1)_N(\zeta)}{\zeta-u'}\cdot\frac{u''}{(\zeta-u')-u''}\,dx\,dy
  \end{align*}
  Writing $f_N:=\pa_{\bar\zeta}(\wt F_1)_N$, we estimate the $\Hb^{s,\alpha}$-norm of the integrand for $\zeta\in\C\setminus\R$ using Lemma~\ref{LemmaRecHs} by
  \[
    \left\|\frac{f_N(\zeta)}{\zeta-u'}\right\|_{\cL(\Hb^{s,\alpha})} \left\|\frac{u''}{(\zeta-u')-u''}\right\|_{\Hb^{s,\alpha}} \lesssim \left\|\frac{f_N(\zeta)}{\zeta-u'}\right\|_{\cL(\Hb^{s,\alpha})} |\Im\zeta|^{-s-2};
  \]
  here, we denote by $\|h\|_{\cL(\Hb^{s,\alpha})}$, for a function $h$, the operator norm of multiplication by $h$ on $\Hb^{s,\alpha}$. We claim that the operator norm
  \[
    b_s:=\left\|\frac{f_N(\zeta)}{\zeta-u'}\right\|_{\cL(\Hb^{s,\alpha})}
  \]
  is bounded by $|\Im\zeta|^{N-s-1}$; then choosing $N\geq 2s+3$ finishes the proof as before. To prove this bound, we use interpolation: First, since $u'$ is real-valued, we have $b_0=\cO(|\Im\zeta|^{-1}|f_N(\zeta)|)=\cO(|\Im\zeta|^{N-1})$ by~\eqref{EqAlmostAnalyticBar}. Next, for integer $k\geq 1$, the Leibniz rule gives
  \[
    b_k\lesssim\sum_{j=0}^k |\Im\zeta|^{-1-j}|\pa_x^{k-j}f_N(\zeta)|\lesssim|\Im\zeta|^{N-k-1},
  \]
  where we use that $|\pa_x^\ell f_N(\zeta)|=\cO(|\Im\zeta|^N)$ for all $\ell$, as follows directly from the definition of $f_N$. By interpolation, we thus obtain $b_s\lesssim|\Im\zeta|^{N-s-1}$, as claimed.
\end{proof}

%%%%%%%%%%%%%%%%%%%%%%%%%%%%%%%%%%%%%%%%%%%%%%%%%%%%%%%%%%%%%%%%%%%%%
\section{Elliptic regularity}
\label{SecEllipticReg}

With the partial calculus developed in \S\ref{SecCalculus}, it is straightforward to prove elliptic regularity for b-Sobolev b-pseudodifferential operators. Notice that operators with coefficients in $\Hb^s$ for $s>n/2$ must vanish at the boundary by the Riemann-Lebesgue lemma, thus they cannot be elliptic there. A natural class of operators which can be elliptic at the boundary is obtained by adding smooth b-\psdo{}s to b-Sobolev b-\psdo{}s, and we will deal with such operators in the second part of the following theorem.

\begin{thm}
\label{ThmEllipticReg}
  Let $m,s,r\in\R$ and $\zeta_0\in\Sb^*\Rnhalf$. Suppose $\wt P=\wt P_m+\wt R$, where $\wt P_m\in\Hb^s\Psib^m(\Rnhalf;E,F)$ has principal symbol $\wt p$, and $\wt R\in\fpsib^{m-1;0}\Hb^{s-1}(\Rnhalf;E,F)$.
  \begin{enumerate}
    \item Let $P=\wt P$, and suppose $p\equiv\wt p$ is elliptic at $\zeta_0$, or
	\item let $P=P_0+\wt P$, where $P_0\in\Psib^m(\Rnhalf;E,F)$ has principal symbol $p_0$, and suppose $p=\wt p+p_0$ is elliptic at $\zeta_0$.
  \end{enumerate}
	Let $\wt s\in\R$ be such that $\wt s\leq s-1$ and $s>n/2+1+(-\wt s)_+$. Then in both cases, if $u\in\Hb^{\wt s+m-1,r}(\Rnhalf;E)$ satisfies
	\[
	  Pu=f\in\Hb^{\wt s,r}(\Rnhalf;F),
	\]
	it follows that $\zeta_0\notin\WFb^{\wt s+m,r}(u)$.
\end{thm}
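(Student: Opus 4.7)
\emph{Proof plan.} The strategy is to construct a non-smooth microlocal parametrix for $P$ at $\zeta_0$ and then invoke the characterization of the wavefront set from Lemma~\ref{LemmaWFbCharacterization}. First, reduce to $r=0$ by conjugating with $x^r$: each of the symbol classes $S^m$, $\Hb^s S_\bl^m$, $S_\bl^{m-1;0}\Hb^{s-1}$ is entire in the Mellin variable with symbol estimates locally uniform in $\Im\lambda$, so conjugation preserves $\Psib^m$, $\Hb^s\Psib^m$, $\fpsib^{m-1;0}\Hb^{s-1}$ with unchanged principal symbols. Replacing $u$ by $x^{-r}u$ and $P$ by $x^{-r}Px^r$, it suffices to show $\zeta_0\notin\WFb^{\tilde s+m}(u)$ under the hypotheses $u\in\Hb^{\tilde s+m-1}$ and $Pu\in\Hb^{\tilde s}$.

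For the parametrix, choose a smooth $A\in\Psib^{\tilde s+m}$ with symbol $a\in S^{\tilde s+m}$ supported in a small conic neighborhood $\Gamma$ of $\zeta_0$ lying inside the ellipticity region of $p_m$, with $a$ elliptic at $\zeta_0$. Proposition~\ref{PropSymbolDivision}(1) in case~(1)---respectively Proposition~\ref{PropSymbolDivision}(2) in case~(2), after isolating the smooth principal symbol $p''_m$---yields $q:=a/p_m\in S^{\tilde s;k}\Hb^s$ for any $k\in\N_0$; in case~(2), $q$ splits as a smooth $S^{\tilde s}$-symbol plus a non-smooth term in $S^{\tilde s;k}\Hb^s$. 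Quantize to $Q:=q(z,\Db)$. By Proposition~\ref{PropOpCont} and the hypothesis $s>n/2+1+(-\tilde s)_+$, the operator $Q$ maps $\Hb^{\tilde s}\to L^2_\bl$, so $Qf\in L^2_\bl$.

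Next, compute $QP$ by applying Theorem~\ref{ThmComp} to each of $QP',QR$ in case~(1), or $QP',QP'',QR$ in case~(2). The leading term of the expansion is $(qp_m)(z,\Db)=A$, while every remaining expansion term together with the composition remainders collects into an operator $E$ of symbolic order at most $\tilde s+m-1$ whose coefficients have regularity $\Hb^{s-1}$ (or better, once $k$ is taken large enough in Theorem~\ref{ThmComp} to push the pure remainder to sufficiently negative order). The constraints $\tilde s\leq s-1$ and $s>n/2+1+(-\tilde s)_+$ are precisely what Proposition~\ref{PropOpCont} requires to conclude $E\colon\Hb^{\tilde s+m-1}\to L^2_\bl$. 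Then
\[
  Au=QPu-Eu=Qf-Eu\in L^2_\bl,
\]
and since $A\in\Psib^{\tilde s+m}$ is elliptic at $\zeta_0$, Lemma~\ref{LemmaWFbCharacterization} gives $\zeta_0\notin\WFb^{\tilde s+m}(u)$, as required. The principal technical obstacle is the bookkeeping within Theorem~\ref{ThmComp}: the expansion lengths ($k$ and the auxiliary $k'$) must be chosen compatibly so that every intermediate piece of $E$ falls into an operator class acting $\Hb^{\tilde s+m-1}\to L^2_\bl$, and this trade-off between symbolic order and Sobolev coefficient regularity is exactly the source of the hypothesis bounds on $\tilde s$ and $s$.
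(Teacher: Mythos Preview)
Your proof is correct and follows essentially the same parametrix construction as the paper: divide a cutoff symbol by $p_m$ using Proposition~\ref{PropSymbolDivision}, compose with $P$ via Theorem~\ref{ThmComp}, and verify that the remainder maps $\Hb^{\tilde s+m-1}\to L^2_\bl$ (resp.\ $\Hb^{\tilde s}$) under exactly the stated constraints on $s,\tilde s$. The only cosmetic differences are that the paper normalizes the parametrix to order~$0$ (concluding $a_0(z,\Db)\Lambda_m u\in\Hb^{\tilde s}$ rather than $Au\in L^2_\bl$) and absorbs $Ru$ into $f$ at the outset instead of carrying $QR$ through the composition.

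One small point you glossed over in case~(2): your invocation of Proposition~\ref{PropSymbolDivision}(2) requires $p''_m$ itself to be elliptic on $\supp a$, not just $p_m=p'_m+p''_m$. This is automatic when $\zeta_0\in\Tb^*_{\pa\Rnhalf}\Rnhalf$, since $p'_m$ vanishes at the boundary and hence $p_m=p''_m$ there, but over the interior it can fail. The paper handles this by observing that away from $\pa\Rnhalf$ one has $\Psib^m\subset\Hb^s\Psib^m$ locally, so $P''$ can be absorbed into $P'$ and case~(1) applies directly. You should insert this case distinction before appealing to Proposition~\ref{PropSymbolDivision}(2).
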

\begin{proof}
  We will only prove the theorem without bundles; adding bundles only requires simple notational changes. In both cases, we can assume that $r=0$ by conjugating $P$ by $x^{-r}$; moreover, $\wt R u\in\Hb^{\wt s}$ by Proposition~\ref{PropOpCont} by the assumptions on $s$ and $\wt s$, thus we can absorb $\wt R u$ into the right hand side and hence assume $\wt R=0$. Choose $a_0\in S^0$ elliptic at $\zeta_0$ such that $p$ is elliptic on $\supp a_0$, and non-vanishing there, which only matters near the zero section.
  \begin{enumerate}[leftmargin=\enummargin]
    \item \label{EnumPfEllRegInterior} Let $\lambda_m$ be as in Corollary~\ref{CorBStdOp}. By Proposition~\ref{PropSymbolDivision},
	  \[
	    q(z,\zeta):=a_0(z,\zeta)\lambda_m(\zeta)/p(z,\zeta)\in S^{0;\infty}\Hb^s.
	  \]
	  Put $Q=q(z,\Db)$. Then by Theorem~\ref{ThmComp} \itref{ThmCompRR}, using $P=\wt P_m\in\fpsi^{m;0}\Hb^s$,
	  \[
	    Q\circ P=a_0(z,\Db)\Lambda_m+R'
	  \]
	  with $R'\in\fpsi^{m-1;0}\Hb^{s-1}$, hence by\footnote{For $Qf\in\Hb^{\wt s}$, we need $s\geq\wt s$ and $s>n/2+(-\wt s)_+$. For $R'u\in\Hb^{\wt s}$, we need $s-1\geq\wt s$ and $s-1>n/2+(-\wt s)_+$.} Proposition~\ref{PropOpCont}
	  \[
	    a_0(z,\Db)\Lambda_m u=Qf-R'u\in\Hb^{\wt s}.
	  \]
	  Then standard microlocal ellipticity implies $\zeta_0\notin\WFb^{\wt s+m}(u)$.

	\item If $\zeta_0\notin\Tb_{\pa \Rnhalf}^*\Rnhalf$, then the proof of part (\ref{EnumPfEllRegInterior}) applies, since away from $\pa \Rnhalf$, one has $\Psib^m\subset\Hb^s\Psib^m$. Thus, assuming $\zeta_0\in\Tb_{\pa\Rnhalf}^*\Rnhalf$, we note that the ellipticity of $p$ at $\zeta_0$ implies $p_0\neq 0$ near $\zeta_0$, since the function $\wt p$ vanishes at $\pa\Rnhalf$. Therefore, Proposition~\ref{PropSymbolDivision} applies if one chooses $a_0\in S^0$ as in the proof of part (\ref{EnumPfEllRegInterior}), yielding
	  \[
	    q(z,\zeta):=a_0(z,\zeta)\lambda_m(\zeta)/p(z,\zeta)=\wt q_0(z,\zeta)+q_0(z,\zeta),
	  \]
	  where $\wt q_0\in S^{0;\infty} \Hb^s$, $q_0\in S^0$. Put $\wt Q_0=\wt q_0(z,\Db),Q_0=q_0(z,\Db)$, then
	  \[
	    (\wt Q_0+Q_0)\circ(\wt P_m+P_0)=a_0(z,\Db)\Lambda_m+R'
	  \]
	  with
	  \begin{align*}
	    R'\in &\ \fpsi^{m-1;0}\Hb^{s-1} + \fpsi^{m-1;0}\Hb^s + \fpsi^{m-1;0}\Hb^{s-1} + \Psib^{m-1} \\
		  &\hspace{5ex}\subset \fpsi^{m-1;0}\Hb^{s-1} + \Psib^{m-1},
	  \end{align*}
	  where the terms are the remainders of the first order expansions of $\wt Q_0\circ\wt P_m$, $\wt Q_0\circ P_0$, $Q_0\circ\wt P_m$ and $Q_0\circ P_0$, in this order; to see this, we use Theorem~\ref{ThmComp} \itref{ThmCompRR}, \itref{ThmCompRB}, \itref{ThmCompBR} and composition properties of b-\psdo{}s, respectively. Hence
	  \[
	    a_0(z,\Db)\Lambda_m u=\wt Q_0 f+Q_0 f-R'u\in\Hb^{\wt s},
	  \]
	  which implies $\zeta_0\notin\WFb^{\wt s+m}(u)$.\qedhere
  \end{enumerate}
\end{proof}

\begin{rmk}
\label{RmkEllipticLocalAssm}
  Notice that it suffices to have only local $\Hb^{\wt s,r}$-membership of $f$ near the base point of $\zeta_0$. Under additional assumptions, even microlocal assumptions are enough, see in particular \cite[Theorem~3.1]{BealsReedMicroNonsmooth}; we will not need this generality though.
\end{rmk}

%%%%%%%%%%%%%%%%%%%%%%%%%%%%%%%%%%%%%%%%%%%%%%%%%%%%%%%%%%%%%%%%%%%%%
\section{Propagation of singularities}
\label{SecPropagation}

We next study the propagation of singularities (equivalently the propagation of regularity) for certain classes of non-smooth operators. The results cover operators that are of real principal type (\S\ref{SubsecRealPrType}) or have a specific radial point structure (\S\ref{SubsecRadialPoints}). For a microlocally more complete picture, we also include a brief discussion of complex absorption (\S\ref{SubsubsecComplexAbsorption}).

The statements of the theorems and the ideas of their proofs are (mostly) standard in the context of smooth pseudodifferential operators; see for example H\"ormander \cite{HormanderAnalysisPDE} and Vasy \cite{VasyMicroKerrdS} for statements on manifolds without boundary and Hassell, Melrose and Vasy \cite{HassellMelroseVasySymbolicOrderZero}, Baskin, Vasy and Wunsch \cite{BaskinVasyWunschRadMink} as well as \cite{HintzVasySemilinear} for the propagation of b-regularity near radial points in various settings. Beals and Reed \cite{BealsReedMicroNonsmooth} discuss the propagation of singularities on manifolds without boundary for non-smooth \psdo{}s, and parts of \S\S\ref{SubsecSharpGarding} and \ref{SubsecRealPrType} follow their exposition closely.

%%%%%%%%%%%%%%%%%%%%%%%%%%%%%%%%%%%%%%%%%%%%%%%%%
\subsection{Sharp G\aa rding inequalities}
\label{SubsecSharpGarding}

We will need various versions of the sharp G\aa rding inequality, which will be used to obtain one-sided bounds for certain terms in positive commutator arguments later. For the first result, we follow the proof of \cite[Lemma~3.1]{BealsReedMicroNonsmooth}.

\begin{prop}
\label{PropGardingNonsmooth}
  Let $s,m\in\R$ be such that $s\geq 2-m$ and $s>n/2+2+m_+$, where $m_+=\max(m,0)$. Let $p(z,\zeta)\in S^{2m+1;2}\Hb^s(\Rnhalf;\End(E))$ be a symbol with non-negative real part, i.e.
  \[
    \Re\la p(z,\zeta)e,e\ra\geq 0\quad z\in\Rnhalf,\zeta\in\R^n, e\in E,
  \]
  where $\la\cdot,\cdot\ra$ is the inner product on the fibers of $E$. Then there is $C>0$ such that $P=p(z,\Db)$ satisfies the estimate
  \[
    \Re\la Pu,u\ra \geq -C\|u\|_{\Hb^m}^2,\quad u\in\CIdotc(\Rnhalf;E).
  \]
\end{prop}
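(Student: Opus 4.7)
The plan is to adapt the Friedrichs symmetrization (Anti-Wick quantization) approach of Beals--Reed \cite{BealsReedMicroNonsmooth} to the b-setting. The first step is to reduce to the case $m=0$ by conjugating with $\Lambda_{-m}$ from Corollary~\ref{CorBStdOp}, which I may as well take self-adjoint (since $\lambda_{-m}$ can be chosen real and positive). Setting $\tilde P := \Lambda_{-m} P \Lambda_{-m}$ and $v = \Lambda_m u$ (using an elliptic parametrix of $\Lambda_m$ to relate $u$ and $v$ modulo smoothing), Theorem~\ref{ThmComp} parts \itref{ThmCompBR} and \itref{ThmCompRB} place $\tilde P \in \fpsi^{1;2}\Hb^s$ modulo an operator of order $\leq 0$, and the principal symbol of $\tilde P$ is $\lambda_{-m}^2 p$, still with non-negative real part. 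The bound $\|u\|_{\Hb^m}^2 \sim \|v\|_{L^2_\bl}^2$, modulo terms absorbable on the right-hand side via $\|u\|_{\Hb^{m-1}}$, reduces the problem to showing $\Re\langle \tilde P v, v\rangle \geq -C\|v\|_{L^2_\bl}^2$. The hypothesis $s>n/2+2+m_+$ is precisely what is required for the composition remainders from this reduction to act on $L^2_\bl$ via Proposition~\ref{PropOpCont}.

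Next I would construct the Anti-Wick symmetrization of $\tilde P$. Work in logarithmic coordinates $t=\log x$ so the Mellin transform becomes the ordinary Fourier transform; fix $\phi\in\cS(\R^n)$ with $\|\phi\|_{L^2}=1$ and set $\phi_\zeta(z):=\la\zeta\ra^{n/4}\phi(\la\zeta\ra^{1/2} z)$. Define the windowed transform $\hat v_\phi(w,\zeta):=\la v, e^{i\zeta\cdot}\phi_\zeta(\cdot-w)\ra_{L^2_\bl}$, and set
\[
  \la \tilde P^F v, v\ra := \iint\bigl\la \tilde p(w,\zeta) \hat v_\phi(w,\zeta), \hat v_\phi(w,\zeta)\bigr\ra_E\,dw\,d\zeta,
\]
where $\tilde p$ is the symbol of $\tilde P$. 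The pointwise non-negativity of $\Re\la \tilde p(w,\zeta)\cdot,\cdot\ra_E$ gives $\Re\la \tilde P^F v, v\ra\geq 0$ directly.

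The remaining step is to show that $\tilde P - \tilde P^F \colon L^2_\bl \to L^2_\bl$ is bounded. The operator $\tilde P^F$ is the left quantization of a Wick symbol which is a smoothing of $\tilde p$ in both $z$ and $\zeta$ at Heisenberg scales $\la\zeta\ra^{-1/2}$ and $\la\zeta\ra^{1/2}$, respectively; a second-order Taylor expansion of $\tilde p(w,\xi)$ about $(z,\zeta)$, together with the vanishing of the first moments of $|\phi|^2$, produces
\[
  \tilde p^F(z,\zeta)-\tilde p(z,\zeta) \sim \tfrac{1}{2}\la\zeta\ra^{-1}\partial_z^2\tilde p + \partial_z\partial_\zeta \tilde p + \tfrac{1}{2}\la\zeta\ra\,\partial_\zeta^2 \tilde p,
\]
each term of reduced order $0$. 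The two $\zeta$-derivatives are exactly what $S^{1;2}\Hb^s$ provides, while the two $z$-derivatives place the coefficients in $\Hb^{s-2}$. By Proposition~\ref{PropOpCont}, the resulting difference operator lies in $\cL(L^2_\bl)$ precisely when $s-2>n/2$, which is the remaining content of the regularity hypothesis (in the $m=0$ reduction).

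The principal obstacle is making the above symbolic expansion rigorous when $\tilde p$ is only $\Hb^s$ in $z$: derivatives like $\partial_z^2 \tilde p$ must be treated distributionally, and the uniform control in $\zeta$ needed to interpret the resulting oscillatory integrals has to be supplied by the non-smooth composition calculus of Theorem~\ref{ThmComp}, rather than by classical symbolic manipulations. A related bookkeeping issue is that the cross-term $\partial_z\partial_\zeta \tilde p$ involves coefficients in $\Hb^{s-1}$ and requires $s-1>n/2$, which is strictly weaker than the $s>n/2+2$ threshold; it is the pure $\partial_z^2$ term that saturates the hypothesis. Finally, the passage between the b-structure and logarithmic coordinates, together with the contour shifts already used in the proof of Proposition~\ref{PropOpCont}, ensures that the estimate established in the translation-invariant model transfers back to the unweighted b-Sobolev statement of the proposition.
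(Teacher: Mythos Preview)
Your core idea---Friedrichs/Anti-Wick symmetrization in the spirit of Beals--Reed---is the same as the paper's. The gap is in the reduction to $m=0$. The paper does \emph{not} reduce; it builds the symmetrization $p_{\sym}(\Db,z,\Db)$ directly at order $2m+1$ and shows that the difference $r=p_{\sym}-p$ has symbol in $S^{2m;0}\Hb^{s-2}$; then Proposition~\ref{PropOpCont}, applied with target $\Hb^{-m}$ and source $\Hb^m$, uses \emph{exactly} the two stated hypotheses ($s-2\geq -m$, i.e.\ $s\geq 2-m$, and $s-2>n/2+m_+$). Both conditions enter at this single point, sharply.

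Your conjugation by $\Lambda_{-m}$ does not reproduce this. For $m<0$ the smooth factor $\Lambda_{-m}$ has positive order $-m$, and the remainder estimates in Theorem~\ref{ThmComp}~\itref{ThmCompBR} for $\Lambda_{-m}\circ(P\Lambda_{-m})$ force $k\geq -m+k'$; each extra unit of $k$ costs one unit of $z$-regularity in the remainder. Concretely, for $m=-3$ (so the hypotheses read $s\geq 5$, $s>n/2+2$) the best remainder you extract lies in $\fpsi^{-2;0}\Hb^{s-3}$, and $L^2$-boundedness via Proposition~\ref{PropOpCont} then needs $s-3>n/2$, which is \emph{not} implied by $s>n/2+2$. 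So the claim that $s>n/2+2+m_+$ is ``precisely what is required for the composition remainders from this reduction'' is wrong on the $m<0$ side; the reduction eats more regularity than the hypotheses supply.

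A second, smaller point: the paper's remainder analysis is carried out entirely on the Mellin--Fourier side. It Taylor-expands $\hat p(\eta;\xi)$ in $\xi$ about $\zeta$ (this is where the $S^{\cdot;2}$ assumption is spent) and Taylor-expands $F(\eta+\zeta,\xi)$ in $\eta$ about $0$ (the resulting $|\eta|,|\eta|^2$ factors are what drop the coefficient regularity from $\Hb^s$ to $\Hb^{s-2}$). No $z$-derivative of $p$ is ever taken explicitly, so the ``principal obstacle'' you flag---making $\partial_z^2\tilde p$ rigorous---simply does not arise in the paper's argument. If you drop the reduction and run your Anti-Wick computation at order $2m+1$ on the transform side, you recover the paper's proof.
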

\begin{proof}
  Let $q\in\CI_\cl(\R^n)$ be a non-negative even function, supported in $|\zeta|\leq 1$, with $\int q^2(\zeta)\,d\zeta=1$, and put
  \[
    F(\zeta,\xi)=\frac{1}{\la\zeta\ra^{n/4}}q\left(\frac{\xi-\zeta}{\la\zeta\ra^{1/2}}\right).
  \]
  Define the symmetrization of $p$ to be
  \[
    p_\sym(\eta,z,\zeta)=\int F(\eta,\xi)p(z,\xi)F(\zeta,\xi)\,d\xi.
  \]
  Observe that the integrand has compact support in $\xi$ for all $\eta,z,\zeta$, therefore $p_\sym$ is well-defined. Moreover,
  \[
    (p_\sym(\Db,z,\Db)u)\ftrans(\eta)=\int\wh p_\sym(\eta,\eta-\zeta,\zeta)\wh u(\zeta)\,d\zeta,
  \]
  hence, writing $u=(u_j)$, $p=(p_{ij})$, $p_\sym=((p_\sym)_{ij})$, and summing over repeated indices,
  \begin{align*}
    \Re&\la p_\sym(\Db,z,\Db)u,u\ra=\Re\iint \wh p_\sym(\eta,\eta-\zeta,\zeta)_{ij}\wh u(\zeta)_j\overline{\wh u_i(\eta)}\,d\zeta\,d\eta \\
	  &=\Re\iint\left(\int e^{i z\zeta}F(\zeta,\xi)\wh u(\zeta)\,d\zeta\right)_j\overline{\left(\int e^{i z\eta}F(\eta,\xi)\wh u(\eta)\,d\eta\right)_i} p_{ij}(z,\xi)\,d\xi\,dz \\
	  &=\iint\Re\bigl\langle p(z,\xi)F(\Db;\xi)u(z),F(\Db;\xi)u(z)\bigr\rangle\,d\xi\,dz\geq 0.
  \end{align*}
  Thus, putting $r(z,\Db)=p_\sym(\Db,z,\Db)-p(z,\Db)$, it suffices to show that $r(z,\zeta)\in S^{2m;0}\Hb^{s-2}(\Rnhalf;\End(E))$, i.e.
  \begin{equation}
  \label{EqGDesiredEstimate}
    \frac{\la\eta\ra^{s-2}\|\wh r(\eta;\zeta)\|_{\End(E)}}{\la\zeta\ra^{2m}}\leq r_0(\eta;\zeta),\quad r_0(\eta;\zeta)\in L^\infty_\zeta L^2_\eta.
  \end{equation}
  in order to conclude the proof, since Proposition~\ref{PropOpCont} then implies the continuity of $r(z,\Db)\colon \Hb^m(\Rnhalf;E)\to \Hb^{-m}(\Rnhalf;E)$. \emph{From now on, we will suppress the bundle $E$ in our notation and simply write $|\cdot|$ for $\|\cdot\|_{\End(E)}$.} Now, $r(z,\Db)$ acts on $\CIdotc$ by
  \[
    (r(z,\Db)u)\ftrans(\eta)=\int\wh r(\eta-\zeta,\zeta)\wh u(\zeta)\,d\zeta;
  \]
  hence
  \begin{align}
    \wh r(\eta;\zeta)&=\wh p_\sym(\eta+\zeta,\eta,\zeta)-\wh p(\eta;\zeta) \nonumber\\
\label{EqGRemX} &=\int F(\eta+\zeta,\xi)\wh p(\eta;\xi)F(\zeta,\xi)\,d\xi-\wh p(\eta;\zeta) \\
    \begin{split}
	\label{EqGRemII}
	  &=\int F(\eta+\zeta,\xi)\bigl(\wh p(\eta;\xi)-\wh p(\eta;\zeta)\bigr)F(\zeta,\xi)\,d\xi \\
	  &\quad +\int \bigl(F(\eta+\zeta,\xi)-F(\zeta,\xi)\bigr)\wh p(\eta;\zeta)F(\zeta,\xi)\,d\xi,
	\end{split}
  \end{align}
  where we use $\int F(\zeta,\xi)^2\,d\xi=1$. To estimate $\wh r(\eta;\zeta)$, we use that
  \[
    |\wh p(\eta;\zeta)|=\frac{\la\zeta\ra^{2m+1}}{\la\eta\ra^s}p_0(\eta;\zeta),\quad p_0(\eta;\zeta)\in L^\infty_\zeta L^2_\eta.
  \]
  We get a first estimate from \eqref{EqGRemX}:
  \begin{align*}
    |\wh r(\eta;\zeta)|\lesssim \int_S \frac{1}{\la\eta+\zeta\ra^{n/4}\la\zeta\ra^{n/4}\la\eta\ra^s}\la\xi\ra^{2m+1}p_0(\eta;\xi)\,d\xi + \frac{\la\zeta\ra^{2m+1}}{\la\eta\ra^s}p_0(\eta;\zeta),
  \end{align*}
  where $S$ is the set
  \[
    S=\{|\xi-\zeta|\leq\la\zeta\ra^{1/2},|\xi-(\eta+\zeta)|\leq\la\eta+\zeta\ra^{1/2}\}.
  \]
  In particular, we have $\la\zeta\ra\sim\la\xi\ra\sim\la\eta+\zeta\ra$ on $S$, which yields
  \[
    |\wh r(\eta;\zeta)|\lesssim\frac{\la\zeta\ra^{2m+1-n/2}}{\la\eta\ra^s}\int_{|\xi-\zeta|\leq\la\zeta\ra^{1/2}}p_0(\eta;\xi)\,d\xi+\frac{\la\zeta\ra^{2m+1}}{\la\eta\ra^s}p_0(\eta;\zeta).
  \]
  We contend that
  \[
    p_0'(\eta;\zeta):=\la\zeta\ra^{-n/2}\int_{|\xi-\zeta|\leq\la\zeta\ra^{1/2}} p_0(\eta;\xi)\,d\xi\in L^\infty_\zeta L^2_\eta.
  \]
  Indeed, this follows from Cauchy-Schwartz:
  \begin{align*}
    \int\;\biggl| \int_{|\xi-\zeta|\leq\la\zeta\ra^{1/2}} p_0(\eta;\xi)\,d\xi\biggr|^2\,d\eta&\lesssim \int \la\zeta\ra^{n/2}\int_{|\xi-\zeta|\leq\la\zeta\ra^{1/2}}|p_0(\eta;\xi)|^2\,d\xi\,d\eta \\
	  &\lesssim \la\zeta\ra^n\|p_0(\eta;\xi)\|_{L^\infty_\xi L^2_\eta}^2.
  \end{align*}
  We deduce
  \[
    |\wh r(\eta;\zeta)|\leq\frac{\la\zeta\ra^{2m+1}}{\la\eta\ra^s}p_0''(\eta;\zeta),\quad p_0''(\eta;\zeta)\in L^\infty_\zeta L^2_\eta.
  \]
  If $|\eta|\geq |\zeta|/2$, this implies
  \begin{equation}
  \label{EqGFirstEstimate}
    \frac{\la\eta\ra^{s-1}|\wh r(\eta;\zeta)|}{\la\zeta\ra^{2m}}\leq\frac{\la\zeta\ra}{\la\eta\ra}p_0''(\eta;\zeta)\lesssim p_0''(\eta;\zeta),
  \end{equation}
  thus we obtain a forteriori the desired estimate~\eqref{EqGDesiredEstimate} in the region $|\eta|\geq|\zeta|/2$.

  From now on, let us thus assume $|\eta|\leq|\zeta|/2$. We estimate the first integral in \eqref{EqGRemII}. By Taylor's theorem,
  \begin{align*}
    \wh p(\eta;\xi)-\wh p(\eta;\zeta)&=\pa_\zeta\wh p(\eta;\zeta)\cdot(\xi-\zeta) \\
	  &\qquad+\int_0^1 (1-t)\la\xi-\zeta,\pa_\zeta^2\wh p(\eta;\zeta+t(\xi-\zeta))\cdot(\xi-\zeta)\ra\,dt,
  \end{align*}
  and since $\la\xi\ra\sim\la\zeta\ra$ on $\supp F(\zeta,\xi)$, this gives
  \[
    \wh p(\eta;\xi)-\wh p(\eta;\zeta)=\pa_\zeta\wh p(\eta;\zeta)\cdot(\xi-\zeta)+|\xi-\zeta|^2 \cO(\la\zeta\ra^{2m-1})\tn{ on }\supp F(\zeta,\xi),
  \]
  where we say $f\in\cO(g)$ if $|f|\leq |g|h$ for some $h\in L^\infty_\zeta L^2_\eta$. The first integral in \eqref{EqGRemII} can then be rewritten as
  \begin{align*}
    \pa_\zeta & \wh p(\eta;\zeta)\cdot\int(\xi-\zeta)\bigl(F(\eta+\zeta,\xi)-F(\zeta,\xi)\Bigr)F(\zeta,\xi)\,d\xi \\
	  &+\cO(\la\zeta\ra^{2m-1}) \int |\xi-\zeta|^2 F(\eta+\zeta,\xi)F(\zeta,\xi)\,d\xi,
  \end{align*}
  where we use $\int(\xi-\zeta)F(\zeta,\xi)^2\,d\xi=0$, which is a consequence of $q$ being even.

  Taking the second integral in \eqref{EqGRemII} into account, we obtain
  \begin{equation}
  \label{EqGSecondEstimate}
    |\wh r(\eta;\zeta)|\lesssim (M_1+M_2+M_3) p'''_0(\eta;\zeta), \quad p'''_0(\eta;\zeta)\in L^\infty_\zeta L^2_\eta,
  \end{equation}
  where
  \begin{align*}
    M_1(\eta,\zeta) &= \frac{\la\zeta\ra^{2m+1}}{\la\eta\ra^s}\int\frac{|\xi-\zeta|}{\la\zeta\ra} |F(\eta+\zeta,\xi)-F(\zeta,\xi)| F(\zeta,\xi)\,d\xi \\
	M_2(\eta,\zeta) &= \frac{\la\zeta\ra^{2m+1}}{\la\eta\ra^s}\int\frac{|\xi-\zeta|^2}{\la\zeta\ra^2} F(\eta+\zeta,\xi)F(\zeta,\xi)\,d\xi \\
	M_3(\eta,\zeta) &= \frac{\la\zeta\ra^{2m+1}}{\la\eta\ra^s}\left|\int\bigl(F(\eta+\zeta,\xi)-F(\zeta,\xi)\bigr) F(\zeta,\xi)\,d\xi\right|.
  \end{align*}

  $M_2$ is estimated easily: On the support of the integrand, one has $|\xi-\zeta|^2\leq\la\zeta\ra$, thus
  \[
    M_2(\eta,\zeta) \lesssim \frac{\la\zeta\ra^{2m}}{\la\eta\ra^s}\cdot\frac{\la\zeta\ra^{n/2}}{\la\eta+\zeta\ra^{n/4}\la\zeta\ra^{n/4}};
  \]
  here, the term $\la\zeta\ra^{n/2}$ in the numerator is (up to a constant) an upper bound for the volume of the domain of integration. Since we are assuming $|\eta|\leq|\zeta|/2$, we have $\la\eta+\zeta\ra\gtrsim\la\zeta\ra$, which gives $M_2(\eta,\zeta)\lesssim\la\zeta\ra^{2m}/\la\eta\ra^s$.

  In order to estimate $M_1$ and $M_3$, we will use
  \begin{align*}
    \pa_\zeta F(\zeta,\xi) &= \frac{a_0(\zeta)}{\la\zeta\ra^{n/4+1}}q_1\left(\frac{\xi-\zeta}{\la\zeta\ra^{1/2}}\right)+\frac{a_1(\zeta)}{\la\zeta\ra^{n/4+1/2}}\pa_\zeta q\left(\frac{\xi-\zeta}{\la\zeta\ra^{1/2}}\right), \\
	\pa_\zeta^2 F(\zeta,\xi) &= \frac{a_2(\zeta)}{\la\zeta\ra^{n/4+1}}q_2\left(\frac{\xi-\zeta}{\la\zeta\ra^{1/2}}\right),
  \end{align*}
  where the $a_j$ are scalar-, vector- or matrix-valued symbols of order $0$, and $q_j\in\CI_\cl(\R^n)$.

  Hence, writing $F(\eta+\zeta,\xi)-F(\zeta,\xi)=\eta\cdot\pa_\zeta F(\zeta+\bar t\eta,\xi)$ for some $0\leq\bar t\leq 1$, we get
  \[
    M_1(\eta,\zeta) \lesssim \frac{\la\zeta\ra^{2m+1}}{\la\eta\ra^s}\cdot\frac{\la\zeta\ra^{n/2} |\eta|}{\la\zeta\ra^{1/2}\la\zeta+\bar t\eta\ra^{n/4+1/2}\la\zeta\ra^{n/4}} \lesssim \frac{\la\zeta\ra^{2m}}{\la\eta\ra^{s-1}},
  \]
  where we again use $|\eta|<|\zeta|/2$ and $\la\zeta+\bar t\eta\ra\gtrsim\la\zeta\ra$.
  
  Finally, to bound $M_3$, we write
  \[
    F(\eta+\zeta,\xi)-F(\zeta,\xi)=\eta\cdot\pa_\zeta F(\zeta,\xi)+\int_0^1 (1-t)\la\eta,\pa_\zeta^2 F(\zeta+t\eta,\xi)\cdot\eta\ra\,dt
  \]
  and deduce
  \begin{align*}
    M_3(\eta,\zeta)&\lesssim\frac{\la\zeta\ra^{2m+1}}{\la\eta\ra^s}\biggl(\frac{\la\zeta\ra^{n/2} |\eta|}{\la\zeta\ra^{n/4+1}\la\zeta\ra^{n/4}} \\
	&\quad+\frac{|\eta|}{\la\zeta\ra^{n/4+1/2}}\left|\int(\pa_\zeta q)\left(\frac{\xi-\zeta}{\la\zeta\ra^{1/2}}\right)q\left(\frac{\xi-\zeta}{\la\zeta\ra^{1/2}}\right)\,d\xi\right| \\
	&\quad+\frac{\la\zeta\ra^{n/2} |\eta|^2}{\la\zeta\ra^{n/4+1}\la\zeta\ra^{n/4}}\biggr) \\
	&\lesssim\frac{\la\zeta\ra^{2m}}{\la\eta\ra^{s-2}},
  \end{align*}
  where we use
  \[
    \int(\pa_\zeta q)\left(\frac{\xi-\zeta}{\la\zeta\ra^{1/2}}\right) q\left(\frac{\xi-\zeta}{\la\zeta\ra^{1/2}}\right)\,d\xi=0,
  \]
  which holds since $q$ has compact support. Plugging the estimates for $M_j$, $j=1,2,3$, into \eqref{EqGSecondEstimate} proves that \eqref{EqGDesiredEstimate} holds. The proof is complete.
\end{proof}

The idea of the proof can also be used to prove the sharp G\aa rding inequality for smooth b-\psdo{}s:

\begin{prop}
\label{PropGardingSmooth}
  Let $m\in\R$, and let $p(z,\zeta)\in S^{2m+1}(\Rnhalf;\End(E))$ be a symbol with non-negative real part. Then there is $C>0$ such that $P=p(z,\Db)$ satisfies the estimate
  \[
    \Re\la Pu,u\ra \geq -C\|u\|_{\Hb^m}^2,\quad u\in\CIdotc(\Rnhalf;E).
  \]
\end{prop}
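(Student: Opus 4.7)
The plan is to follow the Friedrichs symmetrization argument used in the proof of Proposition~\ref{PropGardingNonsmooth} essentially verbatim; the reduction is straightforward because $p$ is now a smooth symbol, so all the delicate $L^\infty_\zeta L^2_\eta$ bookkeeping collapses to ordinary symbolic estimates. Concretely, define the symmetrization $p_\sym(\eta,z,\zeta)$ by the same formula, with the same Gaussian-localized weights $F(\zeta,\xi)=\la\zeta\ra^{-n/4}q((\xi-\zeta)/\la\zeta\ra^{1/2})$, where $q\in\CI_\cl(\R^n)$ is non-negative, even, supported in $|\zeta|\leq 1$ and satisfies $\int q^2=1$.

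The first step is to observe that the pointwise identity
\[
  \Re\la p_\sym(\Db,z,\Db)u,u\ra=\iint\Re\bigl\la p(z,\xi)F(\Db;\xi)u(z),F(\Db;\xi)u(z)\bigr\ra\,d\xi\,dz\geq 0
\]
continues to hold, since its derivation only invokes the pointwise non-negativity of $\Re p(z,\xi)$ as an endomorphism of $E$ and does not involve regularity of the symbol in $z$. It thus suffices to show that the remainder
\[
  r(z,\Db):=p_\sym(\Db,z,\Db)-p(z,\Db)
\]
defines a bounded operator $\Hb^m(\Rnhalf;E)\to\Hb^{-m}(\Rnhalf;E)$.

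For the remainder analysis, I would split $\hat r(\eta;\zeta)$ as in \eqref{EqGRemII} and estimate the three contributions $M_1,M_2,M_3$ in parallel with the proof of Proposition~\ref{PropGardingNonsmooth}. The key simplification is that for smooth $p$, one has $\pa_\zeta^\alpha p\in S^{2m+1-|\alpha|}$, so Taylor expansion of $p(z,\xi)$ about $\xi=\zeta$ on $\supp F(\zeta,\xi)$ (where $|\xi-\zeta|\lesssim\la\zeta\ra^{1/2}$) gives
\[
  p(z,\xi)-p(z,\zeta)=\pa_\zeta p(z,\zeta)\cdot(\xi-\zeta)+|\xi-\zeta|^2\,\cO_{S^{2m-1}}(1),
\]
with no need to track an auxiliary $L^\infty_\zeta L^2_\eta$ factor. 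Using evenness of $q$ to kill the first-order term via $\int(\xi-\zeta)F(\zeta,\xi)^2\,d\xi=0$, together with the area estimate $\mathrm{vol}(\supp_\xi F(\zeta,\xi))\lesssim\la\zeta\ra^{n/2}$ and $\la\zeta+t\eta\ra\sim\la\zeta\ra$ in the region $|\eta|\leq|\zeta|/2$, the same manipulations that yielded the bounds on $M_1,M_2,M_3$ show that the full symbol of $r$ lies in $S^{2m}$. Since a symbol in $S^{2m}$ differs from one in $S^{2m}_\bl$ only by an element of $S^{-\infty}$ (Lemma~\ref{LemmaBQuantization}), the corresponding quantization lies in $\Psib^{2m}+\Psib^{-\infty}$ and is therefore bounded $\Hb^m\to\Hb^{-m}$, yielding the claimed lower bound with some finite $C$.

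The one point that warrants care is the complementary region $|\eta|\geq|\zeta|/2$, where the Taylor-expansion bookkeeping degenerates: here I would instead argue directly from \eqref{EqGRemX}, as in \eqref{EqGFirstEstimate}, using the raw symbolic bound $|\hat p(\eta;\zeta)|\lesssim\la\zeta\ra^{2m+1}\la\eta\ra^{-N}$ for arbitrary $N$ (which holds since $p(\cdot,\zeta)$ is Schwartz on bounded sets in $z$ uniformly in $\zeta/\la\zeta\ra^{2m+1}$). This produces even stronger decay in $\eta$ than needed, so the combined remainder is in $S^{2m}$ as required; bundles are incorporated by the same component-wise reduction as in Proposition~\ref{PropGardingNonsmooth}.
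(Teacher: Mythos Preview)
There is a genuine gap in your treatment of the region $|\eta|\geq|\zeta|/2$. The claimed bound $|\hat p(\eta;\zeta)|\lesssim\la\zeta\ra^{2m+1}\la\eta\ra^{-N}$ is false in general: recall that $\hat{\ }$ denotes the Mellin--Fourier transform in $z=(x,y)$, and the Mellin transform of a smooth compactly supported function on $[0,\infty)$ that does \emph{not} vanish at $x=0$ is not rapidly decaying in the Mellin-dual variable---it has a pole at $\sigma=0$ (this is exactly the issue flagged around \eqref{EqDecomposition}). Your justification ``$p(\cdot,\zeta)$ is Schwartz on bounded sets in $z$'' is not the right notion here; membership in $\CIdotc(\Rnhalf)$, i.e.\ infinite-order vanishing at $x=0$, is what is needed for Mellin rapid decay, and a generic smooth b-symbol does not satisfy this.

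The paper addresses this by splitting $p=p^{(0)}+p^{(1)}$ with $p^{(0)}(y;\zeta)=p(0,y;\zeta)$ and $p^{(1)}=x\tilde p^{(1)}\in\Hb^\infty S^{2m+1}$. Since the symmetrization kernel $F$ is independent of $z$, the remainder splits accordingly. For $p^{(1)}$ one is back in the setting of Proposition~\ref{PropGardingNonsmooth} with $s$ arbitrarily large, and that proof applies verbatim. For the $x$-independent piece $p^{(0)}$ one only Fourier transforms in $y$ (where rapid decay \emph{does} hold), carrying the Mellin variable $\lambda$ along as part of the fiber variable $(\lambda,\eta)$; the analogous $M_1,M_2,M_3$ estimates then yield \eqref{EqGSRemainder}, which suffices for $\Hb^m\to\Hb^{-m}$ boundedness. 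A secondary point: even when your $\hat r$-estimates do hold, they give $L^\infty_\zeta L^2_\eta$-type control and hence $\Hb^m\to\Hb^{-m}$ mapping via Proposition~\ref{PropOpCont}, not full membership in $S^{2m}$ (you have not controlled $\zeta$-derivatives of the remainder symbol).
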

\begin{proof}
  Write $p(x,y;\zeta)=p^{(0)}(y;\zeta)+p^{(1)}(x,y;\zeta)$, where $p^{(0)}(y;\zeta)=p(0,y;\zeta)$ and $p^{(1)}=x\wt p\in\Hb^\infty S^{2m+1}$. The symmetrized operator $p(\Db,z,\Db)$, defined as in the proof of Proposition~\ref{PropGardingNonsmooth} is again non-negative, and the symbol of the remainder operator $r(z,\Db)=p_\sym(\Db,z,\Db)-p(z,\Db)$ is the sum of two terms $p^{(0)}_\sym-p^{(0)}$ and $p^{(1)}_\sym-p^{(1)}$. The proof of Proposition~\ref{PropGardingNonsmooth} shows that $p^{(1)}_\sym-p^{(1)}\in S^{2m;0}\Hb^\infty$. It thus suffices to assume that $p=p^{(0)}$ is independent of $x$, which implies that $p_\sym$ is independent of $x$ as well, and to prove $r(y,\Db)=(p_\sym-p)(y,\Db)\colon \Hb^m\to \Hb^{-m}$.
  
  Similarly to the proof of Proposition~\ref{PropGardingNonsmooth}, we put
  \begin{align*}
    F(\lambda,\eta;\sigma,\gamma)&=\frac{1}{\la\lambda,\eta\ra^{n/4}}q\left(\frac{(\sigma-\lambda,\gamma-\eta)}{\la\lambda,\eta\ra^{1/2}}\right) \\
    p_\sym(\rho,\theta;y;\lambda,\eta)&=\iint F(\rho,\theta;\sigma,\gamma)p(y;\sigma,\gamma)F(\lambda,\eta;\sigma,\gamma)\,d\sigma\,d\gamma
  \end{align*}
  and obtain
  \begin{align*}
	(p_\sym(\Db;y;\Db)u)\ftrans(\rho,\theta)&=\int\sF p_\sym(\rho,\theta;\theta-\eta;\rho,\eta)\wh u(\rho,\eta)\,d\eta \\
	\sF r(\theta;\lambda,\eta)&=\sF p_\sym(\lambda,\theta+\eta;\theta;\lambda,\eta)-\sF p(\theta;\lambda,\eta),
  \end{align*}
  thus
  \begin{align*}
	 \sF r&(\theta;\lambda,\eta) \\
	   &=\iint F(\lambda,\theta+\eta;\sigma,\gamma)\bigl(\sF p(\theta;\sigma,\gamma)-\sF p(\theta;\lambda,\eta)\bigr) F(\lambda,\eta;\sigma,\gamma)\,d\sigma\,d\gamma \\
	   &\ +\iint \bigl(F(\lambda,\theta+\eta;\sigma,\gamma)-F(\lambda,\eta;\sigma,\gamma)\bigr)\sF p(\theta;\lambda,\eta) F(\lambda,\eta;\sigma,\gamma)\,d\sigma\,d\gamma.
  \end{align*}
  Then, following the argument in the previous proof, we obtain
  \begin{equation}
  \label{EqGSRemainder}
    |\sF r(\theta;\lambda,\eta)|\leq\frac{\la\lambda,\eta\ra^{2m}}{\la\theta\ra^N}r_0(\theta;\lambda,\eta),\quad r_0(\theta;\lambda,\eta)\in L^\infty_{\lambda,\eta}L^2_\theta,
  \end{equation}
  where we use
  \[
    |\sF p(\theta;\lambda,\eta)|=\frac{\la\lambda,\eta\ra^{2m+1}}{\la\theta\ra^{N+2}}p_0(\theta;\lambda,\eta),\quad p_0(\theta;\lambda,\eta)\in L^\infty_{\lambda,\eta}L^2_\theta,
  \]
  which holds for every integer $N$ (with $p_0$ depending on the choice of $N$). An estimate similar to the one used in the proof of Proposition~\ref{PropOpCont} shows that \eqref{EqGSRemainder} implies $r(y,\Db)\colon\Hb^s\to\Hb^{s-2m}$ for all $s\in\R$.
\end{proof}

Finally, we merge Propositions~\ref{PropGardingNonsmooth} and \ref{PropGardingSmooth}.

\begin{cor}
\label{CorGardingSmoothPlusNon}
  Let $s,m\in\R$ be such that $s\geq 2-m$, $s>n/2+2+m_+$. Let $p'(z,\zeta)\in S^{2m+1;2}\Hb^s(\Rnhalf;\End(E))$, $p''(z,\zeta)\in S^{2m+1}(\Rnhalf;\End(E))$ be symbols such that $p=p'+p''$ has non-negative real part. Then there is $C>0$ such that $P=p(z,\Db)$ satisfies the estimate
  \[
    \Re\la Pu,u\ra \geq -C\|u\|_{\Hb^m}^2,\quad u\in\CIdotc(\Rnhalf;E).
  \]
\end{cor}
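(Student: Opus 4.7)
The plan is to apply the symmetrization construction from the proof of Proposition~\ref{PropGardingNonsmooth} directly to the combined symbol $p=p'+p''$. Define $F(\zeta,\xi)$ and the symmetrized symbol
\[
p_\sym(\eta,z,\zeta)=\int F(\eta,\xi)\,p(z,\xi)\,F(\zeta,\xi)\,d\xi
\]
exactly as there. Since symmetrization is linear, $p_\sym = p'_\sym + p''_\sym$. The key non-negativity calculation
\[
\Re\la p_\sym(\Db,z,\Db)u,u\ra=\int\!\!\int\Re\bigl\la p(z,\xi)F(\Db;\xi)u(z),F(\Db;\xi)u(z)\bigr\ra\,d\xi\,dz\geq 0
\]
uses nothing about the smoothness or regularity of $p$ in $z$; it requires only $\Re p\geq 0$, which is our hypothesis. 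So it suffices to show that the remainder $R:=p_\sym(\Db,z,\Db)-P$ is bounded $\Hb^m(\Rnhalf;E)\to\Hb^{-m}(\Rnhalf;E)$, and then pair with $u\in\CIdotc$.

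Write $R=R'+R''$ where $R'=p'_\sym(\Db,z,\Db)-p'(z,\Db)$ and $R''=p''_\sym(\Db,z,\Db)-p''(z,\Db)$. For $R'$, the symbol estimates carried out in the proof of Proposition~\ref{PropGardingNonsmooth} apply verbatim (they use only the symbol bounds in $S^{2m+1;2}\Hb^s$), giving $R'\in\fpsi^{2m;0}\Hb^{s-2}(\Rnhalf;\End(E))$. Under our hypotheses $s\geq 2-m$ and $s>n/2+2+m_+$, Proposition~\ref{PropOpCont} yields $R'\colon \Hb^m\to\Hb^{-m}$ boundedly.

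For $R''$, we decompose $p''=p''^{(0)}+p''^{(1)}$ as in the proof of Proposition~\ref{PropGardingSmooth}, where $p''^{(0)}(y;\zeta):=p''(0,y;\zeta)$ is $x$-independent and $p''^{(1)}=x\tilde p''\in\Hb^\infty S^{2m+1}\subset S^{2m+1;2}\Hb^\infty$. The contribution of $p''^{(1)}$ to the remainder lies in $\fpsi^{2m;0}\Hb^\infty$ by the very same estimates used for $R'$, so it maps $\Hb^m\to\Hb^{-m}$. For the $x$-independent piece $p''^{(0)}$, one repeats the Fourier-side argument in the proof of Proposition~\ref{PropGardingSmooth}, which produces estimates on $\sF(p''^{(0)}_\sym-p''^{(0)})$ of the form \eqref{EqGSRemainder} with arbitrary $N$, hence a map $\Hb^s\to\Hb^{s-2m}$ for any $s$, in particular from $\Hb^m$ to $\Hb^{-m}$. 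Adding the two bounds gives the required continuity of $R''$.

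No genuinely new obstacle arises: the proof is essentially a bookkeeping splice of the two previous propositions, the slightly delicate point being just to observe that the non-negativity of $p_\sym(\Db,z,\Db)$ requires only $\Re(p'+p'')\geq 0$ (not termwise positivity), while the two remainder pieces can be estimated independently in the classes where each is well-behaved.
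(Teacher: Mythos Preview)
Your proof is correct and follows essentially the same approach as the paper: symmetrize the full symbol $p=p'+p''$, use $\Re p\geq 0$ for the non-negativity of $p_\sym(\Db,z,\Db)$, and then split the remainder as $(p'_\sym-p')+(p''_\sym-p'')$, handling the two pieces via the proofs of Propositions~\ref{PropGardingNonsmooth} and~\ref{PropGardingSmooth} respectively. The paper's proof is more terse but identical in strategy.
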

\begin{proof}
  The symmetrized operator $p_\sym(\Db,z,\Db)$ is again non-negative, and the symbol of the remainder operator $r(z,\Db)=p_\sym(\Db,z,\Db)-p(z,\Db)$ is the sum of two terms $p'_\sym-p'$ and $p''_\sym-p''$. The proofs of Propositions~\ref{PropGardingNonsmooth} and \ref{PropGardingSmooth} show that $(p'_\sym-p')(z,\Db)$ and $(p''_\sym-p'')(z,\Db)$ map $\Hb^m$ to $\Hb^{-m}$, hence $r(z,\Db)$ maps $\Hb^m$ to $\Hb^{-m}$, and the proof is complete.
\end{proof}

%%%%%%%%%%%%%%%%%%%%%%%%%%%%%%%%%%%%%%%%%%%%%%%%%
\subsection{Mollifiers}
\label{SubsecMollifiers}

In order to deal with certain kinds of non-smooth terms in \S\S\ref{SubsecRealPrType} and \ref{SubsecRadialPoints}, we will need smoothing operators in order to smooth out and approximate non-smooth functions in a precise way. We only state the results for unweighted spaces, but the corresponding statements for weighted spaces hold true by the same proofs.

\begin{lemma}
\label{LemmaLocNearBdy}
  Let $s\in\R$, $\chi\in\CI_\cl(\Rhalf)$. Then $\chi(x/\eps)\to 0$ strongly as a multiplication operator on $\Hb^s(\Rnhalf)$ as $\eps\to 0$, and in norm as a multiplication operator from $\Hb^{s,\alpha}(\Rnhalf)\to\Hb^s(\Rnhalf)$ for $\alpha>0$.
\end{lemma}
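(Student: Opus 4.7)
The plan is to reduce both statements to uniform pointwise $L^\infty$-bounds on the multiplier and its b-derivatives, exploiting the scale invariance $(x\pa_x)[f(x/\eps)]=(x/\eps)f'(x/\eps)$. Iterating this identity gives, for every $k\in\N_0$, $(x\pa_x)^k[\chi(x/\eps)]=\chi_k(x/\eps)$ for some $\chi_k\in\CI_\cl(\Rhalf)$ independent of $\eps$, and similarly $(x\pa_x)^k[x^\alpha\chi(x/\eps)]=x^\alpha\tilde\chi_k(x/\eps)$ with $\tilde\chi_k\in\CI_\cl(\Rhalf)$ independent of $\eps$. Since all these functions are independent of $y$, Leibniz's rule together with the uniform $L^\infty$-bounds on $\chi_k(x/\eps)$ shows that multiplication by $\chi(x/\eps)$ is uniformly bounded on $\Hb^k(\Rnhalf)$ for every $k\in\N_0$ as $\eps\to 0$; duality with respect to the $L^2_\bl$-pairing (under which $\chi(x/\eps)$ is self-adjoint, being real-valued) together with complex interpolation extends this uniform bound to all $s\in\R$.

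For the strong convergence, density of $\CIdotc(\Rnhalf)$ in $\Hb^s(\Rnhalf)$ together with the uniform boundedness just established reduces the claim to convergence on $\CIdotc$. Given $u\in\CIdotc(\Rnhalf)$, $u$ vanishes to infinite order at $x=0$, so $|X_1\cdots X_k u|\leq C_{k,N}x^N$ for every $N$ and every $X_j\in\Vb$; since $\supp\chi(x/\eps)\subset\{x\leq C_0\eps\}$, a Leibniz expansion using the uniform bounds on the $\chi_j(x/\eps)$ yields $\|\chi(x/\eps) u\|_{\Hb^k}=O(\eps^N)$ for every $N$ and every $k\in\N_0$. Interpolation and duality then give $\chi(x/\eps)u\to 0$ in $\Hb^s$ for every $s\in\R$.

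For the norm convergence on weighted spaces, factor the operator through the isometric isomorphism $x^{-\alpha}\colon\Hb^{s,\alpha}\to\Hb^s$:
\[
  \|\chi(x/\eps)\|_{\Hb^{s,\alpha}\to\Hb^s}=\|x^\alpha\chi(x/\eps)\|_{\Hb^s\to\Hb^s}.
\]
The crucial pointwise estimate
\[
  |x^\alpha\tilde\chi_k(x/\eps)|=\eps^\alpha(x/\eps)^\alpha|\tilde\chi_k(x/\eps)|\leq C_k\eps^\alpha,
\]
valid because $\alpha>0$ and $\tilde\chi_k$ is compactly supported, combined with the Leibniz/duality/interpolation scheme of the first paragraph, gives $\|x^\alpha\chi(x/\eps)\|_{\Hb^s\to\Hb^s}\leq C_s\eps^\alpha\to 0$. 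The only subtle step is the passage from integer to arbitrary Sobolev orders in the uniform multiplier bounds, handled as indicated; the underlying b-geometric calculation is otherwise scale-invariant and elementary.
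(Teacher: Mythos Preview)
Your proof is correct and follows essentially the same strategy as the paper: compute $(x\pa_x)^k$ of the multiplier to get uniform $L^\infty$-bounds, use Leibniz/duality/interpolation for the full range of $s$, and handle the weighted case via the pointwise bound $|x^\alpha\tilde\chi_k(x/\eps)|\leq C_k\eps^\alpha$. The only minor difference is in the strong-convergence step: the paper argues directly by dominated convergence on $\Hb^k$ for integer $k$ and then approximates by $\Hb^\infty$, whereas you reduce to $\CIdotc$ and use the infinite-order vanishing of $u$ at $x=0$ to get explicit $O(\eps^N)$ decay; both routes are equivalent once uniform boundedness is in hand.
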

\begin{proof}
  We start with the first half of the lemma: For $s=0$, the statement follows from the dominated convergence theorem. For $s$ a positive integer, we use that
  \[
    (x\pa_x)^s\left(\chi\left(\frac{x}{\eps}\right)\right)=\sum_{j=1}^s c_{sj}\left(\frac{x}{\eps}\right)^j\chi^{(j)}\left(\frac{x}{\eps}\right),\quad c_{sj}\in\R,
  \]
  is bounded and converges to $0$ pointwise in $x>0$ as $\eps\to 0$, thus by virtue of the Leibniz rule and the dominated convergence theorem, we obtain $\chi(x/\eps)u(x,y)\to 0$ in $\Hb^s(\Rnhalf)$ for $u\in\Hb^s(\Rnhalf)$. For $s\in -\N$, the statement follows by duality.

  Finally, to treat the case of general $s$, we first show that $\chi(\cdot/\eps)$ is a uniformly bounded family (in $\eps>0$) of multiplication operators on $\Hb^s(\Rnhalf)$ for all $s\in\R$: For $s\in\N_0$, this follows from the above estimates, for $s\in\Z$ again by duality, and then for general $s\in\R$ by interpolation. Now, put $M=\sup_{0<\eps\leq 1}\|\chi(\cdot/\eps)\|_{\Hb^s\to\Hb^s}<\infty$. Let $w\in\Hb^s$ and $\delta>0$ be given, and choose $w'\in\Hb^\infty$ such that $\|w'-w\|_{\Hb^s}<\delta/2M$. By what we have already proved, we can choose $\eps_0>0$ so small that
  \[
    \|\chi(\cdot/\eps)w'\|_{\Hb^s}\leq\|\chi(\cdot/\eps)w'\|_{\Hb^{\lceil s\rceil}}<\delta/2,\quad \eps<\eps_0;
  \]
  then
  \[
    \|\chi(\cdot/\eps)w\|_{\Hb^s}\leq\|\chi(\cdot/\eps)(w-w')\|_{\Hb^s}+\|\chi(\cdot/\eps)w'\|_{\Hb^s}<M\frac{\delta}{2M}+\frac{\delta}{2}=\delta.
  \]

  Concerning the second half of the lemma, the case $s=0$ is clear since $x^\alpha\chi(x/\eps)\to 0$ in $L^\infty(\Rhalf)$ as $\eps\to 0$; as above, this implies the statement for $s$ a positive integer, and the case of real $s$ again follows by duality and interpolation.
\end{proof}

\begin{lemma}
\label{LemmaMollifier}
  Let $M$ be a compact manifold with boundary. Then there exists a family of operators $J_\eps\colon\CnI(M)\to\CIc(M^\circ)$, $\eps>0$, such that $J_\eps\in\Psib^{-\infty}(M)$, and for all $s,\alpha\in\R$, $J_\eps$ is a uniformly bounded family of operators on $\Hb^{s,\alpha}(M)$ that converges strongly to the identity map $I$ as $\eps\to 0$.
\end{lemma}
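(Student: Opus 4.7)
The plan is to construct $J_\eps$ locally via a partition of unity, reducing the b-mollification near $\pa M$ to standard Euclidean mollification through a logarithmic change of coordinates, and then to force the output into $\CIc(M^\circ)$ by a scaled boundary cutoff.

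I would cover $M$ by finitely many charts of two types: interior charts $U_j\Subset M^\circ$ and boundary charts $V_j\cong[0,\delta_j)\times W_j$ with $W_j\subset\R^{n-1}$ open, and fix a subordinate smooth partition of unity $\{\phi_j\}$. On an interior chart I would take $F_\eps^j$ to be the usual Friedrichs mollifier, convolution with $\eps^{-n}\rho(\cdot/\eps)$ for a nonnegative $\rho\in\CIc(\R^n)$ of unit mass. On a boundary chart I would introduce $t=\log x$; the map $u(x,y)\mapsto u(e^t,y)$ is an isometry $\Hb^k(V_j)\to H^k(\R\times W_j)$ (since $x\pa_x=\pa_t$ and $\tfrac{dx}{x}=dt$), extended to all $s\in\R$ by duality and interpolation, and $V_j^\circ$ corresponds to all of $\R\times W_j$. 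Mollifying in $(t,y)$ by convolution with a standard Euclidean mollifier then yields an operator $M_\eps^j$ which in the original coordinates reads
\[
  (M_\eps^j u)(x,y)=\iint u(e^{-\eps s}x,\,y-\eps y')\,\rho(s,y')\,ds\,dy'.
\]
Its Schwartz kernel on the b-stretched product is smooth, supported in a neighborhood of the diagonal of logarithmic size $\cO(\eps)$, and vanishes identically near both side faces, so $M_\eps^j\in\Psib^{-\infty}(M)$. Uniform $\Hb^s$-boundedness and strong convergence $M_\eps^j\to I$ transport directly from the Euclidean case.

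The main difficulty is that the range must lie in $\CIc(M^\circ)$, i.e., must vanish near $\pa M$, which no translation-invariant mollifier can guarantee by itself. I would overcome this by composing with a scaled boundary cutoff. Pick $\chi\in\CI(\Rhalf)$ with $\chi\equiv 0$ near $0$ and $\chi\equiv 1$ on $[1,\infty)$, and set $\chi_\eps=\chi(x/\eps^{1/2})$, extending it to a smooth function on $M$ equal to $1$ outside a collar of $\pa M$ (well-defined once $\eps$ is small enough). Applying Lemma~\ref{LemmaLocNearBdy} to $1-\chi\in\CIc(\Rhalf)$ shows that $1-\chi_\eps\to 0$ strongly on every $\Hb^s$, and the proof of that lemma also shows $\chi_\eps$ is a uniformly bounded family of multipliers on each $\Hb^s$. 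I would then define
\[
  J_\eps u:=\chi_\eps\sum_j \tilde M_\eps^j(\phi_j u),
\]
with $\tilde M_\eps^j=F_\eps^j$ on interior charts and $\tilde M_\eps^j=M_\eps^j$ on boundary charts. Each summand is smooth and supported in its chart, and $\chi_\eps$ kills a collar of $\pa M$, so $J_\eps u\in\CIc(M^\circ)$; membership $J_\eps\in\Psib^{-\infty}(M)$ follows because this class is closed under composition with functions smooth up to $\pa M$ (here $\phi_j$ and $\chi_\eps$) and under finite sums.

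The remaining verifications are routine. Uniform boundedness on $\Hb^s$ is the triangle inequality applied to the finite sum, using uniform boundedness of each $\tilde M_\eps^j$, of multiplication by $\phi_j$, and of $\chi_\eps$. For strong convergence, $\sum_j\phi_j=1$ gives
\[
  J_\eps u - u=\chi_\eps\sum_j(\tilde M_\eps^j - I)(\phi_j u)+(\chi_\eps-1)u;
\]
the first sum vanishes in $\Hb^s$ by strong convergence of each $\tilde M_\eps^j$ together with uniform boundedness of $\chi_\eps$, and the second vanishes by Lemma~\ref{LemmaLocNearBdy}.
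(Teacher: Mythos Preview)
Your argument is correct and differs from the paper's only in the order of operations and in how much is made explicit. The paper first cuts off near the boundary and then mollifies: it takes $J_\eps=\tilde J_\eps\circ(I-\chi(\cdot/\eps))$ with $\chi\equiv 1$ near $0$, so that $(I-\chi(\cdot/\eps))u$ is already supported in $\{x\geq\eps\}$, and then invokes an abstract b-mollifier $\tilde J_\eps$ whose kernel is localized enough near the diagonal that the support only spreads to $\{x\geq\eps/2\}$. You instead mollify first and cut off afterwards, and you construct the b-mollifier by hand via the log substitution $t=\log x$. Your version is more self-contained (it actually exhibits the $\tilde J_\eps$ the paper takes for granted), while the paper's version is shorter and makes the support control slightly more transparent since the cutoff happens before any smearing. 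Either ordering works; the choice of scale $\eps^{1/2}$ in your cutoff is harmless (any function of $\eps$ tending to $0$ would do), though you might note this explicitly.
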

\begin{proof}
  Choosing a product decomposition $\pa M\times[0,\eps_0)_x$ near the boundary of $M$ and $\chi\in\CI_\cl(\R)$, $\chi\equiv 1$ near $0$, $\supp\chi\subset[0,1/2]$, we can define the multiplication operators $\chi(x/\eps)$ globally on $\Hb^{-\infty}(M)$. By the previous lemma, $I-\chi(\cdot/\eps)$ converges strongly to $I$ on $\Hb^s(M)$; moreover, $\supp(u-\chi(\cdot/\eps)u)\subset\{x\geq\eps\}$. Thus, if we let $\wt J_\eps$ be a family of mollifiers, $\wt J_\eps\in\Psib^{-\infty}(M)$, $\wt J_\eps\to I$ in $\Psib^{\delta'}(M)$ for $\delta'>0$, such that on the support of the Schwartz kernel of $\wt J_\eps$, we have $|x_1-x_2|<\eps/2$ near $\pa M\times\pa M$ where $x_1,x_2$ are the lifts of $x$ to the left and right factor of $M\times M$, then we have that $\wt J_\eps(u-\chi(\cdot/\eps)u)$ is an element of $\Hb^\infty(M)$ with support in $\{x\geq\eps/2\}$, thus is smooth. Therefore, the family $J_\eps:=\wt J_\eps\circ(I-\chi(\cdot/\eps))$ satisfies all requirements.
\end{proof}

%%%%%%%%%%%%%%%%%%%%%%%%%%%%%%%%%%%%%%%%%%%%%%%%%
\subsection{Real principal type propagation, complex absorption}
\label{SubsecRealPrType}

We will prove real principal type propagation estimates of b-regularity for operators with non-smooth coefficients by means of a positive commutator argument which is standard in the smooth coefficient case; we recall the argument below.

\begin{thm}
\label{ThmRealPrType}
  Let $m,r,s,\wt s\in\R$, $\alpha>0$. Suppose $\wt P=\wt P_m+\wt P_{m-1}+\wt R$, where $\wt P_m\in\Hb^{s,\alpha}\Psib^m(\Rnhalf;E)$ has a real, scalar, homogeneous principal symbol $\wt p_m$; moreover, let $\wt P_{m-1}\in\Hb^{s-1,\alpha}\Psib^{m-1}(\Rnhalf;E)$ and $\wt R\in\Psib^{m-2}(\Rnhalf;E)+\fpsib^{m-2;0}\Hb^{s-1,\alpha}(\Rnhalf;E)$. Suppose $s$ and $\wt s$ are such that
  \begin{equation}
  \label{EqRealPrTypeCond}
    \wt s\leq s-1, \quad s>n/2+7/2+(2-\wt s)_+.
  \end{equation}
  \begin{enumerate}
    \item \label{EnumRealInt} Let $P\equiv\wt P$ and $p\equiv\wt p_m$, or
	\item \label{EnumRealBdy} let $P=P_0+\wt P$, where $P_0\in\Psib^m(\Rnhalf;E)$ has a real, scalar, homogeneous principal symbol $p_0$. Denote $p=p_0+\wt p_m$.
  \end{enumerate}
  In both cases, if $u\in\Hb^{\wt s+m-3/2,r}(\Rnhalf;E)$ is such that $Pu\in\Hb^{\wt s,r}(\Rnhalf;E)$, then $\WFb^{\wt s+m-1,r}(u)$ is a union of maximally extended null-\-bi\-char\-ac\-te\-ris\-tics of $p$, i.e.\ of integral curves of the Hamilton vector field $H_p$ within the characteristic set $p^{-1}(0)\subset\Tb^*\Rnhalf\setminus o$.
\end{thm}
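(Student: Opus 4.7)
My plan is to adapt the classical Duistermaat--H\"ormander positive commutator proof, pushing everything through the non-smooth operator calculus of Section \ref{SecCalculus}. Conjugating $P$ by $x^{-r}$ reduces to $r=0$ (the weighted $\Hb^{s,\alpha}$ coefficient classes are preserved); I set $\sigma := \tilde s + m - 1$, the target microlocal regularity. Since $s - n/2 > 7/2$, the non-smooth principal symbol $p'_m$ is at least $C^3$ in $z$, so the Hamilton vector field $H_{p_m} = H_{p'_m} + H_{p''_m}$ is $C^2$ on a conic neighborhood of the null-bicharacteristic segment of interest and its flow is well defined there. Given a point $\omega$ at which $u$ is a priori in $\Hb^\sigma$ and a nearby $\omega'$ reached from $\omega$ by flowing along $H_{p_m}$, I would construct a \emph{smooth} commutant $a \in S^{2\sigma - m + 1}$, supported in a thin conic neighborhood of the segment and monotone decreasing along $H_{p_m}$, satisfying
\[
  H_{p_m} a = -b^2 + e + f_0,
\]
with $b \in S^\sigma$ elliptic at $\omega'$, $e \in S^\sigma$ supported near $\omega$, and $f_0 \in \Hb^{s-1,\alpha} S^{2\sigma}$ a sign-indefinite non-smooth remainder forced by the non-smooth character of $H_{p'_m}$ acting on $a$ (in the smooth setting $f_0 = 0$).

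Setting $A := a(z,\Db) \in \Psib^{2\sigma-m+1}$, I would expand $[P,A]$ via Theorem \ref{ThmComp}, using parts \itref{ThmCompBR}, \itref{ThmCompRB}, and \itref{ThmCompBRFunny} for the mixed smooth / non-smooth compositions. At principal order $2\sigma$ the symbol of $i[P,A]$ equals $-H_{p_m} a = b^2 - e - f_0$, producing an operator identity
\[
  i[P,A] = B^*B - E - F + R',
\]
with $B, E, F$ quantizing $b, e, f_0$ respectively and with $R'$ collecting lower-order smooth operators in $\Psib^{2\sigma - 1}$ and non-smooth operators in $\fpsib^{2\sigma - 1;0}\Hb^{s-2,\alpha}$. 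Pairing formally against $u$ and using $\la i[P,A]u,u\ra = \la Au, iP^*u\ra - \la iPu, A^*u\ra$ yields the key identity
\[
  \la B^*B u, u\ra = \la Eu, u\ra + \la Fu, u\ra + \la R'u, u\ra + \la Au, iP^*u\ra - \la iPu, A^*u\ra,
\]
in which every right-hand term other than $\la Fu,u\ra$ is controllable: the $E$ term by the a priori microlocal hypothesis at $\omega$; the two $Pu$ pairings by $Pu \in \Hb^{\tilde s}$ together with the mapping properties of $A, A^*$ from Proposition \ref{PropOpCont}; the $R'u$ term by the background regularity $u \in \Hb^{\sigma - 1/2}$.

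The main obstacle is $\la Fu, u\ra$. Because $F$ is of top symbolic order $2\sigma$ with only $\Hb^{s-1,\alpha}$ coefficient regularity, the pairing itself requires regularization before it even makes sense on $u \in \Hb^{\sigma - 1/2}$. The plan is to introduce mollifiers $J_\eps \in \Psib^{-\infty}$ from Lemma \ref{LemmaMollifier}, replace $A$ by $A_\eps := J_\eps^* A J_\eps$, derive the analogue of the identity above (in which every pairing is now literal), and pass to the limit $\eps \to 0$. The key analytic input which both makes the limit possible and controls $\la Fu, u\ra_\eps$ is the sharp G\aa rding inequality in the form of Corollary \ref{CorGardingSmoothPlusNon} applied to $\pm F$ (splitting $F$ into smooth and non-smooth summands as supplied by the expansion in Theorem \ref{ThmComp}), yielding a uniform bound $|\la Fu, u\ra_\eps| \leq C\|u\|_{\Hb^{\sigma - 1/2}}^2$. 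The hypothesis $s > n/2 + 7/2 + (2-\tilde s)_+$ is exactly what is needed simultaneously to place $F$ in the admissible range for Corollary \ref{CorGardingSmoothPlusNon}, to place the lower-order non-smooth pieces of $R'$ in the range covered by Proposition \ref{PropOpCont}, and to justify each dual pairing and each use of an adjoint of a non-smooth operator above.

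Passing to the limit $\eps \to 0$ and using the uniform bound, I would conclude $Bu \in L^2_\bl$; since $B$ is elliptic at $\omega'$, the elliptic regularity Theorem \ref{ThmEllipticReg} then gives $\omega' \notin \WFb^\sigma(u)$. Iterating along the bicharacteristic and extending maximally establishes the conclusion in case \itref{EnumRealInt}. Case \itref{EnumRealBdy} at a boundary point requires no modification: the positive commutator machinery is intrinsic to $\Tb^*\Rnhalf$, and the additional smooth summand $P''_m + P''_{m-1}$ contributes only standard smooth terms that fit readily into $E$ and $R'$, with Theorem \ref{ThmComp} \itref{ThmCompRR} handling the products with the non-smooth $P'_m, P'_{m-1}$. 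The hard part is thus not the geometric construction of the commutant but rather the bookkeeping of non-smooth error terms arising from limited coefficient regularity, and the clever use of sharp G\aa rding to turn a sign-indefinite top-order error into a bounded form on the background regularity space.
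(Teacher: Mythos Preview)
Your overall architecture is right, but there is a genuine gap at the step you flag as ``the main obstacle.'' You propose to bound the top-order non-smooth error by applying Corollary~\ref{CorGardingSmoothPlusNon} to $\pm F$ and concluding $|\la Fu,u\ra|\leq C\|u\|_{\Hb^{\sigma-1/2}}^2$. Sharp G\aa rding does not give two-sided bounds: it requires the symbol to have a sign and yields only a one-sided inequality. Since your $f_0$ is by your own description sign-indefinite, neither $f_0\geq 0$ nor $-f_0\geq 0$, so the inequality is unavailable for both $F$ and $-F$. Worse, $F$ has order $2\sigma$ while $u$ is only in $\Hb^{\sigma-1/2}$ a priori, so the pairing $\la Fu,u\ra$ is off by a full order and there is no hope of a direct absolute-value bound by $\|u\|_{\Hb^{\sigma-1/2}}^2$ without some positivity.

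The paper's proof turns exactly on this point, and the idea you are missing is that one must \emph{engineer} $f_t\geq 0$. Concretely, the paper works with a square-root commutant $\check A_t$ of order $\sigma-(m-1)/2$ (so that the errors from $P_m-P_m^*$, $P_{m-1}$, etc.\ appear as multiples of $\|\check A_t u\|_{\Hb^{(m-1)/2}}^2$ and can be absorbed by an auxiliary term $M^2\rho^{m-1}\check a_t^2$ built into the symbolic identity), and, crucially, replaces the non-smooth $\sfH_{p'_m}$ by a mollified $J\sfH_{p'_m}$ in the construction of $b_t$ and $e_t$, shunting the difference $(\sfH_{p'_m}-J\sfH_{p'_m})(\cdots)$ plus a small positive cushion (the $c_0/8$ and $\tilde\chi_1$ terms) into $f_t$; for $J$ close enough to $I$ this forces $f_t\geq 0$. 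G\aa rding then gives the \emph{one-sided} bound $\Re\la (j^-f_t/j^+)(z,\Db)(J^+)^*u,(J^+)^*u\ra\geq -C$, which is precisely the direction needed. The additional $J^\pm$ factorization (writing $I=J^+J^-+\tilde R$ with $J^-\check A_t^*\in\Psib^1$) is also essential: composing $\check A_t^*$ directly with the order-$(\sigma+(m-1)/2)$ non-smooth operator $(H_{p_m}\check a_t)(z,\Db)$ via Theorem~\ref{ThmComp} would lose too much regularity in the remainder. Finally, the regularization is by $\varphi_t=(1+t\rho)^{-1}$ in the fiber, not by spatial mollifiers $J_\eps$; your $J_\eps^*AJ_\eps$ scheme does not obviously interact well with the symbolic positivity needed above.
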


The proof, given in \S\S\ref{SubsubsecInterior} and \ref{SubsubsecBoundary}, in fact gives an estimate for the $\Hb^{\wt s+m-1,r}$-norm of $u$: Suppose $E,B,G\in\Psib^0$ are such that all forward or backward null-bicharacteristics from $\WFb'(B)$ reach the elliptic set of $E$ while remaining in the elliptic set of $G$, and $\psi\in\CIc(\Rnhalf)$ is identically $1$ on $\pi(\WFb'(B))$, where $\pi\colon\Tb^*\Rnhalf\to\Rnhalf$ is the projection, then
\begin{equation}
\label{EqPropEstimate}
\begin{split}
  \|Bu&\|_{\Hb^{\wt s+m-1,r}} \\
    &\leq C(\|GPu\|_{\Hb^{\wt s,r}}+\|Eu\|_{\Hb^{\wt s+m-1,r}}+\|\psi Pu\|_{\Hb^{\wt s-1,r}}+\|u\|_{\Hb^{\wt s+m-3/2,r}})
\end{split}
\end{equation}
in the sense that if all quantities on the right hand side are finite, then so is the left hand side, and the inequality holds. See Figure~\ref{FigProp} for an illustration. In particular, it suffices to have only microlocal $\Hb^{\wt s,r}$-membership of $Pu$ near the parts of null-bicharacteristics along which we want to propagate $\Hb^{\wt s+m-1,r}$-regularity of $u$. The term involving $\psi Pu$ comes from the local requirements for elliptic regularity, see Remark~\ref{RmkEllipticLocalAssm}. 

\begin{figure}[!ht]
  \includegraphics{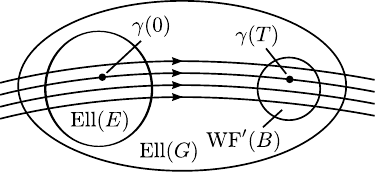}
  \caption{Illustration of the propagation estimate \eqref{EqPropEstimate}. Here, $\gamma$ is a null-bicharacteristic, i.e.\ an integral curve of the Hamilton vector field of the principal symbol of $P$.}
  \label{FigProp}
\end{figure}

In \S\ref{SubsubsecComplexAbsorption}, we will add complex absorption and obtain the following statement.

\begin{thm}
\label{ThmComplexAbs}
  Under the assumptions of Theorem~\ref{ThmRealPrType}, let $Q\in\Psib^m(\Rnhalf;E)$, $Q=Q^*$. Suppose $E,B,G\in\Psib^0$ are such that all forward, resp.\ backward, bicharacteristics from $\WFb'(B)$ reach the elliptic set of $E$ while remaining in the elliptic set of $G$, and suppose moreover that $q\leq 0$, resp.\ $q\geq 0$, on $\WFb'(G)$, further let $\psi\in\CIc(\Rnhalf)$ be identically $1$ on $\pi(\WFb'(B))$, then
  \begin{equation}
  \label{EqComplexEstimate}
    \begin{split}
      \|Bu\|_{\Hb^{\wt s+m-1,r}}\leq C(\|G(P-iQ)&u\|_{\Hb^{\wt s,r}}+\|Eu\|_{\Hb^{\wt s+m-1,r}} \\
	    &+\|\psi(P-iQ)u\|_{\Hb^{\wt s-1,r}}+\|u\|_{\Hb^{\wt s+m-3/2,r}})
	\end{split}
  \end{equation}
  in the sense that if all quantities on the right hand side are finite, then so is the left hand side, and the inequality holds.
\end{thm}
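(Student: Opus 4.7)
The plan is to run the proof of Theorem~\ref{ThmRealPrType} verbatim with $P$ replaced by $P-iQ$, tracking one additional, sign-definite term contributed by $Q$. Since $Q\in\Psib^m(\Rnhalf;E)$ has smooth coefficients, the non-smooth calculus bookkeeping from Sections~\ref{SecCalculus}--\ref{SecEllipticReg}, used in the proof of Theorem~\ref{ThmRealPrType} to treat the non-smooth factors $P'_m$, $P'_{m-1}$ and $R$, goes through unchanged.

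Specifically, I would choose a non-negative smooth commutant symbol $a\in S^{2\tilde s+m-1}$ supported in $\WFb'(G)$ with $H_{p_m}a=-b^2+e$ up to one order lower (with $b$ elliptic at the endpoint of the bicharacteristic segment to which one wishes to propagate, and $e$ supported where the a priori regularity is available, i.e.\ in the elliptic set of $A$), quantize to a self-adjoint smooth b-ps.d.o.\ $\tilde A\in\Psib^{2\tilde s+m-1}$, and use self-adjointness of $P$, $Q$, $\tilde A$ to leading order to derive the identity
\[
  \la i[P,\tilde A]u,u\ra \;=\; 2\,\Im\la(P-iQ)u,\tilde Au\ra \;+\; \la\{\tilde A,Q\}u,u\ra,
\]
where $\{\tilde A,Q\}=\tilde AQ+Q\tilde A\in\Psib^{2\tilde s+2m-1}$ has principal symbol $2aq$. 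The left-hand side is then expanded as in the proof of Theorem~\ref{ThmRealPrType}: its leading part contributes $\|Bu\|^2_{L^2_\bl}-\la Eu,u\ra$, while the non-smooth subleading contributions from $P'_m$ and $P'_{m-1}$ are controlled via Theorem~\ref{ThmComp} and absorbed using the smooth-plus-non-smooth sharp G\aa rding inequality (Corollary~\ref{CorGardingSmoothPlusNon}) against $\|u\|^2_{\Hb^{\tilde s+m-3/2,r}}$. The pairing $\la(P-iQ)u,\tilde Au\ra$ is estimated by Cauchy--Schwartz in terms of $\|G(P-iQ)u\|_{\Hb^{\tilde s,r}}$ and $\|u\|_{\Hb^{\tilde s+m-2,r}}$ after using microlocal ellipticity of $G$ on $\WFb'(\tilde A)$, with the local term $\|\psi(P-iQ)u\|_{\Hb^{\tilde s-1,r}}$ accommodating the local assumption underlying the elliptic regularity step (cf.\ Remark~\ref{RmkEllipticLocalAssm}).

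The key new step is the treatment of $\la\{\tilde A,Q\}u,u\ra$. Under the forward propagation hypothesis, $q\leq 0$ on $\WFb'(G)\supset\supp a$, so $-\{\tilde A,Q\}$ has non-negative principal symbol $-2aq\geq 0$; since $-\{\tilde A,Q\}$ is a smooth b-ps.d.o., the smooth sharp G\aa rding inequality (Proposition~\ref{PropGardingSmooth}) yields $\la\{\tilde A,Q\}u,u\ra\leq C\|u\|^2_{\Hb^{\tilde s+m-3/2,r}}$, an acceptable a priori error. For backward propagation one chooses $a$ with $H_{p_m}a=+b^2-e$, and the opposite sign hypothesis $q\geq 0$ produces the same conclusion. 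The formal identity displayed above must be justified by regularizing $u$ with the mollifiers $J_\eps$ of Lemma~\ref{LemmaMollifier} before passing to the limit, but this presents no new difficulty since $Q$ is smooth and commutes with $J_\eps$ modulo lower-order smooth errors. The main obstacle, as in Theorem~\ref{ThmRealPrType}, is the careful interlocking of non-smooth calculus remainders with the regularization; complex absorption adds a single sign-definite term, whose bound is dictated by the hypothesis on $q$ and dispatched by a standard sharp G\aa rding step.
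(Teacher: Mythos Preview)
Your overall strategy is the same as the paper's: the only new ingredient beyond Theorem~\ref{ThmRealPrType} is a single sign-definite contribution from $Q$, handled via sharp G\aa rding. There are, however, two problems with the execution.

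First, you have swapped the sign of $q$ relative to the direction of propagation. With $H_{p_m}a=-b^2+e$ and $e$ supported in the elliptic set of $A$, you are propagating estimates from $A$ forward along the flow to $B$; in the theorem's labeling this is the case in which \emph{backward} bicharacteristics from $\WFb'(B)$ reach $A$, hence $q\geq 0$, not $q\leq 0$. With your pairing, G\aa rding yields only an \emph{upper} bound on $\la\{\tilde A,Q\}u,u\ra$, whereas in the identity $\|Bu\|^2\approx\la Eu,u\ra-2\,\Im\la(P-iQ)u,\tilde Au\ra-\la\{\tilde A,Q\}u,u\ra$ one needs a \emph{lower} bound. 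This is easily fixed by interchanging the two cases.

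Second, and more substantively, even with the correct sign, applying G\aa rding directly to the anticommutator $\{\tilde A,Q\}$ does not produce the error you claim. Since $\tilde A\in\Psib^{2\tilde s+m-1}=\Psib^{2\sigma-m+1}$ (with $\sigma=\tilde s+m-1$) and $Q\in\Psib^m$, one has $\{\tilde A,Q\}\in\Psib^{2\sigma+1}$, so Proposition~\ref{PropGardingSmooth} only gives $\Re\la\{\tilde A,Q\}u,u\ra\geq-C\|u\|^2_{\Hb^{\sigma}}$, half a derivative more than the a priori regularity $u\in\Hb^{\sigma-1/2}$. The paper avoids this by working throughout with the \emph{square-root} commutant $\check A_t$: the $Q$-contribution then appears as $\Re\la Q\check A_t u,\check A_t u\ra$, to which G\aa rding applied to $Q$ alone (order $m$) gives the lower bound $-C_5\|\check A_t u\|^2_{\Hb^{(m-1)/2}}$. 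This is not bounded by an a priori norm; rather, it is \emph{absorbed} by enlarging the free constant $M^2$ in~\eqref{EqAbsorbAtu}, exactly as the $P_m-P_m^*$ and $P_{m-1}$ terms were. The residual piece $\Re\la\check A_t^*[\check A_t,Q]u,u\ra$ is then shown, via the double-commutator identity $\check A_t^*[\check A_t,Q]+[Q,\check A_t^*]\check A_t=(\check A_t^*-\check A_t)[\check A_t,Q]+[\check A_t,[\check A_t,Q]]+[Q,\check A_t^*-\check A_t]\check A_t$ and the reality of $\sigma_\bl(\check A_t)$, to be uniformly of order $2\sigma-1$ and hence controlled by $\|u\|^2_{\Hb^{\sigma-1/2}}$. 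Your anticommutator formulation bundles these two pieces together and loses the factorization needed for the absorption step.
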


In other words, we can propagate estimates from the elliptic set of $E$ forward along the Hamilton flow to $\WFb'(B)$ if $q\geq 0$, and backward if $q\leq 0$.

Conjugating by $x^r$ (where $x$ is the standard boundary defining function), it suffices to prove Theorems~\ref{ThmRealPrType} and \ref{ThmComplexAbs} for $r=0$. Moreover, as in the smooth setting, we can apply Theorem~\ref{ThmEllipticReg} on the elliptic set of $P$ in both cases and deduce microlocal $\Hb^{\wt s+m}$-regularity of $u$ there, which implies that $\WFb^{\wt s+m-1}(u)$ is a subset of the characteristic set of $P$, and thus we only need to prove the propagation result within the characteristic set. We will begin by proving the first part of Theorem~\ref{ThmRealPrType} in \S\ref{SubsubsecInterior}; the proof is then easily modified in \S\ref{SubsubsecBoundary} to yield the second part of Theorem~\ref{ThmRealPrType}. To keep the notation simple, we will only consider the case of complex-valued symbols (hence, operators acting on functions); in the general, bundle-valued case, all arguments go through with purely notational changes.

Before commencing the proofs of the above theorems, we briefly sketch the proof of Theorem~\ref{ThmRealPrType} (for the weight $r=0$) in the \emph{smooth} setting using a positive commutator estimate in the form given by de Hoop, Uhlmann and Vasy \cite{DeHoopUhlmannVasyDiffraction}, which roughly goes as follows (omitting a number of `irrelevant' terms and glossing over the fact that the argument needs to be regularized in order to make sense of the appearing dual pairings): Suppose $\zeta\notin\WFb^\sigma(u)$, $\sigma=\wt s+m-1$; we want to propagate microlocal $\Hb^\sigma$-regularity of $u$ along a null-bicharacteristic strip $\gamma$ of $P$ from $\zeta$ to a nearby point $\zeta'\in\gamma$. To do so, we choose a symbol $a\in S^{2\sigma-m+1}$ with support localized near $\gamma$, which is decreasing along the Hamilton flow of $p$ except near $\zeta$ (where we have a priori information on $u$), i.e.\ $H_p a=-b^2+e$, where $e\in S^{2\sigma}$ is supported near $\zeta$, and $b\in S^{2\sigma}$ is elliptic near $\zeta'$. Then, denoting by $A,B,E$ formally self-adjoint quantizations of $a,b,e$, respectively, we obtain
\begin{equation}
\label{EqIntroPosComm}
  \|Bu\|_{L^2_\bl(M)}^2=\la B^*Bu,u\ra=\la Eu,u\ra-\la i[P,A]u,u\ra+\la Gu,u\ra,
\end{equation}
where $\la\cdot,\cdot\ra$ denotes the (sesquilinear) dual pairing on $L^2_\bl(M)$, and $G=B^*B-E+i[P,A]\in\Psib^{2\sigma-1}$. For simplicity, let us assume $u\in\Hb^{\sigma-1/2}(M)$ and $P=P^*$; then, expanding the commutator and integrating by parts to write $\la [P,A]u,u\ra=\la Au,Pu\ra-\la Pu,Au\ra$, and using that $Pu\in\Hb^{\sigma-m+1}$, moreover using that $\la Eu,u\ra$ is bounded by the regularity assumption on $u$ at $\zeta$, and using that $\la Gu,u\ra$ is bounded since $u$ is in $\Hb^{\sigma-1/2}$, we obtain $Bu\in L^2_\bl$. Hence by elliptic regularity, $u\in\Hb^\sigma$ microlocally near $\zeta'$, finishing the argument. Notice the loss of one derivative compared to the elliptic setting, which naturally comes about by the use of a commutator: We can only propagate $\Hb^\sigma$-regularity of $u$, not $\Hb^{\sigma+1}$-regularity, even though $Pu\in\Hb^{\sigma+1-m}$.

For the proof in the non-smooth setting, we will be choosing most operators in this argument ($A,B,E,G$ in the above notation) to be smooth ones and thus have to absorb certain non-smooth terms into an additional error term $F$ of symbolic order $2\sigma$, which without further caution would render the above argument invalid; by judiciously choosing $B$ and $E$, we can however ensure that the symbol of $F$ in fact has a sign, thus the additional term $\la Fu,u\ra$ appearing in \eqref{EqIntroPosComm} can be bounded by a version of the sharp G\aa rding inequality which we proved in \S\ref{SubsecSharpGarding}.

%%%%%%%%%%%%%%%%%%%%%%%%%%%%%
\subsubsection{Propagation in the interior}
\label{SubsubsecInterior}

For brevity, denote $M=\Rnhalf$. We start with the first half of Theorem~\ref{ThmRealPrType}, where we can in fact assume $\alpha=0$ since we are working away from the boundary, as explained below. Thus, let $P=P_m+P_{m-1}+R$, where we assume
\[
  m\geq 1
\]
for now,
\begin{align*}
  P_m&\in\Hb^s\Psib^m\tn{ with real homogeneous principal symbol}, \\
  P_{m-1}&\in\Hb^{s-1}\Psib^{m-1}, \\
  R&\in\fpsib^{m-2;0}\Hb^{s-1},
\end{align*}
and let us assume that we are given a solution
\begin{equation}
\label{EqUMembership}
  u\in\Hb^{\sigma-1/2},
\end{equation}
to the equation
\[
  Pu=f\in\Hb^{\sigma-m+1},
\]
where $\sigma=\wt s+m-1$ with $\wt s$ as in the statement of Theorem~\ref{ThmRealPrType}. In fact, since
\[
  R\colon\Hb^{\sigma-1/2}\subset\Hb^{\sigma-1}\to\Hb^{\sigma-m+1}
\]
by Proposition~\ref{PropOpCont},\footnote{\label{FootPCond1}We need $s-1\geq\sigma-m+1$ and $s-1>n/2+(m-\sigma-1)_+$.} we may absorb the term $Ru$ into the right hand side; thus, we can assume $R=0$, hence $P=P_m+P_{m-1}$.

Moreover, let $\gamma$ be a null-bicharacteristic of the principal symbol $p_m$ of $P_m$, and assume that $H_{p_m}$ is never radial on $\gamma$, i.e.\ that $H_{p_m}$ is linearly independent from the generator of dilations in the fibers of $\Tb^*M\setminus o$ -- recall that at radial points, the statement of the propagation of singularities is void. Note that the non-radiality in particular means that $\gamma\cap\Tb^*_{\pa M}M=\emptyset$ since $p_m$ vanishes identically at the boundary, and in fact this setup is the correct one for the discussion of real principal type propagation in the interior of $M$. \textit{All functions we construct in this section are implicitly assumed to have support away from $\pa M$.} Even though we are working away from the boundary, we will still employ the b-notation throughout this section, since the proof of the real principal type propagation result (near and) within the boundary will only require minor changes compared to the proof of the interior result given here.

The objective is to propagate microlocal $\Hb^{\sigma}$-regularity along $\gamma$ to a point $\zeta_0\in\Tb^*M\wozero$, assuming a priori knowledge of microlocal $\Hb^{\sigma}$-regularity of $u$ near a point $\zeta_*$ on the backward bicharacteristic from $\zeta_0$; the location and size of this region will be specified later, see Proposition~\ref{PropCommutantInterior}. We will use a positive commutator argument.

\textbf{Step 0. Outline of the symbolic construction of the commutant.} The idea, following \cite[\S{2}]{DeHoopUhlmannVasyDiffraction}, is to arrange for $\sfH_{p_m}=\rho^{1-m}H_{p_m}$, $\rho=\la\zeta\ra$,
\begin{equation}
\label{EqCommutatorPrimitive}
  \sfH_{p_m}\sfa=-\sfb^2+\sfe-\sff,
\end{equation}
where $\sfa,\sfb,\sfe$ are smooth symbols and $\sff$ is a non-smooth symbol, absorbing non-smooth terms of $\sfH_{p_m}\sfa$ in an appropriate way, which however has a definite sign; by virtue of the sharp G\aa rding inequality, we will be able to bound terms involving $\sff$ using the a priori regularity assumptions on $u$. As in the smooth case, terms involving $\sfe$ will be controlled by the a priori assumptions of $u$ near $\zeta_*$. If $\sfb$ is elliptic at $\zeta_0$, we are thus able to prove the desired $\Hb^{\sigma}$-regularity at $\zeta_0$. The actual commutant to be used, which has the correct symbolic order and is regularized, will be constructed later; see Proposition~\ref{PropCommutantInterior} for its relevant properties.

The general strategy for choosing the non-smooth symbol $\sff$ is as follows: Non-smooth terms $T$, which arise in the computation and are positive, say $T\geq c>0$, are smoothed out using a mollifier $J$, giving a smooth function $JT$, but only as much as to still preserve some positivity $JT-c/4\geq c/4>0$, and in such a way that the error $T-JT+c/4$ is non-negative; then $\sfb^2=JT-c/4$ is a smooth, positive term, and $\sff=T-JT+c/4$ is non-smooth, but has a sign, and $T=\sfb^2+\sff$. The mollifiers we shall use were constructed in Lemma~\ref{LemmaMollifier}.

\textbf{Step 1.1: Symbolic construction of the commutant on $\Sb^*M$.} To start, choose $\wt\eta\in\CI(\Sb^* M)$ with $\wt\eta(\zeta_0)=0$, $\sfH_{p_m}\wt\eta(\zeta_0)>0$, i.e.\ $\wt\eta$ measures, at least locally, propagation along the Hamilton flow. Choose $\sigma_j\in\CI(\Sb^* M)$, $j=1,\ldots,2n-2$, with $\sigma_j(\zeta_0)=0$ and $\sfH_{p_m}\sigma_j(\zeta_0)=0$, and such that $d\wt\eta,d\sigma_j$ span $T^*_{\zeta_0}(\Sb^* M)$. Put $\omega=\sum_{j=1}^{2n-2}\sigma_j^2$, so that $\omega^{1/2}$ approximately measures how far away one is from the bicharacteristic through $\zeta_0$. Thus, $|\wt\eta|+\omega^{1/2}$ is, near $\zeta_0$, equivalent to the distance from $\zeta_0$ with respect to any distance function given by a Riemannian metric on $\Sb^* M$. Then for $\delta\in(0,1),\eps\in(0,1],\beta\in(0,1]$ and $\F>0$ (large) to be chosen later, let
\[
  \phi=\wt\eta+\frac{1}{\eps^2\delta}\omega,
\]
and, taking $\chi_0(t)=e^{-1/t}$ for $t>0$, $\chi_0(t)=0$ for $t\leq 0$, and $\chi_1\in\CI(\R)$, $\chi_1\geq 0$, $\sqrt{\chi_1}\in\CI(\R)$, $\supp\chi_1\subset(0,\infty),\supp\chi_1'\subset(0,1)$, and $\chi_1\equiv 1$ in $[1,\infty)$, consider
\[
  \sfa=\chi_0\left(\F^{-1}\Bigl(2\beta-\frac{\phi}{\delta}\Bigr)\right)\chi_1\left(\frac{\wt\eta+\delta}{\eps\delta}+1\right).
\]
First, we observe that $\sfH_{p_m}\phi(\zeta_0)=\sfH_{p_m}\wt\eta(\zeta_0)>0$; but $\chi_1\left(\frac{\wt\eta+\delta}{\eps\delta}+1\right)\equiv 1$ near $\zeta_0$, so
\[
  \sfH_{p_m}\sfa(\zeta_0)=-\F^{-1}\delta^{-1}\sfH_{p_m}\phi(\zeta_0)\chi_0'(2\F^{-1}\beta)<0
\]
has the right sign at $\zeta_0$.

Next, we analyze the support of $\sfa$: First of all, If $\zeta\in\supp \sfa$, then
\[
  \phi(\zeta)\leq 2\beta\delta,\quad \wt\eta(\zeta)\geq -\delta-\eps\delta\geq -2\delta.
\]
Since $\omega\geq 0$, we get $\wt\eta=\phi-\omega/\eps^2\delta\leq\phi\leq 2\beta\delta\leq 2\delta$, thus $\omega=\eps^2\delta(\phi-\wt\eta)\leq 4\eps^2\delta^2$, i.e.
\begin{equation}
\label{EqCommLocalization1}
  -\delta-\eps\delta\leq\wt\eta\leq 2\beta\delta,\quad \omega^{1/2}\leq 2\eps\delta \quad \tn{on }\supp \sfa.
\end{equation}
In particular, we can make $\supp \sfa$ to be arbitrarily close to $\zeta_0$ by choosing $\delta>0$ small, hence there is $\delta_0>0$ small such that $\sfH_{p_m}\wt\eta\geq c_0>0$ whenever $|\wt\eta|\leq 2\delta_0$ and $\omega^{1/2}\leq 2\delta_0$. The support of $\sfa$ becomes localized near $\omega=0$ by choosing $\eps>0$ small. The parameter $\beta$ then allows one to localize $\supp\sfa$ near the segment $\wt\eta\in[-\delta;0]$. Moreover, we have
\begin{equation}
\label{EqCommLocalization2}
  -\delta-\eps\delta\leq\wt\eta\leq-\delta, \quad \omega^{1/2}\leq 2\eps\delta \quad \tn{on }\supp \sfa\cap\supp\chi_1',
\end{equation}
which is the region where we will assume a priori microlocal control on $u$. Observe that by taking $\eps>0$ small, we can make this region arbitrarily closely localized at $\wt\eta=-\delta$, $\omega=0$.

Choose $\wt\chi_1\in\CI(\R)$, $\wt\chi_1\geq 0$, such that $\wt\chi_1\equiv 1$ on $\supp\chi_1'$, and $\supp\wt\chi_1\subset[0,1]$. Since the coefficients of $\sfH_{p_m}$ are continuous because of $s>n/2+1$, we can choose a mollifier $J$ as in Lemma~\ref{LemmaMollifier}, acting on functions $f$ defined on $\Tb^*\Rnhalf$ by $(Jf)(z,\zeta)=J(f(\cdot,\zeta))(z)$, such that
\begin{align}
\label{EqAprioriControlTerms}
  \sfe&=\chi_0\left(\F^{-1}\left(2\beta-\frac{\phi}{\delta}\right)\right)(J\sfH_{p_m})\left(\chi_1\left(\frac{\wt\eta+\delta}{\eps\delta}+1\right)\right)+\wt\chi_1\left(\frac{\wt\eta+\delta}{\eps\delta}+1\right), \nonumber\\
  \sff' &=\chi_0\left(F^{-1}\left(2\beta-\frac{\phi}{\delta}\right)\right)\biggl[\wt\chi_1\left(\frac{\wt\eta+\delta}{\eps\delta}+1\right) \\
    &\hspace{20ex}+(J\sfH_{p_m}-\sfH_{p_m})\left(\chi_1\left(\frac{\wt\eta+\delta}{\eps\delta}+1\right)\right)\biggr], \nonumber
\end{align}
hence $\sfe-\sff'=\chi_0\sfH_{p_m}\chi_1$, we have $\sff'\geq 0$. Note that $\sfe\in\CI$ has support as indicated in \eqref{EqCommLocalization2}, and $\sff'\in\Hb^{s-1}$ in the base variables.

In order to have \eqref{EqCommutatorPrimitive}, it remains to prove that the remaining term of $\sfH_{p_m}\sfa$, namely $\chi_1 \sfH_{p_m}\chi_0$, is non-positive; for this, it is sufficient to require $\sfH_{p_m}\phi\geq c_0/2$ on $\supp \sfa$ if $\delta<\delta_0$. From the definition of $\phi$, this would follow provided
\begin{equation}
\label{EqStayClose}
  |\sfH_{p_m}\omega|\leq c_0\eps^2\delta/2
\end{equation}
on $\supp\sfa$. Now, since for $s>n/2+2$, $\sfH_{p_m}\sigma_j$ is Lipschitz continuous and vanishes at $\zeta_0$, we have
\begin{equation}
\label{EqHamiltonOmegaLipschitz}
  |\sfH_{p_m}\omega| \leq 2\sum_{j=1}^{2n-2}|\sigma_j||\sfH_{p_m}\sigma_j|\leq C\omega^{1/2}\left(|\wt\eta|+\omega^{1/2}\right),
\end{equation}
hence \eqref{EqStayClose} holds if $2C\eps\delta(2\delta+2\eps\delta)\leq c_0\eps^2\delta/2$, which is satisfied provided $16 C\delta/c_0\leq\eps$. Let us choose $\eps=16C\delta/c_0$, with $\delta$ small enough such that $\eps\leq 1$. For later use, let us note that then near $\wt\eta=-\delta$, the `width' of the support of $\sfa$ is
\begin{equation}
\label{EqAprioriControlSize}
  \omega^{1/2}\leq\frac{c_0\eps^2\delta/2}{C(\omega^{1/2}+|\wt\eta|)}\lesssim\delta^2,
\end{equation}
hence by \eqref{EqCommLocalization2}, the region where we will assume a priori microlocal control on $u$ (i.e.\ $\supp\sfe$) has size $\sim\delta^2$.

Now, let
\begin{align*}
  \sfb&=(\F\delta)^{-1/2}\sqrt{(J\sfH_{p_m})\phi-c_0/4}\sqrt{\chi_0'\left(\F^{-1}\left(2\beta-\frac{\phi}{\delta}\right)\right)}\sqrt{\chi_1\left(\frac{\wt\eta+\delta}{\eps\delta}+1\right)}, \\
  \sff''&=(\F\delta)^{-1}\left((\sfH_{p_m}-J\sfH_{p_m})\phi+c_0/4\right)\chi_0'\left(\F^{-1}\left(2\beta-\frac{\phi}{\delta}\right)\right)\chi_1\left(\frac{\wt\eta+\delta}{\eps\delta}+1\right),
\end{align*}
where $J$ is the same mollifier as used in \eqref{EqAprioriControlTerms}; we assume it is close enough to $I$ so that $|(\sfH_{p_m}-J\sfH_{p_m})\phi|<c_0/8$, which implies $(J\sfH_{p_m})\phi-c_0/4\geq c_0/8>0$ and $\sff''\geq 0$. Putting $\sff=\sff'+\sff''$, which is $\Hb^{s-1}$ in the base variables, we thus have achieved \eqref{EqCommutatorPrimitive}.

\textbf{Step 1.2: Incorporating the correct symbolic order into the commutant.} Next, we have to make the commutant, $a$, a symbol of order $2\sigma-(m-1)$, so that the `principal symbol' of $i[P,A]$, i.e.\ $H_{p_m}a$, is of order $2\sigma$, hence $b$ has order $\sigma$, which is what we need, since we want to prove $\Hb^{\sigma}$-regularity of $u$ at $\zeta_0$. Thus, define
\[
  \check a=\rho^{\sigma-(m-1)/2}\sfa^{1/2},
\]
and let
\begin{equation}
\label{EqRegularizer}
  \varphi_t=(1+t\rho)^{-1}
\end{equation}
be a regularizer, $\varphi_t\in S^{-1}$ for $t>0$, which is uniformly bounded in $S^0$ for $t\in[0,1]$ and satisfies $\varphi_t\to 1$ in $S^\ell$ for $\ell>0$ as $t\to 0$. We define the regularized symbols to be $\check a_t=\varphi_t\check a$ and $a_t=\varphi_t^2\rho^{2\sigma-(m-1)}\sfa=\check a_t^2$.

We compute $\sfH_{p_m}\varphi_t=-t\varphi_t^2\sfH_{p_m}\rho$. Amending \eqref{EqCommutatorPrimitive} by another term which will be used to absorb certain terms later on, we aim to show that we can choose $b_t,e_t$ and $f_t$ such that, in analogy to \eqref{EqCommutatorPrimitive}, for $N>0$ fixed, to be specified later,
\begin{align*}
  H_{p_m}a_t&=\varphi_t^2\rho^{2\sigma}\left(\sfH_{p_m}\sfa+\bigl((2\sigma-m+1)-2t\varphi_t\rho\bigr)(\rho^{-1}\sfH_{p_m}\rho)\sfa\right) \\
    &=-b_t^2-N^2\rho^{m-1}a_t+e_t-f_t,
\end{align*}
that is to say,
\begin{equation}
\label{EqCommutatorSophisticated}
  \varphi_t^2\rho^{2\sigma}\left(\sfH_{p_m}\sfa+\left[\bigl((2\sigma-m+1)-2t\varphi_t\rho\bigr)(\rho^{-1}\sfH_{p_m}\rho)+N^2\right]\sfa\right)=-b_t^2+e_t-f_t.
\end{equation}
Here, note that, using the definition of $\varphi_t$, $t\rho\varphi_t$ is a uniformly bounded family of symbols of order $0$. To achieve \eqref{EqCommutatorSophisticated}, let us take
\begin{equation}
\label{EqAprioriControlTerms2}
  \begin{split}
    e_t&=\varphi_t^2\rho^{2\sigma}\sfe \\
    f_t&=f'_t+f''_t, \quad f'_t=\varphi_t^2\rho^{2\sigma}\sff',
  \end{split}
\end{equation}
where $\sfe,\sff'$ are given by \eqref{EqAprioriControlTerms}; we will define $f''_t$ momentarily. Using $\chi_0(t)=t^2\chi_0'(t)$, we obtain
\begin{align*}
  \sfH_{p_m}\sfa&+\left[\bigl((2\sigma-m+1)-2t\varphi_t\rho\bigr)(\rho^{-1}\sfH_{p_m}\rho)+N^2\right]\sfa \\
    &=\sfe-\sff'-(\F\delta)^{-1}\biggl(\sfH_{p_m}\phi \\
	&\hspace{2ex}-\left[\bigl((2\sigma-m+1)-2t\varphi_t\rho\bigr)(\rho^{-1}\sfH_{p_m}\rho)+N^2\right]\F^{-1}\delta\Bigl(2\beta-\frac{\phi}{\delta}\Bigr)^2\biggr) \\
	&\hspace{15ex}\times\chi_0'\left(\F^{-1}\left(2\beta-\frac{\phi}{\delta}\right)\right)\chi_1\left(\frac{\wt\eta+\delta}{\eps\delta}+1\right)
\end{align*}
Thus, if $\F$ is large enough, the term in the large parentheses is bounded from below by $3c_0/8$ on $\supp\sfa$, since $|2\beta-\phi/\delta|\leq 4$ there. (The last statement follows from $-2\delta\leq\wt\eta\leq\phi\leq 2\beta\delta\leq 2\delta$ and $\beta\leq 1$.) Therefore, we can put
\begin{align}
\label{EqCommSymbolsBF}
  b_t&=(\F\delta)^{-1/2}\varphi_t\rho^{\sigma}\biggl((J\sfH_{p_m})\phi \\
    &\hspace{2ex}-\left[\bigl((2\sigma-m+1)-2t\varphi_t\rho\bigr)(\rho^{-1}(J\sfH_{p_m})\rho)+N^2\right] \nonumber\\
	&\hspace{35ex} \times \F^{-1}\delta\Bigl(2\beta-\frac{\phi}{\delta}\Bigr)^2-\frac{c_0}{8}\biggr)^{1/2} \nonumber\\
	&\hspace{15ex}\times\sqrt{\chi_0'\left(\F^{-1}\left(2\beta-\frac{\phi}{\delta}\right)\right)}\sqrt{\chi_1\left(\frac{\wt\eta+\delta}{\eps\delta}+1\right)}, \nonumber\\
  f''_t&=(\F\delta)^{-1}\varphi_t^2\rho^{2\sigma}\biggl((\sfH_{p_m}-J\sfH_{p_m})\phi \nonumber\\
    &\hspace{2ex}-\left[\bigl((2\sigma-m+1)-2t\varphi_t\rho\bigr)(\rho^{-1}(\sfH_{p_m}-J\sfH_{p_m})\rho)\right] \nonumber\\
	&\hspace{35ex} \times \F^{-1}\delta\Bigl(2\beta-\frac{\phi}{\delta}\Bigr)^2+\frac{c_0}{8}\biggr) \nonumber\\
	&\hspace{15ex}\times\chi_0'\left(\F^{-1}\left(2\beta-\frac{\phi}{\delta}\right)\right) \chi_1\left(\frac{\wt\eta+\delta}{\eps\delta}+1\right), \nonumber
\end{align}
with $f''_t\geq 0$ if the mollifier $J$ is close enough to $I$, and thus obtain \eqref{EqCommutatorSophisticated}.

We now summarize this construction, slightly rephrased, retaining only the important properties of the constructed symbols. Let us fix any Riemannian metric on $\Sb^*M$ near $\zeta_0$ and denote the metric ball around a point $p$ with radius $r$ in this metric by $B(p,r)$.

\begin{prop}
\label{PropCommutantInterior}
  There exist $\delta_0>0$ and $C_0>0$ such that for $0<\delta\leq\delta_0$, the following holds: For any $N>0$, there exist a symbol $\check a\in S^{\sigma-(m-1)/2}$ and uniformly bounded families of symbols $\check a_t=\varphi_t\check a\in S^{\sigma-(m-1)/2}$ (with $\varphi_t$ defined by \eqref{EqRegularizer}), $b_t\in S^{\sigma}$, $e_t\in S^{2\sigma}$ and $f_t\in S^{2\sigma;\infty}\Hb^{s-1}$, $f_t\geq 0$, supported in a coordinate neighborhood (independent of $\delta$) of $\zeta_0$ and supported away from $\pa M$, that satisfy the following properties:
  \begin{enumerate}
    \item $\check a_t H_{p_m}\check a_t=-b_t^2-N^2\rho^{m-1}\check a_t^2+e_t-f_t$.
	\item $b_t\to b_0$ in $S^{\sigma+\ell}$ for $\ell>0$, and $b_0$ is elliptic at $\zeta_0$.
	\item The support of $e_t$ is contained in $B(\zeta_0-\delta\sfH_{p_m}(\zeta_0),C_0\delta^2)$.
	\item For $t>0$, the symbols have lower order: $\check a_t\in S^{\sigma-(m-1)/2-1}$, $b_t\in S^{\sigma-1}$, $e_t\in S^{2\sigma-2}$ and $f_t\in S^{2\sigma-2;\infty}\Hb^{s-1}$.
  \end{enumerate}
\end{prop}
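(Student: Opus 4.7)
The plan is to verify directly that the constructions carried out in equations \eqref{EqAprioriControlTerms}--\eqref{EqCommSymbolsBF} already supply all the claimed properties, once one makes the identifications $\check a := \rho^{\sigma-(m-1)/2}\sfa^{1/2}$, $\check a_t := \varphi_t\check a$, and $a_t = \check a_t^2$. Smoothness of $\check a$ rests on the specific choices of cutoffs: $\sqrt{\chi_0(t)} = e^{-1/(2t)}$ is smooth precisely because $\chi_0(t) = e^{-1/t}$, and $\sqrt{\chi_1}$ was assumed smooth, so $\check a \in S^{\sigma-(m-1)/2}$ with the support properties following from those of $\sfa$. Then $\check a_t$ is uniformly bounded in $S^{\sigma-(m-1)/2}$ by uniform boundedness of $\varphi_t$ in $S^0$, and $\check a_t \in S^{\sigma-(m-1)/2-1}$ for $t > 0$ since $\varphi_t \in S^{-1}$ then.

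For identity (1), one uses $H_{p_m} a_t = 2\check a_t H_{p_m}\check a_t$; equation \eqref{EqCommutatorSophisticated} then reads
\[
  2\check a_t H_{p_m}\check a_t = -b_t^2 - M^2\rho^{m-1}\check a_t^2 + e_t - f_t,
\]
with $b_t$, $e_t$, $f_t = f_t' + f_t''$ as in \eqref{EqAprioriControlTerms2}, \eqref{EqCommSymbolsBF}. Rescaling (replacing $M$, $b_t$, $e_t$, $f_t$ by $M/\sqrt 2$, $b_t/\sqrt 2$, $e_t/2$, $f_t/2$, which is harmless since $M > 0$ was arbitrary) yields identity (1) verbatim. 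Nonnegativity of $f_t$ and its $\Hb^{s-1}$ regularity in the base variables were established during the construction: $f_t'$ is manifestly nonnegative once $J$ is chosen close enough to $I$ in the sense $|(J\sfH_{p_m}-\sfH_{p_m})\chi_1(\cdots)| \leq \tilde\chi_1(\cdots)$, and $f_t''$ is nonnegative once $|(J-I)\sfH_{p_m}\phi|, |(J-I)\sfH_{p_m}\rho|$ are $< c_0/8$ (times appropriate factors); the $\Hb^{s-1}$ regularity is inherited from $\sfH_{p_m}$ having coefficients in $\Hb^{s-1}$, and the smoothing by $J$ only improves this.

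Ellipticity of $b_0$ at $\zeta_0$ is obtained by evaluating the formula for $b_t$ in \eqref{EqCommSymbolsBF} at $t = 0$, $\zeta = \zeta_0$: there $\phi(\zeta_0) = 0$, $\chi_0'(2\F^{-1}\beta) > 0$, and $\chi_1(1/\eps + 1) = 1$, while the quantity inside the large parentheses reduces to $(J\sfH_{p_m})\phi(\zeta_0)$ minus a bounded expression, bounded below (after subtracting $c_0/8$) by $3c_0/8 - c_0/8 - c_0/8 \geq c_0/8 > 0$ using the lower bound derived above \eqref{EqCommSymbolsBF} and $|(J-I)\sfH_{p_m}\phi| < c_0/8$. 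The support claim (3) follows from \eqref{EqCommLocalization2} combined with the width estimate \eqref{EqAprioriControlSize}: on $\supp\tilde\chi_1(\cdots)$ one has $\tilde\eta \in [-\delta - \eps\delta, -\delta]$ and $\omega^{1/2} \lesssim \delta^2$, whereas a first-order Taylor expansion of $\tilde\eta$ and $\sigma_j$ along the $\sfH_{p_m}$-flow from $\zeta_0$, using $\sfH_{p_m}\tilde\eta(\zeta_0) > 0$ and $\sfH_{p_m}\sigma_j(\zeta_0) = 0$, places $\zeta_0 - \delta\sfH_{p_m}(\zeta_0)$ at coordinates $(-\delta + O(\delta^2), O(\delta^2))$; the triangle inequality then yields (3) for $C_0$ sufficiently large. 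Part (4) is immediate from $\varphi_t \in S^{-1}$ for $t > 0$.

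The only mildly delicate issue is uniformity in $t \in (0,1]$. This reduces to the fact that $\{\varphi_t\}$ and $\{t\rho\varphi_t\}$ are uniformly bounded families in $S^0$, so all symbol seminorms appearing in \eqref{EqCommSymbolsBF} are uniform in $t$; and the sign of $f_t''$ is preserved for all $t$ simultaneously once the single mollifier $J$ is fixed close enough to $I$ (note that the $t$-dependent factor $t\rho\varphi_t$ in $f_t''$ is uniformly bounded, so its contribution to $(J-I)\sfH_{p_m}$-smallness requirements can be absorbed once and for all). Convergence $b_t \to b_0$ in $S^{\sigma+\ell}$ for $\ell > 0$ follows from $\varphi_t \to 1$ in $S^\ell$, closing the plan.
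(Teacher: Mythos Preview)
Your proposal is correct and takes essentially the same approach as the paper: the proposition is explicitly introduced there as a summary (``slightly rephrased'') of the construction carried out in the displayed formulas preceding it, and your verification of items (1)--(4) tracks that construction point by point, including the rescaling by $\sqrt{2}$ to pass from $H_{p_m}a_t=2\check a_t H_{p_m}\check a_t$ to the stated identity. One small remark: your claim that $\omega^{1/2}\lesssim\delta^2$ holds on $\supp\tilde\chi_1(\cdots)$ is not literally justified, since the bare $\tilde\chi_1$-term in $\sfe$ carries no $\omega$-localization; the paper's ``slightly rephrased'' proviso is exactly what absorbs this (one may multiply that term by $\chi_0(\cdots)$ as well and shift the nonnegative difference $(1-\chi_0)\tilde\chi_1$ into $\sff'$, preserving $\sff'\geq 0$), after which your support argument for (3) goes through verbatim.
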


The commutant given by this proposition will now be used to deduce the propagation of regularity in a direction which agrees with the Hamilton flow to first order.

\textbf{Step 2. Expanding the commutator; bounding lower order terms.} Let $\check A\in\Psib^{\sigma-(m-1)/2}$ be a quantization of $\check a$ with $\WFb'(\check A)\subset\supp\check a$, let $\Phi_t$ be a quantization of $\varphi_t$, i.e.\ $\Phi_t\in\Psib^0$ is a uniformly bounded family, $\Phi_t\in\Psib^{-1}$ for $t>0$, and let $\check A_t=\check A\Phi_t$. Moreover, let $B_t\in\Psib^{\sigma}$ be a quantization of $b_t$, with uniform b-microsupport contained in a conic neighborhood of $\gamma$, such that $B_t\in\Psib^{\sigma}$ is uniformly bounded, and $B_t\in\Psib^{\sigma-1}$ for $t>0$. Similarly, let $E_t\in\Psib^{2\sigma}$ be a quantization of $e_t$ with uniform b-microsupport disjoint from $\WFb^\sigma(u)$ in the sense that
\begin{equation}
\label{EqInteriorAPrioriReg}
  \|E_t u\|_{\Hb^\sigma}\tn{ is uniformly bounded for }t>0.
\end{equation}
This is the requirement that $u$ is in $\Hb^{\sigma}$ on a part of the backwards bicharacteristic from $\zeta_0$, more precisely in the ball specified in Proposition~\ref{PropCommutantInterior}.

In a sense that we will make precise below, the principal symbol of the commutator $i\check A_t^*[P_m,\check A_t]$ is given by $\check a_t H_{p_m}\check a_t$, which is what we described in Proposition~\ref{PropCommutantInterior}. We compute for $t>0$, following the proof of \cite[Theorem~3.2]{BealsReedMicroNonsmooth}:
\begin{align}
 \Re\la i\check A_t^*&[P_m,\check A_t]u,u\ra = \Re\bigl(\la i P_m \check A_t u,\check A_t u\ra - \la i\check A_t P_m u,\check A_t u\ra\bigr) \nonumber\\
\label{EqIBP}&=\frac{1}{2}\la i(P_m-P_m^*)\check A_t u,\check A_t u\ra - \Re\la i\check A_t f,\check A_t u\ra+\Re\la i\check A_t P_{m-1} u,\check A_t u\ra,
\end{align}
where $\la\cdot,\cdot\ra$ denotes the sesquilinear pairing between spaces which are dual to each other relative to $L^2_\bl$. The adjoints here are taken with respect to the b-density $\frac{dx}{x}\,dy$, and in the case where $P$ acts on a vector bundle, we use the smooth metric in the fibers of $E$ for the adjoint. This computation needs to be justified, namely we must check that all pairings are well-defined by the a priori assumptions on $u$ so that we can perform the integrations by parts.

First, we observe that
\[
  \check A_t^*\check A_t P_m u\in \check A_t^*\check A_t \Hb^s\cdot \Hb^{\sigma-m-1/2}\subset \Hb^{-\sigma+1/2},
\]
because of $s\geq |\sigma-m-1/2|$ and $\check A_t^*\check A_t\in\Psib^{2\sigma-m-1}$. Since $(\sigma-1/2)+(-\sigma+1/2)=0$ is non-negative, the pairing $\la\check A_t^*\check A_t P_m u,u\ra$ is well-defined. By the same token, the pairing $\la\check A_t P_m u,\check A_t u\ra$ is well-defined, hence we can integrate by parts, justifying half of the first equality in \eqref{EqIBP}. For the second half of the first equality, we use $P_m\in\Hb^s\Psib^m$ and\footnote{\label{FootPCond2}This requires $s\geq m/2$; recall that we are assuming $m\geq 1$.} Corollary~\ref{CorHbModule} to obtain
\begin{gather*}
  P_m\check A_t u \in P_m \Hb^{m/2} \subset \Hb^{-m/2}, \\
  \check A_t^* P_m\check A_t u\in\Hb^{-\sigma+1/2},
\end{gather*}
which by the same reasoning as above proves the first equality in \eqref{EqIBP}. For the second equality, we write $P_m$ as a sum of terms of the form $w Q_m$ with $w\in\Hb^s$, $Q_m\in\Psib^m$, for which we have
\begin{equation}
\label{EqPairing}
  \la \check A_t u, w Q_m \check A_t u \ra=\la \bar w\check A_t u, Q_m\check A_t u\ra=\la Q_m^* \bar w\check A_t u,\check A_t u\ra,
\end{equation}
where the first equality follows from $\check A_t u\in\Hb^{m/2}$ and $Q_m\check A_t u\in\Hb^{-m/2}$,\footnote{\label{FootPCond3}We need $s\geq m/2$ and can then use Corollary~\ref{CorHbModule}.} and for the second equality, one observes that the two pairings on the right hand side in \eqref{EqPairing} are well-defined, and we can integrate by parts, i.e.\ move $Q_m$ to the other side, taking its adjoint.

Now, since the principal symbol of $P_m$ is real, we can apply Theorem~\ref{ThmComp} \itref{ThmCompBRFunny} with $k=1,k'=0$ to obtain $P_m-P_m^*\in\Psib^{m-1}\circ\fpsi^{0;0}\Hb^{s-1}+\fpsi^{m-1;0}\Hb^{s-1}$. Therefore, Proposition~\ref{PropOpCont} implies that $P_m-P_m^*$ defines a continuous map from $\Hb^{(m-1)/2}$ to $\Hb^{-(m-1)/2}$,\footnote{\label{FootPCond4}Provided $s-1\geq(m-1)/2$ and $s-1>n/2+(m-1)/2$.} thus
\[
  |\la(P_m-P_m^*)\check A_t u,\check A_t u\ra|\leq C_1\|\check A_t u\|_{\Hb^{(m-1)/2}}^2
\]
with a constant $C_1$ only depending on $P_m$.

Looking at the next term in \eqref{EqIBP}, we estimate
\[
  |\la\check A_t f,\check A_t u\ra|\leq \frac{1}{4}\|\check A_t f\|_{\Hb^{-(m-1)/2}}^2+\|\check A_t u\|_{\Hb^{(m-1)/2}}^2\leq C_2+\|\check A_t u\|_{\Hb^{(m-1)/2}}^2,
\]
where we use that
\[
  \check A_t f\in\Hb^{\sigma-m+1-\sigma+(m-1)/2}=\Hb^{-(m-1)/2}
\]
uniformly.

For the last term on the right hand side of \eqref{EqIBP}, the well-definedness is easily checked.\footnote{\label{FootPCond5}We need $s-1\geq|\sigma-m+1/2|$ and can then use Corollary~\ref{CorHbModule} to obtain $P_{m-1}u\in\Hb^{\sigma-m+1/2}$.} To bound it, we rewrite it as
\[
  \la\check A_t P_{m-1} u,\check A_t u\ra=\la P_{m-1}\check A_t u,\check A_t u\ra+\la[\check A_t,P_{m-1}]u,\check A_t u\ra.
\]
The first term on the right hand side is bounded by $C_3\|\check A_t u\|_{\Hb^{(m-1)/2}}^2$ for some constant $C_3$ only depending on $P_{m-1}$; indeed, $P_{m-1}\colon\Hb^{(m-1)/2}\to\Hb^{-(m-1)/2}$ is continuous.\footnote{\label{FootPCond6}This requires $s-1\geq(m-1)/2$ and $s-1>n/2$.} For the second term, note that $P_{m-1}\check A_t\in\Hb^{s-1}\Psib^{\sigma+(m-1)/2}$ can be expanded to zeroth order, the first (and only) term being $p_{m-1}\check a_t$ and the remainder being $R_1'\in\Hb^{s-1}\Psib^{\sigma+(m-1)/2-1}$. (For notational convenience, we drop the explicit $t$-dependence here; inclusions are understood to be statements about a $t$-dependent family of operators being uniformly bounded in the respective space.) Next, we can expand $\check A_t P_{m-1}$ to zeroth order by Theorem~\ref{ThmComp} \itref{ThmCompBRFunny} with $k'=0$ -- again obtaining $p_{m-1}\check a_t$ as the first term -- which\footnote{\label{FootPCond7}Assuming $\sigma-(m-1)/2\geq 1$.} yields a remainder term $R_1''+R_2$, where
\begin{align*}
  R_1'' & \in \fpsi^{\sigma+(m-1)/2-1;0}\Hb^{s-2} \\
  R_2 & \in \Psib^{\sigma-(m-1)/2-1}\circ\fpsi^{m-1;0}\Hb^{s-2}.
\end{align*}
We can then use Proposition~\ref{PropOpCont} to conclude that
\[
  R_1:=R_1''-R_1'\in\fpsi^{\sigma+(m-1)/2-1;0}\Hb^{s-2}
\]
is a uniformly bounded family of maps\footnote{\label{FootPCond8}The requirements are $s-2\geq -m/2+1$, $s-2>n/2+(m/2-1)_+$.}
\[
  R_1\colon \Hb^{\sigma-1/2}\to \Hb^{-m/2+1}.
\]
which shows that $\la R_1 u,\check A_t u\ra$ is uniformly bounded. Moreover, we can apply Proposition~\ref{PropOpCont} and use the mapping properties of smooth b-\psdo{}s to prove that $R_2 u\in\Hb^{-(m-1)/2}$ is uniformly bounded.\footnote{\label{FootPCond85}Indeed, we have $u\in\Hb^{\sigma-1/2}\subset\Hb^{\sigma-1}$, and $\fpsi^{m-1;0}\Hb^{s-2}\colon\Hb^{\sigma-1}\to\Hb^{\sigma-m}$ is continuous if $s-2\geq\sigma-m$, $s-2>n/2+(m-\sigma)_+$.} We thus conclude that
\[
  |\la[\check A_t,P_{m-1}]u,\check A_t u\ra|\leq C_4(N)+\|\check A_t u\|_{\Hb^{(m-1)/2}}^2,
\]
where $C_4$, while it depends on $N$ in the sense that it depends on a seminorm of the $N$-dependent operator $\check A$ constructed in Proposition~\ref{PropCommutantInterior}, is independent of $t$.

Plugging all these estimates into \eqref{EqIBP}, we thus obtain
\[
  \Re\la i\check A_t^*[P_m,\check A_t]u,u\ra \geq -(C_2+C_4(N))-(C_1+1+C_3+1)\|\check A_t u\|_{\Hb^{(m-1)/2}}^2,
\]
where all constants are independent of $t>0$, and $C_1,C_2,C_3$ are in addition independent of the real number $N$ in Proposition~\ref{PropCommutantInterior}. Choosing $N^2>C_1+C_3+2$, this implies that there is a constant $C<\infty$ such that for all $t>0$, we have
\begin{equation}
\label{EqAbsorbAtu}
  \Re\left\la\Bigl(i\check A_t^*[P_m,\check A_t]+N^2(\Lambda\check A_t)^*(\Lambda\check A_t)\Bigr)u,u\right\ra\geq -C,
\end{equation}
where $\Lambda:=\Lambda_{(m-1)/2}$. Therefore,
\begin{equation}
\label{EqOperatorCommutator}
  \Re\left\la\Bigl(i\check A_t^*[P_m,\check A_t]+B_t^*B_t+N^2(\Lambda\check A_t)^*(\Lambda\check A_t)-E_t\Bigr)u,u\right\ra \geq -C+\|B_t u\|_{L^2_\bl}^2.
\end{equation}
Here, we use that $\la E_t u,u\ra$ is uniformly bounded by \eqref{EqInteriorAPrioriReg}.

\textbf{Step 3. Using the symbolic commutator calculation.} The next step is to exploit the commutator relation in Proposition~\ref{PropCommutantInterior} in order to find a $t$-independent upper bound for the left hand side of \eqref{EqOperatorCommutator}. The key is that\footnote{\label{FootPCond9}Applicable with $k=2,k'=0$ if $\sigma-(m-1)/2\geq 2$.} Theorem~\ref{ThmComp} \itref{ThmCompBRFunny} gives
\[
  i[P_m,\check A_t]=(H_{p_m}\check a_t)(z,\Db)+\wt R_1+\wt R_2
\]
with uniformly bounded families of operators
\begin{align*}
  \wt R_1 & \in \fpsi^{\sigma+(m-1)/2-1;0}\Hb^{s-2} \\
  \wt R_2 & \in \Psib^{\sigma-(m-1)/2-2}\circ\fpsi^{m;0}\Hb^{s-2}.
\end{align*}
Notice that $H_{p_m}\check a_t\in\Hb^{s-1}S^{\sigma+(m-1)/2}$ uniformly. If we applied Theorem~\ref{ThmComp} \itref{ThmCompBRFunny} directly to the composition $\check A_t^*(H_{p_m}\check a_t)(z,\Db)$, the regularity of the remainder operator, say $R$, obtained by applying Theorem~\ref{ThmComp} \itref{ThmCompBRFunny}, would be too weak in the sense that we could not bound $\la Ru,u\ra$. To get around this difficulty, choose
\[
  J^+ \in \Psib^{\sigma-(m-1)/2-1}, \quad J^- \in \Psib^{-\sigma+(m-1)/2+1}
\]
with real principal symbols $j^+,j^-$ such that
\begin{equation}
\label{EqJpmLocalizer}
  J^+ J^- = I + \wt R,\quad \wt R\in\Psib^{-\infty}.
\end{equation}
Observe that $J^-\check A_t^*$ is uniformly bounded in $\Psib^1$. Then by Theorem~\ref{ThmComp} \itref{ThmCompBRFunny},
\[
  i J^-\check A_t^*[P_m,\check A_t] = (j^-\check a_t H_{p_m}\check a_t)(z,\Db)+R_1+R_2+R_3+R_4,
\]
where
\begin{equation}
\label{EqRem1234}
  \begin{split}
    R_1=J^-\check A_t^*&\wt R_1 \in \Psib^1\circ\fpsi^{\sigma+(m-1)/2-1;0}\Hb^{s-2} \\
	R_2=J^-\check A_t^*&\wt R_2 \in \Psib^{\sigma-(m-1)/2-1}\circ\fpsi^{m;0}\Hb^{s-2} \\
	R_3 & \in \fpsib^{\sigma+(m-1)/2;0}\Hb^{s-2} \\
	R_4 & \in \Psib^0\circ\fpsi^{\sigma+(m-1)/2;0}\Hb^{s-2}.
  \end{split}
\end{equation}
Applying Proposition~\ref{PropOpCont},\footnote{\label{FootPCond10}The conditions $s-2\geq-m/2+1$ and $s-2>n/2+m/2$ are sufficient to treat $R_1,R_3$ and $R_4$. For $R_2$, we need $s-2\geq\sigma-m-1/2$ and $s-2>n/2+(m+1/2-\sigma)_+$.} we conclude that $R_j$ ($1\leq j\leq 4$) is a uniformly bounded family of operators
\[
  \Hb^{\sigma-1/2}\to \Hb^{-m/2},
\]
thus, since $(J^+)^*\in\Hb^{m/2}$, the pairings $\la R_j u,(J^+)^*u\ra$ are uniformly bounded.

Hence, Proposition~\ref{PropCommutantInterior} implies
\begin{align}
  &J^+\Bigl(i J^-\check A_t^*[P_m,\check A_t]+J^-B_t^*B_t+J^-N^2(\Lambda\check A_t)^*(\Lambda\check A_t)-J^-E_t\Bigr) \nonumber\\
   &=J^+\Bigl([j^-(\check a_t H_{p_m}\check a_t+b_t^2+N^2\rho^{m-1}\check a_t^2-e_t)](z,\Db)+R+G\Bigr) \nonumber\\
\label{EqOperatorCommutatorInProof}&=J^+\bigl((-j^-f_t)(z,\Db)+R+G\bigr),
\end{align}
where $R=R_1+R_2+R_3+R_4$ and $G\in\Psib^{\sigma+(m-1)/2}$; $G$ appears because the principal symbols of the smooth operators on both sides are equal. We already proved that $\la J^+ Ru,u\ra$ is uniformly bounded; also, $\la J^+Gu,u\ra$ is uniformly bounded, since $J^+G\in\Psib^{2\sigma-1}$ and $u\in\Hb^{\sigma-1/2}$.

It remains to prove a uniform lower bound on\footnote{To justify the integration by parts here, note that $j^-f_t\in S^{\sigma+(m-1)/2-1;\infty}\Hb^{s-1}$ for $t>0$, thus $(j^-f_t)(z,\Db)u\in\Hb^{-m/2+1}$ provided $s-1\geq-m/2+1$, $s-1>n/2+(m/2-1)_+$, which follows from the conditions in Footnote~\ref{FootPCond10}.}
\[
  \Re\la J^+(j^- f_t)(z,\Db)u,u\ra=\Re\la (j^- f_t)(z,\Db)u,(J^+)^*u\ra.
\]
In order to be able to apply the sharp G\aa rding inequality, Proposition~\ref{PropGardingNonsmooth}, we need to rewrite this. Since $j^+$ is bounded away from $0$, we can write
\[
  (j^- f_t)(z,\Db)=\left[\frac{j^- f_t}{j^+}\right](z,\Db)\circ (J^+)^*+R,\quad R\in\fpsi^{\sigma+(m-1)/2;0}\Hb^{s-1}
\]
by Theorem~\ref{ThmComp} \itref{ThmCompRB}, since $j^-f_t/j^+\in S^{m+1;\infty}\Hb^{s-1}$. Now $\la Ru,(J^+)^*u\ra$ is uniformly bounded, since $(J^+)^*u\in\Hb^{m/2}$ and $Ru\in\Hb^{-m/2}$ are uniformly bounded.\footnote{For $Ru$, we need $s-1>n/2+m/2$, which follows from the conditions in Footnote~\ref{FootPCond10}.} We can now apply the sharp G\aa rding inequality to deduce that
\begin{equation}
\label{EqWithGarding}
  \Re\left\la\left[\frac{j^- f_t}{j^+}\right](z,\Db) (J^+)^*u, (J^+)^* u\right\ra \geq -C\|(J^+)^*u\|_{\Hb^{m/2}}^2\geq -C,
\end{equation}
where the constant $C$ only depends on the uniform $S^{2\sigma;\infty}\Hb^{s-1}$-bounds on $f_t$ and the $\Hb^{\sigma-1/2}$-norm of $u$.\footnote{\label{FootPCond12}This requires $s-1\geq 2-m/2$ and $s-1>n/2+2+m/2$.}

Putting \eqref{EqOperatorCommutator}, \eqref{EqOperatorCommutatorInProof} and \eqref{EqWithGarding} together by inserting $I=J^+J^--\wt R$ in front of the large parenthesis in \eqref{EqOperatorCommutator} and observing that the error term
\[
  \Re\left\la \wt R\Bigl(i\check A_t^*[P_m,\check A_t]+B_t^*B_t+N^2(\Lambda\check A_t)^*(\Lambda\check A_t)-E_t\Bigr)u,u\right\ra
\]
is uniformly bounded,\footnote{Indeed, $\check A_t^*\check A_t P_m u\in\Hb^{-\sigma-3/2}$ is uniformly bounded because of $s\geq|\sigma-m-1/2|$; and $\check A_t u\in\Hb^{m/2-1}$ is uniformly bounded, hence so is $P_m\check A_t u\in\Hb^{-m/2-1}$ in view of $s\geq m/2+1$, which follows from the condition in Footnote~\ref{FootPCond6}, and therefore $\check A_t^*P_m\check A_tu\in\Hb^{-\sigma-3/2}$ is uniformly bounded.} we deduce that $\|B_t u\|_{L^2_\bl}$ is uniformly bounded for $t>0$. Therefore, a subsequence $B_{t_k}u$, $t_k\to 0$, converges weakly to $v\in L^2_\bl$ as $k\to\infty$. On the other hand, $B_{t_k}u\to Bu$ in $\Hb^{-\infty}$; hence $Bu=v\in L^2_\bl$, which implies that $u\in\Hb^{\sigma}$ microlocally on the elliptic set of $B$.

\textbf{Step 4. Removing the restriction on the order $m$.} To eliminate the assumption that $m\geq 1$, notice that the above propagation estimate for a general $m$-th order operator can be deduced from the $m_0$-th order result for any $m_0\geq 1$, simply by considering
\[
  P\Lambda^+(\Lambda^- u)=f+PRu,
\]
where $\Lambda^+\in\Psib^{-(m-m_0)}$ is elliptic with parametrix $\Lambda^-\in\Psib^{m-m_0}$, and $\Lambda^+\Lambda^-=I+R$, $R\in\Psib^{-\infty}$. If we pass from $P$ to $P\Lambda^+$, which means passing from $m$ to $m_0$, we correspondingly have to pass from $\sigma$ to $\sigma_0=\sigma-m+m_0$ in equation~\eqref{EqUMembership}; in other words, the difference $\sigma-m=\sigma_0-m_0$ remains the same.

\textbf{Step 5. Collecting the regularity requirements.} Thus, let us collect the conditions on $s$ and $\wt s=\sigma-m+1$ as given in the footnotes in the course of the argument: All conditions are satisfied provided
\begin{gather}
\label{EqRealPrTypeConditionsProof}
  3/2-s\leq\wt s\leq s-1, \quad \wt s\geq(5-m_0)/2, \\
  s>n/2+2+(3/2-\wt s)_+,\quad s>n/2+3+m_0/2
\end{gather}
for some $m_0\geq 1$. The optimal choice for $m_0$ is thus $m_0=\max(1,5-2\wt s)=1+2(2-\wt s)_+$; plugging this in, we obtain the conditions in the statement of Theorem~\ref{ThmRealPrType}:
\[
  s>n/2+7/2+(2-\wt s)_+, \quad \wt s\leq s-1.
\]
Thus, we have proved a propagation result which propagates estimates in a direction which is `correct to first order'. To obtain the final form of the propagation result, we use an argument by Melrose and Sj\"ostrand \cite{MelroseSjostrandSingBVPI,MelroseSjostrandSingBVPII}, in the form given in \cite[Lemma~8.1]{DeHoopUhlmannVasyDiffraction}. This finishes the proof of the first part of Theorem~\ref{ThmRealPrType}.

\begin{rmk}
  For second order real principal type operators of the form considered above, with the highest order derivative having $\Hb^s$-coefficients, the maximal regularity one can prove for a solution $u$ with right hand side $f\in\Hb^{s-1}$ is $\Hb^{\wt s+1}$ with $\wt s$ being at most $s-1$, i.e.\ one can prove $u\in\Hb^s$, which is exactly what we will need in our quest to solve quasilinear wave equations.
\end{rmk}

%%%%%%%%%%%%%%%%%%%%%%%%%%%%%
\subsubsection{Propagation near the boundary}
\label{SubsubsecBoundary}

We now aim to prove the corresponding propagation result (near and) within the boundary $\pa M$: Thus, let $P=P_0+\wt P$, where $\wt P=\wt P_m+\wt P_{m-1}+\wt R$, with $P_0\in\Psib^m$ and $\wt P_m\in\Hb^{s,\alpha}\Psib^m$ having real homogeneous principal symbols, $\wt P_{m-1}\in\Hb^{s-1,\alpha}\Psib^{m-1}$ and $\wt R\in\fpsib^{m-2;0}\Hb^{s-1,\alpha}$ as before, and let us assume that we are given a solution
\[
  u\in\Hb^{\sigma-1/2}
\]
to the equation
\[
  Pu=f\in\Hb^{\sigma-m+1},
\]
where $\sigma=\wt s+m-1$. In fact, since
\[
  \wt R\colon\Hb^{\sigma-1/2}\subset\Hb^{\sigma-1}\to\Hb^{\sigma-m+1},
\]
we may absorb the term $\wt R u$ into the right hand side; thus, we can assume $\wt R=0$, hence $\wt P=\wt P_m+\wt P_{m-1}$.

Moreover, let $\gamma$ be a null-bicharacteristic of $p=p_0+\wt p_m$; we assume $H_p$ is never radial on $\gamma$. Since $H_{\wt p_m}=0$ at $\Tb^*_{\pa M}M$, this in particular implies that $H_{p_0}$ is not radial on $\gamma\cap\Tb^*_{\pa M}M$, and the positivity of the principal symbol $\check a_t H_p\check a_t$ of the commutator there comes from the positivity of $\check a_t H_{p_0}\check a_t$.

The proof of the interior propagation, with small adaptations, carries over to the new setting. We indicate the changes: First, in the notation of \S\ref{SubsubsecInterior}, $\sfH_p\sigma_j$ now only is H\"older continuous with exponent $\alpha$, thus \eqref{EqHamiltonOmegaLipschitz} becomes
\[
  |\sfH_{p_m}\omega| \leq C\omega^{1/2}\left(|\wt\eta|+\omega^{1/2}\right)^\alpha.
\]
Hence, for \eqref{EqStayClose} to hold, we need
\[
  C\omega^{1/2}(|\wt\eta|+\omega^{1/2})^\alpha\leq c_0\eps^2\delta/2,
\]
which holds if $2^{1+2\alpha}C\delta^{1+\alpha}\leq c_0\eps\delta/2$, suggesting the choice $\eps=4^{1+\alpha}C\delta^\alpha/c_0$; in particular $\eps\leq 1$ for $\delta$ small enough. Thus, the size of the a priori control region near $\wt\eta=-\delta$, cf.\ \eqref{EqAprioriControlSize}, becomes
\[
  \omega^{1/2}\leq\frac{c_0\eps^2\delta}{2C(|\wt\eta|+\omega^{1/2})^\alpha}=C_\alpha\delta^{1+\alpha},
\]
which is small enough for the argument in \cite[Lemma~8.1]{DeHoopUhlmannVasyDiffraction} to work. Further, defining the commutant $\sfa$ as before, we replace the a priori control terms $\sfe,\sff'$ in \eqref{EqAprioriControlTerms} by
\begin{equation}
\label{EqPropBdyAPriori}
  \begin{split}
    \sfe&=\chi_0(\sfH_{p_0}+J\sfH_{\wt p_m})\chi_1+\wt\chi_1, \\
    \sff'&=\chi_0(J\sfH_{\wt p_m}-\sfH_{\wt p_m})\chi_1+\wt\chi_1,
  \end{split}
\end{equation}
where we choose the mollifier $J$ to be so close to $I$ that $\sff'\geq 0$; here, we use that the first summand in the definition of $\sff'$ is an element of $\Hb^{s-1}$ in the base variables, hence for $s>n/2+1$ in particular continuous and vanishing at the boundary $\pa M$, and can therefore be dominated by $\wt\chi_1$. We then let $e_t$ and $f'_t$ be defined as in \eqref{EqAprioriControlTerms2} with the above $\sfe$ and $\sff'$. We change the terms $b_t$ and $f''_t$ in \eqref{EqCommSymbolsBF} in a similar way: We take
\begin{align*}
  b_t&=(\F\delta)^{-1/2}\varphi_t\rho^{\sigma}\biggl((\sfH_{p_0}+J\sfH_{\wt p_m})\phi \\
    &\hspace{4ex}-\left[\bigl((2\sigma-m+1)-2t\varphi_t\rho\bigr)(\rho^{-1}(J\sfH_{\wt p_m}+\sfH_{p_0})\rho)+N^2\right] \\
	&\hspace{10ex}\times \F^{-1}\delta\Bigl(2\beta-\frac{\phi}{\delta}\Bigr)^2-\frac{c_0}{8}\biggr)^{1/2} \\
	&\hspace{15ex}\times \sqrt{\chi_0'\left(\F^{-1}\left(2\beta-\frac{\phi}{\delta}\right)\right)}\sqrt{\chi_1\left(\frac{\wt\eta+\delta}{\eps\delta}+1\right)}, \\
  f''_t&=(\F\delta)^{-1}\varphi_t^2\rho^{2\sigma}\biggl((\sfH_{\wt p_m}-J\sfH_{\wt p_m})\phi \\
    &\hspace{-1ex}-\left[\bigl((2\sigma-m+1)-2t\varphi_t\rho\bigr)(\rho^{-1}(\sfH_{\wt p_m}-J\sfH_{\wt p_m})\rho)\right]\F^{-1}\delta\Bigl(2\beta-\frac{\phi}{\delta}\Bigr)^2+\frac{c_0}{8}\biggr) \\
	&\hspace{15ex}\times\chi_0'\left(\F^{-1}\left(2\beta-\frac{\phi}{\delta}\right)\right) \chi_1\left(\frac{\wt\eta+\delta}{\eps\delta}+1\right).
\end{align*}
As before, we can control the term $\la E_t u,u\ra$ in \eqref{EqOperatorCommutator} by the a priori assumptions on $u$. The new feature here is that $f'_t,f''_t\geq 0$ are not just symbols with coefficients having regularity $\Hb^{s-1}$, but there are additional smooth terms involving $\wt\chi_1$ and $c_0/8$. Thus, we need to appeal to the version of the sharp G\aa rding inequality given in Corollary~\ref{CorGardingSmoothPlusNon} to obtain a uniform lower bound on the term $\la J^+(j^- f_t)(z,\Db)u,u\ra$ in \eqref{EqOperatorCommutatorInProof}.

Since the computation of compositions and commutators in the proof of the previous section for $P_0$ is standard as $P_0$ is a smooth b-ps.d.o, and since $\wt P_m$ and $\wt P_{m-1}$ lie in the same space as the operators called $P_m$ and $P_{m-1}$ there, all arguments now go through after straightforward changes that take care of the smooth b-\psdo{}\ $P_0$.

This finishes the proof of Theorem~\ref{ThmRealPrType}.

%%%%%%%%%%%%%%%%%%%%%%%%%%%%%%%%%%%%%%%%%%%%%%%%%
\subsubsection{Complex absorption}
\label{SubsubsecComplexAbsorption}

We next aim to prove Theorem~\ref{ThmComplexAbs}, namely we add a complex absorbing potential $Q=q(z,\Db)\in\Psib^m$ with $Q=Q^*$ and prove the propagation of $\Hb^{\sigma}$-regularity of solutions $u\in\Hb^{\sigma-1/2}$ to the equation
\[
  (P-iQ)u=f\in\Hb^{\sigma-m+1},
\]
where $\gamma$ is a null-bicharacteristic of $P$, in a direction which depends on the sign of $q$ near $\gamma$. Namely, we can propagate $\Hb^{\sigma}$-regularity \emph{forward} along the flow of the Hamilton vector field $H_p$ if $q\geq 0$ near $\gamma$, and \emph{backward} along the flow if $q\leq 0$ near $\gamma$.

Let $\Gamma$ be an open neighborhood of $\gamma$. It suffices to consider the case that $q\geq 0$ in $\Gamma$. Recall that the proofs of the real principal type propagation results given above only show the propagation in the \emph{forward} direction along the flow; the propagation in the backward direction is proved completely analogously (or simply by considering forward propagation for $-P$), and in the presence of complex absorption leads to the reversal in the condition on the sign of $q$ described above. We thus focus on the forward propagation estimate: Here, the only step that we have to change in the real principal type propagation proofs is the right hand side of equation \eqref{EqIBP}, where we have an additional term in view of $P_m u=f-P_{m-1}u+iQu$, namely
\[
  -\Re\la i\check A_t\,iQu,\check A_t u\ra=\Re\la\check A_t Qu,\check A_t u\ra=\Re\la Q\check A_t u,\check A_t u\ra+\Re\la\check A_t^*[\check A_t,Q]u,u\ra.
\]
The first term on the right is bounded from below by $-C_5\|\check A_t u\|_{\Hb^{(m-1)/2}}$ and will be absorbed as in \eqref{EqAbsorbAtu}, and the second term is bounded by the a priori microlocal $\Hb^{\sigma-1/2}$-regularity of $u$ in $\Gamma$, since
\[
  \Re\la\check A_t^*[\check A_t,Q]u,u\ra=\frac{1}{2}\la \wt Q_t u,u\ra
\]
with
\begin{align*}
  \wt Q_t&=\check A_t^*[\check A_t,Q]+[Q,\check A_t^*]\check A_t \\
    &=(\check A_t^*-\check A_t)[\check A_t,Q]+[\check A_t,[\check A_t,Q]]+[Q,\check A_t^*-\check A_t]\check A_t
\end{align*}
uniformly bounded in $\Psib^{2\sigma-1}$ in view of the principal symbol of $\check A_t$ being real and the presence of double commutators.

This finishes the proof of Theorem~\ref{ThmComplexAbs}.

%%%%%%%%%%%%%%%%%%%%%%%%%%%%%%%%%%%%%%%%%%%%%%%%%
\subsection{Propagation near radial points}
\label{SubsecRadialPoints}

We will only consider the class of radial points which will be relevant in our applications; see \S\ref{SecApp}, where an example of an operator with this radial point structure is presented. We remark that the conditions at the `radial set' $L$ below do \emph{not} entail that $H_p$ is indeed radial there, i.e.\ a multiple of the generator of dilations in the fibers of $\Tb^*M\setminus o$, but this \emph{does} hold in the application on exact static de Sitter space, which is why we kept the terminology. (In more general applications, the setup below thus really gives the propagation at `generalized radial sets.') The point is that the standard propagation of singularities results proved in the previous sections do not give any information at (generalized) radial sets: In the case of radial sets, this is clear since the Hamilton flow at a radial point stays within the fiber of the b-cotangent bundle over that point, and more generally in the case of generalized radial sets $L$ considered here, one cannot propagate any regularity into (or out of) $L$ in finite time using standard propagation.

The setting is very similar to the one in \cite[\S{2}]{HintzVasySemilinear}: There, the authors consider an operator $P\in\Psib^m(M;E)$ with real, scalar, homogeneous principal symbol $p$ on a compact manifold $M$ with boundary $Y=\pa M$ and boundary defining function $x$, where the assumptions on $p$ are as follows:
\begin{enumerate}
  \item\label{EnumRadCond1} At $p=0$, $dp\neq 0$, and at $\Sb^*_Y M\cap p^{-1}(0)$, $dp$ and $dx$ are linearly independent; hence $\Sigma=p^{-1}(0)\subset\Sb^*M$ is a smooth codimension $1$ submanifold transversal to $\Sb^*_Y M$.
  \item\label{EnumRadCond2} $L=L_+\cup L_-$, where $L_\pm$ are smooth disjoint submanifolds of $\Sb^*_Y M$, given by $L_\pm=\cL_\pm\cap\Sb^*_Y M$, where $\cL_\pm$ are smooth disjoint submanifolds of $\Sigma$ transversal to $\Sb^*_Y M$, defined locally near $\Sb^*_Y M$. Moreover, the rescaled Hamilton vector field $\sfH_p=\rho^{1-m}H_p$ (which is homogeneous of degree $0$) is tangent to $\cL_\pm$, where, as before, $\rho=\la\zeta\ra$, and $H_p$ is the Hamilton vector field of $p$.
  \item\label{EnumRadCond3} There are functions $\beta_0,\wt\beta\in\CI(L_\pm)$, $\beta_0,\wt\beta>0$, such that
    \begin{equation}
	\label{EqRadialFiberBdy}
	  \rho\sfH_p\rho^{-1}|_{L_\pm}=\mp\beta_0, \qquad -x^{-1}\sfH_p x|_{L_\pm}=\mp\wt\beta\beta_0.
	\end{equation}
  \item\label{EnumRadCond4} For a homogeneous degree $0$ quadratic defining function $\rho_0$ of $\cL=\cL_+\cup\cL_-$ within $\Sigma$,
    \begin{equation}
	\label{EqRadialQuadr}
	  \mp\sfH_p\rho_0-\beta_1\rho_0\geq 0\tn{ modulo terms that vanish cubically at }L_\pm,
	\end{equation}
	where $\beta_1\in\CI(\Sigma)$, $\beta_1>0$ at $L_\pm$.
  \item\label{EnumRadCond5} The imaginary part of the subprincipal symbol is homogeneous, and equals
    \begin{equation}
	\label{EqRadialSubprSmooth}
	  \sigma_\bl^{m-1}\left(\frac{1}{2i}(P-P^*)\right)=\pm\wh\beta\beta_0\rho^{m-1}\tn{ at }L_\pm,
	\end{equation}
	where $\wh\beta\in\CI(L_\pm;\pi^*\End(E))$, $\pi\colon L_\pm\to M$ being the projection to the base; note that $\wh\beta$ is self-adjoint at every point.
\end{enumerate}
These conditions imply that $L_\pm$ is a sink, resp.\ source, for the bicharacteristic flow within $\Sb^*_Y M$, in the sense that nearby null-bicharacteristics tend to $L_\pm$ in the forward, resp.\ backward, direction; but at $L_\pm$ there is also an unstable, resp.\ stable, manifold, namely $\cL_\pm$. In general applications, $L_+$ (and likewise $L_-$) will be the union of one or several connected components of the (generalized) radial set; on static de Sitter space, $L_+$ and $L_-$ will be the two halves of the conormal bundle of the cosmological horizon within future infinity; see Figure~\ref{FigOmega}.

In the non-smooth setting, we will make the exact same assumptions on the `smooth part' of the operator; the guiding principle is that non-smooth operators with coefficients in $\Hb^{s,\alpha}$, $\alpha>0$, $s>n/2+1$, have symbols and associated Hamilton vector fields that vanish at the boundary, thus would not affect the above conditions anyway, with the exception of condition~\itref{EnumRadCond4}, which however is only used close to, but away from $L_\pm$, and the positivity of $\mp\sfH_p\rho_0$ there is preserved when one adds small non-smooth terms in $\Hb^{s,\alpha}$ to $p$. In order to be able to give a concise expression for the threshold regularity (determining whether one can propagate into or out of the boundary), let us define for a function $b\in\CI(L_\pm,\pi^*\End(E))$ with values in self-adjoint endomorphisms of the fiber,
\begin{gather*}
  \inf_{L_\pm}b:=\sup\{\lambda\in\R\colon b\geq\lambda\,I\tn{ everywhere on }L_\pm\}, \\
  \sup_{L_\pm}b:=\inf\{\lambda\in\R\colon b\leq\lambda\,I\tn{ everywhere on }L_\pm\}.
\end{gather*}
We then have the following theorem:

\begin{thm}
\label{ThmRadialPoints}
  Let $m,r,s,\wt s\in\R$, $\alpha>0$. Let $P=P_0+\wt P$, where $P_0\in\Psib^m(\Rnhalf;E)$ has a real, scalar, homogeneous principal symbol $p_0$, further $\wt P=\wt P_m+\wt P_{m-1}+\wt R$ with $\wt P_m\in\Hb^{s,\alpha}\Psib^m(\Rnhalf;E)$ having a real, scalar, homogeneous principal symbol $\wt p_m$, moreover $\wt P_{m-1}\in\Hb^{s-1,\alpha}\Psib^{m-1}(\Rnhalf;E)$ and $\wt R\in\Psib^{m-2}(\Rnhalf;E)+\fpsib^{m-2;0}\Hb^{s-1,\alpha}(\Rnhalf;E)$. Suppose that the above conditions~\itref{EnumRadCond1}-\itref{EnumRadCond5} hold for $p_0$ and $P_0$ in place of $p$ and $P$. Finally, assume that $s$ and $\wt s$ satisfy
  \begin{equation}
  \label{EqRadialCond}
    \wt s\leq s-1, \quad s>n/2+7/2+(2-\wt s)_+.
  \end{equation}
  Suppose $u\in\Hb^{\wt s+m-3/2,r}(\Rnhalf;E)$ is such that $Pu\in\Hb^{\wt s,r}(\Rnhalf;E)$.
  \begin{enumerate}
    \item \label{EnumThmRadIntoBdy} If $\wt s+(m-1)/2-1+\inf_{L_\pm}(\wh\beta-r\wt\beta)>0$, let us assume that in a neighborhood of $L_\pm$, $\cL_\pm\cap\{x>0\}$ is disjoint from $\WFb^{\wt s+m-1,r}(u)$.
	\item \label{EnumThmRadFromBdy} If $\wt s+(m-1)/2+\sup_{L_\pm}(\wh\beta-r\wt\beta)<0$, let us assume that a punctured neighborhood of $L_\pm$, with $L_\pm$ removed, in $\Sigma\cap\Sb^*_{\pa\Rnhalf}\Rnhalf$ is disjoint from $\WFb^{\wt s+m-1,r}(u)$.
  \end{enumerate}
  Then in both cases, $L_\pm$ is disjoint from $\WFb^{\wt s+m-1,r}(u)$.
\end{thm}

Adjoints are again taken with respect to the b-density $\frac{dx}{x}\,dy$ and the smooth metric on the vector bundle $E$. We remark that condition~\eqref{EqRadialSubprSmooth} is insensitive to changes both of the b-density and the metric on $E$ by the radiality of $H_{p_0}$ at $L_\pm$; see \cite[Footnote~19]{VasyMicroKerrdS} for details.

\begin{rmk}
\label{RmkRelaxedRadial}
  Since $\WFb^{\wt s+m-1,r}(u)$ is closed, we in fact have the conclusion that a neighborhood of $L_\pm$ is disjoint from $\WFb^{\wt s+m-1,r}(u)$. As in the real principal type setting (see equation~\eqref{EqPropEstimate} in particular), one can also rewrite the wavefront set statement as an estimate on the $L^2_\bl$ norm of an operator of order $\wt s+m-1$, elliptic at $L_\pm$, applied to $u$. In particular, we will see that it suffices to have only microlocal $\Hb^{\wt s,r}$-membership of $Pu$ near the part of the radial set that we propagate to/from, and local membership in $\Hb^{\wt s-1}$, which comes from a use of elliptic regularity (Theorem~\ref{ThmEllipticReg}) in our argument.

  Moreover, as the proof will show, the theorem also holds for operators $P$ which are perturbations of those for which it directly applies: Indeed, even though the dynamical assumptions~\itref{EnumRadCond1}-\itref{EnumRadCond4} are (probably) not stable under perturbations, the estimates derived from these are. Here, perturbations are to be understood in the sense that $P_0$ may be perturbed within $\Psib^m$, and $\wt P_m,\wt P_{m-1}$ and $\wt R$ may be changed arbitrarily, with the estimate corresponding to the wavefront set statement of the theorem being locally uniform.
\end{rmk}

The proof is an adaptation of the proof of \cite[Proposition~2.1]{HintzVasySemilinear}, see also \cite[Propositions~2.10 and 2.11]{VasyMicroKerrdS} for a related result, to our non-smooth setting. Before giving the full proof, we briefly recall the commutator proof in the smooth setting, for the weight $r=0$, and under the assumption $P=P^*\in\Psib^m(\Rnhalf;E)$, ignoring issues of regularization: Putting $\sigma=\wt s+m-1$ and $\sfp=\rho^{-m}p$, where $p$ is the principal symbol of $P$, we consider the commutant $a=\rho^{\sigma-(m-1)/2}\psi(\rho_0)\psi_1(x)\psi_0(\sfp)$, where $\psi,\psi_0,\psi_1\in\CIc(\R)$ are identically $1$ and have non-positive derivative on $[0,\infty)$. Since the Hamilton vector field $H_p$ of is radial at $L_\pm$ (or, more generally, tangent to the generalized radial set $L_\pm$), the positivity in the commutator calculation comes from the weight: Concretely, working near $L_+$,
\begin{align*}
  a H_p a = \rho^{2\sigma}\psi\psi_0\psi_1\bigl(\beta_0(\sigma&-(m-1)/2)\psi\psi_0\psi_1 \\
    &+(x^{-1}\sfH_p x)x\psi_1' \psi\psi_0 + (\sfH_p\rho_0)\psi'\psi_0\psi_1 + (\sfH_p\sfp)\psi_0'\psi\psi_1\Bigr).
\end{align*}
The first term is the main term giving `positivity' at $L_\pm$. Let us only treat the case $\sigma-(m-1)/2>0$, then the main term is positive at $L_+$ by \eqref{EqRadialFiberBdy}, as is the third term by \eqref{EqRadialQuadr}, while the second term is negative; it is on its support that we need to make a priori regularity assumption. (The last term is supported on the elliptic set of $P$ and will therefore be controlled by elliptic regularity.) We then let $A$ be a quantization of $a$ and obtain $\la iA^*[P,A]u,u\ra=\la(B_1^*B_1-B_2^*B_2+B_3^*B_3)u,u\ra$ (plus lower order terms and terms controlled by $Pu$ using elliptic regularity), with $B_1\in\Psib^\sigma$ elliptic at $L_+$; therefore, dropping the term involving $B_3$,
\[
  \|B_1 u\|_{L^2_\bl}^2 \leq \|B_2 u\|_{L^2_\bl} + \Re\la iA^*[P,A]u,u\ra.
\]
Upon expanding the commutator, integration by parts (this is where $\wh\beta$ from \eqref{EqRadialSubprSmooth} comes into play if $P$ is not formally self-adjoint) and applying Cauchy-Schwarz, absorbing terms involving $Au$ into the left hand side, we can therefore control $u$ in $\Hb^\sigma$ microlocally at $L_+$ provided we have $\Hb^\sigma$ control on $u$ on the support of $\psi_1'(x)\psi(\rho_0)\psi_0(\sfp)$ and $\Hb^{\sigma-m+1}$ control on $f=Pu$.

\begin{proof}[Proof of Theorem~\ref{ThmRadialPoints}.]
  We again drop the bundle $E$ from the notation. Since $\wt R u\in\Hb^{\wt s}$ by the a priori regularity on $u$, we can absorb $\wt R u$ into
  \[
    f\equiv Pu
  \]
  and thus assume $\wt R=0$.
  
  \textbf{Step 1: Symbolic construction of the commutant.} Let us assume
  \[
    m\geq 1,\quad r=0
  \]
  for now; these conditions will be eliminated at the end of the proof.
  
  Define the regularizer $\varphi_t(\rho)=(1+t\rho)^{-1}$ for $t\geq 0$ as in the proof of Theorem~\ref{ThmRealPrType}, put $\sfp_0=\rho^{-m}p_0$ and $\sigma=\wt s+m-1$, and consider the commutant
  \[
    a_t=\varphi_t(\rho)\psi(\rho_0)\psi_0(\sfp_0)\psi_1(x)\rho^{\sigma-(m-1)/2},
  \]
  where $\psi,\psi_0,\psi_1\in\CI_\cl(\R)$ are equal to $1$ near $0$ and have derivatives which are $\leq 0$ on $[0,\infty)$; we will be more specific about the supports of $\psi,\psi_0,\psi_1$ below. Let us also assume that $\sqrt{-\psi\psi'}$ and $\sqrt{-\psi_1\psi_1'}$ are smooth in a neighborhood of $[0,\infty)$. As usual, we put $\sfH_{\wt p_m}=\rho^{1-m}H_{\wt p_m}$. We then compute, using $\sfH_{\wt p_m}\varphi_t=-t\varphi_t^2\sfH_{\wt p_m}\rho$:
  \begin{align*}
    a_t H_{\wt p_m}a_t &= \varphi_t^2\rho^{2\sigma}\psi\psi_0\psi_1\Bigl((\sigma-(m-1)/2-t\rho\varphi_t)(\rho^{-1}\sfH_{\wt p_m}\rho)\psi\psi_0\psi_1 \\
	&\quad + (x^{-1}\sfH_{\wt p_m}x)x\psi\psi_0\psi_1' + (\sfH_{\wt p_m}\rho_0)\psi'\psi_0\psi_1 + \sfH_{\wt p_m}(\sfp_0)\psi\psi_0'\psi_1\Bigr),
  \end{align*}
  and to compute $a_t H_{p_0}a_t$, we can use \eqref{EqRadialFiberBdy} to simplify the resulting expression.

  To motivate the next step, recall that the objective is to obtain an estimate similar to \eqref{EqAbsorbAtu}; however, since in our situation, the weight $\rho^{\sigma-(m-1)/2}$ can only give a limited amount of positivity at $L_\pm$, we need to absorb error terms, in particular the ones involving $P-P^*$, into the commutator $a_t H_{p_m}a_t$, where $p_m=p_0+\wt p_m$ is the principal symbol of $P$ and also of $P_m:=P_0+\wt P_m$. Thus, consider
  \begin{align*}
    a_t&H_{p_m}a_t \pm \rho^{m-1}a_t^2\beta_0\wh\beta = \pm\varphi_t^2\rho^{2\sigma}\psi\psi_0\psi_1 \\
	  &\times \Bigl(\bigl[\beta_0(\sigma-(m-1)/2-t\rho\varphi_t+\wh\beta) \\
	  &\hspace{10ex}\pm(\sigma-(m-1)/2-t\rho\varphi_t)(\rho^{-1}\sfH_{\wt p_m}\rho)\bigr]\psi\psi_0\psi_1 \\
	  &\qquad + (\wt\beta\beta_0 \pm x^{-1}\sfH_{\wt p_m}x)x\psi\psi_0\psi_1' \pm (\sfH_{p_0}\rho_0+\sfH_{\wt p_m}\rho_0)\psi'\psi_0\psi_1 \\
	  &\qquad + (-m\beta_0\sfp_0 \pm \sfH_{\wt p_m}\sfp_0)\psi\psi_0'\psi_1\Bigr).
  \end{align*}
  Recall that $t\rho\varphi_t$ is a bounded family of symbols in $S^0$, and we in fact have $|t\rho\varphi_t|\leq 1$ for all $t$. We now proceed to prove the first case of the theorem. Let us make the following assumptions:
  \begin{itemize}[leftmargin=\enummargin]
    \item On $\supp(\psi\circ\rho_0)\cap\supp(\psi_0\circ\sfp_0)\cap\supp(\psi_1\circ x)$:
	  \begin{gather}
	    \label{EqThresholdPos} \beta_0(\sigma-(m-1)/2-1+\wh\beta)\geq c_0>0 \\
		|(\sigma-(m-1)/2-t\rho\varphi_t)(\rho^{-1}\sfH_{\wt p_m}\rho)|\leq c_0/4\tn{ for all }t>0. \nonumber
	  \end{gather}
	  The first condition is satisfied at $L_\pm$ by assumption, and the second condition is satisfied close to $Y=\{x=0\}$, since $\rho^{-1}\sfH_{\wt p_m}\rho=o(1)$ as $x\to 0$ by Riemann-Lebesgue.
	\item On $\supp d(\psi_1\circ x)\cap\supp(\psi\circ\rho_0)\cap\supp(\psi_0\circ\sfp_0)$:
	  \[
	    \wt\beta\beta_0\geq c_1>0, \quad |x^{-1}\sfH_{\wt p_m}x|\leq c_1/2.
	  \]
	  The second condition is satisfied close to $Y$, since $x^{-1}\sfH_{\wt p_m}x=o(1)$ as $x\to 0$.
	\item On $\supp d(\psi\circ\rho_0)\cap\supp(\psi_1\circ x)\cap\supp(\psi_0\circ\sfp_0)$:
	  \begin{equation}
	  \label{EqQuadrDefPos}
	    \mp\sfH_{p_0}\rho_0\geq\frac{\beta_1}{2}\rho_0\geq c_2>0,\quad |\sfH_{\wt p_m}\rho_0|\leq c_2/2.
	  \end{equation}
	\item On $\supp d(\psi_0\circ\sfp_0)\cap\supp(\psi\circ\rho_0)\cap\supp(\psi_1\circ x)$:
	  \begin{equation}
	  \label{EqPmElliptic}
	    |\rho^{-m}p_m|\geq c_3>0.
	  \end{equation}
	  This can be arranged as follows: First, note that we can ensure
	  \begin{equation}
	  \label{EqPm''Elliptic}
	    |\sfp_0|\geq 2c_3
	  \end{equation}
	  there; then, since $|\rho^{-m}\wt p_m|=o(1)$ as $x\to 0$, shrinking the support of $\psi_1$ if necessary guarantees \eqref{EqPmElliptic}.
  \end{itemize}
  We can ensure that all these assumptions are satisfied by first choosing $\psi_1$, localizing near $\Sb^*_Y M$, then $\psi$, localizing near $L_\pm$ within the characteristic set $p_0^{-1}(0)$ of $P_0$, such that the inequalities in \eqref{EqThresholdPos} and \eqref{EqQuadrDefPos} are strict on $p_0^{-1}(0)$, then choosing $\psi_0$ (localizing near $p_0^{-1}(0)$) such that strict inequalities hold in \eqref{EqThresholdPos}, \eqref{EqQuadrDefPos} and \eqref{EqPm''Elliptic}, and finally shrinking the support of $\psi_1$, if necessary, such that all inequalities hold.

  We can then write
  \begin{equation}
  \label{EqRadialCommutator} a_t H_{p_m} a_t \pm \rho^{m-1}a_t^2\beta_0\wh\beta = \pm\left(\frac{c_0}{8}\rho^{m-1}a_t^2 + b_{1,t}^2+b_{2,t}^2 - b_{3,t}^2 + f_t + g_t\right),
  \end{equation}
  where, with a mollifier $J$ as in Lemma~\ref{LemmaMollifier},
  \begin{align*}
    b_{1,t} &= \varphi_t\rho^\sigma \psi\psi_0\psi_1 \Bigl[\beta_0(\sigma-(m-1)/2-t\rho\varphi_t+\wh\beta) \\
	  &\hspace{23ex} \pm(\sigma-(m-1)/2-t\rho\varphi_t)(\rho^{-1}J\sfH_{\wt p_m}\rho)-\frac{c_0}{2}\Bigr]^{1/2}, \\
	b_{2,t} &= \varphi_t\rho^\sigma\psi_0\psi_1\sqrt{-\psi\psi'}\left[\mp\left(\sfH_{p_0}\rho_0+J\sfH_{\wt p_m}\rho_0\right)-\frac{c_2}{4}\right]^{1/2}, \\
	b_{3,t} &= \varphi_t\rho^\sigma\psi\psi_0\sqrt{-\psi_1\psi_1'}\left[\left(\wt\beta\beta_0 \pm x^{-1}J\sfH_{\wt p_m}x+\frac{c_1}{4}\right)x\right]^{1/2}, \\
	g_t &= \varphi_t^2\rho^{2\sigma}\psi^2\psi_0\psi_0'\psi_1^2(-m\beta_0\sfp_0 \pm \sfH_{\wt p_m}\sfp_0),
  \end{align*}
  and $f_t=f_{1,t}+f_{2,t}+f_{3,t}$ with
  \begin{align*}
    f_{1,t} &= \varphi_t^2\rho^{2\sigma}\psi^2\psi_0^2\psi_1^2 \\
	  &\qquad\times\left[\pm(\sigma-(m-1)/2-t\rho\varphi_t)\bigl(\rho^{-1}(\sfH_{\wt p_m}-J\sfH_{\wt p_m})\rho\bigr)+\frac{3c_0}{8}\right], \\
	f_{2,t} &= \varphi_t^2\rho^{2\sigma}\psi\psi'\psi_0^2\psi_1^2\left(\pm\bigl(\sfH_{\wt p_m}-J\sfH_{\wt p_m}\bigr)\rho_0-\frac{c_2}{4}\right), \\
	f_{3,t} &= \varphi_t^2\rho^{2\sigma}\psi^2\psi_0^2\psi_1\psi_1'\left(\pm x^{-1}\bigl(\sfH_{\wt p_m}-J\sfH_{\wt p_m}\bigr)x-\frac{c_1}{4}\right)x.
  \end{align*}
  In particular, $b_{1,t},b_{2,t}\in S^\sigma$, $b_{3,t}\in x^{1/2}S^\sigma$, $f_t\in S^{2\sigma;\infty}\Hb^{s-1}+S^{2\sigma}$, $g_t\in\Hb^{s-1}S^{2\sigma}+S^{2\sigma}$ uniformly, with the symbol orders one lower if $t>0$ for $b_{j,t}$, $j=1,2,3$, and two lower for $f_t,g_t$. The term $b_{1,t}^2$ will give rise to an operator which is elliptic at $L_\pm$. The term $b_{2,t}^2$ (which has the same, `advantageous,' sign as $b_{1,t}$) can be discarded, and the term $-b_{3,t}^2$, with a `disadvantageous' sign, will be bounded using the a priori regularity assumptions on $u$. An important point here is that the non-smooth symbol $f_t$ is non-negative if we choose the mollifier $J$ to be close enough to $I$; in fact, we then have $f_{j,t}\geq 0$ for $j=1,2,3$. Lastly, we will be able to estimate the contribution of the term $g_t$ using elliptic regularity, noting that its support is disjoint from the characteristic set $p_m^{-1}(0)$ of $P$.

  \textbf{Step 2. Expanding the commutator; bounding lower order terms.} Let $A_t\in\Psib^{\sigma-(m-1)/2}$ and $B_{1,t},B_{2,t},B_{3,t}\in\Psib^\sigma$ denote quantizations with uniform b-microsupport contained in the support of the respective full symbols $a_t,b_{1,t},b_{2,t}$ and $b_{3,t}$. Then we compute as in the proof of real principal type propagation (see equation~\eqref{EqIBP} there):
  \begin{align*}
    \Re\la iA_t^*[P_m,A_t]u,u\ra &= -\left\la\frac{1}{2i}(P_m-P_m^*)A_t u, A_t u\right\ra \\
	  &\hspace{10ex}-\Re\la i A_t f,A_t u\ra+\Re\la i A_t \wt P_{m-1} u,A_t u\ra.
  \end{align*}
  We split the first term on the right hand side into two pieces corresponding to the decomposition $P_m=P_0+\wt P_m$. The piece involving $P_0$ will be dealt with later. For the other piece, note that $\wt P_m$ is a sum of terms of the form $\tau^\alpha w Q_m$, where $w\in\Hb^s$ is real-valued and $Q_m=q_m(z,\Db)\in\Psib^m$ has a real principal symbol. Now,
  \begin{align*}
    \tau^\alpha &w Q_m-(\tau^\alpha w Q_m)^* \\
	 &=\tau^\alpha w(Q_m-Q_m^*)+\tau^\alpha(w Q_m^*-Q_m^* w)+\tau^\alpha(Q_m^*-\tau^{-\alpha}Q_m^*\tau^\alpha)w,
  \end{align*}
  thus, using Theorem~\ref{ThmComp} \itref{ThmCompBRFunny} with $k=1,k'=0$ (applicable because we are assuming $m\geq 1$) to compute $Q_m^* w$ and with $k=0,k'=0$ to compute the last term, we get
  \[
    i(\wt P_m-\wt P_m^*)=R_1+R_2+R_3,
  \]
  where
  \[
    R_1\in\Hb^{s-1,\alpha}\Psib^{m-1}, R_2\in\Psib^{m-1}\circ\Psi^{0;0}\Hb^{s-1,\alpha}, R_3\in\Psi^{m-1;0}\Hb^{s-1,\alpha}.
  \]
  Let $\chi\in\CIc(\Rhalf)$, $\chi\equiv 1$ near $0$. Writing $R_1$ as the sum of terms of the form $w'Q'$, where $w'\in\Hb^{s-1,\alpha}$ and $Q'\in\Psib^{m-1}$, we have for $\eps'>0$, which we can choose to be as small as we like provided we shrink the support of the Schwartz kernel of $A_t$:
  \[
    \la w'(z)Q' A_t u, A_t u\ra = \la \chi(x/\eps')w'(z) Q' A_t u, A_t u\ra;
  \]
  by Lemma~\ref{LemmaLocNearBdy}, this can be bounded by $c_{\eps'}\|A_t u\|_{\Hb^{(m-1)/2}}^2$, where $c_{\eps'}\to 0$ as $\eps'\to 0$.\footnote{\label{FootRCond1}This argument requires that elements of $\Hb^{s-1}$ are multipliers on $\Hb^{(m-1)/2}$, which is the case if $s-1\geq(m-1)/2$.} In a similar manner, we can treat the terms involving $R_2$ and $R_3$. Hence, under the assumption that the Schwartz kernel of $A_t$ is localized sharply enough near $\pa M\times\pa M$, we have
  \[
    |\la(\wt P_m-\wt P_m^*)A_t u,A_t u\ra|\leq C_\delta + \delta\|A_t u\|_{\Hb^{(m-1)/2}}^2
  \]
  for an arbitrarily small, but fixed $\delta>0$.

  Next, for $\delta>0$, we estimate
  \[
    |\la A_t f,A_t u\ra|\leq C_\delta+\delta\|A_t u\|_{\Hb^{(m-1)/2}}^2,
  \]
  using that $\|A_t f\|_{\Hb^{-(m-1)/2}}$ is uniformly bounded.

  Finally, we can bound the term $\la A_t \wt P_{m-1} u,A_t u\ra$ as in the proof of Theorem~\ref{ThmRealPrType}, thus obtaining
  \[
    |\la A_t \wt P_{m-1} u,A_t u\ra|\leq C_\delta+\delta\|A_t u\|_{\Hb^{(m-1)/2}}^2.
  \]
  Therefore, writing $Q:=\frac{1}{2i}(P_0-P_0^*)\in\Psib^{m-1}$, we get
  \[
    \pm\Re\la (iA_t^*[P_m,A_t]+A_t^*QA_t)u,u\ra \leq C_\delta+\delta\|A_t u\|_{\Hb^{(m-1)/2}}^2
  \]
  Now, using that $|\la B_{t,3}^*B_{t,3}u, u\ra|=\|B_{t,3}u\|_{L^2_\bl}^2$ is uniformly bounded because of the assumed a priori control of $u$ in a neighborhood of $L_\pm$ in $\cL_\pm\cap\{x>0\}$, we deduce, using the operator $\Lambda=\Lambda_{(m-1)/2}$:
  \begin{equation}
  \label{EqRadialCommutatorOp}
    \begin{split}
      \Re&\Bigl\langle\Bigl(\pm i A_t^*[P_m,A_t]\pm A_t^*QA_t-\frac{c_0}{8}(\Lambda A_t)^*(\Lambda A_t) \\
	  &\hspace{20ex}-B_{1,t}^*B_{1,t}-B_{2,t}^*B_{2,t}+B_{3,t}^*B_{3,t}\Bigr)u,u\Bigr\rangle \\
	  &\qquad \leq C_\delta + \left(\delta-\frac{c_0}{8}\right)\|A_t u\|_{\Hb^{(m-1)/2}}^2 - \|B_{1,t}u\|_{L^2_\bl}^2,
	\end{split}
  \end{equation}
  where we discarded the negative term $-\la B_{2,t}^*B_{2,t}u,u\ra$ on the right hand side. If we choose $\delta<c_0/8$, then we can also discard the term on the right hand side involving $A_t u$, hence
  \begin{equation}
  \label{EqRadialEllBound}
    \begin{split}
      \|B_{1,t}u\|_{L^2_\bl}^2&\leq C-\Re\Bigl\langle\Bigl(\pm i A_t^*[P_m,A_t]\pm A_t^*QA_t-\frac{c_0}{8}(\Lambda A_t)^*(\Lambda A_t) \\
	    &\hspace{20ex}-B_{1,t}^*B_{1,t}-B_{2,t}^*B_{2,t}+B_{3,t}^*B_{3,t}\Bigr)u,u\Bigr\rangle.
	\end{split}
  \end{equation}

  \textbf{Step 3. Using the symbolic commutator calculation.} We now exploit the commutator relation~\eqref{EqRadialCommutator} in the same way as in the proof of Theorem~\ref{ThmRealPrType}: If we introduce operators
  \[
    J^+\in\Psib^{\sigma-(m-1)/2-1},\quad J^-\in\Psib^{-\sigma+(m-1)/2+1}
  \]
  with real principal symbols $j^+,j^-$, satisfying $J^+J^-=I+\wt R$, $\wt R\in\Psib^{-\infty}$, we obtain, keeping in mind \eqref{EqRadialSubprSmooth},
  \begin{align*}
    \Re&\Bigl\langle J^-\Bigl(\pm i A_t^*[P_m,A_t]\pm A_t^* QA_t-\frac{c_0}{8}(\Lambda A_t)^*(\Lambda A_t) \\
	  &\hspace{20ex}-B_{1,t}^*B_{1,t}-B_{2,t}^*B_{2,t}+B_{3,t}^*B_{3,t}\Bigr)u,(J^+)^*u\Bigr\rangle \\
	  &\geq \Re\Bigl\langle\Bigl[j^-\Bigl(\pm a_t H_{p_m}a_t+\rho^{m-1}a_t^2\beta_0\wh\beta-\frac{c_0}{8}\rho^{m-1}a_t^2 \\
	  &\hspace{20ex}-b_{1,t}^2-b_{2,t}^2+b_{3,t}^2\Bigr)\Bigr](z,\Db)u,(J^+)^*u\Bigr\rangle-C \\
	  &=\Re\la(j^-f_t)(z,\Db)u,(J^+)^*u\ra+\Re\la(j^-g_t)(z,\Db)u,(J^+)^*u\ra-C,
  \end{align*}
  where we absorbed various error terms in the constant $C$; see the discussion around equation~\eqref{EqOperatorCommutatorInProof} for details. The term involving $f_t$ is uniformly bounded from below as explained in the proof of Theorem~\ref{ThmRealPrType} after equation~\eqref{EqOperatorCommutatorInProof}. It remains to bound the term involving $g_t$. Note that we can write $(j^-g_t)(z,\zeta)$ as a sum of terms of the form $w(z)\varphi_t(\zeta)^2 s(z,\zeta)$, where $w\in\Hb^{s-1}$, or $w\in\CI$, and $s\in S^{\sigma+(m-1)/2+1}$, and we can assume
  \[
    (\Sb^*M\cap\supp s)\cap p_m^{-1}(0)=\emptyset,
  \]
  since this holds for $g_t$ in place of $s$. Thus, on $\Sb^*M\cap\supp s$, we can use elliptic regularity, Theorem~\ref{ThmEllipticReg}, to conclude that $\WFb^{\sigma+1}(u)\cap(\Sb^*M\cap\supp s)=\emptyset$; but this implies that
  \[
    (w\varphi_t^2 s)(z,\Db)u\in\Hb^{-(m-1)/2}
  \]
  is uniformly bounded. Therefore, we finally obtain from \eqref{EqRadialEllBound} a uniform bound on $\|B_{1,t}u\|_{L^2_\bl}$, which implies $B_{1,0}u\in L^2_\bl$ and thus the claimed microlocal regularity of $u$ at $L_\pm$, finishing the proof of the first part of the theorem in the case $m\geq 1$, $r=0$.

  \textbf{Step 3'. Propagation below the threshold regularity.} The proof of the second part is similar, only instead of requiring \eqref{EqThresholdPos}, we require
  \[
    \beta_0(\sigma-(m-1)/2+\wh\beta)\leq -c_0<0
  \]
  on $\supp(\psi\circ\rho_0)\cap\supp(\psi_0\circ\sfp_0)\cap\supp(\psi_1\circ x)$, and we correspondingly define
  \begin{align*}
    b_{1,t} &= \varphi_t\rho^\sigma \psi\psi_0\psi_1 \Bigl[-\beta_0(\sigma-(m-1)/2-t\rho\varphi_t+\wh\beta) \\
	  &\hspace{23ex} \mp(\sigma-(m-1)/2-t\rho\varphi_t)(\rho^{-1}J\sfH_{\wt p_m}\rho)-\frac{c_0}{2}\Bigr]^{1/2}.
  \end{align*}
  We also redefine
  \begin{align*}
	b_{3,t} &= \varphi_t\rho^\sigma\psi\psi_0\sqrt{-\psi_1\psi_1'}\left[\left(\wt\beta\beta_0 \pm x^{-1}J\sfH_{\wt p_m}x-\frac{c_1}{4}\right)x\right]^{1/2}, \\
    f_{1,t} &= \varphi_t^2\rho^{2\sigma}\psi^2\psi_0^2\psi_1^2 \\
	  &\qquad\left[\mp(\sigma-(m-1)/2-t\rho\varphi_t)\bigl(\rho^{-1}(\sfH_{\wt p_m}-J\sfH_{\wt p_m})\rho\bigr)+\frac{3c_0}{8}\right], \\
	f_{3,t} &= \varphi_t^2\rho^{2\sigma}\psi^2\psi_0^2\psi_1\psi_1'\left(\mp x^{-1}\bigl(\sfH_{\wt p_m}-J\sfH_{\wt p_m}\bigr)x-\frac{c_1}{4}\right)x.
  \end{align*}
  Equation~\eqref{EqRadialCommutator} then becomes
  \[
    a_t H_{p_m} a_t \pm \rho^{m-1}a_t^2\beta_0\wh\beta = \mp\left(\frac{c_0}{8}\rho^{m-1}a_t^2 + b_{1,t}^2-b_{2,t}^2 + b_{3,t}^2 + f_t + g_t\right),
  \]
  and the rest of the proof proceeds as before, the most important difference being that now the term $b_{3,t}^2$ has an advantageous sign (namely, the same as $b_{1,t}^2$), whereas $-b_{2,t}^2$ does not, which is the reason for the microlocal regularity assumption on $u$ in a punctured neighborhood of $L_\pm$ within $\Sb^*_{\pa\Rnhalf}\Rnhalf$.

  \textbf{Step 4. Removing the restrictions on the order $m$ and the weight $r$.} The last step in the proof is to remove the restrictions on $m$ (the order of the operator) and $r$ (the growth rate of $u$ and $f$). We accomplish this by rewriting the equation $Pu=f$ (without restrictions on $m$ and $r$) as
  \[
    (x^{-r}P\Lambda^+ x^r)(x^{-r}\Lambda^- u)=x^{-r}f+x^{-r}PRx^r(x^{-r}u),
  \]
  where $\Lambda^\pm\in\Psib^{\mp(m-m_0)}$, $m_0\geq 1$, have principal symbols $\rho^{\mp(m-m_0)}$ and satisfy $\Lambda^+\Lambda^-=I+R$, $R\in\Psib^{-\infty}$. Then $x^{-r}P\Lambda^+ x^r$ has order $m_0$, and, recalling $\wt s=\sigma-m+1$,
  \[
    x^{-r}f\in\Hb^{\wt s}, \quad x^{-r}\Lambda^- u\in\Hb^{\wt s+m_0-3/2}
  \]
  lie in unweighted b-Sobolev spaces. The principal symbol of $P_{0,r}:=x^{-r}P_0\Lambda^+ x^r$ is an elliptic multiple of the principal symbol of $P_0$, hence the Hamilton vector fields of $P_{0,r}$ and $P_0$ agree, up to a positive factor, on the characteristic set of $P_0$; in particular, even though $\beta_0$ in equation~\eqref{EqRadialFiberBdy} may be different for $P_{0,r}$ and $P_0$, $\wt\beta$ does not change on $L_\pm$. However, the imaginary part of the subprincipal symbol, hence $\wh\beta$, does change, resulting in a shift of the threshold values in the statement of the theorem: Concretely, we claim that
  \begin{equation}
  \label{EqGeneralRadialPrSym}
    \sigma_\bl^{m_0-1}\left(\frac{1}{2i}(P_{0,r}-P_{0,r}^*)\right)=\pm\rho^{m_0-1}\beta_0\left(\wh\beta+\frac{m-m_0}{2}-r\wt\beta\right)\tn{ at }L_\pm.
  \end{equation}
  Granted this, the threshold quantity is the $\sup$, resp.\ $\inf$, over $L_\pm$ of
  \[
    \wt s+(m_0-1)/2+\wh\beta+(m-m_0)/2-r\wt\beta=\wt s+(m-1)/2+\wh\beta-r\wt\beta.
  \]
  To prove \eqref{EqGeneralRadialPrSym}, let us write $P_0=P'_m+P'_{m-1}$; we can arrange for $P'_m$ to be (formally) self-adjoint by letting $P'_m=(P_0+P_0^*)/2$ and $P'_{m-1}=(P_0-P_0^*)/2$, and we shall also assume $\Lambda^+=(\Lambda^+)^*$. We then compute
  \begin{align*}
    \sigma_\bl^{m_0-1}&\left(\frac{1}{2i}\bigl(x^{-r}P'_{m-1}\Lambda^+x^r-x^r\Lambda^+(P'_{m-1})^*x^{-r}\bigr)\right) \\
	  &=\rho^{m_0-1}\rho^{1-m}\sigma_\bl^{m-1}\left(\frac{1}{2i}(P'_{m-1}-(P'_{m-1})^*)\right)=\pm\rho^{m_0-1}\beta_0\wh\beta,
  \end{align*}
  and
  \begin{align*}
    &\sigma_\bl^{m_0-1}\left(\frac{1}{2i}\bigl(x^{-r}P'_m\Lambda^+ x^r-x^r\Lambda^+P'_m x^{-r}\bigr)\right) \\
	  &\ =\sigma_\bl^{m_0-1}\left(\frac{1}{2i}[P'_m,\Lambda^+]\right)+\sigma_\bl^{m_0-1}\left(\frac{1}{2i}\bigl(x^{-r}[P'_m\Lambda^+,x^r]-x^r[\Lambda^+P'_m,x^{-r}]\bigr)\right) \\
	  &\ =\pm\frac{m-m_0}{2}\beta_0\rho^{m_0-1}-rx^{-1}H_{p_0\rho^{m_0-m}}x \\
	  &\ =\pm\left(\frac{m-m_0}{2}\beta_0-r\wt\beta\beta_0\right)\rho^{m_0-1}-rp_0x^{-1}H_{\rho^{m_0-m}}x.
  \end{align*}
  The last term on the right hand side involving $p_0$ vanishes at $L_\pm$, proving \eqref{EqGeneralRadialPrSym}.

  \textbf{Step 5. Collecting the regularity requirements.} Lastly, the regularities needed for the proof to go through are that the conditions in \eqref{EqRealPrTypeConditionsProof} hold for some $m_0\geq 1$; thus, choosing $m_0=\max(1,5-2\wt s)=1+2(2-\wt s)_+$, we obtain the conditions given in the statement of Theorem~\ref{ThmRadialPoints}.
\end{proof}

%%%%%%%%%%%%%%%%%%%%%%%%%%%%%%%%%%%%%%%%%%%%%%%%%%%%%%%%%%%%%%%%%%%%%
\section{Global solvability results for second order hyperbolic operators with non-smooth coefficients}
\label{SecGlobalForSecond}

Even though complex absorption is a useful tool to put wave equations on some classes of geometric spaces into a Fredholm framework, as done by Vasy \cite{VasyMicroKerrdS} in various dilation-invariant settings, and microlocally easy to deal with, it is problematic in general non dilation-invariant situations if one wants to prove the existence of forward solutions, as pointed out by Vasy and the author \cite{HintzVasySemilinear}. We shall adopt the strategy of \cite{HintzVasySemilinear} and use standard, non-microlocal, energy estimates for wave operators to show the invertibility of the forward problem on sufficiently weighted spaces; using the microlocal regularity results of \S\S\ref{SecEllipticReg} and \ref{SecPropagation}, we will in fact show higher regularity and the existence of partial expansions of forward solutions.

%%%%%%%%%%%%%%%%%%%%%%%%%%%%%%%%%%%%%%%%%%%%%%%%%
\subsection{Energy estimates}
\label{SubsecEnergy}

Let $(M,g)$ be a compact manifold with boundary e\-quipped with a Lorentzian b-metric $g$ satisfying
\begin{equation}
\label{EqNonsmoothMetric}
  g\in\CI(M;\Sym^2\Tb^*M)+\Hb^s(M;\Sym^2\Tb^*M),
\end{equation}
with $s\in\R$ to be specified later, where the b-Sobolev space here is defined using an arbitrary fixed smooth b-density on $M$. Let $U\subset M$ be open, and suppose $\ft\colon U\to(t_0,t_1)$ is a proper function such that $d\ft$ is timelike on $U$. We consider the operator
\[
  P=\Box_g+L,\quad L\in(\CI+\Hb^{s-1})\Diffb^1+(\CI+\Hb^{s-2}).
\]

\begin{rmk}
\label{RmkBoxOnBundles}
  Although all arguments in this section are presented for $P$ and $\Box_g$ acting on functions, the results are true for $P$ and $\Box_g$ acting on natural vector bundles as well, e.g.\ the bundle of $q$-forms; only minor, mostly notational, changes are needed to verify this.
\end{rmk}

For $s>n/2$, one obtains using Lemma~\ref{LemmaRecHs} and Corollary~\ref{CorHbModule} that in any coordinate system the coefficients $G^{ij}$ of the dual metric $G$ are elements of $\CI+\Hb^s$, and all Christoffel symbols are elements of $\CI+\Hb^{s-1}$. Therefore, by definition of $\Box_g$, one easily obtains that
\[
  \Box_g\in(\CI+\Hb^s)\Diffb^2+(\CI+\Hb^{s-1})\Diffb^1,
\]
thus
\begin{equation}
\label{EqPMembership}
  P\in(\CI+\Hb^s)\Diffb^2+(\CI+\Hb^{s-1})\Diffb^1+(\CI+\Hb^{s-2}).
\end{equation}

\begin{prop}
\label{PropEnergyWave}
  Let $t_0<T_0<T_0'<T_1<t_1$ and $r\in\R$, and suppose $s>n/2+2$. Then there exists a constant $C>0$ such that for all $u\in\Hb^{2,r}(M)$, the following estimate holds:
  \[
    \|u\|_{\Hb^{1,r}(\ft^{-1}([T_0',T_1]))}\leq C(\|Pu\|_{\Hb^{0,r}(\ft^{-1}([T_0,T_1]))}+\|u\|_{\Hb^{1,r}(\ft^{-1}([T_0,T_0']))}).
  \]
  This also holds with $P$ replaced by $P^*$. If one replaces $C$ by any $C'>C$, the estimate also holds for small perturbations of $P$ in the space indicated in \eqref{EqPMembership}.
\end{prop}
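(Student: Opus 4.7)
The plan is to run the standard multiplier (positive commutator) argument for wave equations adapted to the b-setting, paying attention only to the points where the limited regularity of the coefficients could enter. First, I would reduce to the case $r=0$ by conjugation: set $\tilde u=\tau^{-r}u$ and $\tilde P=\tau^{-r}P\tau^r$, where $\tau$ is a boundary defining function of $M$. A direct computation using Corollaries~\ref{CorReciprocal}, \ref{CorHbModule} and Proposition~\ref{PropCompWithAnalytic} (applied to $\tau^{\pm r}$ in local coordinates) shows that $\tilde P$ has the same structural form~\eqref{EqPMembership} as $P$, with $\tilde P-\Box_g$ only contributing additional $\CI+\Hb^{s-1}$ first-order and $\CI+\Hb^{s-2}$ zero-th order terms. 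The same reduction applies verbatim to $P^*$, since $P^*$ itself lies in the space~\eqref{EqPMembership} by the computations in Section~\ref{SecCalculus}.

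With $r=0$, I would build a timelike multiplier $V=\chi(\ft)\sfH$, where $\sfH$ is the $g$-dual of $d\ft$ (a smooth b-vector field, timelike on $U$ because $d\ft$ is), and $\chi\in\CI_\cl(\R)$ is a cutoff equal to $1$ on $[T_0',T_1]$, vanishing outside $(T_0,T_1)$, and monotone in between. The energy identity
\[
  2\Re\la Pu,Vu\ra_{L^2_\bl} \;=\; \la Ku,u\ra_{L^2_\bl} \;+\; \bigl(\text{boundary contributions at }\ft=T_0,T_1\bigr),
\]
obtained by integrating by parts the $\Box_g$-part of $P$ in the standard stress-energy form, produces a zero-th order operator $K$ built from the deformation tensor of $V$ plus contributions of $L$. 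The coefficients of $K$ involve at worst one derivative of $g$ and of $V$, hence lie in $\CI+\Hb^{s-1}$; since $s-1>n/2+1$, Sobolev embedding gives $\Hb^{s-1}\subset L^\infty_\bl$, so $K$ is a bounded multiplication operator. On the support of $\chi'>0$ (which I arrange to lie in $\ft^{-1}([T_0,T_0'])$), the boundary contribution of the identity dominates $c\|u\|_{\Hb^1(\ft^{-1}([T_0',T_1]))}^2$ minus $C\|u\|_{\Hb^1(\ft^{-1}([T_0,T_0']))}^2$, thanks to the timelike character of $V$ and $d\ft$. The Cauchy--Schwarz bound on $2\Re\la Pu,Vu\ra$ then yields the claimed estimate, with a Gronwall step to absorb the zeroth-order part of $Ku$ (treating $\ft$ as a time function).

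The step requiring care is the justification of the integration by parts for $u\in\Hb^{2,0}$ only: classically one has $\Hb^2$-regularity on each slab, but the coefficients $g^{ij}$ are merely in $\CI+\Hb^s$, and the terms moved by integration by parts carry a derivative of $g$. Here I would regularize $u$ by the mollifiers $J_\eps$ of Lemma~\ref{LemmaMollifier}, apply the identity to $J_\eps u$ (for which everything is smooth enough), and pass to the limit $\eps\to 0$; the commutators $[P,J_\eps]$ acting on $\Hb^{2,0}$ are uniformly bounded in $L^2_\bl$ by Theorem~\ref{ThmComp} together with Proposition~\ref{PropOpCont}, which is exactly the place where the assumption $s>n/2+2$ enters (it is what allows $[P,J_\eps]\colon\Hb^{1,0}\to L^2_\bl$ uniformly).

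Finally, stability under small perturbations in the space~\eqref{EqPMembership} is read off from the proof itself: the constant $C$ depends only on the $\Hb^{s}$-norm of the $\Box_g$-coefficients and the $\Hb^{s-1}$, $\Hb^{s-2}$-norms of the coefficients of $L$, through the $L^\infty_\bl$-bounds on $g^{ij}$, its first derivatives, and the coefficients of $L$, all of which depend continuously on the data by Sobolev embedding. Hence any $C'>C$ works for a sufficiently small neighborhood of $P$.
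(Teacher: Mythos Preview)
Your approach is a valid classical alternative, but it differs from the paper's in two structural respects. First, the paper does not conjugate away the weight: the commutant itself carries $x^{-2r}$, namely $V=-iZ$ with $Z=x^{-2r}\chi(\ft)W$, $W=G(\,\cdot\,,\bdiff\ft)$, and one computes $-i(V^*P-P^*V)=\bdiff^*C\,\bdiff$ with $C=x^{-2r}(\chi'A+\chi R)$, $A$ positive definite (the stress--energy form), $R$ a bounded symmetric error. Second, rather than fixing $\chi\equiv 1$ on the target slab and absorbing the $\chi R$-error by Gronwall, the paper chooses $\chi$ so that $-\chi'/\chi$ can be made arbitrarily large on $[T_0',T_1]$ (via a factor $\chi_0(-\F^{-1}(\cdot-T_1'))$ with $\chi_0(t)=e^{-1/t}$, cut off by a Heaviside at $T_1$); the $\chi R$-term is then absorbed directly into $-\chi'A$, the $\delta_{T_1}$-contribution from the Heaviside is dropped by sign, and no Gronwall is needed. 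Your route works as well, but a few points need tightening: your $\chi$ cannot both equal $1$ on $[T_0',T_1]$ and be supported in $(T_0,T_1)$; the ``boundary contribution'' you invoke is the energy on the \emph{slice} $\ft=T_1$, not on the slab, so to get the slab norm you must vary the upper endpoint and integrate (this is where Gronwall actually enters); and $\sfH$, being the $g$-dual of $d\ft$, has $\CI+\Hb^s$ rather than smooth coefficients---harmless, but it should be said. Finally, the paper uses $s>n/2+2$ not via mollifier--commutators but directly, to make sense of the compositions in $-i(V^*P-P^*V)$ and to justify the integrations by parts for $u\in\Hb^{2,r}$.
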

\begin{proof}
  Let us work in a coordinate system $z_1=x,z_2=y_1,\ldots,z_n=y_{n-1}$, where $x$ is a boundary defining function in case we are working near the boundary. By piecing together estimates from coordinate patches, one can deduce the full result. Write $\bpa_j=\pa_{z_j}$ for $2\leq j\leq n$, and $\bpa_1=x\pa_x$ if we are working near the boundary, $\bpa_1=\pa_x$ otherwise. Moreover, let us fix the \emph{Riemannian} b-metric
  \[
    \wt g=\frac{dx^2}{x^2}+dy^2
  \]
  near the boundary, $\wt g=dx^2+dy^2$ away from it. \emph{We adopt the summation convention in this proof.}
  
  We will imitate the proof of \cite[Proposition~3.8]{VasyMicroKerrdS}, which proves a similar result in a smooth, semiclassical setting. Thus, consider the commutant $V=-iZ$, where $Z=x^{-2r}\chi(\ft)W$ with $\chi\in\CI(\R)$, chosen later in the proof, and $W=G(-,\bdiff\ft)$, which is timelike in $U$. We will compute the `commutator'
  \begin{equation}
  \label{EqEnergyComm}
    -i(V^*P-P^*V)=-i(V^*\Box_g-\Box_g^*V)-iV^*L+iL^*V,
  \end{equation}
  where the adjoints are taken with respect to the (b-)metric $\wt g$. First, we need to make sense of all appearing operator compositions. Notice that $V\in x^{-2r}(\CI+\Hb^s)\Diffb^1$, and writing $V=-iZ^j\bpa_j$, we get
  \[
    V^*=-i\bpa_j Z^j=V-i(\bpa_j Z^j) \in x^{-2r}(\CI+\Hb^s)\Diffb^1+x^{-2r}(\CI+\Hb^{s-1}),
  \]
  similarly
  \[
    \Box_g,\Box_g^*,P^*\in(\CI+\Hb^s)\Diffb^2+(\CI+\Hb^{s-1})\Diffb^1+(\CI+\Hb^{s-2});
  \]
  now, since 
  \[
    (\CI+\Hb^{s-j})\Diffb^j (\CI+\Hb^{s-k})\Diffb^k \subset \sum_{l\leq j}(\CI+\Hb^{s-j}\Hb^{s-k-l})\Diffb^{j+k-l},
  \]
  it suffices to require $s>n/2+2$, since then $\Hb^{s-j}\Hb^{s-k-j}\subset\Hb^{s-k-j}$ for $0\leq j,k\leq 2$, $0\leq j+k\leq 3$.
  
  Returning to the computation of \eqref{EqEnergyComm}, we conclude that $-i(V^*\Box_g-\Box_g^*V)\in(\CI+\Hb^{s-3,-2r})\Diffb^2$, and thus its principal symbol is defined. Since it is a formally self-adjoint (with respect to $\wt g$) operator with real coefficients that vanishes on constants, it equals $\bdiff^*C\bdiff$ provided the principal symbols are equal. To compute it, we adopt the computation by Vasy \cite[\S3]{VasyWaveOnAdS} to the present context: Let us write
  \[
    -i(V^*\Box_g-\Box_g^*V)=-(\bpa_k Z^k)\Box_g+i[\Box_g,V]-i(\Box_g-\Box_g^*)V.
  \]
  We define $S^i\in\CI+\Hb^{s-1}$ by
  \[
    \sigma_\bl^2(-i(\Box_g-\Box_g^*)V)=2S^iZ^j\zeta_i\zeta_j=(S^iZ^j+S^jZ^i)\zeta_i\zeta_j.
  \]
  Moreover, with $H_G$ denoting the Hamilton vector field of the dual metric of $g$,
  \[
    H_G=G^{ij}\zeta_i\bpa_j+G^{ij}\zeta_j\bpa_i-(\bpa_k G^{ij})\zeta_i\zeta_j\pa_{\zeta_k},
  \]
  we find $\sigma_\bl^2(-i(V^*\Box_g-\Box_g^*V))=B^{ij}\zeta_i\zeta_j$ with
  \begin{align*}
    B^{ij}&=-\bpa_k(Z^k G^{ij})+G^{ik}(\bpa_k Z^j)+G^{jk}(\bpa_k Z^i)+S^iZ^j+S^jZ^i \\
	  &\hspace{50ex} \in x^{-2r}(\CI+\Hb^{s-1}),
  \end{align*}
  thus
  \[
    -i(V^*\Box_g-\Box_g^*V)=\bdiff^*C\bdiff, \quad C_i^j=B^{ij}.
  \]
  Let us now plug $Z=x^{-2r}\chi W$ into the definition of $B^{ij}$ and separate the terms with derivatives falling on $\chi$, the idea being that the remaining terms, considered error terms, can then be dominated by choosing $\chi'$ large compared to $\chi$. We get
  \begin{equation}
  \label{EqBEndomorphism}
  \begin{split}
    B^{ij}&=x^{-2r}(\bpa_k\chi)(G^{ik}W^j+G^{jk}W^i-G^{ij}W^k) \\
	  &\hspace{5ex} +\chi\bigl(G^{ik}(\bpa_k x^{-2r} W^j)+G^{jk}(\bpa_k x^{-2r} W^i) \\
	  &\hspace{22ex} -\bpa_k(x^{-2r} W^k G^{ij})+x^{-2r}(S^iW^j+S^jW^i)\bigr).
  \end{split}
  \end{equation}
  Notice here that for a b-1-form $\omega\in\CI(M;\Tb^*M)$, the quantity
  \begin{align*}
    E_{W,\bdiff\chi}(\omega)&:=\frac{1}{2}(\bpa_k\chi)(G^{ik}W^j+G^{jk}W^i-G^{ij}W^k)\omega_i\overline{\omega_j} \\
	  &=\frac{1}{2}\bigl[(\omega,\bdiff\chi)_G\overline{\omega(W)}+\omega(W)(\bdiff\chi,\omega)_G-\bdiff\chi(W)(\omega,\omega)_G\bigr] \\
	  &=\chi'(\ft)E_{W,\bdiff\ft}(\omega)
  \end{align*}
  is related to the sesquilinear energy-momentum tensor
  \[
    E_{W,\bdiff\ft}(\omega)=\Re\bigl((\omega,\bdiff\ft)_G\overline{\omega(W)}\bigr)-\frac{1}{2}\bdiff\ft(W)(\omega,\omega)_G,
  \]
  where $(\cdot,\cdot)_G$ is the sesquilinear inner product on $\CTb^*M$. This quantity, rewritten in terms of b-vector fields as
  \[
    E_{X,Y}(\omega)=\Re(\omega(X)\overline{\omega(Y)})-\frac{1}{2}\la X,Y\ra(\omega,\omega)_G,
  \]
  is well-known to be positive definite provided $X$ and $Y$ are both future (or both past) timelike, see e.g.\ Alinhac \cite{AlinhacGeoHypPDE}. In our setting, we thus have $E_{W,\bdiff\ft}=E_{W,W}>0$ by our definition of $W$. Correspondingly,
  \begin{equation}
  \label{EqCEndomorphism}
    C=x^{-2r}\chi' A+x^{-2r}\chi R
  \end{equation}
  with $A$ positive definite and $R$ symmetric.

  We obtain\footnote{The integrations by parts here and further below are readily justified using $s>n/2+2$: In fact, since we are assuming $u\in\Hb^{2,r}$, we have $Vu\in\Hb^{1,-r}$ for $s>n/2,s\geq 1$, and then $P^*Vu\in\Hb^{-1,-r}$ provided multiplication with an $\Hb^{s-j}$ function is continuous $\Hb^1\to\Hb^{1-j}$ for $j=0,1,2$, which is true for $s>n/2+1$; similarly, one has $Pu\in\Hb^{0,r}$ provided $s>n/2$, and then $V^*Pu\in\Hb^{-1,-r}$ if multiplication by an $\Hb^{s-j}$ function is continuous $\Hb^{-j}\to\Hb^{-1}$ for $j=0,1$, which holds for $s>n/2,s\geq 1$.}
  \begin{equation}
  \label{EqWaveComm}
    \la-i(V^*P-P^*V)u,u\ra=\la C\bdiff u,\bdiff u\ra-\la iLu,Vu\ra+\la iVu,Lu\ra.
  \end{equation}
  We now finish the proof by making $\chi'$ large compared to $\chi$ on $\ft^{-1}([T_0',T_1])$, as follows: Pick $T_1'\in(T_1,t_1)$ and let
  \[
    \wt\chi(s)=\wt\chi_1\left(\frac{s-T_0}{T_0'-T_0}\right)\chi_0(-\F^{-1}(s-T_1')),\quad \chi(s)=\wt\chi(s)H(T_1-s),
  \]
  where $H$ is the Heaviside step function, $\chi_0(s)=e^{-1/s}H(s)\in\CI(\R)$ (which satisfies $\chi_0'(s)=s^{-2}\chi_0(s)$) and $\wt\chi_1\in\CI(\R)$ equals $0$ on $(-\infty,0]$ and $1$ on $[1,\infty)$; see Figure~\ref{FigCommutant}.

  \begin{figure}[!ht]
    \includegraphics{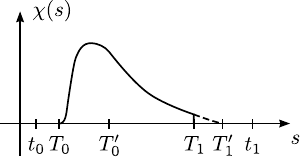}
    \caption{Graph of the commutant $\chi$. The dashed line is the graph of the part of $\wt\chi$ that is cut off using the Heaviside function in the definition of $\chi$.}
    \label{FigCommutant}
  \end{figure}
  
  Then in $(T_0',T_1')$,
  \begin{align*}
    \chi'(s)&=-\F^{-1}\chi_0'(-\F^{-1}(s-T_1'))H(T_1-s)-\chi_0(-\F^{-1}(T_1-T_1'))\delta_{T_1} \\
	  &=-\F(s-T_1')^{-2}\chi(s)-\chi_0(-\F^{-1}(T_1-T_1'))\delta_{T_1},
  \end{align*}
  in particular $\chi(s)=-\F^{-1}(s-T_1')^2\chi'(s)$ on $(T_0',T_1)$; hence for any $\gamma>0$, we can choose $\F>0$ so large that $\chi\leq-\gamma\chi'$ on $(T_0',T_1)$; therefore
  \[
    -(\chi' A+\chi R)\geq -\frac{1}{2}\chi'\wt\chi_1 A\tn{ on }(T_0',T_1).
  \]
  Put $\chi_1(s)=\wt\chi_1(s)H(T_1-s)$, then
  \begin{align*}
    -\la C&\bdiff u,\bdiff u\ra \geq\frac{1}{2}\la x^{-2r}(-\chi'\chi_1)A\bdiff u,\bdiff u\ra \\
	  &\hspace{-3ex}+\chi_0(-\F^{-1}(T_1-T_1'))\la x^{-2r} A\delta_{T_1}\bdiff u,\bdiff u\ra-C'\|\bdiff u\|_{\Hb^{0,r}(\ft^{-1}([T_0,T_0']))}^2,
  \end{align*}
  and the term on the right hand side involving $\delta_{T_1}$ is positive, thus can be dropped. Hence, using equation~\eqref{EqWaveComm} and the positivity of $A$,
  \begin{equation}
  \label{EqWaveControl}
  \begin{split}
    c_0\|&\sqrt{-\chi'\chi_1}\bdiff u\|_{\Hb^{0,r}}^2 \leq \frac{1}{2}\la x^{-2r}(-\chi'\chi_1)A\bdiff u,\bdiff u\ra \\
	  &\leq C'\|\bdiff u\|_{\Hb^{0,r}(\ft^{-1}([T_0,T_0']))}^2+C'\|\chi^{1/2}Pu\|_{\Hb^{0,r}}\|\chi^{1/2}\bdiff u\|_{\Hb^{0,r}} \\
	  & \hspace{10ex} + C'\|\chi^{1/2}\bdiff u\|_{\Hb^{0,r}}^2+C'\|\chi^{1/2}\bdiff u\|_{\Hb^{0,r}}\|\chi^{1/2}u\|_{\Hb^{0,r}} \\
	  & \leq C''\|u\|_{\Hb^{1,r}(\ft^{-1}([T_0,T_0']))}^2+C'\|\chi^{1/2}Pu\|_{\Hb^{0,r}}^2+C'\gamma\|\sqrt{-\chi'\chi_1}\bdiff u\|_{\Hb^{0,r}}^2 \\
	  & \hspace{10ex} + C'\gamma\|\sqrt{-\chi'\chi_1}u\|^2_{\Hb^{0,r}},
  \end{split}
  \end{equation}
  where the norms are on $\ft^{-1}([T_0,T_1])$ unless otherwise specified. Choosing $\F$ large and thus $\gamma$ small allows us to absorb the second to last term on the right into the left hand side. To finish the proof, we need to treat the last term, as follows: We compute, using $W\chi=\chi' G(\bdiff\ft,\bdiff\ft)\equiv m\chi'$ with $m\in\CI+\Hb^s$ positive,
  \begin{align*}
    \la&(W^*x^{-2r}\chi+x^{-2r}\chi W)u,u\ra \\
	  &= -\la(W x^{-2r}\chi-x^{-2r}\chi W)u,u\ra-\la(\dv_{\wt g}W) x^{-2r}\chi u,u\ra \\
	  &\geq -\la x^{-2r} m\chi'u,u\ra_{\Hb^0(\ft^{-1}([T_0,T_1]))}-\la x^{-2r} w\chi u,u\ra-\la(\dv_{\wt g}W) x^{-2r}\chi u,u\ra \\
	  &\geq \|\sqrt{-\chi'\chi_1}m^{1/2} u\|_{\Hb^{0,r}(\ft^{-1}([T_0',T_1]))}^2-\|\sqrt{|\chi'|}m^{1/2}u\|_{\Hb^{0,r}(\ft^{-1}([T_0,T_0']))}^2 \\
	  &\hspace{10ex} -C\|\sqrt{\chi}u\|_{\Hb^{0,r}(\ft^{-1}([T_0,T_1]))}^2,
  \end{align*}
  where $w=x^{2r}Wx^{-2r}\in\CI+\Hb^s$. Similarly as above, we now choose $\F$ large to obtain
  \begin{align*}
    \|\sqrt{-\chi'\chi_1}&u\|_{\Hb^{0,r}(\ft^{-1}([T_0',T_1]))}^2 \\
	  &\leq C\|\sqrt{\chi}\bdiff u\|_{\Hb^{0,r}}^2+C\|\sqrt{\chi+|\chi'|}u\|_{\Hb^{0,r}(\ft^{-1}([T_0,T_0']))}^2.
  \end{align*}
  Using this estimate in \eqref{EqWaveControl} and absorbing one of the resulting terms, namely $\gamma\|\sqrt{\chi}\bdiff u\|_{\Hb^{0,r}(\ft^{-1}([T_0',T_1]))}^2$, into the left hand side finishes the proof of the estimate, since $\sqrt{-\chi'\chi_1}$ has a positive lower bound on $\ft^{-1}([T_0',T_1])$.

  That the estimate holds for perturbations of $P$ follows simply from the observation that all constants in this proof depend on finitely many seminorms of the coefficients of $P$, hence the constants only change by small amounts if one makes a small perturbation of $P$.
\end{proof}

%%%%%%%%%%%%%%%%%%%%%%%%%%%%%%%%%%%%%%%%%%%%%%%%%
\subsection{Analytic, geometric and dynamical assumptions on non-smooth linear problems}
\label{SubsecHypotheses}

The arguments of the first half of \cite[\S{2.1.3}]{HintzVasySemilinear} leading to a Fredholm framework for the forward problem for certain $P$, e.g.\ wave operators on non-smooth perturbations of the static model of de Sitter space, now go through with only minor technical modifications. Because there are large di\-men\-sion-de\-pen\-dent losses in estimates for the adjoint of $P$ relative to the regularity of the coefficients of $P$, say $\CI+\Hb^s$ for the highest order ones, the spaces that $P$ acts on as a Fredholm operator are roughly of the order $s-n/2$.

This can be vastly improved with a calculus for right quantizations of non-smooth symbols just like the one developed in this paper for left quantizations. Right quantizations have `good' mapping properties on \emph{negative} order (but lossy ones on positive order) b-Sobolev spaces. Correspondingly, all microlocal results (elliptic regularity, propagation of singularities, including at radial points) hold by the same proofs \emph{mutatis mutandis}. Then, viewing $P^*$ as the right quantization of a non-smooth symbol gives estimates which allow one to put $P$ into a Fredholm framework on spaces with regularity $s-\eps$, $\eps>0$.
 
Our focus here however is to prove the \emph{invertibility} of the forward problem, whose discussion in the second half of \cite[\S{2.1.3}]{HintzVasySemilinear} (in the smooth setting) we follow.

We now describe the general setup; a concrete example to keep in mind for the remainder of the section is the wave operator on a (perturbed) static asymptotically de Sitter space. Let us from now assume that the operator
\[
  P=\Box_g+L,\quad L\in(\CI+\Hb^{s-1,\alpha})\Diffb^1+(\CI+\Hb^{s-1,\alpha}),
\]
with $\alpha>0$, and $g$ now satisfying
\[
  g\in\CI(M;\Sym^2\Tb^*M)+\Hb^{s,\alpha}(M;\Sym^2\Tb^*M),
\]
is such that:
\begin{enumerate}
  \item \label{EnumHypPDyn} $P$ satisfies the dynamical assumptions of Theorem~\ref{ThmRadialPoints}, i.e.\ has the indicated radial point structure. Let $L_\pm$, $\wt\beta$, $\wh\beta$ be defined as in the statement of Theorem~\ref{ThmRadialPoints},
  \item \label{EnumHypPCoeffs} $P\in(\CI+\Hb^{s,\alpha})\Diffb^2+(\CI+\Hb^{s-1,\alpha})\Diffb^1+(\CI+\Hb^{s-1,\alpha})$; note that the regularity of the lowest order term is higher than what we assumed before,
  \item \label{EnumHypCharpm} the characteristic set $\Sigma$ of $P$ has the form $\Sigma=\Sigma_+\cup\Sigma_-$ with $\Sigma_\pm$ a union of connected components of $\Sigma$, and $L_\pm\subset\Sigma_\pm$.
\setcounter{CounterEnumi}{\value{enumi}}
\end{enumerate}
We denote by $\ft_1$ and $\ft_2$ two smooth functions on $M$ and put for $\delta_1,\delta_2$ small
\begin{gather*}
  \Omega_{\delta_1,\delta_2}:=\ft_1^{-1}([\delta_1,\infty))\cap\ft_2^{-1}([\delta_2,\infty)),\quad \Omega\equiv\Omega_{0,0}, \\
  \Omega_{\delta_1,\delta_2}^\circ:=\ft_1^{-1}((\delta_1,\infty))\cap\ft_2^{-1}((\delta_2,\infty)),
\end{gather*}
where we assume that:
\begin{enumerate}
\setcounter{enumi}{\value{CounterEnumi}}
  \item \label{EnumHypFt} The differentials of $\ft_1$ and $\ft_2$ have the opposite timelike character near their respective zero sets within $\Omega$, more specifically, $d\ft_1$ is future timelike, $d\ft_2$ past timelike,
  \item \label{EnumHypArtBdy} putting $H_j:=\ft_j^{-1}(0)$, the $H_j$ intersect the boundary $\pa M$ transversally, and $H_1$ and $H_2$ intersect only in the interior of $M$, and they do so transversally,
  \item \label{EnumHypCompact} $\Omega_{\delta_1,\delta_2}$ is compact.
\setcounter{CounterEnumi}{\value{enumi}}
\end{enumerate}

Let us make two additional assumptions:
\begin{enumerate}
\setcounter{enumi}{\value{CounterEnumi}}
  \item \label{EnumHypTimelike} Assume that there is a boundary defining function $x$ of $M$ such that $dx/x$ is timelike everywhere on $\Omega$ with timelike character opposite to the one of $\ft_1$, i.e.\ $dx/x$ is past oriented.
  \item \label{EnumHypNontrapping} The metric $g$ is \emph{non-trapping} in the following sense: All bicharacteristics in $\Sigma_\Omega:=\Sigma\cap\Sb^*_\Omega M$ from any point in $\Sigma_\Omega\cap(\Sigma_+\setminus L_+)$ flow (within $\Sigma_\Omega$) to $\Sb^*_{H_1}M\cup L_+$ in the forward direction (i.e.\ either enter $\Sb^*_{H_1}M$ in finite time or tend to $L_+$) and to $\Sb^*_{H_2}M\cup L_+$ in the backward direction, and from any point in $\Sigma_\Omega\cap(\Sigma_-\setminus L_-)$ to $\Sb^*_{H_2}M\cup L_-$ in the forward direction and to $\Sb^*_{H_1}M\cup L_-$ in the backward direction.
\end{enumerate}

See Figure~\ref{FigOmega} for the setup.
\begin{figure}[!ht]
\includegraphics{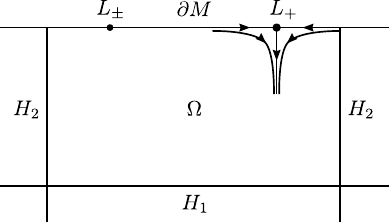}
\caption{The domain $\Omega$ on which we have a global energy estimate as well as solvability and uniqueness on appropriate weighted b-Sobolev spaces. The `artificial' spacelike boundary hypersurfaces $H_1$ and $H_2$ are also indicated.}
\label{FigOmega}
\end{figure}

Conditions~\itref{EnumHypPDyn} and \itref{EnumHypNontrapping} are (probably) not stable under perturbations of $P$, and it will in fact be crucial later that they can be relaxed. Namely, we do not need to require that null-bicharacteristics of a small perturbation $\wt P$ of $P$ tend to $L_\pm$, but only that they reach a fixed neighborhood of $L_\pm$, since then Theorem~\ref{ThmRadialPoints} is still applicable to $\wt P$, see Remark~\ref{RmkRelaxedRadial}; and this condition \emph{is} stable under perturbations.

Denote by $\Hb^{s,r}(\Omega_{\delta_1,\delta_2})^{\bullet,-}$ distributions which are supported ($\bullet$) at the `artificial' boundary hypersurface $\ft_1^{-1}(\delta_1)$ and extendible ($-$) at $\ft_2^{-1}(\delta_2)$, and the other way around for $\Hb^{s,r}(\Omega_{\delta_1,\delta_2})^{-,\bullet}$. Then we have the following \emph{global} energy estimate:
\begin{lemma}
\label{LemmaWaveGlobalEnergy}
  (Cf.\ \cite[Lemma~2.15]{HintzVasySemilinear}.) Suppose $s>n/2+2$. There exists $r_0<0$ such that for $r\leq r_0$, $-\wt r\leq r_0$, there is $C>0$ such that for $u\in\Hb^{2,r}(\Omega_{\delta_1,\delta_2})^{\bullet,-}$, $v\in\Hb^{2,\wt r}(\Omega_{\delta_1,\delta_2})^{-,\bullet}$, one has
  \begin{align*}
    \|u\|_{\Hb^{1,r}(\Omega_{\delta_1,\delta_2})^{\bullet,-}} & \leq C \|Pu\|_{\Hb^{0,r}(\Omega_{\delta_1,\delta_2})^{\bullet,-}}, \\
	\|v\|_{\Hb^{1,\wt r}(\Omega_{\delta_1,\delta_2})^{-,\bullet}} & \leq C \|P^*v\|_{\Hb^{0,\wt r}(\Omega_{\delta_1,\delta_2})^{-,\bullet}}.
  \end{align*}
  If one replaces $C$ by any $C'>C$, the estimates also hold for small perturbations of $P$ in the space indicated in assumption~\itref{EnumHypPCoeffs}.
\end{lemma}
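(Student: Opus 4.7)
The plan is to run a single global positive commutator argument on $\Omega_{\delta_1,\delta_2}$, following the scheme of Proposition~\ref{PropEnergyWave} but replacing the local timelike function $\ft$ there by a globally constructed commutant on $\Omega$ that also exploits the weight $x^{-2r}$. Since $u$ is supported at $H_1$, the ``initial'' boundary term from $H_1$ will automatically vanish, while the extendibility at $H_2$ will be handled, as in Proposition~\ref{PropEnergyWave}, by truncating the commutant with a Heaviside factor just before $H_2$, yielding a favorably-signed boundary term there. The whole point of choosing $r\ll 0$ is that the $x$-derivative falling on $x^{-2r}$ produces a definite-sign term of size $|r|$ which dominates all error terms coming from subprincipal contributions and from the $\Hb^{s,\alpha}$ part of the coefficients of $P$.

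Concretely, I would take $V=-iZ$ with $Z=x^{-2r}\chi W$, where $W=G(-,\bdiff\ft_1)$ is future timelike on a neighborhood of $\Omega$ by assumption~\itref{EnumHypFt}, and $\chi=\chi(\ft_1,\ft_2)$ is a smooth cutoff localizing to $\Omega_{\delta_1,\delta_2}$ (zero for $\ft_1\le 0$ by the support condition on $u$, with $\chi'$ in $\ft_2$ supplied by a Heaviside-smoothing factor as in the proof of Proposition~\ref{PropEnergyWave} near $H_2$). Computing
\[
  -i(V^*P-P^*V)=\bdiff^*C\bdiff+(\tn{lower order, including }L\tn{ and adjoint remainders}),
\]
exactly as in \eqref{EqBEndomorphism}--\eqref{EqCEndomorphism}, the endomorphism $C$ now has a \emph{two-parameter} decomposition: one part is proportional to $\chi'$ (times $|\chi'|\ge 0$ terms) and is positive definite by the timelike character of $W$; the other, new part comes from the $x$-derivative hitting $x^{-2r}$ and is of the form
\[
  -2r\,x^{-2r}\chi\,E_{W,\,dx/x}\,+\,x^{-2r}\chi R,
\]
where $E_{W,\,dx/x}$ is pointwise positive definite by assumption~\itref{EnumHypTimelike} (both $W$ and $dx/x$ are timelike with opposite orientation, so the energy-momentum tensor is positive definite) and $R$ is bounded. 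Choosing $-r$ sufficiently large makes the $-2r$-term dominate $R$ as well as all error contributions of bounded size coming from $L$, from $[P,\chi]$, and from the $\Hb^{s-1,\alpha}$ part of $P-P^*$ (estimated via Theorem~\ref{ThmComp}\itref{ThmCompBRFunny} exactly as in the proofs of Section~\ref{SecPropagation}, and controllable in operator norm by seminorms of the coefficients of $P$). Pairing with $u$ and integrating by parts as in the derivation of \eqref{EqWaveControl} then yields
\[
  c|r|\,\|u\|_{\Hb^{1,r}(\Omega_{\delta_1,\delta_2})^{\bullet,-}}^2\le C\,\|Pu\|_{\Hb^{0,r}(\Omega_{\delta_1,\delta_2})^{\bullet,-}}\|u\|_{\Hb^{1,r}(\Omega_{\delta_1,\delta_2})^{\bullet,-}}+C\,\|u\|_{\Hb^{1,r}(\Omega_{\delta_1,\delta_2})^{\bullet,-}}^2
\]
(the last term absorbed for $|r|$ large), which after a Cauchy--Schwarz gives the claimed estimate. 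The estimate for $P^*$ on $\Hb^{1,\tilde r}(\Omega_{\delta_1,\delta_2})^{-,\bullet}$ follows by the same argument after swapping the roles of $H_1$ and $H_2$ (i.e.\ propagating in the time-reversed direction using $\ft_2$), noting that $P$ and $P^*$ have the same principal symbol and the low-order error analysis is symmetric in the two. Stability under small perturbations of $P$ follows since all constants depend only on finitely many seminorms of the coefficients.

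The main obstacle I anticipate is bookkeeping the non-smooth error terms: the commutator $[\chi,P]$ and the skew part $P-P^*$ of $P$ now contain coefficients in $\Hb^{s-1,\alpha}$ (for the highest order part) and $\Hb^{s-1,\alpha}$ (for the subprincipal), so one must verify that their contributions to $\la-i(V^*P-P^*V)u,u\ra$ are genuinely bounded by $C\|u\|_{\Hb^{1,r}}^2$ with a constant $C$ \emph{independent of $|r|$} (and independent under small perturbations), so that making $|r|$ large really does absorb them. This is where the hypothesis $s>n/2+2$ enters, together with the mapping property Proposition~\ref{PropOpCont} and the algebra property of Corollary~\ref{CorHbModule}, used exactly as in the setup of Section~\ref{SubsecEnergy}; the weight $\alpha>0$ on the $\Hb^{s,\alpha}$ part of $g$ is precisely what ensures that these error terms carry an extra decaying factor in $x$ and thus are indeed negligible compared to the $|r|$-term near the boundary.
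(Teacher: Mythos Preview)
Your approach is the paper's: use the weighted commutant $Z = x^{-2r}\chi W$ as in Proposition~\ref{PropEnergyWave}, and observe that differentiating $x^{-2r}$ contributes a term to $C$ in \eqref{EqCEndomorphism} carrying an explicit factor of $r$, which dominates all bounded-coefficient errors once $-r$ is large. The paper's proof says exactly this, deferring the routine details to the cited reference.

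Two points in your execution need repair. First, your choice $W = G(-,\bdiff\ft_1)$ is not timelike on all of $\Omega$: assumption~\itref{EnumHypFt} only guarantees this near $H_1$, and in the de~Sitter example $\bdiff\ft_1 = -d\tau = -\tau\,\tfrac{d\tau}{\tau}$ degenerates at $\pa M$ as a b-covector, so your commutant gives no control there. You need a $W$ that is uniformly timelike on $\Omega$; for instance, interpolate between $G(-,\bdiff\ft_j)$ near $H_j$ and the dual of $dx/x$ near $\pa M$ (timelike there by assumption~\itref{EnumHypTimelike}), or equivalently piece together local estimates from Proposition~\ref{PropEnergyWave} and its near-boundary analogue. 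Second, your sign claim for $E_{W,\,dx/x}$ is backwards: the footnote in the proof of Proposition~\ref{PropEnergyWave} shows $E_{X,Y}$ is positive definite when $X,Y$ have the \emph{same} time orientation, so with $W$ future and the dual of $dx/x$ past, $E_{W,\,dx/x}$ is negative definite. The argument survives because the contribution to $-C$ is then $2r\,x^{-2r}\chi\,E_{W,\,dx/x}$, which is positive for $r<0$; you just have the sign bookkeeping inverted. (Your closing remark attributing the absorbability of the non-smooth errors to $\alpha>0$ is also off: these errors have bounded operator norm on $\Hb^{1,r}$ independent of $r$, so it is purely the size of $|r|$ that absorbs them.)
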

\begin{proof}
  The proof follows \cite[Lemma~2.15]{HintzVasySemilinear}, adapted to the non-smooth setting as in Proposition~\ref{PropEnergyWave}, the point being that the terms in \eqref{EqBEndomorphism} with $x^{-2r}$ differentiated and thus possessing a factor of $r$ can be used to dominate the other, `error,' terms in \eqref{EqCEndomorphism}. We require $s>n/2+2$ here in order for the arguments in Proposition~\ref{PropEnergyWave} to apply.
\end{proof}

\begin{rmk}
  For this lemma we in fact only need to assume conditions~\itref{EnumHypFt}-\itref{EnumHypTimelike}.
\end{rmk}

By a duality argument, we thus obtain solvability; furthermore, the results on microlocal elliptic regularity and the propagation of singularities proved in \S\S\ref{SecEllipticReg} and \ref{SecPropagation} give us a robust way to show higher regularity:
\begin{lemma}
\label{LemmaWaveGlobalSolve}
  (Cf.\ \cite[Corollaries~2.10 and 2.16]{HintzVasySemilinear}.) Let $0\leq s'\leq s$ and assume $s>n/2+6$. There exists $r_0<0$ such that for $r\leq r_0$, there is $C>0$ with the following property: If $f\in\Hb^{s'-1,r}(\Omega)^{\bullet,-}$, then there exists a unique $u\in\Hb^{s',r}(\Omega)^{\bullet,-}$ such that $Pu=f$, and $u$ moreover satisfies
  \[
    \|u\|_{\Hb^{s',r}(\Omega)^{\bullet,-}} \leq C\|f\|_{\Hb^{s'-1,r}(\Omega)^{\bullet,-}}.
  \]
  If one replaces $C$ by any $C'>C$, this result also holds for small perturbations of $P$ in the space indicated in assumption~\itref{EnumHypPCoeffs}.
\end{lemma}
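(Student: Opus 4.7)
The plan is to prove existence and the estimate at a low ``base'' level of regularity by combining the dual energy estimate of Lemma~\ref{LemmaWaveGlobalEnergy} with Hahn-Banach, then to upgrade the solution to $\Hb^{s',r}(\Omega)^{\bullet,-}$ by bootstrapping with the microlocal regularity results of Sections~\ref{SecEllipticReg}--\ref{SecPropagation}; uniqueness is obtained either directly from the forward energy estimate on $P$ or by the dual of the existence argument.

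First I would fix $r_0 < 0$ as in Lemma~\ref{LemmaWaveGlobalEnergy} and, for $r \leq r_0$ with $\tilde r := -r$, apply the second half of that lemma to obtain $\|v\|_{\Hb^{1,-r}(\Omega)^{-,\bullet}} \leq C\|P^*v\|_{\Hb^{0,-r}(\Omega)^{-,\bullet}}$. Given $f \in \Hb^{-1,r}(\Omega)^{\bullet,-}$, the antilinear map $P^*v \mapsto \la f, v\ra$ is then well-defined and bounded on the image of $P^*$ inside $\Hb^{0,-r}(\Omega)^{-,\bullet}$, so Hahn-Banach extends it to the whole space. The extension is represented by a $u \in \Hb^{0,r}(\Omega)^{\bullet,-}$ with $Pu = f$ weakly and $\|u\|_{\Hb^{0,r}(\Omega)^{\bullet,-}} \leq C\|f\|_{\Hb^{-1,r}(\Omega)^{\bullet,-}}$, handling the case $s' = 0$ at one order lower.

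Next, assuming such a solution $u$ with $Pu = f \in \Hb^{s'-1,r}(\Omega)^{\bullet,-}$, $0 \leq s' \leq s$, I would bootstrap the regularity microlocally on $\Sb^*_\Omega M$. Elliptic regularity (Theorem~\ref{ThmEllipticReg}) raises $u$ to the target regularity on the elliptic set of $P$. Within the characteristic set, Theorem~\ref{ThmRealPrType} is applied starting from $H_1$, where $u$ vanishes by the supported character, pushing $\Hb^{s',r}$-regularity forward along null-bicharacteristics; by the non-trapping hypothesis~\itref{EnumHypNontrapping}, every bicharacteristic reaches either one of the artificial boundaries $H_j$ or the radial set $L_\pm \subset \Sb^*_{\pa M}M$ in finite time. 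At $L_\pm$, Theorem~\ref{ThmRadialPoints}\itref{EnumThmRadIntoBdy} is used to propagate regularity into $L_\pm$ from $\cL_\pm \cap \{x>0\}$, and Theorem~\ref{ThmRadialPoints}\itref{EnumThmRadFromBdy} to propagate out of $L_\pm$ within $\Sb^*_{\pa M}M$, depending on whether the component is a sink or source of the flow. A final application of the boundary version of Theorem~\ref{ThmRealPrType} then spreads regularity along the bicharacteristics trapped over $\pa M$, covering the remainder of $\Sigma$.

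The main obstacle will be arranging the threshold condition at $L_\pm$ so that a \emph{single} $r_0 < 0$ works uniformly for all $0 \leq s' \leq s$: with $m = 2$ the relevant inequality in Theorem~\ref{ThmRadialPoints}\itref{EnumThmRadIntoBdy} reads $s' - 3/2 + \inf_{L_\pm}(\hat\beta - r\tilde\beta) > 0$, and since $\tilde\beta > 0$ on $L_\pm$, replacing $r$ by a more negative value makes $-r\tilde\beta$ as large as desired, pushing the threshold above $s$; the analogous manipulation takes care of \itref{EnumThmRadFromBdy}. One must simultaneously satisfy the hypothesis of Lemma~\ref{LemmaWaveGlobalEnergy}, which also demands $|r|$ large, and check that the background regularity constraints $s > n/2 + 7/2 + (2-\tilde s)_+$ in Theorems~\ref{ThmRealPrType} and \ref{ThmRadialPoints} hold throughout the bootstrap; the assumption $s > n/2 + 6$ comfortably covers this. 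Uniqueness in $\Hb^{s',r}(\Omega)^{\bullet,-}$ with $s' \geq 2$ is then immediate from the forward estimate for $P$; for smaller $s'$, one argues instead by the Hahn-Banach construction applied to $P^*$, which shows that $Pu = 0$ with $u \in \Hb^{0,r}(\Omega)^{\bullet,-}$ pairs to zero against a dense set of test functions and hence vanishes.
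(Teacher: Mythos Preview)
Your overall strategy---Hahn-Banach existence at low regularity from the adjoint energy estimate, then a microlocal bootstrap via elliptic regularity, real principal type propagation, and radial point estimates---matches the paper. The genuine error is in your handling of the radial points. You invoke Theorem~\ref{ThmRadialPoints}\itref{EnumThmRadFromBdy} and claim that ``the analogous manipulation takes care of \itref{EnumThmRadFromBdy}'', but the below-threshold condition $\tilde s + 1/2 + \sup_{L_\pm}(\hat\beta - r\tilde\beta) < 0$ moves the \emph{wrong way}: since $\tilde\beta>0$, sending $r\to-\infty$ sends $-r\tilde\beta\to+\infty$, so the left side blows up and the condition fails for every $\tilde s$ occurring in the bootstrap. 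Moreover, both cases of Theorem~\ref{ThmRadialPoints} propagate regularity \emph{into} $L_\pm$; they differ only in where the a~priori control is assumed (on $\cL_\pm\cap\{x>0\}$ versus in a punctured boundary neighborhood), not in the direction of propagation, so ``propagate out of $L_\pm$'' is not what \itref{EnumThmRadFromBdy} does. The paper uses only case~\itref{EnumThmRadIntoBdy}, at \emph{both} $L_+$ and $L_-$ (the $\pm$ built into the theorem already accounts for the sink/source dichotomy), and then---exactly as in your last sentence of that paragraph---real principal type propagation within $\Sb^*_{\pa M}M$ spreads the resulting neighborhood-of-$L_\pm$ regularity (Remark~\ref{RmkRelaxedRadial}) over the rest of the boundary characteristic set. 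Your argument is salvaged by simply deleting the appeal to~\itref{EnumThmRadFromBdy}.

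Two smaller points. First, the paper carries out both the Hahn-Banach step and the bootstrap on an enlarged domain $\Omega_{\delta_1,\delta_2}$ (extending $f$ past $H_2$ and viewing $u$ as supported in $\Omega_{0,\delta_2}\subset\Omega_{\delta_1,\delta_2}$), so that the local microlocal theorems apply cleanly near the artificial hypersurfaces; this is not optional. Second, your uniqueness argument for $s'<2$ via ``Hahn-Banach for $P^*$'' is too vague to work as stated; the paper instead notes that $Pu=0\in\Hb^{s-1,r}$, runs the same microlocal bootstrap to upgrade $u$ to $\Hbloc^{2,r}$, and then applies the forward energy estimate of Lemma~\ref{LemmaWaveGlobalEnergy} directly.
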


\begin{rmk}
  One can prove this Lemma without appealing to microlocal regularity results by commuting b-vector fields through the equation $Pu=f$. However, we use this opportunity to show in the simplest setting how the microlocal results are used before applying them in a more substantial way in the proof of Theorem~\ref{ThmExistenceAndExpansion} below. Moreover, the proof given here immediately shows that one can choose $r_0$ independently of $s',s$.
\end{rmk}

\begin{proof}[Proof of Lemma~\ref{LemmaWaveGlobalSolve}.]
  We follow the proofs of \cite[Lemmas~2.7 and 2.9]{HintzVasySemilinear}. Choose $\delta_1<0$ and $\delta_2<0$ small, and choose an extension
  \[
    \wt f\in\Hb^{s'-1,r}(\Omega_{0,\delta_2})^{\bullet,-}\subset\Hb^{-1,r}(\Omega_{0,\delta_2})^{\bullet,-}
  \]
  satisfying
  \begin{equation}
  \label{EqFExtension}
    \|\wt f\|_{\Hb^{s'-1,r}(\Omega_{0,\delta_2})^{\bullet,-}}\leq 2\|f\|_{\Hb^{s'-1,r}(\Omega)^{\bullet,-}}.
  \end{equation}
  By Lemma~\ref{LemmaWaveGlobalEnergy}, applied with $\wt r=-r$, we have
  \[
    \|\phi\|_{\Hb^{1,\wt r}(\Omega_{0,\delta_2})^{-,\bullet}}\leq C\|P^*\phi\|_{\Hb^{0,\wt r}(\Omega_{0,\delta_2})^{-,\bullet}}
  \]
  for $\phi\in\Hb^{2,\wt r}(\Omega_{0,\delta_2})^{-,\bullet}$. Therefore, by the Hahn-Banach theorem, there exists $\wt u\in\Hb^{0,\wt r}(\Omega_{0,\delta_2})^{\bullet,-}$ such that
  \[
    \la P\wt u,\phi\ra=\la\wt u,P^*\phi\ra=\la\wt f,\phi\ra,\quad \phi\in\Hb^{2,\wt r}(\Omega_{0,\delta_2})^{-,\bullet},
  \]
  and
  \begin{equation}
  \label{EqSolOnBiggerSetBound}
    \|\wt u\|_{\Hb^{0,\wt r}(\Omega_{0,\delta_2})^{\bullet,-}}\leq C\|\wt f\|_{\Hb^{-1,\wt r}(\Omega_{0,\delta_2})^{\bullet,-}}.
  \end{equation}
  We can view $\wt u$ as an element of $\Hb^{0,\wt r}(\Omega_{\delta_1,\delta_2})^{\bullet,-}$ with support in $\Omega_{0,\delta_2}$, similarly for $\wt f$; then $\la P\wt u,\phi\ra=\la\wt f,\phi\ra$ for all $\phi\in\CIdotc(\Omega_{\delta_1,\delta_2}^\circ)$ (with the dot referring to infinite order of vanishing at $\pa M$), i.e.\ $P\wt u=\wt f$ as distributions on $\Omega_{\delta_1,\delta_2}^\circ$.

  Now, $\wt u$ vanishes on $\Omega_{\delta_1,\delta_2}^\circ\setminus\Omega_{0,\delta_2}$, in particular is in $\Hbloc^{s',r}$ there. Elliptic regularity and the propagation of singularities, Theorems~\ref{ThmEllipticReg}, \ref{ThmRealPrType} and \ref{ThmRadialPoints}, imply that $\wt u\in\Hbloc^{s',r}(\Omega_{\delta_1,\delta_2}^\circ)$. Indeed, by Theorem~\ref{ThmEllipticReg} with $\wt s=-1$, $\wt u$ is in $\Hb^{1/2,r}$ on the elliptic set of $P$ within $\Omega_{\delta_1,\delta_2}^\circ$; Theorem~\ref{ThmRealPrType} with $\wt s=-1/2$ gives $\Hb^{1/2,r}$-control of $\wt u$ on the characteristic set away from radial points, and then an application of Theorem~\ref{ThmRadialPoints} gives $\Hb^{1/2,r}$-control of $\wt u$ on all of $\Omega_{\delta_1,\delta_2}^\circ$.\footnote{The conditions of all theorems used here are satisfied because of $s>n/2+6$; if necessary, we need to make $r_0$ smaller, i.e.\ assume that $r\leq r_0$ is more negative, in order for the assumptions of Theorem~\ref{ThmRadialPoints} to be fulfilled. Strictly speaking, we in fact need to use localized estimates in the following sense: If $\wt u\in\Hb^{\wt s,r}$ and $P\wt u\in\Hb^{\wt s-1/2,r}$, and if $\chi\in\CIc(\Omega_{\delta_1,\delta_2}^\circ)$ is identically $1$ near a point $x_0$, then $P\chi\wt u=\chi P\wt u+[P,\chi]\wt u$ is in $\Hb^{\wt s-1/2,r}$ in a neighborhood of $x_0$ and globally in $\Hb^{\wt s-1,r}$, since $[P,\chi]$ is a first order operator. By inspection of the relevant theorems, in particular \eqref{EqPropEstimate}, this regularity suffices to apply the relevant microlocal regularity results and deduce microlocal $\Hb^{\wt s+1/2,r}$-regularity of $\wt u$.} Iterating this argument gives $\Hbloc^{s',r}(\Omega_{\delta_1,\delta_2})^\circ$, and we in fact get an estimate
  \[
    \|\chi\wt u\|_{\Hb^{s',r}(\Omega_{\delta_1,\delta_2})}\leq C\bigl(\|\wt\chi P\wt u\|_{\Hb^{s'-1,r}(\Omega_{\delta_1,\delta_2})}+\|\wt\chi\wt u\|_{\Hb^{0,r}(\Omega_{\delta_1,\delta_2})}\bigr)
  \]
  for appropriate $\chi,\wt\chi\in\CI_\cl(\Omega_{\delta_1,\delta_2}^\circ)$, $\wt\chi\equiv 1$ on $\supp\chi$. In view of the support properties of $\wt u$, an appropriate choice of $\chi$ and $\wt\chi$ gives that the restriction of $\wt u$ to $\Omega$ is an element of $\Hb^{s',r}(\Omega)^{\bullet,-}$, with norm bounded by the $\Hb^{s'-1,r}(\Omega)^{\bullet,-}$-norm of $f$ in view of \eqref{EqSolOnBiggerSetBound} and \eqref{EqFExtension}.

  To prove uniqueness, suppose $u\in\Hb^{s',r}(\Omega)^{\bullet,-}$ satisfies $Pu=0$, then, viewing $u$ as a distribution on $\Omega_{\delta_1,0}^\circ$ with support in $\Omega$, elliptic regularity and the propagation of singularities, applied as above, give $u\in\Hbloc^{s,r}(\Omega_{\delta_1,0}^\circ)\subset\Hbloc^{2,r}(\Omega_{\delta_1,0}^\circ)$; hence, for any $\wt\delta>0$, Lemma~\ref{LemmaWaveGlobalEnergy} applied to $u'=u|_{\Omega_{0,\wt\delta}}\in\Hb^{2,r}(\Omega_{0,\wt\delta})^{\bullet,-}$ gives $u'=0$, thus, since $\wt\delta>0$ is arbitrary, $u=0$.
\end{proof}

\begin{cor}
\label{CorWaveGlobalCont}
  (Cf.\ \cite[Corollary~2.17]{HintzVasySemilinear}.) Let $0\leq s'\leq s$ and assume $s>n/2+6$. There exists $r_0<0$ such that for $r\leq r_0$, there is $C>0$ with the following property: If $u\in\Hb^{s',r}(\Omega)^{\bullet,-}$ is such that $Pu\in\Hb^{s'-1,r}(\Omega)^{\bullet,-}$, then
  \[
    \|u\|_{\Hb^{s',r}(\Omega)^{\bullet,-}} \leq C\|Pu\|_{\Hb^{s'-1,r}(\Omega)^{\bullet,-}}.
  \]
  If one replaces $C$ by any $C'>C$, this result also holds for small perturbations of $P$ in the space indicated in assumption~\itref{EnumHypPCoeffs}.
\end{cor}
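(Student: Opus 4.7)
The plan is to deduce the estimate directly from Lemma~\ref{LemmaWaveGlobalSolve} via a uniqueness argument. Given $u \in \Hb^{s',r}(\Omega)^{\bullet,-}$ with $f := Pu \in \Hb^{s'-1,r}(\Omega)^{\bullet,-}$, I would first apply the existence half of Lemma~\ref{LemmaWaveGlobalSolve} to produce a (unique) solution $\tilde u \in \Hb^{s',r}(\Omega)^{\bullet,-}$ of $P\tilde u = f$ satisfying the bound
\[
\|\tilde u\|_{\Hb^{s',r}(\Omega)^{\bullet,-}} \leq C\|f\|_{\Hb^{s'-1,r}(\Omega)^{\bullet,-}}.
\]

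Next I would observe that the difference $w := u - \tilde u \in \Hb^{s',r}(\Omega)^{\bullet,-}$ satisfies $Pw = 0$, so the uniqueness statement of Lemma~\ref{LemmaWaveGlobalSolve} (which applies since $w$ lies in a space on which the lemma gives uniqueness) forces $w = 0$. Hence $u = \tilde u$, and the asserted estimate is just the bound on $\tilde u$ above. The stability of the estimate under small perturbations of $P$ in the space indicated in assumption~\itref{EnumHypPCoeffs} is inherited verbatim from the corresponding stability in Lemma~\ref{LemmaWaveGlobalSolve}, since both the existence construction (which rests on the dual energy estimate of Lemma~\ref{LemmaWaveGlobalEnergy} plus the propagation/radial-point regularity theorems, all of which are perturbation-stable in the sense of Remark~\ref{RmkRelaxedRadial}) and the uniqueness argument depend only on those same ingredients.

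There is no real obstacle here; the corollary is essentially a reformulation of Lemma~\ref{LemmaWaveGlobalSolve} as an a priori estimate. The only minor point to check is that the constant $C$ can indeed be taken the same as in the lemma (equivalently, that the map $f \mapsto u$ we are inverting is the same one in both statements), which is immediate from the uniqueness. The perturbation stability claim similarly requires no new work beyond noting that the solution operator constructed in the lemma's proof is the one producing $\tilde u$, and that its norm bound is stable.
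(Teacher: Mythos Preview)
Your proposal is correct and follows the same approach as the paper's proof: apply the existence part of Lemma~\ref{LemmaWaveGlobalSolve} to $f=Pu$ to obtain a solution with the desired estimate, then use the uniqueness part to conclude that this solution coincides with $u$. The paper's argument is stated in one sentence and omits the perturbation remark, but your additional comments on stability are accurate and in the spirit of the surrounding text.
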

\begin{proof}
  Let $u'\in\Hb^{s',r}(\Omega)^{\bullet,-}$ be the solution of $Pu'=Pu$ given by the existence part Lemma~\ref{LemmaWaveGlobalSolve}, then $P(u-u')=0$, and the uniqueness part implies $u=u'$.
\end{proof}

We also obtain the following propagation of singularities type result:
\begin{cor}
\label{CorWavePropSing}
  Let $0\leq s''\leq s'\leq s$ and assume $s>n/2+6$; moreover, let $r\in\R$ be such that $s''-1+\inf_{L_\pm}(\wh\beta-r\wt\beta)>0$. Then there is $C>0$ such that the following holds: Any $u\in\Hb^{s'',r}(\Omega)^{\bullet,-}$ with $Pu\in\Hb^{s'-1,r}(\Omega)^{\bullet,-}$ in fact satisfies $u\in\Hb^{s',r}(\Omega)^{\bullet,-}$, and obeys the estimate
  \[
    \|u\|_{\Hb^{s',r}(\Omega)^{\bullet,-}}\leq C(\|Pu\|_{\Hb^{s'-1,r}(\Omega)^{\bullet,-}}+\|u\|_{\Hb^{s'',r}(\Omega)^{\bullet,-}}).
  \]
  If one replaces $C$ by any $C'>C$, this result also holds for small perturbations of $P$ in the space indicated in assumption~\itref{EnumHypPCoeffs}.
\end{cor}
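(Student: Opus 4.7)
The plan is to imitate the bootstrap argument in the proof of Lemma~\ref{LemmaWaveGlobalSolve}, but starting from the a priori regularity $\Hb^{s'',r}$ rather than from the $\Hb^{0,r}$ produced there by the duality/Hahn--Banach step. Pick $\delta_1<0$ small and view $u$ as an element of $\Hbloc^{s'',r}(\Omega_{\delta_1,0}^\circ)$ with support in $\Omega$ (this uses the ``$\bullet$'' condition at $H_1$); likewise $Pu\in\Hbloc^{s'-1,r}(\Omega_{\delta_1,0}^\circ)$ with support in $\Omega$. Thus on $\ft_1^{-1}([\delta_1,0))$ we have $u\equiv 0$, so $u$ is smooth there and, in particular, has $\Hb^{s',r}$-regularity microlocally over that region, which will serve as our starting source of regularity for the propagation arguments.

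The iteration goes as follows. Assume, inductively, that $u\in\Hbloc^{\sigma,r}$ with $s''\le\sigma\le s'$. First, Theorem~\ref{ThmEllipticReg} applied with $\tilde s=\min(\sigma-1/2,s'-1)$ gives $u\in\Hb^{\min(\sigma+3/2,s'+1),r}$ microlocally on the elliptic set of $P$ (the regularity hypotheses $\tilde s\le s-1$ and $s>n/2+1+(-\tilde s)_+$ follow from $s>n/2+6$). Second, Theorem~\ref{ThmRealPrType} with $\tilde s=\min(\sigma-1/2,s'-1)$ propagates $\Hb^{\min(\sigma+1/2,s'),r}$-regularity along every null-bicharacteristic; by the non-trapping assumption~\itref{EnumHypNontrapping}, each bicharacteristic in $\Sigma_\Omega\setminus(L_+\cup L_-)$ reaches, in one of the two directions, the region $\ft_1<0$ where $u$ vanishes, so we obtain microlocal $\Hb^{\min(\sigma+1/2,s'),r}$-regularity of $u$ on all of $\Sigma_\Omega\setminus(L_+\cup L_-)$, including in particular on $\cL_\pm\cap\{x>0\}$ near $L_\pm$. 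Third, Theorem~\ref{ThmRadialPoints} case~\itref{EnumThmRadIntoBdy} with the same $\tilde s$ propagates this regularity across the radial sets $L_\pm$, provided the threshold condition $\tilde s+(m-1)/2-1+\inf_{L_\pm}(\hat\beta-r\tilde\beta)>0$ holds. At the very first iteration ($\sigma=s''$) this threshold reads precisely $s''-1+\inf_{L_\pm}(\hat\beta-r\tilde\beta)>0$, which is the hypothesis of the corollary; at all subsequent iterations $\sigma>s''$, so the threshold only becomes easier. Each iteration thus raises $\sigma$ by at least $1/2$, and after at most $\lceil 2(s'-s'')\rceil$ rounds we reach $u\in\Hbloc^{s',r}(\Omega_{\delta_1,0}^\circ)$. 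Restricting and using the support in $\Omega$ yields $u\in\Hb^{s',r}(\Omega)^{\bullet,-}$.

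The estimate comes from chaining the microlocal estimates packaged with each of the three theorems (cf.\ \eqref{EqPropEstimate}): at each rung of the ladder one has a bound $\|u\|_{\Hb^{\sigma+1/2,r}}\le C\bigl(\|Pu\|_{\Hb^{s'-1,r}}+\|u\|_{\Hb^{\sigma,r}}\bigr)$, and composing finitely many such bounds produces the claimed inequality. Stability under small perturbations of $P$ in the space in assumption~\itref{EnumHypPCoeffs} follows because (i) the constants in each of Theorems~\ref{ThmEllipticReg}, \ref{ThmRealPrType} and \ref{ThmRadialPoints} are locally uniform under perturbation of the coefficients in the relevant non-smooth symbol spaces, and (ii) the radial-point/non-trapping geometry needed is stable in the relaxed sense of Remark~\ref{RmkRelaxedRadial}. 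The main technical obstacle is bookkeeping: one has to check that the regularity hypothesis $s>n/2+7/2+(2-\tilde s)_+$ appearing in both Theorems~\ref{ThmRealPrType} and \ref{ThmRadialPoints} is satisfied at every iteration. The worst case is the very first round $\tilde s=s''-1/2\ge -1/2$, where the hypothesis becomes $s>n/2+7/2+5/2=n/2+6$, which is exactly the standing assumption of the corollary; at subsequent iterations $\tilde s$ only grows, so the condition is preserved throughout.
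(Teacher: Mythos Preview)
Your proposal is correct and follows essentially the same route as the paper: extend past $H_1$ to $\Omega_{\delta_1,0}$, then iterate elliptic regularity, real principal type propagation, and Theorem~\ref{ThmRadialPoints}\,\itref{EnumThmRadIntoBdy} starting from $\tilde s=s''-1/2$, exactly as the paper does (and your bookkeeping of the threshold and of the condition $s>n/2+6$ matches the paper's remark that ``the latter, applied in the first step with $\tilde s=s''-1/2$, is the reason for the condition on $s''$'').

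There is one step you gloss over that the paper handles explicitly. Your iteration yields $u\in\Hbloc^{s',r}(\Omega_{\delta_1,0}^\circ)$, and you then write ``restricting and using the support in $\Omega$ yields $u\in\Hb^{s',r}(\Omega)^{\bullet,-}$.'' But $\Omega_{\delta_1,0}^\circ$ does not contain $H_2=\{\ft_2=0\}$, so local regularity there gives no uniform control up to the extendible boundary $H_2$; the support condition only helps at $H_1$. The paper records the honest conclusion of the iteration as $u\in\Hb^{s',r}(\Omega_{0,\tilde\delta})^{\bullet,-}$ for every $\tilde\delta>0$, and then invokes the argument of \cite[Proposition~2.13]{HintzVasySemilinear} to pass from these shrunken domains to $\Omega$ itself. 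You should insert that step (or, alternatively, extend $Pu$ and $u$ past $H_2$ as in the existence proof of Lemma~\ref{LemmaWaveGlobalSolve} and work on $\Omega_{\delta_1,\delta_2}^\circ$).
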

\begin{proof}
  As in the proof of Lemma~\ref{LemmaWaveGlobalSolve}, working on $\Omega_{\delta_1,0}$ for $\delta_1<0$ small, we obtain $u\in\Hbloc^{s',r}$ by iteratively using elliptic regularity, real principal type propagation and the propagation near radial points; the latter, applied in the first step with $\wt s=s''-1/2$, is the reason for the condition on $s''$. Thus, $u\in\Hb^{s',r}(\Omega_{0,\wt\delta})^{\bullet,-}$ for $\wt\delta>0$. From here, arguing as in the proof of Proposition~2.13 in \cite{HintzVasySemilinear}, we obtain the desired conclusion.
\end{proof}

Let us rephrase Lemma~\ref{LemmaWaveGlobalSolve} and Corollary~\ref{CorWaveGlobalCont} as an invertibility statement:
\begin{thm}
\label{ThmWaveGlobalSolveCont}
  (Cf.\ \cite[Theorem~2.18]{HintzVasySemilinear}.) Let $0\leq s'\leq s$ and assume $s>n/2+6$. There exists $r_0<0$ with the following property: Let $r\leq r_0$ and define the spaces
  \[
    \cX^{s,r}=\{u\in\Hb^{s,r}(\Omega)^{\bullet,-}\colon Pu\in\Hb^{s-1,r}(\Omega)^{\bullet,-}\},\quad \cY^{s,r}=\Hb^{s,r}(\Omega)^{\bullet,-}.
  \]
  Then $P\colon\cX^{s,r}\to\cY^{s-1,r}$ is a continuous, invertible map with continuous inverse.
  
  Moreover, the operator norm of the inverse, as a map from $\Hb^{s-1,r}(\Omega)^{\bullet,-}$ to $\Hb^{s,r}(\Omega)^{\bullet,-}$, of small perturbations of $P$ in the space indicated in assumption~\itref{EnumHypPCoeffs} is uniformly bounded.
\end{thm}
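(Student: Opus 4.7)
The theorem essentially repackages Lemma~\ref{LemmaWaveGlobalSolve} and Corollary~\ref{CorWaveGlobalCont} as an isomorphism statement, so the plan is to check each of the four requirements (well-definedness, continuity, bijectivity, continuity of the inverse) by directly quoting these results, and then to note that the perturbation clause is inherited verbatim.

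First I would fix $r_0<0$ so that both Lemma~\ref{LemmaWaveGlobalSolve} and Corollary~\ref{CorWaveGlobalCont} apply for all $r\leq r_0$; this is possible since each result furnishes its own threshold, and we take the minimum. I would then equip $\cX^{s,r}$ with the graph norm $\|u\|_{\cX^{s,r}}^2=\|u\|_{\Hb^{s,r}(\Omega)^{\bullet,-}}^2+\|Pu\|_{\Hb^{s-1,r}(\Omega)^{\bullet,-}}^2$, which makes it a Banach space (completeness follows because $P$ is closed as a map between the $\Hb$-spaces in question, these being Banach). With this norm, continuity of $P\colon\cX^{s,r}\to\cY^{s-1,r}$ is immediate.

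For bijectivity, surjectivity is exactly the existence statement of Lemma~\ref{LemmaWaveGlobalSolve}: for $f\in\Hb^{s-1,r}(\Omega)^{\bullet,-}$ we obtain $u\in\Hb^{s,r}(\Omega)^{\bullet,-}$ with $Pu=f$, hence $u\in\cX^{s,r}$ automatically. Injectivity is the uniqueness statement of the same lemma. Continuity of $P^{-1}$ as a map $\cY^{s-1,r}\to\cX^{s,r}$ follows from Corollary~\ref{CorWaveGlobalCont}: given $Pu=f$ we have $\|u\|_{\Hb^{s,r}}\leq C\|f\|_{\Hb^{s-1,r}}$, so
\[
  \|u\|_{\cX^{s,r}}^2=\|u\|_{\Hb^{s,r}}^2+\|f\|_{\Hb^{s-1,r}}^2\leq (C^2+1)\|f\|_{\Hb^{s-1,r}}^2,
\]
which gives continuity (and, alternatively, one could just invoke the open mapping theorem once bijectivity is known).

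The uniform bound under perturbations requires no new work: both Lemma~\ref{LemmaWaveGlobalSolve} and Corollary~\ref{CorWaveGlobalCont} include the clause that, upon enlarging the constant $C$ to any $C'>C$, their conclusions persist for sufficiently small perturbations of $P$ in the space specified in assumption~\itref{EnumHypPCoeffs}. Since the operator norm of $P^{-1}$ from $\Hb^{s-1,r}$ to $\Hb^{s,r}$ is controlled by precisely the constant appearing in Corollary~\ref{CorWaveGlobalCont}, the uniform bound follows at once. There is no genuine obstacle here; the only point demanding a little care is to keep $r_0$ uniform across the hypotheses of the cited results, in particular ensuring that the threshold condition at the radial sets (entering Theorem~\ref{ThmRadialPoints}, which powered the propagation arguments behind Lemma~\ref{LemmaWaveGlobalSolve}) is met simultaneously for $P$ and its perturbations, which is automatic since the thresholds depend continuously on the coefficients.
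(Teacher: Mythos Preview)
Your proposal is correct and matches the paper's treatment exactly: the paper presents this theorem as a direct repackaging of Lemma~\ref{LemmaWaveGlobalSolve} and Corollary~\ref{CorWaveGlobalCont} (it is introduced with ``Let us rephrase Lemma~\ref{LemmaWaveGlobalSolve} and Corollary~\ref{CorWaveGlobalCont} as an invertibility statement'') and gives no further proof. Your added details on the graph norm and the perturbation clause are appropriate elaborations of what the paper leaves implicit.
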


We can now apply the arguments of \cite{HintzVasySemilinear}, see also \cite{VasyMicroKerrdS} for the dilation-invariant case, to obtain more precise asymptotics of solutions $u$ to $Pu=f$ using the knowledge of poles of the inverse of the Mellin transformed normal operator family $\widehat P(\sigma)$, where the normal operator $N(P)$ of $P$ is defined just as in the smooth setting by `freezing' the coefficients of $P$ at the boundary $\pa M$. This makes sense in our setting since the coefficients of $P$ are continuous; also, the coefficients of $N(P)$ are then smooth, since all non-smooth contributions to $P$ vanish at the boundary.

\begin{thm}
\label{ThmExistenceAndExpansion}
  (Cf.\ \cite[Theorem~2.20]{HintzVasySemilinear}.) Let $s>n/2+6$, $0<\alpha<1$, and assume $g\in\CI(M;\Sym^2\Tb^*M)+\Hb^{s,\alpha}(M;\Sym^2\Tb^*M)$. Let
  \[
    P=\Box_g+L, \quad L\in(\CI+\Hb^{s-1,\alpha})\Diffb^1+(\CI+\Hb^{s-1,\alpha}).
  \]
  Further, let $\ft_1$ and $\Omega\subset M$ be as above, and suppose $P$, $\Omega$ and $g$ satisfy the assumptions~\itref{EnumHypPDyn}-\itref{EnumHypNontrapping} above. Let $\sigma_j$ be the poles of $\wh P(\sigma)^{-1}$, of which there are only finitely many in any half space $\Im\sigma\geq-C$ by assumption~\itref{EnumHypTimelike}.\footnote{See the arguments leading to \cite[Theorem~7.5]{VasyMicroKerrdS} for an explanation.} Let $r\in\R$ be such that $r\neq\Im\sigma_j$ and $r\leq-\Im\sigma_j+\alpha$ for all $j$, and let $r_0\in\R$. Moreover, let $1\leq s_0\leq s'\leq s$, and suppose that
  \[
    s'-2+\inf_{L_\pm}(\wh\beta-r\wt\beta)>0.
  \]
  Finally, let $\phi\in\CI(\R)$ be such that $\supp\phi\subset(0,\infty)$ and $\phi\circ\ft_1\equiv 1$ near $\pa M\cap\Omega$.

  If we \emph{assume} that the poles $\sigma_j$ or $\wh P(\sigma)^{-1}$ are simple, then any solution $u\in\Hb^{s_0,r_0}(\Omega)^{\bullet,-}$ of $Pu=f$ with $f\in\Hb^{s'-1,r}(\Omega)^{\bullet,-}$ satisfies
  \begin{equation}
  \label{EqExpansion}
    u-\sum_j x^{i\sigma_j}(\phi\circ\ft_1)a_j=u'\in\Hb^{s',r}(\Omega)^{\bullet,-}
  \end{equation}
  for some $a_j\in\CI(\pa M\cap\Omega)$, where the sum is understood over the finite set of $j$ such that $-\Im\sigma_j<r<-\Im\sigma_j+\alpha$.

  The result is stable under small perturbations of $P$ in the space indicated in assumption~\itref{EnumHypPCoeffs} in the sense that, even though the $\sigma_j$ might change, all $\CI$-seminorms of the expansion terms $a_j$ and the $\Hb^{s',r}(\Omega)^{\bullet,-}$-norm of the remainder term $u'$ are bounded by $C(\|u\|_{\Hb^{s_0,r_0}(\Omega)^{\bullet,-}}+\|f\|_{\Hb^{s'-1,r}(\Omega)^{\bullet,-}})$ for some uniform constant $C$ (depending on which norm we are bounding).

  In general, \emph{without the simplicity assumption}, the expansion \eqref{EqExpansion} includes log terms $x^{i\sigma_j}|\log x|^k$, $0\leq k\leq m_j<\infty$, with $(m_j+1)$ equal to the order of the pole of $\wh P(\sigma)^{-1}$ at $\sigma=\sigma_j$.
\end{thm}

\begin{proof}
  By making $r_0$ smaller (i.e.\ more negative) if necessary, we may assume that $r_0\leq r$ and
  \[
    s_0-1+\inf_{L_\pm}(\wh\beta-r_0\wt\beta)>0.
  \]
  First, assume $\sigma_*:=\min_j\{-\Im\sigma_j\}>r$. Then $u\in\Hb^{s_0,r_0}(\Omega)^{\bullet,-}$ and $Pu=f\in\Hb^{s'-1,r}(\Omega)^{\bullet,-}$ imply $u\in\Hb^{s',r_0}(\Omega)^{\bullet,-}$ by Corollary~\ref{CorWavePropSing}. Since
  \[
    P-N(P)\in (x\CI+\Hb^{s,\alpha})\Diffb^2+(x\CI+\Hb^{s-1,\alpha})\Diffb^1+(x\CI+\Hb^{s-1,\alpha}),
  \]
  we thus obtain $\wt f:=(P-N(P))u\in\Hb^{s'-2,r_0+\alpha}(\Omega)^{\bullet,-}$, where we use $s\geq s'-2$ and $s-1\geq s'-1$; hence
  \[
    N(P)u=f-\wt f\in\Hb^{s'-2,r'}(\Omega)^{\bullet,-}
  \]
  with $r'=\min(r,r_0+\alpha)$. Applying\footnote{This requires $s'\geq 1$ in view of the supported/extendible spaces that we are using here; see also \cite[Footnote~28]{HintzVasySemilinear}.} \cite[Lemma~3.1]{VasyMicroKerrdS} gives $u\in\Hb^{s'-1,r'}(\Omega)^{\bullet,-}$ in view of the absence of poles of $\widehat P(\sigma)$ in $\Im\sigma\geq -r$; but then $Pu\in\Hb^{s'-1,r}(\Omega)^{\bullet,-}$ implies $u\in\Hb^{s',r'}(\Omega)^{\bullet,-}$, again by Corollary~\ref{CorWavePropSing}, where we use
  \[
    (s'-1)-1+\inf(\wh\beta-r'\wt\beta)\geq s'-2+\inf(\wh\beta-r\wt\beta)>0.
  \]
  If $r'=r$, we are done; otherwise, we iterate, replacing $r_0$ by $r_0+\alpha$, and obtain $u\in\Hb^{s',r}(\Omega)^{\bullet,-}$ after finitely many steps.

  If there are $\sigma_j$ with $-\Im\sigma_j<r$, then, assuming that $\sigma_*-\alpha<r_0<\sigma_*$, in fact that $r_0$ is arbitrarily close to $\sigma_*$, as we may by the first part of the proof, the application of \cite[Lemma~3.1]{VasyMicroKerrdS} gives a partial expansion $u_1$ of $u$ with remainder $u'\in\Hb^{s'-1,r'}(\Omega)^{\bullet,-}$, where $r'=\min(r,r_0+\alpha)$. Now $N(P)u_1=0$, and $u_1$ is a sum of terms of the form $a_j x^{i\sigma_j}$ with $\Im\sigma_j\leq-\sigma_*$ and $a_j\in\CI(\pa M\cap\Omega)$, in particular $u_1\in\Hb^{\infty,r_0}(\Omega)^{\bullet,-}$; thus
  \begin{equation}
  \label{EqPNPu1}
    (P-N(P))u_1\in\Hb^{\infty,r_0+1}(\Omega)^{\bullet,-}+\Hb^{s-1,\sigma_*+\alpha}(\Omega)^{\bullet,-}\subset\Hb^{s-1,\sigma_*+\alpha}(\Omega)^{\bullet,-},
  \end{equation}
  where the two terms correspond to the coefficients of $P-N(P)$ being sums of $x\CI$- and $\Hb^{s-1,\alpha}$-functions. Therefore,
  \begin{equation}
  \label{EqPuprime}
    Pu'=Pu-N(P)u_1-(P-N(P))u_1\in\Hb^{s'-1,r}(\Omega)^{\bullet,-},
  \end{equation}
  which by Corollary~\ref{CorWavePropSing} implies $u'\in\Hb^{s',r'}(\Omega)^{\bullet,-}$, finishing the proof in the case that $r'=r$, i.e.\ $r<\sigma_*+\alpha$. If $r=\sigma_*+\alpha$, we need one more iterative step to establish the improvement in the weight of $u'$: We use $u'\in\Hb^{s',r'}$ to deduce
  \[
    N(P)u=f-(P-N(P))u \in \Hb^{s'-1,r} + \Hb^{s'-2,r'+\alpha} + \Hb^{s-1,\sigma_*+\alpha} \subset \Hb^{s'-2,\sigma_*+\alpha},
  \]
  where we use $(P-N(P))u'\in\Hb^{s'-2,r'+\alpha}$ and \eqref{EqPNPu1}. Hence \cite[Lemma~3.1]{VasyMicroKerrdS} implies that the partial expansion $u=u_1+u'$ in fact holds with $u'\in\Hb^{s'-1,r}$, and then Corollary~\ref{CorWavePropSing} and \eqref{EqPuprime} imply $u'\in\Hb^{s',r}$, finishing the proof in the case $r=\sigma_*+\alpha$.
\end{proof}

\begin{rmk}
\label{RmkNonsmoothExpansionLimits}
  In the smooth setting, one can use the partial expansion $u_1$ to obtain better information on $\wt f$ for a next step in the iteration. This however relies on the fact that $P-N(P)\in x\Diffb^2$ there (see the proof of \cite[Theorem~2.20]{HintzVasySemilinear}); here, however, we also have terms in the space $\Hb^{s-1,\alpha}\Diffb^2$ in $P-N(P)$, and $\Hb^{s-1}$-functions do not have a Taylor expansion at $x=0$, hence the above iteration scheme does not yield additional information after the first step in which one gets a non-trivial part $u_1$ of the expansion of $u$.
  
  If however we encode more precise asymptotics in the function space in which $g$ lies, then $P-N(P)$ has a partial polyhomogeneous expansion which can be used to obtain more precise asymptotics for $u$. See also Remark~\ref{RmkPreciseExpansion}.
\end{rmk}

Combining Theorem~\ref{ThmExistenceAndExpansion} with Theorem~\ref{ThmWaveGlobalSolveCont} gives us a forward solution operator for $P$ which, provided we understand the poles of $\widehat P(\sigma)^{-1}$, will be the key tool in our discussion of quasilinear wave equations in the next section.

%%%%%%%%%%%%%%%%%%%%%%%%%%%%%%%%%%%%%%%%%%%%%%%%%%%%%%%%%%%%%%%%%%%%%
\section{Quasilinear wave and Klein-Gordon equations on the static model of de Sitter space}
\label{SecApp}

%%%%%%%%%%%%%%%%%%%%%%%%%%%%%%%%%%%%%%%%%%%%%%%%%
\subsection{The static model of de Sitter space}
\label{SubsecStaticdS}

In order to introduce the static model of de Sitter space, we start by considering $(n+1)$-dimensional Minkowski space $\R^{n+1}_z$ with metric $g_0:=dz_{n+1}^2-dz_1^2-\cdots-dz_n^2$. Then $n$-dimensional de Sitter space is the one-sheeted hyperboloid
\[
  \wt M^\circ=\{ z_{n+1}^2-z_1^2-\cdots-z_n^2 = -1 \}
\]
with metric $g$ induced by $g_0$; thus, $g$ has signature $(+,-,\ldots,-)$. Moreover, $\wt M^\circ$ inherits the usual time orientation from the ambient Minkowski space, in which $\pa_{z_{n+1}}$ is future timelike. We can introduce global coordinates using the map $\R_{z_{n+1}}\times\Sph^{n-1}_\theta, \quad (z_{n+1},\theta)\mapsto ((1+z_{n+1}^2)^{1/2}\theta,z_{n+1})\in\R^{n+1}$, and the metric becomes
\[
  g = \frac{dz_{n+1}^2}{1+z_{n+1}^2} - (1+z_{n+1}^2)d\theta^2
\]
We compactify $\wt M^\circ$, first at future infinity by introducing $x=z_{n+1}^{-1}$ in $z_{n+1}\geq 1$, say, so the metric becomes
\begin{equation}
\label{EqGeometryDeSitterGlobal0Metric}
  g = x^{-2}\left(\frac{dx^2}{1+x^2} - (1+x^2)d\theta^2\right) =: x^{-2}\bar g,
\end{equation}
where $\bar g$ is a smooth Lorentzian metric down to $x=0$, and likewise at past infinity; thus, we have compactified $\wt M^\circ$ to a cylinder
\[
  \wt M\cong[-1,1]_T\times\Sph^{n-1},
\]
say with $T=1-x$ near $x=0$, and $T=0$ at $z_{n+1}=0$. The metric $g$ is a so-called \emph{0-metric}, see \cite{MazzeoMelroseHyp}. Null-geodesics of $g$ are merely reparametrizations of null-geodesics of the conformally related metric $\bar g$. From the point of view of causality, one can localize the study of (nonlinear) wave equations on de Sitter space $\wt M$ by picking a point $q$ at future infinity, say $T=1$, $\theta=e_1\in\Sph^{n-1}\subset\R^n$, considering only the interior $M_S^\circ$ of the backward light cone from $q$, intersected with $\{T\geq 0\}$ for convenience; we call $M_S^\circ$ the \emph{static model of de Sitter space}.\footnote{Strictly speaking, this is only the future half of the static model; the full static model is the intersection of the interior of the backward light cone from $(T=1,\theta=e_1)$ with the forward light cone from $(T=-1,\theta=e_1)$, see \cite[\S4]{VasyMicroKerrdS} for details.} For an illustration, see Figure~\ref{FigGeometryStaticdS}.

\begin{figure}[!ht]
  \centering
  \includegraphics{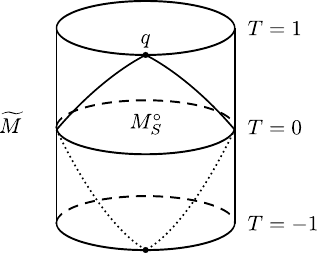}
  \caption[Static model of de Sitter space]{The `future half' of the static model $M_S^\circ$ of de Sitter space, a submanifold of (compactified) de Sitter space $\wt M$, is the backward light cone from the point $q$ at future infinity, intersected with $T\geq 0$. The full static model is the intersection of the interiors of the backward light cone from $q$ and the corresponding point at past infinity.}
  \label{FigGeometryStaticdS}
\end{figure}

We make this explicit in the coordinates $z_1,\ldots,z_{n+1}$ of the ambient Minkowski space: Namely, for each fixed $\omega\in\Sph^{n-2}\subset\R^{n-1}_{z_2,\ldots,z_n}$, the affine curve
\[
  \gamma_\omega(z_{n+1})=(z_{n+1},\omega;z_{n+1}) \in \wt M^\circ\subset\R^{1+(n-1)+1}
\]
is a geodesic on de Sitter space $\wt M^\circ$, and written in the coordinates $x=z_{n+1}^{-1}$, $\theta=(z_1^2+\cdots+z_n^2)^{-1/2}(z_1,\ldots,z_n)\in\Sph^{n-1}$ introduced above, it is equal to
\[
  \gamma_\omega(z_{n+1}) = (x,\theta(x)),\quad \theta(x)=(1+x^2)^{-1/2}(e_1+x\omega).
\]
Thus, we see that the family $\{\gamma_\omega\colon\omega\in\Sph^{n-2}\}$ sweeps out the backward light cone from the point $x=0$, $\theta=e_1$, thus is the boundary of the static model $M_S^\circ$. In other words, in the Minkowskian coordinates,
\[
  M_S^\circ=\{(z_1,\ldots,z_{n+1})\in \wt M^\circ\colon z_{n+1}\geq 0, z_2^2+\cdots+z_n^2<1\}.
\]
The backward light cone is a \emph{cosmological horizon} for $M_S^\circ$: Any causal (timelike or null) future-oriented curve in $\wt M^\circ$, starting at a point in $M_S^\circ$, which crosses the cosmological horizon, can never return to $M_S^\circ$.

On $M_S^\circ$, one can choose \emph{static coordinates} $t\in\R,Z\in\R^{n-1}$, $|Z|<1$, with respect to which the metric $g$ is $t$-independent, and writing $Z=r\omega$, $r\in(0,1)$, $\omega\in\Sph^{n-2}$, away from $Z=0$, one has
\[
  g = (1-r^2)\,dt^2 - (1-r^2)^{-1}\,dr^2 - r^2\,d\omega^2.
\]
One can compactify this at future infinity using $\wt\tau=e^{-t}$ as a boundary defining function; the coordinate singularity of the metric at $r=1$ can then be resolved by means of a suitable blow up of the corner $\wt\tau=0,r=1$, with $g$ extending smoothly and non-degenerately past the front face of the blow-up near the side face $\wt\tau=0$; see \cite[\S4]{VasyMicroKerrdS} for details.

We describe a different way of arriving at such a smooth extension of the static metric past the cosmological horizon. First, we recall the relation of hyperbolic space $\HH^n=\{z_{n+1}^2-z_1^2-\cdots-z_n^2=1,z_{n+1}>0\}$ as a subset of Minkowski space to the upper half plane model: Define the global coordinate chart
\begin{gather*}
  \Phi\colon\HH^n \ni (z_1,z_2,\ldots,z_n,z_{n+1}) \mapsto (x,y), \\
  x=\frac{2}{z_1+z_{n+1}}\in(0,\infty), \quad y=\frac{2(z_2,\ldots,z_n)}{z_1+z_{n+1}}\in\R^{n-2},
\end{gather*}
then the induced metric on $\HH^n$ takes the simple form $x^{-2}(dx^2+dy^2)$. The above map $\Phi$ is in fact well-defined on $\{z_1+z_{n+1}>0\}$, and restricting $\Phi$ to $\wt M^\circ\cap\{z_1+z_{n+1}>0\}$, the metric on $\wt M^\circ$ has the form
\[
  g=x^{-2}(dx^2-dy^2).
\]
Moreover, the point $q$ at future infinity singled out above has coordinates $x=0,y=0$, where we extended $\Phi$ by continuity to $\wt M\cap\{z_1+z_{n+1}>0\}$, and the backward light cone from $q$ is simply the set $\{|y|=x\}$; the static model, compactified at future infinity, therefore is
\[
  M_S=\{|y|<x,x\geq 0\}.
\]
Blowing up $(0,0)$ spherically, we can introduce coordinates $\tau=x\in[0,1),Z=y/x\in\R^{n-1}$, $|Z|<1$ near the interior of the front face, with respect to which
\begin{equation}
\label{EqGeometryStaticdSMetric}
  g = (1-|Z|^2)\,\frac{d\tau^2}{\tau^2} - 2Z\frac{d\tau}{\tau}\,dZ - dZ^2 = \frac{d\tau^2}{\tau^2} - \Bigl(Z\frac{d\tau}{\tau}+dZ\Bigr)^2
\end{equation}
which extends non-degenerately as a Lorentzian b-metric past the cosmological horizon $|Z|=1$. Moreover, $\tau$ and $1/z_{n+1}$ are comparable (i.e.\ bounded by constant multiples of each other) near $q$, and in terms of the static time coordinate $t$, we have $\tau\sim e^{-t}$ over compact subsets of $M_S$, i.e.\ away from the cosmological horizon. In fact, we can define a new time coordinate $t_*$ by
\[
  \tau = e^{-t_*},
\]
which is thus smooth on $M_S^\circ$ \emph{up to} (and beyond) the cosmological horizons.

The dual metric of $g$ is
\begin{equation}
\label{EqGeometryStaticdSDualMetricGlob}
  G = (Z\pa_Z-\tau\pa_\tau)^2 - \pa_Z^2.
\end{equation}
Concretely, with $r=|Z|$ and $\omega=r^{-1}Z$, we introduce $\mu=1-r^2$ as a defining function of $r=1$, and compute
\begin{equation}
\label{EqGeometryStaticdSDualMetricPolar}
  G = -4\mu r^2\pa_\mu^2 + 4r^2 \tau\pa_\tau\,\pa_\mu + (\tau\pa_\tau)^2 - r^{-2}\pa_\omega^2,
\end{equation}
valid away from $r=0$, which extends non-degenerately to $\mu<0$; the Lorentzian geometry at $\mu=0$ is crucial for our analysis, since $\Box_g$ has radial points at the conormal bundle to $\mu=0$ within $\tau=0$, as we will verify below. Let us therefore use the coordinates $\mu\in(-\delta,1),\omega\in\Sphere^{n-2},\tau\in[0,\infty)$, for small $\delta>0$, on physical space near the cosmological horizon $\mu=0$ and the natural coordinates in the fiber of the b-cotangent bundle, which come from writing b-covectors as
\[
  \xi\,d\mu+\eta\,d\omega+\sigma\,\frac{d\tau}{\tau}.
\]
Moreover, we write $K$ for the dual metric on the round sphere; in a coordinate system on the sphere, its components are denoted $K^{ij}$. We shall also have occasion to use the coordinates $Z=r\omega\in\R^{n-1}$ and $\tau$, valid near $r=0$, with b-covectors written
\[
  \zeta\,dZ+\sigma\frac{d\tau}{\tau}.
\]
Then the quadratic form associated with the dual metric $G$ of the static de Sitter metric $g$, which is the same as the b-principal symbol of $P:=\Box_g$, is given by
\begin{equation}
\label{EqDSDualMetric}
\begin{split}
  p=\sigma_\bl^2(P)&=-4r^2\mu\xi^2+4r^2\sigma\xi+\sigma^2-r^{-2}|\eta|_K^2 \\
    &=(Z\cdot\zeta-\sigma)^2-|\zeta|^2.
\end{split}
\end{equation}
Correspondingly, the Hamilton vector field is
\begin{equation}
\label{EqDSHamilton}
  \begin{split}
  H_p&=(\pa_\xi p)\pa_\mu-(\pa_\mu p)\pa_\xi+(\pa_\sigma p)\tau\pa_\tau-(\tau\pa_\tau p)\pa_\sigma-r^{-2}H_{|\eta|_K^2} \\
    &=4r^2(-2\mu\xi+\sigma)\pa_\mu-(4\xi^2(1-2r^2)-4\sigma\xi-r^{-4}|\eta|^2_K)\pa_\xi \\
	&\qquad\qquad +(4r^2\xi+2\sigma)\tau\pa_\tau-r^{-2}H_{|\eta|_K^2} \\
	&=2(Z\cdot\zeta-\sigma)(Z\pa_Z-\zeta\pa_\zeta-\tau\pa_\tau)-2\zeta\cdot\pa_Z.
  \end{split}
\end{equation}

We aim to show that $P$ fits into the framework of \S\ref{SecGlobalForSecond}, so that Theorems~\ref{ThmWaveGlobalSolveCont} and \ref{ThmExistenceAndExpansion} apply to $P$ and non-smooth perturbations of it. Since the metric $g$ is $\tau$-independent, the computations are very similar to those performed by Vasy \cite{VasyMicroKerrdS} in the Mellin transformed picture; also, in \cite[\S{2}]{HintzVasySemilinear}, it is used, even if not explicitly stated, that $P$ does fit into the smooth framework there, but we will provide all details here for the sake of completeness.

Denote the characteristic set of $p$ by $\Sigma=p^{-1}(0)\subset\Tb^*M\wozero$.
\begin{lemma}
  $\Sigma$ is a smooth conic codimension 1 submanifold of $\Tb^*M\wozero$ transversal to $\Tb^*_Y M$.
\end{lemma}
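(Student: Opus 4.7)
The plan is to verify the three claims---that $\Sigma$ is conic, a codimension one smooth submanifold of $\Tb^*M\wozero$, and transversal to $\Tb^*_Y M$---directly by reading off the properties of the explicit principal symbol $p$ from \eqref{EqDSDualMetric}. All computations are local and I will carry them out in each of the two coordinate charts singled out in Section~\ref{SubsecStaticdS}; the fact that $p$ is fiber-homogeneous of degree $2$ immediately gives that $\Sigma$ is invariant under the dilation action in the fibers of $\Tb^*M$, hence conic, so the only real content is the non-vanishing of $dp$ on $\Sigma\wozero$ together with its linear independence from $d\tau$.

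To obtain smoothness as a codimension one submanifold, I will compute the fiber derivatives of $p$ in the coordinate chart near the cosmological horizon. There one has
\[
  \pa_\xi p=4r^2(-2\mu\xi+\sigma),\quad \pa_\sigma p=4r^2\xi+2\sigma,\quad \pa_\eta p=-2r^{-2}K\eta.
\]
Since $K$ is positive definite, $\pa_\eta p=0$ forces $\eta=0$; the system $\pa_\xi p=\pa_\sigma p=0$ then gives $\sigma=2\mu\xi$ and $\sigma=-2r^2\xi=-2(1-\mu)\xi$, whose sum yields $\xi=0$ and hence $\sigma=0$, i.e.\ the zero section, which is excluded. In the chart near $r=0$ the symbol is the cleaner expression $p=(Y\cdot\zeta-\sigma)^2-|\zeta|^2$, so $\pa_\sigma p=-2(Y\cdot\zeta-\sigma)$ and $\pa_\zeta p=2(Y\cdot\zeta-\sigma)Y-2\zeta$; these vanish simultaneously only when $Y\cdot\zeta=\sigma$ and $\zeta=0$, hence $\sigma=0$, again the zero section. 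Thus $dp\ne 0$ at every point of $\Sigma\wozero$ in either chart, so $\Sigma\subset\Tb^*M\wozero$ is a smooth codimension one submanifold.

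For the transversality with $\Tb^*_Y M$, observe that $p$ is independent of $\tau$ in both coordinate systems, so the covector $dp\in T^*(\Tb^*M)$ has no $d\tau$-component. Since $d\tau$ is a nonvanishing section of $T^*(\Tb^*M)$ along $\Tb^*_Y M=\{\tau=0\}$, the vanishing of the $d\tau$-component of $dp$ together with $dp\ne 0$ on $\Sigma\wozero$ yields linear independence of $d\tau$ and $dp$ at every point of $\Sigma\cap\Tb^*_Y M$, which is precisely the transversality statement.

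There is no serious obstacle here: this is a direct computation using only the explicit form of $p$ and the positivity of $K$; the argument above fits into a few lines once one records the partial derivatives in both charts. The point to be slightly careful about is simply that the two coordinate systems together cover a neighborhood of the relevant portion of $Y$, so that the verification in each chart suffices to conclude the properties of $\Sigma$ globally on the region of interest.
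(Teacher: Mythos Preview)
Your proposal is correct and takes essentially the same approach as the paper: compute $dp$ in each of the two coordinate charts, check that it vanishes only at the zero section, and observe $\tau$-independence of $p$ for transversality. The only cosmetic difference is that you work exclusively with the fiber derivatives $\pa_\xi p,\pa_\sigma p,\pa_\eta p$ (resp.\ $\pa_\zeta p,\pa_\sigma p$), which already suffice, whereas the paper writes out the full differential including the base components; your version is if anything slightly cleaner.
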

\begin{proof}
  We have to show that $dp\neq 0$ whenever $p=0$. We compute
  \begin{align*}
    dp&=(4\xi^2(1-2r^2)-4\sigma\xi-r^{-4}|\eta|^2_K)d\mu+4r^2(-2\mu\xi+\sigma)d\xi \\
	  &\quad +(4(1-\mu)\xi+2\sigma)d\sigma-r^{-2}d|\eta|_K^2.
  \end{align*}
  Thus if $dp=0$, all coefficients have to vanish, thus $\sigma=2\mu\xi$ and $\sigma=2(\mu-1)\xi$, giving $\xi=0$ and thus $\sigma=0$, hence also $\eta=0$. Thus $dp$ vanishes only at the zero section of $\Tb^*M$ in this coordinate system. In the coordinates valid near $r=0$, we compute
  \[
    dp=2(Z\cdot\zeta-\sigma)\zeta\cdot dZ+2\bigl((Z\cdot\zeta-\sigma)Z-2\zeta\bigr)\cdot d\zeta-2(Z\cdot\zeta-\sigma)\,d\sigma,
  \]
  thus $dp=0$ implies $Z\cdot\zeta=\sigma$, hence $\zeta=0$ and then $\sigma=0$. Thus, the first half of the statement is proved. The transversality is clear since $dp$ and $d\tau$ are linearly independent at $\Sigma$ by inspection.
\end{proof}

We will occasionally also use $\Sigma$ to denote the characteristic set viewed as a subset of the radially compactified b-cotangent bundle $\rcTb^*M$ or as a subset of the boundary $\Sb^* M$ of $\rcTb^*M$ at fiber infinity.

%%%%%%%%%%%%%%%%%%%%%%%%%%%%%
\subsubsection{Radial points}
\label{SubsubsecDSRadialPoints}

Since $g$ is a Lorentzian b-metric, the Hamilton vector field $H_p$ cannot be radial except at the boundary $Y=\pa M$ at future infinity, where $\tau=0$. In the coordinate system near $r=0$, one easily checks using \eqref{EqDSHamilton} that there are no radial points over $Z=0$. At radial points, we then moreover have $H_p\mu=4r^2(-2\mu\xi+\sigma)=0$, thus $\sigma=2\mu\xi$. Further, we compute
\[
  H_{|\eta|_K^2}=H_{\eta_i K^{ij}(\omega)\eta_j}=2\eta_j K^{ij}(\omega)\pa_{\omega_i}-2\eta_i(\pa_{\omega_k}K^{ij})\eta_j\pa_{\eta_k}.
\]
The coefficient of $\pa_{\omega_i}$ must vanish for all $i$, which implies $\eta=0$, since $K$ is non-degenerate. Now, if $\xi=0$, then $\sigma=0$, i.e. all fiber variables vanish and we are outside the characteristic set $\Sigma$; thus $\xi\neq 0$. At points where $\sigma=2\mu\xi,\eta=0,\tau=0$, the expression for $p$ simplifies to $4r^2\mu\xi^2+4\mu^2\xi^2=4\mu\xi^2$, which does not vanish unless $\mu=0$. Hence, $\mu=0,\tau=0,\eta=0,\sigma=0$, and we easily check that these conditions are also sufficient for a point in this coordinate patch to be a radial point. Thus:
\begin{lemma}
\label{LemmaDSRadial}
  The set of radial points of $\Box_g$ is a disjoint union $\cR=\cR_+\cup\cR_-$, where
  \begin{align*}
    \cR_\pm&=\{\mu=0,\tau=0,\eta=0,\sigma=0,\pm\xi>0\} \\
	  &=\{\tau=0,\sigma=0,Z=\mp\zeta/|\zeta|\}\subset\Sigma.
  \end{align*}
\end{lemma}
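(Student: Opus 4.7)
The plan is to determine the radial points directly from the explicit formulas \eqref{EqDSDualMetric} and \eqref{EqDSHamilton}. A point of $\Sigma$ is radial when $H_p$ is a scalar multiple of the fiber dilation vector field, which in the $(\mu,\omega,\tau;\xi,\eta,\sigma)$ b-coordinates is $\xi\pa_\xi+\eta\pa_\eta+\sigma\pa_\sigma$. Since $p$ is $\tau$-independent, $H_p$ has no $\pa_\sigma$-component, so radiality forces either $\sigma=0$ or the scalar multiple to vanish; but the latter would give $H_p=0$, hence $dp=0$, which by the lemma above occurs only at the zero section. So $\sigma=0$.

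Next I would read off the conditions from the vanishing of the base coefficients of $H_p$. The $\pa_{\omega_i}$ contribution comes solely from $-r^{-2}H_{|\eta|_K^2}$, and nondegeneracy of $K$ forces $\eta=0$. The $\pa_\mu$ coefficient $4r^2(-2\mu\xi+\sigma)$, with $\sigma=0$, vanishes iff $\mu\xi=0$; if $\xi=0$ then together with $\eta=\sigma=0$ we are at the zero section, so necessarily $\mu=0$. Finally, the $\tau\pa_\tau$ coefficient $(4r^2\xi+2\sigma)\tau$ vanishes at our point only if $\tau=0$, since $\xi\ne 0$ and $\sigma=0$. Conversely, at any point with $\mu=\tau=\eta=\sigma=0$ and $\xi\ne 0$, one has $p=0$ by \eqref{EqDSDualMetric} (using $r^2=1$), and \eqref{EqDSHamilton} gives $H_p=4\xi\cdot(\xi\pa_\xi)=c(\xi\pa_\xi+\eta\pa_\eta+\sigma\pa_\sigma)$ with $c=4\xi$, confirming radiality. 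This produces the first description, with $\cR_\pm$ indexed by the sign of $\xi$.

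To obtain the second description, I would translate the condition $\{\mu=\eta=0\}$ into the $(Y,\zeta)$ coordinates, which are related by $Y=r\omega=\sqrt{1-\mu}\,\omega$. At $\mu=0$ one has $|Y|=1$ and $Y=\omega$; expanding $dY=-\tfrac{\omega}{2}\,d\mu+d\omega$ at $\mu=0$ and matching $\xi\,d\mu+0\cdot d\omega=\zeta\cdot dY$ forces the tangential-to-sphere part of $\zeta$ to vanish and $\zeta\cdot\omega=-2\xi$, hence $\zeta=-2\xi\,Y$. This gives $|\zeta|=2|\xi|$ and $Y=-\sgn(\xi)\,\zeta/|\zeta|$, matching $\cR_\pm=\{Y=\mp\zeta/|\zeta|\}$ and confirming $\sigma=\tau=0$ in these coordinates.

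There is no real obstacle here: the paragraph preceding the lemma already carries out the main computations, so the proof is just a careful organization of those calculations, together with the coordinate-change bookkeeping in the last step, which is the only place one must pay attention to signs.
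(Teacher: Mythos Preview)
Your argument is correct and mirrors the computation in the paragraph preceding the lemma, with only a reordering of steps: you deduce $\sigma=0$ first from the vanishing $\pa_\sigma$-component of $H_p$, whereas the paper first obtains $\tau=0$ from a general Lorentzian b-metric argument, then $\sigma=2\mu\xi$ from $H_p\mu=0$, and finally uses $p=0$ to force $\mu=0$. One small omission: the $(\mu,\omega)$ chart does not cover $Y=0$ (equivalently $r=0$, i.e.\ $\mu=1$), so you should note separately that there are no radial points there---the paper dispatches this in one line using the $(Y,\zeta)$ form of $H_p$.
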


To analyze the flow near $L_\pm:=\pa\cR_\pm\subset\Sb^*M$, we introduce normalized coordinates
\[
  \wh\rho=\frac{1}{\xi},\wh\eta=\frac{\eta}{\xi},\wh\sigma=\frac{\sigma}{\xi}
\]
and consider the homogeneous degree 0 vector field $\sfH_p:=|\wh\rho|H_p$. Following \cite[\S3]{BaskinVasyWunschRadMink}, we get a good qualitative understanding of the dynamics near $L_\pm$ by looking at the linearization $W$ of $\pm\sfH_p=\wh\rho H_p$; note that $\la\xi\ra^{-1}$ is a defining function of the boundary of $\rcTb^*M$ at fiber infinity near $L_\pm$. The coordinate vector fields in the new coordinate system are
\[
  \pa_\eta=\wh\rho\pa_{\wh\eta},\quad \xi\pa_\xi=-\wh\rho\pa_{\wh\rho}-\wh\eta\pa_{\wh\eta}-\wh\sigma\pa_{\wh\sigma}.
\]
Hence
\begin{align*}
  \wh\rho H_p&=4r^2(-2\mu+\wh\sigma)\pa_\mu+(4(1-2r^2)-4\wh\sigma-r^{-4}|\wh\eta|_K^2)(\wh\rho\pa_{\wh\rho}+\wh\eta\pa_{\wh\eta}+\wh\sigma\pa_{\wh\sigma}) \\
    &\qquad +(4r^2+2\wh\sigma)\tau\pa_\tau-r^{-2}\wh\rho H_{|\eta|_K^2}.
\end{align*}
We have $\wh\rho H_p\in\Vb(\rcTb^*M)$, i.e. it is tangent to the boundary $\wh\rho=0$ at fiber infinity, and to the boundary of $M$, given by $\tau=0$. Since $\wh\rho H_p$ vanishes at a radial point $q\in\rcTb^*M$, it maps the ideal $\cI$ of functions in $C^\infty(\rcTb^*M)$ vanishing at $q$ into itself. The linearization of $\wh\rho H_p$ at $q$ then is the vector field $\wh\rho H_p$ acting on $\cI/\cI^2\cong T^*_q\rcTb^*M$, where the isomorphism is given by $f+\cI^2\mapsto df|_q$. Computing the linearization $W$ of $\wh\rho H_p$ at $q$ now amounts to ignoring terms of $\wh\rho H_p$ that vanish to at least second order at $q$, which gives
\[
  W=4(-2\mu+\wh\sigma)\pa_\mu-4(\wh\rho\pa_{\wh\rho}+\wh\eta\pa_{\wh\eta}+\wh\sigma\pa_{\wh\sigma})+4\tau\pa_\tau-2K^{ij}(\omega)\wh\eta_j\pa_{\omega_i}.
\]
We read off the eigenvectors and corresponding eigenvalues:
\begin{align*}
  d\wh\rho,d\wh\eta,d\wh\sigma & \tn{ with eigenvalue }-4, \\
  d\mu-d\wh\sigma & \tn{ with eigenvalue }-8, \\
  d\tau & \tn{ with eigenvalue }+4, \\
  d\omega_i-\tfrac{1}{2}K^{ij}d\wh\eta_j & \tn{ with eigenvalue } 0.
\end{align*}
Thus, $L_+$ ($L_-$) is a sink (source) of the Hamilton flow within $\Sb^*_Y M$, with an unstable (stable) direction normal to the boundary. More precisely, the $\tau$-independence of the metric suggests the definition
\[
  \cL_\pm=\pa\{\mu=0,\sigma=0,\eta=0,\pm\xi>0\}\subset\Sb^* M,
\]
so that $L_\pm=\Sb^*_Y M\cap\cL_\pm$; moreover $\cL_\pm\subset\Sigma$, and $\sfH_p$ is tangent to $\cL_\pm$; indeed,
\begin{equation}
\label{EqHamOnUnstableMf}
  H_p=4\xi^2\pa_\xi+4\xi\tau\pa_\tau\tn{ at }\cL_\pm.
\end{equation}
Lastly, $\cL_+$ ($\cL_-$) is indeed the unstable (stable) manifold at $L_\pm$. Now, going back to the full rescaled Hamilton vector field $\sfH_p$, we have at $L_\pm$ (in fact, at $\cL_\pm$):
\[
  |\wh\rho|^{-1}\sfH_p|\wh\rho|=\mp\beta_0, \quad -\tau^{-1}\sfH_p\tau=\mp\wt\beta\beta_0
\]
with $\beta_0=4$ and $\wt\beta=1$; furthermore, near $\cL_\pm$,
\[
  \mp\sfH_p\wh\eta=4\wh\eta,\quad \mp\sfH_p\wh\sigma=4\wh\sigma,\quad \mp\sfH_p(\mu-\wh\sigma)=8(\mu-\wh\sigma)
\]
modulo terms that vanish quadratically at $\cL_\pm$, hence, putting $\beta_1=8$, the quadratic defining function $\rho_0:=\wh\eta^2+\wh\sigma^2+(\mu-\wh\sigma)^2$ of $\cL_\pm$ within $\Sigma$ satisfies
\[
  \mp\sfH_p\rho_0-\beta_1\rho_0\geq 0
\]
modulo terms that vanish cubically at $\cL_\pm$.

We have thus verified the geometric and dynamical assumptions~\itref{EnumRadCond1}-\itref{EnumRadCond5} in \S\ref{SubsecRadialPoints} regarding the characteristic set and the Hamilton flow of $p$ near the radial set. Note that assumption~\itref{EnumRadCond5} is automatic here with $\wh\beta=0$, since $P$ is formally self-adjoint with respect to the metric b-density. In other words, we have verified assumption~\itref{EnumHypPDyn} of \S\ref{SubsecHypotheses}.

%%%%%%%%%%%%%%%%%%%%%%%%%%%%%
\subsubsection{Global behavior of the characteristic set}
\label{SubsubsecDSCharGlobal}

The next assumption to be checked is \itref{EnumHypCharpm} in \S\ref{SubsecHypotheses}. This is easily accomplished: Indeed, from \eqref{EqDSDualMetric}, we have
\begin{equation}
\label{EqDSDualRewrite}
  p=(\sigma+2r^2\xi)^2-4r^2\xi^2-r^{-2}|\eta|_K^2,
\end{equation}
and thus $\Sigma=\Sigma_+\cup\Sigma_-$, where
\begin{equation}
\label{EqDSSigmapm}
  \Sigma_\pm=\{\pm(\sigma+2r^2\xi)>0\}\cap\Sigma=\{\pm(\sigma-Z\cdot\zeta)>0\}
\end{equation}
since $p=0$, $\sigma+2r^2\xi=0$ implies $\xi=\eta=0$, thus $\sigma=0$, thus $\{\sigma+2r^2\xi=0\}$ does not intersect the characteristic set $p^{-1}(0)$, and similarly in the $(\tau,Z;\sigma,\zeta)$ coordinates. Moreover, we have $\cL_\pm\subset\Sigma_\pm$ by definition of $\cL_\pm$.

We proceed with a description of the domain $\Omega\subset M$ with artificial boundaries $H_1$ and $H_2$, which have defining functions $\ft_1,\ft_2$, and check the assumptions~\itref{EnumHypFt}-\itref{EnumHypNontrapping} in \S\ref{SubsecHypotheses}. We first observe that $G\bigl(\frac{d\tau}{\tau},\frac{d\tau}{\tau}\bigr)=1>0$. Now, pick any $\delta>0$ and $\tau_0>0$ and define
\[
  \ft_1=\tau_0-\tau,\quad \ft_2=\mu+\delta.
\]
Then
\begin{gather*}
  G(\bdiff\ft_1,\bdiff\ft_1)|_{\ft_1=0}=G\Bigl(-\tau\frac{d\tau}{\tau},-\tau\frac{d\tau}{\tau}\Bigr)|_{\tau=\tau_0}=\tau_0^2>0, \\
  G(\bdiff\ft_2,\bdiff\ft_2)|_{\ft_2=0}=G(d\mu,d\mu)|_{\mu=-\delta}=4\delta(1+\delta)>0, \\
  G(\bdiff\ft_1,\bdiff\ft_2)|_{\ft_1=\ft_2=0}=-4(1+\delta)\tau_0<0,
\end{gather*}
thus $\bdiff\ft_1$ and $\bdiff\ft_2$ are timelike with opposite timelike character; indeed, with the usual time orientation on de Sitter space (namely where $-d\tau/\tau$ is future oriented), $\ft_1$ is future oriented and $\ft_2$ is past oriented, as is $d\tau/\tau$. Moreover, $d\ft_2$ and $d\tau$ are clearly linearly independent at $Y\cap H_2$, as are $d\ft_1$ and $d\ft_2$ at $H_1\cap H_2$. Thus, the assumptions~\itref{EnumHypFt}-\itref{EnumHypTimelike} are verified.

It remains to check the non-trapping assumption~\itref{EnumHypNontrapping}. Let us first analyze the flow in $\Tb^*_\Omega M\setminus\Tb^*_Y M$; notice that $H_p\tau=0$ in $\Tb^*_Y M$, thus bicharacteristics that intersect $\Tb^*_Y M$ are in fact contained in $\Tb^*_Y M$, and correspondingly bicharacteristics containing points in $\Tb^*_\Omega M\setminus\Tb^*_Y M$ stay in $\Tb^*_\Omega M\setminus\Tb^*_Y M$. There,
\begin{equation}
\label{EqDSHtau}
  \pm H_p\tau=\pm 2(\sigma+2r^2\xi)\tau>0\tn{ on }\Sigma_\pm.
\end{equation}
In particular, in $\Sigma_\pm\setminus\Tb^*_Y M$, bicharacteristics reach $\Tb^*_{H_1}M$ (i.e.\ $\tau=\tau_0$) in finite time in the forward ($+$), resp.~backward ($-$), direction. We show that they stay within $\Tb^*_\Omega M$: For this, observe that $p=0$ and $\mu<0$, thus $r>1$, imply
\[
  2|\xi|\leq 2r|\xi|\leq|\sigma+2r^2\xi|
\]
by equation~\eqref{EqDSDualRewrite}. In fact, if $\xi\neq 0$, the first inequality is strict, and if $\xi=0$, the second inequality is strict, and we conclude the strict inequality
\[
  2|\xi|<|\sigma+2r^2\xi|\tn{ if }p=0, \mu<0.
\]
Hence, on $(\Sigma_\pm\setminus\Tb^*_Y M)\cap\Sigma_\Omega$, if $\mu<0$, then
\begin{equation}
\label{EqDSHmu}
  \pm H_p\mu=\pm 4r^2(\sigma+2r^2\xi-2\xi)>0,
\end{equation}
thus in the forward (on $\Sigma_+$), resp.~backward (on $\Sigma_-$), direction, bicharacteristics cannot cross $\Tb^*_{H_2}M=\{\mu=-\delta\}$.

Next, backward, resp.~forward, bicharacteristics in $\cL_\pm\setminus L_\pm$ tend to $L_\pm$ by equation~\eqref{EqDSHtau}, since $H_p$ is tangent to $\cL_\pm$, and $L_\pm=\cL_\pm\cap\{\tau=0\}$; in fact, by equation~\eqref{EqHamOnUnstableMf}, more is true, namely these bicharacteristics, as curves in $\rcTb^*M\wozero$, tend to $L_\pm$ if the latter is considered a subset of the boundary $\Sb^*M$ of $\rcTb^*M$ at fiber infinity. Now, consider a backward, resp.~forward, bicharacteristic $\gamma$ in $(\Sigma_\pm\setminus\cL_\pm)\cap\Tb^*_\Omega M$, including those within $\Tb^*_Y M$. By \eqref{EqDSHtau}, $\tau$ is non-increasing along $\gamma$, and by \eqref{EqDSHmu}, $\mu$ is strictly decreasing along $\gamma$ once $\gamma$ enters $\mu<0$, hence it then reaches $\Tb^*_{H_2}M$ in finite time, staying within $\Tb^*_\Omega M$. We have to show that $\gamma$ necessarily enters $\mu<0$ in finite time. Assume this is not the case. Then observe that
\begin{equation}
\label{EqDSHEscape}
  \mp H_p(\sigma-Z\cdot\zeta)=\mp 2|\zeta|^2=\mp 2(\sigma-Z\cdot\zeta)^2\tn{ on }\Sigma_\pm,
\end{equation}
thus $\sigma-Z\cdot\zeta$ converges to $0$ along $\gamma$. Now on $\Sigma$, $|\zeta|=|\sigma-Z\cdot\zeta|$, thus, also $\zeta$ converges to $0$, and moreover, on $\Sigma$, we have
\[
  |\sigma|\leq |Z\cdot\zeta|+|Z\cdot\zeta-\sigma|\leq(1+|Z|)|\zeta|
\]
since we are assuming $|Z|\leq 1$ on $\gamma$, hence $\sigma$ converges to $0$ along $\gamma$. But $H_p\sigma=0$, i.e.\ $\sigma$ is constant. Thus necessarily $\sigma=0$, hence $p=0$ gives $|Z\cdot\zeta|=|\zeta|$, and thus we must in fact have $|Z|=1$ on $\gamma$, more precisely $Z=\mp\zeta/|\zeta|$, and thus $\gamma$ lies in $\cL_\pm$, which contradicts our assumption $\gamma\not\subset\cL_\pm$. Hence, $\gamma$ enters $|Z|>1$ in finite time, and thus, as we have already seen, reaches $\Tb^*_{H_2}M$ in finite time.

Finally, we show that forward, resp.~backward, bicharacteristics $\gamma$ in $(\Sigma_\pm\cap\Tb^*_Y M\setminus\cR_\pm)\cap\Sigma_\Omega$ tend to $L_\pm$. By equation~\eqref{EqDSHEscape}, $\pm(\sigma-Z\cdot\zeta)\to\infty$ (in finite time) along $\gamma$, hence $|\zeta|=|\sigma-Z\cdot\zeta|$ on $\gamma\subset\Sigma$ tends to $\infty$, and therefore
\[
  |Z|\geq\frac{|Z\cdot\zeta|}{|\zeta|}\geq\frac{|\sigma-Z\cdot\zeta|}{|\zeta|}-\frac{|\sigma|}{|\zeta|}\to 1
\]
since $\sigma$ is constant along $\gamma$. On the other hand, at points on $\gamma$ where $|Z|>1$, i.e.\ $\mu<0$, we have $\pm H_p\mu>0$ by \eqref{EqDSHmu}. We conclude that $\gamma$ tends to $|Z|=1$, i.e.\ $\mu=0$. Moreover,
\[
  \left(Z\cdot\frac{\zeta}{|\zeta|}-\frac{\sigma}{|\zeta|}\right)^2=1\tn{ on }\Sigma,
\]
thus $\bigl|Z\cdot\zeta/|\zeta|\bigr|\to 1$ along $\gamma$; together with $|Z|\to 1$, this implies $Z\to\mp\zeta/|\zeta|$, and since $\sigma$ is constant and $|\zeta|\to\infty$, we conclude that $\gamma$ tends to $L_\pm$. Thus, assumption~\itref{EnumHypNontrapping} in \S\ref{SubsecHypotheses} is verified.

%%%%%%%%%%%%%%%%%%%%%%%%%%%%%
\subsubsection{The normal operator}
\label{SubsubsecDSNormal}

The Mellin transformed normal operator $\widehat P(\sigma)$ of $P=\Box_g$, with principal symbol (in the high energy sense, $\sigma$ being the large parameter) given by the right hand side of \eqref{EqDSDualMetric}, fits into the framework of Vasy \cite{VasyMicroKerrdS}. In the current setting, the poles of $\widehat P(\sigma)^{-1}$ for $P$ acting on functions have been computed explicitly by Vasy \cite{VasyWaveOndS}. In fact, if more generally $P_\lambda=\Box_g-\lambda$, the only possible poles of $\widehat P_\lambda(\sigma)^{-1}$ are in
\begin{equation}
\label{EqDSResonances}
  i\wh s_\pm(\lambda)-i\N, \quad \wh s_\pm(\lambda)=-\frac{n-1}{2}\pm\sqrt{\frac{(n-1)^2}{4}-\lambda},
\end{equation}
and the pole with largest imaginary part is simple unless $\lambda=(n-1)^2/4$, in which case it is a double pole. Notice that for $\lambda=0$, all non-zero resonances have imaginary part $\leq -1$.

%%%%%%%%%%%%%%%%%%%%%%%%%%%%%%%%%%%%%%%%%%%%%%%%%
\subsection{Quasilinear wave equations}
\label{SubsecQuasilinear}

We are now prepared to discuss existence, uniqueness and asymptotics of solutions to quasilinear wave and Klein-Gordon equations for complex- and/or real-valued functions on the static model of de Sitter space, in fact on the domain $\Omega$ described in the previous section, with small data, i.e.\ small forcing. Keep in mind though that the methods work in greater generality, as explained in the introduction. In particular, we will prove Theorems~\ref{ThmIntroBaby} and \ref{ThmIntroFull}.

\begin{rmk}
\label{RmkBoxOnBundles2}
  The only reason for us to stick to the scalar case here as opposed to considering wave equations on natural vector bundles is the knowledge of the location of resonances in this case, see \S\ref{SubsubsecDSNormal}; the author is not aware of corresponding statements for bundle-valued equations. The general statement is that as long as there is no resonance or only a simple resonance at $0$ in the closed upper half plane, the arguments presented in this section go through. Likewise, we can work on the more general class of static asymptotically de Sitter spaces, since the normal operator, hence the resonances are the same as on exact static de Sitter space, and in fact on much more general spacetimes, provided the above resonance condition as well as all assumptions in \S\ref{SubsecHypotheses} are satisfied; examples of the latter kind include perturbations (even of the asymptotic model) of asymptotically de Sitter spaces, see also Remark~\ref{RmkPoleAt0Fixed}.
\end{rmk}

Let us from now on denote by $g_\dS$ the static de Sitter metric. We start with a discussion of quasilinear wave equations.

\begin{definition}
  For $s,\alpha\in\R$, define the Hilbert space
  \[
    \cX^{s,\alpha}:=\C\oplus\Hb^{s,\alpha}(\Omega)^{\bullet,-}
  \]
  with norm $\|(c,v)\|_{\cX^{s,\alpha}}^2=|c|^2+\|v\|_{\Hb^{s,\alpha}(\Omega)^{\bullet,-}}^2$. We will identify an element $(c,v)\in\cX^{s,\alpha}$ with the distribution $(\phi\circ\ft_1)c+v$, where $\phi$ and $\ft_1$ are as in the statement of Theorem~\ref{ThmExistenceAndExpansion}.
\end{definition}

We are now in a position to prove Theorem~\ref{ThmIntroFull} whose statement we recall:
\begin{thm}
\label{ThmAppWaveExistence}
  Let $s>n/2+7$ and $0<\alpha<1$. Assume that for $j=0,1$,
  \begin{gather*}
    g\colon\cX^{s-j,\alpha}\to(\CI+\Hb^{s-j,\alpha})(M;\Sym^2\Tb^*M), \\
	q\colon\cX^{s-j,\alpha}\times\Hb^{s-1-j,\alpha}(\Omega;\Tb^*_\Omega M)^{\bullet,-}\to\Hb^{s-1-j,\alpha}(\Omega)^{\bullet,-}
  \end{gather*}
  are continuous, $g$ is Lipschitz near $0$, and
  \begin{equation}
  \label{EqQContQuant1}
    \|q(u,\bdiff u)-q(v,\bdiff v)\|_{\Hb^{s-1-j,\alpha}(\Omega)^{\bullet,-}}\leq L_q(R)\|u-v\|_{\cX^{s-j,\alpha}}
  \end{equation}
  for $u,v\in\cX^{s-j,\alpha}$ with norm $\leq R$, then there is a constant $C_L>0$ so that the following holds: If $L_q(0)<C_L$, then for small $R>0$, there is $C_f>0$ such that for all $f\in\Hb^{s-1,\alpha}(\Omega)^{\bullet,-}$ with norm $\leq C_f$, there exists a unique solution $u\in\cX^{s,\alpha}$ of the equation
  \begin{equation}
  \label{EqQuasilinearWavePDE}
    \Box_{g(u)}u=f+q(u,\bdiff u)
  \end{equation}
  with norm $\leq R$, and in the topology of $\cX^{s-1,\alpha}$, $u$ depends continuously on $f$.
\end{thm}

\begin{rmk}
\label{RmkPoleAt0Fixed}
  Note that the poles of the meromorphic family $\widehat{\Box_{g(u)}}(\sigma)^{-1}$ depend continuously on $u$, see \cite[\S2.7]{VasyMicroKerrdS}, and the simple pole at $0$, corresponding to constant functions being annihilated by $N(\Box_{g(u)})$, is preserved under perturbations. This will be crucial in the proof, and it also shows that we may allow the metric $g(0)$ to be a perturbation (in the b-sense) of $g_\dS$, rather than exact $g_\dS$, without any additional work.
\end{rmk}

\begin{rmk}
  The permitted range of $\alpha$ is directly linked to the width of the resonance free strip below the real axis for the basic linear operator. In particular, $R>0$ depends on $\alpha$ in the sense that one must ensure that the linear operator $\Box_{g(u)}$ for $u\in\cX^{s,\alpha}$ with norm $\leq R$ has no resonances other than $0$ in $\Im\sigma>-\alpha$. Since the only resonances of $\Box_{g_\dS}$ in $\Im\sigma\geq -1$ are $0$ and $-i$, the continuous dependence of resonances on the b-metric ensures that $R>0$ can indeed be chosen in this way.
\end{rmk}

\begin{rmk}
  Of course, we require all sections $g(u)$ of $\Sym^2\Tb^*M$ to take values in symmetric 2-tensors with real coefficients. If we assume that $q$ and $f$ are real-valued, we may therefore work in the \emph{real} Hilbert space
  \begin{equation}
  \label{EqcXR}
    \cX^{s,\alpha}_\R:=\R\oplus\Hb^{s,\alpha}(\Omega;\R)^{\bullet,-}
  \end{equation}
  and find the solution $u$ there. This remark also applies to all theorems later in this section.
\end{rmk}

\begin{proof}[Proof of Theorem~\ref{ThmAppWaveExistence}.]
  To simplify the notation, we will occasionally write $\Hb^{\sigma,\rho}$ in place of $\Hb^{\sigma,\rho}(\Omega)^{\bullet,-}$ if the context is clear.

  By assumption on $g$, there exists $R_S$ such that for $u\in\cX^{s,\alpha}$ with $\|u\|_{\cX^{s,\alpha}}\leq R_S$, the operator $\Box_{g(u)}$ satisfies the relaxed versions of the assumptions~\itref{EnumHypPDyn}-\itref{EnumHypNontrapping} in \S\ref{SubsecHypotheses} (see the discussion after assumption~\itref{EnumHypNontrapping}), thus Theorem~\ref{ThmWaveGlobalSolveCont} is applicable, giving a continuous forward solution operator $S_{g(u)}$ on sufficiently weighted b-Sobolev spaces. For such $u$, the normal operator $N(\Box_{g(u)})$ is a small perturbation of $N(\Box_{g_\dS})$ in $\Diff^2(Y\cap\Omega)$, and since further $s-2-\alpha>0$, we can apply Theorem~\ref{ThmExistenceAndExpansion} to conclude that the solution operator in fact maps
  \[
    S_{g(u)}\colon\Hb^{s-1,\alpha}(\Omega)^{\bullet,-}\to\cX^{s,\alpha}
  \]
  continuously, with uniformly bounded operator norm
  \begin{equation}
  \label{EqSolOpContQuant}
    \|S_{g(u)}\|\leq C_S,\quad \|u\|_{\cX^{s,\alpha}}\leq R_S.
  \end{equation}
  Let $C_L:=C_S^{-1}$, and assume that $L_q(0)<C_L$, then $L_q(R_q)<C_L$ for $R_q>0$ small. Put $R:=\min(R_S,R_q)$ and $C_f=R(C_S^{-1}-L_q(R))$; let $f\in\Hb^{s-1,\alpha}(\Omega)^{\bullet,-}$ have norm $\leq C_f$. Define $u_0:=0$ and iteratively $u_{k+1}\in\cX^{s,\alpha}$ by solving
  \begin{equation}
  \label{EqWaveIteration}
    \Box_{g(u_k)}u_{k+1}=f+q(u_k,\bdiff u_k),
  \end{equation}
  i.e.\ $u_{k+1}=S_{g(u_k)}\bigl(f+q(u_k,\bdiff u_k)\bigr)$. For $u_{k+1}$ to be well-defined, we need to check that $\|u_k\|_{\cX^{s,\alpha}}\leq R$ for all $k$. For $k=0$, this is clear; for $k>0$, we deduce from \eqref{EqSolOpContQuant} and \eqref{EqQContQuant1} that
  \begin{align*}
    \|u_{k+1}\|_{\cX^{s,\alpha}} & \leq C_S\bigl(\|f\|_{\Hb^{s-1,\alpha}}+L_q(R)\|u_k\|_{\cX^{s,\alpha}}\bigr) \\
	  &\leq C_S\bigl(R(C_S^{-1}-L_q(R))+L_q(R)R\bigr)=R.
  \end{align*}
  We aim to show that the sequence $(u_k)_k$ is in fact Cauchy in $\cX^{s-1,\alpha}$. First, we observe that for $u\in\cX^{s-1,\alpha}$, we have
  \[
    \Box_{g(u)}=g^{ij}(u)\Db_i\Db_j+\wt g^j(u,\bdiff u)\Db_j
  \]
  with $g^{ij}(u)\in\CI+\Hb^{s-1,\alpha}$, $\wt g^j(u,\bdiff u)\in\CI+\Hb^{s-2,\alpha}$; using the explicit formula for the inverse of a metric, Corollary~\ref{CorHbModule} and Lemma~\ref{LemmaRecHs}, we deduce from the Lipschitz assumption on $g$ that
  \[
    g^{ij}\colon\cX^{s-1,\alpha}\to\CI+\Hb^{s-1,\alpha},\quad \wt g^j\colon\cX^{s-1,\alpha}\to\CI+\Hb^{s-2,\alpha}
  \]
  are Lipschitz as well; hence, for some constant $C_g(R)$, we obtain
  \[
    \|\Box_{g(u)}-\Box_{g(v)}\|_{\cL(\cX^{s,\alpha},\Hb^{s-2,\alpha})}\leq C_g(R)\|u-v\|_{\cX^{s-1,\alpha}}
  \]
  for $u,v\in\cX^{s-1,\alpha}$ with norms $\leq R$. Therefore, we get the following estimate for the difference of two solution operators $S_{g(u)}$ and $S_{g(v)}$, $u,v\in\cX^{s,\alpha}$, with a loss of $2$ derivatives relative to the elliptic setting, using a `resolvent identity:'
  \begin{align}
  \label{EqResolventId} \|S_{g(u)}-&S_{g(v)}\|_{\cL(\Hb^{s-1,\alpha},\cX^{s-1,\alpha})}=\|S_{g(u)}(\Box_{g(v)}-\Box_{g(u)})S_{g(v)}\|_{\cL(\Hb^{s-1,\alpha},\cX^{s-1,\alpha})} \\
    &\leq C_S^2\|\Box_{g(u)}-\Box_{g(v)}\|_{\cL(\cX^{s,\alpha},\Hb^{s-2,\alpha})}\leq C_S^2 C_g(R) \|u-v\|_{\cX^{s-1,\alpha}}. \nonumber
  \end{align}
  Here, we assumed $C_S$ is such that $\|S_{g(u)}\|_{\cL(\Hb^{s-2,\alpha},\cX^{s-1,\alpha})}\leq C_S$ for small $u\in\cX^{s,\alpha}$, which is where we use that $s-1>n/2+6$. Returning to the goal of proving that $(u_k)_k$ is Cauchy in $\cX^{s-1,\alpha}$, we estimate
  \begin{align*}
    \|u_{k+1}&-u_k\|_{\cX^{s-1,\alpha}} \leq \bigl\|(S_{g(u_k)}-S_{g(u_{k-1})})\bigl(f+q(u_{k-1},\bdiff u_{k-1})\bigr)\bigr\|_{\cX^{s-1,\alpha}} \\
      &\hspace{17ex} + \|S_{g(u_k)}(q(u_k,\bdiff u_k)-q(u_{k-1},\bdiff u_{k-1}))\|_{\cX^{s-1,\alpha}} \\
      &\leq C_S\bigl(L_q(R)+C_S C_g(R)(C_f+L_q(R)R)\bigr)\|u_k-u_{k-1}\|_{\cX^{s-1,\alpha}}.
  \end{align*}
  Since $C_S L_q(0)<1$, the constant on the right hand side is less than $1$ for small $R>0$, recalling that $C_f=C_f(R)\to 0$ as $R\to 0$. Therefore, $(u_k)_k$ converges exponentially fast to a limit $u\in\cX^{s-1,\alpha}$ as $k\to\infty$. Since $\{u_k\}$ is bounded in the Hilbert space $\cX^{s,\alpha}$, it in fact has a weakly convergent subsequence in $\cX^{s,\alpha}$, and the limit is necessarily equal to $u$, so $u\in\cX^{s,\alpha}$. This easily implies the weak convergence of the full sequence $u_k\weakto u$ in $\cX^{s,\alpha}$.
  
  We can prove uniqueness and stability in one stroke: Suppose that $u_1,u_2\in\cX^{s,\alpha}$ have norm $\leq R$ and satisfy
  \[
    \Box_{g(u_j)}u_j=f_j+q(u_j,\bdiff u_j),\quad j=1,2,
  \]
  where the $f_j\in\Hb^{s-1,\alpha}$, $j=1,2$, have norm $\leq C_f$. Then the estimate~\eqref{EqResolventId} yields
  \begin{align*}
    \|u_1-u_2\|_{\cX^{s-1,\alpha}}&\leq C_S\bigl(\|f_1-f_2\|_{\Hb^{s-2,\alpha}}+ \\
	  &\hspace{5ex}(L_q(R)+C_S C_g(R)(C_f+L_q(R)R))\|u_1-u_2\|_{\cX^{s-1,\alpha}}\bigr).
  \end{align*}
  Arguing as before, the second term on the right can be absorbed into the left hand side for small $R>0$. Hence
  \[
    \|u_1-u_2\|_{\cX^{s-1,\alpha}}\leq C'\|f_1-f_2\|_{\Hb^{s-2,\alpha}}
  \]
  as desired.
\end{proof}

\begin{rmk}
\label{RmkConstantMetric}
  In the case that $g(u)\equiv g$ is constant, see \cite[Theorem~2.24]{HintzVasySemilinear} for a discussion of the corresponding semilinear equations. There, one in particular obtains more precise asymptotics in the case of polynomial non-linearities, see \cite[Theorem~2.35]{HintzVasySemilinear}; see also Remark~\ref{RmkNonsmoothExpansionLimits}.
\end{rmk}

We next turn to a special case of Theorem~\ref{ThmAppWaveExistence} which is very natural and allows for a stronger conclusion.

\begin{thm}
\label{ThmAppWavePoly}
  Let $s>n/2+7$ and $0<\alpha<1$. Let $N,N'\in\N$, and suppose $c_k\in\CI(\R;\R)$, $g_k\in(\CI+\Hb^s)(M;\Sym^2\Tb^*M)$ for $1\leq k\leq N$; define the map $g\colon\cX_\R^{s,\alpha}\to(\CI+\Hb^{s,\alpha})(M;\Sym^2\Tb^*M)$ by
  \[
    g(u)=\sum_{k=1}^N c_k(u)g_k,
  \]
  and assume $g(0)=g_\dS$. Moreover, define
  \[
    q(u,\bdiff u)=\sum_{j=1}^{N'} u^{e_j}\prod_{k=1}^{N_j} X_{jk}u,\quad e_j+N_j\geq 2, N_j\geq 1, X_{jk}\in(\CI+\Hb^\infty)\Vb.
  \]
  Then for small $R>0$, there exists $C_f>0$ such that for all forcing terms $f\in\Hb^{s-1,\alpha}(\Omega;\R)^{\bullet,-}$ with norm $\leq C_f$, the equation
  \begin{equation}
  \label{EqQuasilinearWavePDESmooth}
    \Box_{g(u)}u=f+q(u,\bdiff u)
  \end{equation}
  has a unique solution $u\in\cX_\R^{s,\alpha}$, with norm $\leq R$, and in the topology of $\cX_\R^{s-1,\alpha}$, $u$ depends continuously on $f$. If one in fact has $f\in\Hb^{s'-1,\alpha}(\Omega;\R)^{\bullet,-}$ for some $s'\in(s,\infty]$, then $u\in\cX_\R^{s',\alpha}$.
\end{thm}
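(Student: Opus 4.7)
The plan is to derive the existence, uniqueness, and continuous-dependence parts of the statement by verifying that the given $g$ and $q$ satisfy the Lipschitz hypotheses of Theorem~\ref{ThmAppWaveExistence}, and then to upgrade the solution's regularity from $s$ to $s'$ by rerunning the same iteration scheme in successively smoother spaces.

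\textbf{Verification of hypotheses and existence.} Writing $u = c + u' \in \R \oplus \Hb^{s-j,\alpha}$ for $j = 0, 1$, Proposition~\ref{PropCompWithSmooth2} yields $c_k(u) \in \CI(M) + \Hb^{s-j,\alpha}(M)$ with continuous and locally Lipschitz dependence on $u$; multiplication by the fixed $g_k \in \CI + \Hb^s$ preserves this space by Corollary~\ref{CorHbModule} (using $s-j > n/2$), so that $g \colon \cX^{s-j,\alpha}_\R \to \CI + \Hb^{s-j,\alpha}$ is Lipschitz for both $j$. For the polynomial $q(u, \bdiff u) = \sum_j u^{e_j}\prod_{l=1}^{N_j} X_{jl}u$, the key points are that b-vector fields annihilate the constant part of $u$, giving $X_{jl}u = X_{jl}u' \in \Hb^{s-1-j,\alpha}$ (using Corollary~\ref{CorHbModule} on the $(\CI+\Hb^{s-1})$-coefficient), and that the algebra property then places each monomial in $\Hb^{s-1-j,\alpha}$. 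Telescoping $u^{e_j}\prod X_{jl}u - v^{e_j}\prod X_{jl}v$ into $(u-v)$- and $X_{jl}(u-v)$-factored pieces, each with prefactor bounded by $R^{e_j + N_j - 1}$, together with $e_j + N_j \ge 2$, yields $L_q(R) \le C R$; in particular $L_q(0) = 0 < C_L$. Theorem~\ref{ThmAppWaveExistence} then delivers the unique small solution $u \in \cX^{s,\alpha}_\R$ with continuous dependence on $f$ in the $\cX^{s-1,\alpha}_\R$ topology.

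\textbf{Higher regularity and main obstacle.} Assume now $f \in \Hb^{s'-1,\alpha}$ for some $s' > s$. I would rerun the iteration $u_0 = 0$, $u_{k+1} = S_{g(u_k)}(f + q(u_k, \bdiff u_k))$ of the first part and show, by induction on $\sigma \in [s, s']$ (in small increments), that $\{u_k\}$ stays uniformly bounded in $\cX^{\sigma,\alpha}_\R$: if the $u_k$ are bounded in $\cX^{\sigma,\alpha}$, then Proposition~\ref{PropCompWithSmooth2} gives $g(u_k) - g_\dS$ bounded in $\CI + \Hb^{\sigma,\alpha}$, so that Theorem~\ref{ThmExistenceAndExpansion} applied with coefficient regularity $\sigma$ produces a uniformly bounded solution operator $S_{g(u_k)} \colon \Hb^{\sigma-1,\alpha} \to \cX^{\sigma,\alpha}$; meanwhile the quadratic bound $\|q(u_k, \bdiff u_k)\|_{\Hb^{\sigma-1,\alpha}} \le C\|u_k\|_{\cX^{s,\alpha}}\|u_k\|_{\cX^{\sigma,\alpha}}$ exploits the low-regularity smallness $\|u_k\|_{\cX^{s,\alpha}} \le R$ to absorb the $\cX^{\sigma,\alpha}$-norm and close the estimate. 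Weak sequential compactness of the unit ball in $\cX^{\sigma,\alpha}_\R$ then passes the bound to the limit $u = \lim u_k$, and iterating $\sigma \uparrow s'$ gives $u \in \cX^{s',\alpha}_\R$. The hard part will be ensuring that the bootstrap actually closes when $\sigma > s$: the intrinsic regularity of $g(u_k)$ is constrained by the fixed $g_k$, and the uniform control of the solution operators across the bootstrap hinges on the non-smooth part of $g(u_k) - g_\dS$ being a small perturbation of $g_\dS$ controlled by $\|u_k\|_{\cX^{s,\alpha}} \le R$, together with the stability of the linear theory (in particular the radial-point propagation of Theorem~\ref{ThmRadialPoints}) under such perturbations.
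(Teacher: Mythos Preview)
Your verification of the Lipschitz hypotheses for $g$ and $q$ and the appeal to Theorem~\ref{ThmAppWaveExistence} are correct and match the paper's argument for existence, uniqueness in the small ball, and continuous dependence. (The paper adds one sentence you omit: uniqueness in \emph{all} of $\cX_\R^{s,\alpha}$, not just the ball of radius $R$, follows from classical local uniqueness for quasilinear symmetric hyperbolic systems.)

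For the higher regularity statement your approach is genuinely different from the paper's, and as written it has a gap. You propose to rerun the Picard iteration $u_{k+1}=S_{g(u_k)}(f+q(u_k,\bdiff u_k))$ and bootstrap uniform $\cX^{\sigma,\alpha}$-bounds on the iterates up to $\sigma=s'$. The problem --- which you flag but do not resolve --- is that the linear theory of Section~\ref{SecGlobalForSecond} does not give \emph{tame} estimates for the solution operator: the bound on $\|S_{g(u_k)}\|_{\cL(\Hb^{\sigma-1,\alpha},\cX^{\sigma,\alpha})}$ coming from Theorems~\ref{ThmWaveGlobalSolveCont} and \ref{ThmExistenceAndExpansion} depends on the $(\CI+\Hb^{\sigma,\alpha})$-seminorms of $g(u_k)$, hence on $\|u_k\|_{\cX^{\sigma,\alpha}}$, not merely on the small quantity $\|u_k\|_{\cX^{s,\alpha}}\le R$. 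So the inductive inequality you would write down is circular and does not close. This is exactly the loss-of-derivatives issue the remark following the theorem warns about (and suggests Nash--Moser as a cure for); your sketch does not supply the missing tame estimate.

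The paper avoids this by a different mechanism. It proves the half-step implication ``$u\in\cX^{s'-1/2,\alpha}$ and $f\in\Hb^{s'-1,\alpha}$ imply $u\in\cX^{s',\alpha}$'' by differentiating the PDE by a constant-coefficient b-vector field $V$ that is noncharacteristic on a chosen piece $\tilde\Sigma$ of the characteristic set. This produces a wave-type equation $P_1(Vu)=f_2$ whose principal part is still $\Box_{g(u)}$ and whose additional first-order coefficients lie in $\Hb^{s'-3/2,\alpha}$; after rewriting $\bpa_{ij}^2 u=\bpa_iQ_jVu+\bpa_iR_ju$ with $\WFb'(R_j)\cap\tilde\Sigma=\emptyset$ and invoking a microlocal product lemma (Lemma~\ref{LemmaMicrolocalProduct}), one has $f_2\in\Hb^{s'-5/2,\alpha;\,s'-2,\tilde\Sigma}$. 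Then the \emph{propagation} results (Theorems~\ref{ThmRealPrType} and \ref{ThmRadialPoints}) --- not the global solution operator --- are applied directly to $P_1$ to push $Vu$ from $\Hb^{s'-3/2,\alpha}$ to $\Hb^{s'-1,\alpha}$ microlocally on $\tilde\Sigma$; elliptic inversion of $V$ gives $u'\in\Hb^{s',\alpha}$ there, and a covering argument plus iteration in $s'$ finishes. The point is that each propagation step only needs coefficient regularity one half-step ahead of the target, which the inductive hypothesis on $u$ itself supplies, so no tame estimate on $S_{g(u)}$ is ever required.
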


The initial metric $g(0)$ can be more general, see Remark~\ref{RmkPoleAt0Fixed}.

\begin{rmk}
  As alluded to in \S\ref{SecIntroduction}, the class $(\CI+\Hb^\infty)\Vb$ of b-vector fields is well-defined irrespective of the exponential compactification of the static de Sitter space: Using the function $t_*$ from the beginning of \S\ref{SubsecStaticdS}, one could use $\wt\tau:=e^{-a(x)t_*}$, with $a>0$ a smooth function of the `spatial' variables, to compactify the spacetime, i.e.\ use $\wt\tau$ as a boundary defining function of future infinity, which in view of $\wt\tau=\tau^{a(x)}$, $\tau=e^{-t_*}$, leads to a different smooth structure than if one compactifies using $\tau$, however one easily checks that the space $\Hb^\infty$ on the compactification does not depend on the choice of boundary defining function, and neither does the space $\CI+\Hb^\infty$.
\end{rmk}

\begin{rmk}
  One could, for instance, choose the metrics $g_k$ such that at every point $p\in M$, the linear space $\Sym^2\Tb_p M$ is spanned by the $g_k(p)$, and in a similar manner the b-vector fields $X_{jk}$.
\end{rmk}

\begin{rmk}
  The point of the last part of the theorem is that even though a priori the radius of the ball which is the set of $f\in\Hb^{s'-1,\alpha}(\Omega)^{\bullet,-}$ for which one has solvability in $\cX^{s',\alpha}$ according to Theorem~\ref{ThmAppWaveExistence} could shrink to $0$ as $s'\to\infty$, this does not happen in the setting of Theorem~\ref{ThmAppWavePoly}. We use a straightforward approach to proving this by differentiating the PDE; a somewhat more robust way could be to use Nash-Moser iteration, see e.g.\ \cite{SaintRaymondNashMoser}, which however would require a more careful analysis of all estimates in \S\S\ref{SecCalculus}--\ref{SecGlobalForSecond}, as indicated, for instance, in Remark~\ref{RmkNoMoser}.\footnote{Recently, this analysis has been done by Vasy and the author \cite{HintzVasyQuasilinearKdS} and was used there to study nonlinear waves on asymptotically Kerr-de Sitter spaces.}
\end{rmk}

\begin{rmk}
\label{RmkPreciseExpansion}
  If $f$ has more decay, say $f\in\Hb^{\infty,\infty}$, it is relatively straightforward to show that the solution $u$ in fact has an asymptotic expansion to any fixed order, assuming $f$ is small in an appropriate space. Indeed, for such a statement, one only needs to replace the spaces $\cX^{s,\alpha}$ by similar spaces which now encode more precise partial asymptotic expansions, as in \cite[\S{2.3}]{HintzVasySemilinear}, and prove the persistence of such spaces under taking reciprocals, compositions with smooth functions etc.
\end{rmk}

For the proof, we need one more definition:
\begin{definition}
\label{DefMlSpaces}
  (Cf.\ \cite[Definition~1.1]{BealsReedMicroNonsmooth}.) For $s'>s$, $\alpha\in\R$ and $\Gamma\subset\Sb^*M$, let
  \[
    \Hb^{s,\alpha;s',\Gamma}:=\{u\in\Hb^{s,\alpha}\colon \WFb^{s',\alpha}(u)\cap\Gamma=\emptyset\}.
  \]
\end{definition}

\begin{proof}[Proof of Theorem~\ref{ThmAppWavePoly}.]
  The map $g$ satisfies the requirements of Theorem~\ref{ThmAppWaveExistence} by Proposition~\ref{PropCompWithSmooth2}, and $q$ satisfies \eqref{EqQContQuant1} with $L_q(0)=0$, thus Theorem~\ref{ThmAppWaveExistence} implies the existence and uniqueness of solutions in $\cX^{s,\alpha}$ with small norm as well as their stability in the topology of $\cX^{s-1,\alpha}$. The uniqueness of $u$ in all of $\cX_\R^{s,\alpha}$, in fact in $\Hbloc^s(\Omega^\circ)$, follows from local uniqueness for quasilinear symmetric hyperbolic systems, see e.g.\ Taylor \cite[\S{16.3}]{TaylorPDE}.
  
  It remains to establish the higher regularity statement; by an iterative argument, it suffices to prove the following: If $s'>s$, $u\in\cX_\R^{s'-1/2,\alpha}$, $\|u\|_{\cX^{s,\alpha}}\leq R$, and $u$ solves~\eqref{EqQuasilinearWavePDESmooth} with $f\in\Hb^{s'-1,\alpha}$, then $u\in\cX_\R^{s',\alpha}$; note that we only make the $s'$-independent assumption of the $\cX^{s,\alpha}$-norm of $u$ being small -- the reason for this assumption is that it ensures that $\Box_{g(u)}$ fits into our framework.
  
  \emph{We will use the summation convention for the remainder of the proof.} Equation~\eqref{EqQuasilinearWavePDESmooth} in local coordinates reads
  \begin{equation}
  \label{EqWaveInCoords}
    \bigl(g^{ij}(u)\bpa_{ij}^2+h^j(u,\bpa u)\bpa_j\bigr)u=f+q(u,\bpa u),
  \end{equation}
  where $g^{ij}(v)$, $h^j(v;z)$ and $q(v;z)$ are $\CI$-functions of $v$ and $z$. As is standard in ODEs to obtain higher regularity (and exploited in a similar setting by Beals and Reed \cite[\S{4}]{BealsReedMicroNonsmooth}), we will differentiate this equation with respect to certain b-vector field $V$: After differentiating and collecting/rewriting terms, one obtains an equation like~\eqref{EqWaveInCoords} for $Vu$, where only the coefficients of first order terms are changed, and without $q$ and with a different forcing term; one can then appeal to the regularity theory for the equation for $Vu$, which is thus again a wave equation with lower order terms. Concretely, suppose $\wt\Sigma\subset\Sigma$ is a closed subset of the characteristic set of $\Box_{g(u)}$, consisting of bicharacteristic strips and contained in the coordinate patch we are working in; we want to propagate $\cX^{s',\alpha}$-regularity of $u$ into $\wt\Sigma$, assuming we have this regularity on backward/forward bicharacteristics from $\wt\Sigma$ or in a punctured neighborhood of $\wt\Sigma$. With $\pi\colon\Sb^*M\to M$ denoting the projection to the base, choose $\chi,\chi_0\in\CI_\cl(\Rnhalf)$ so that $\chi$ is identically $1$ near $\pi(\wt\Sigma)$ and $\chi_0$ is identically $1$ on $\supp\chi$. Let $V_0\in\Vb(\Rnhalf)$ be a constant coefficient b-vector field which is non-characteristic (in the b-sense) on $\wt\Sigma$, which is possible if $\wt\Sigma$ is sufficiently small, and put $V=\chi_0 V_0$. Applying $V$ to~\eqref{EqWaveInCoords}, we obtain, suppressing the arguments $u,\bpa u$,
  \begin{align*}
    &\bigl(g^{ij}\bpa_{ij}^2+[h^j+(\pa_{z_j}h^k)\bpa_k u-\pa_{z_j}q]\bpa_j\bigr)Vu+(g^{ij})'Vu\,\bpa_{ij}^2 u+g^{ij}[V,\bpa_{ij}^2]u \\
	  &\hspace{2ex} = Vf+(\pa_v q)Vu+(\pa_{z_j}q)[V,\bpa_j]u \\
	  &\hspace{12ex} - (\pa_v h^j)Vu\,\bpa_j u - h^j[V,\bpa_j]u - (\pa_{z_j}h^k)[V,\bpa_k]u =: f_1.
  \end{align*}
  Since $V_0$ annihilates constants, $Vu\in\Hb^{s'-3/2,\alpha}$ locally near $\pi(\wt\Sigma)$. Similarly, $[V,\bpa_j]u\in\Hb^{s'-3/2,\alpha}$ locally near $\pi(\wt\Sigma)$, and $h^j(u,\bpa u)\in\CI+\Hb^{s'-3/2,\alpha}$, $q(u,\bpa u)\in\Hb^{s'-3/2,\alpha}$, similarly for derivatives of $h^j$ and $q$; lastly, $Vf\in\Hb^{s'-2,\alpha}$, thus $f_1\in\Hb^{s'-2,\alpha}$ locally near $\pi(\wt\Sigma)$. We need to analyze the last two terms on the left hand side: Since $V$ is non-characteristic on $\supp\chi\supset\pi(\wt\Sigma)$, we can write
  \[
    \bpa_j=(1-\chi)\bpa_j+Q_jV+\wt R_j,\quad Q_j\in\Psib^0, \wt R_j\in\Psib^1, \WFb'(\wt R_j)\cap\wt\Sigma=\emptyset;
  \]
  put $R_j:=(1-\chi)\bpa_j+\wt R_j=\bpa_j-Q_jV$. Note that $R_j$ annihilates constants. We can then write
  \[
    \bpa_{ij}^2 u=\bpa_i Q_j Vu+\bpa_i R_j u,
  \]
  and the second term is in $\Hb^{\infty,\alpha}$ microlocally near $\wt\Sigma$. Thus, we have
  \[
    (g^{ij})' Vu\,\bpa_{ij}^2 u=\bigl((g^{ij})' Vu\,\bpa_i Q_j\bigr)Vu+(g^{ij})' Vu\,\bpa_i R_j u;
  \]
  the second term on the right is a product of a function in $\Hb^{s'-3/2,\alpha}$ with $\bpa_i R_j u$, the latter a priori being an element of $\Hb^{s'-5/2,\alpha;\infty,\wt\Sigma}$; we will prove below in Lemma~\ref{LemmaMicrolocalProduct} that this product is an element of $\Hb^{s'-5/2,\alpha;s'-3/2,\wt\Sigma}$. Moreover, $[V,\bpa_{ij}^2]$ is a second order b-differential operator, vanishing on constants, with coefficients vanishing near $\pi(\wt\Sigma)$; this implies $g^{ij}[V,\bpa_{ij}^2]u\in\Hb^{s'-5/2,\alpha;\infty,\wt\Sigma}$. We conclude that
  \begin{equation}
  \label{EqPVu}
    P_1(Vu)=f_2\in\Hb^{s'-5/2,\alpha;s'-2,\wt\Sigma},
  \end{equation}
  where
  \[
    P_1=\Box_{g(u)}+\wt P,\quad \wt P=[(\pa_{z_j}h^k)\bpa_k u-\pa_{z_j}q]\bpa_j+(g^{ij})' Vu\,\bpa_i Q_j.
  \]
  Since we are assuming $u\in\cX^{s'-1/2,\alpha}$, and moreover $\wt P$ is an element of $\Hb^{s'-3/2,\alpha}\Psib^1$ near $\pi(\wt\Sigma)$, we see that, a forteriori,
  \[
    P_1\in(\CI+\Hb^{s'-1,\alpha})\Diffb^2+(\CI+\Hb^{s'-2,\alpha})\Psib^1.
  \]
  Hence, we can propagate $\Hb^{s'-1,\alpha}$-regularity of $Vu$ into $\wt\Sigma$ by Theorems~\ref{ThmRealPrType} and \ref{ThmRadialPoints}; recall that these two theorems only deal with the propagation of regularity which is $1/2$ more than than the a priori regularity of $Vu$, which is $\Hb^{s'-3/2,\alpha}$. The point here is that real principal type propagation only depends on the principal symbol of $P_1$, which is the same as the principal symbol of $\Box_{g(u)}$, and the propagation of $\Hb^{s'-1,\alpha}$-regularity near radial points works for arbitrary $\Hb^{s'-2,\alpha}\Psib^1$-perturbations of $\Box_{g(u)}$; see Remark~\ref{RmkRelaxedRadial}. Therefore, writing $u=c+u'$ with $u'\in\Hb^{s'-1/2,\alpha}$ a priori, we obtain $u'\in\Hb^{s',\alpha}$ microlocally near $\wt\Sigma$ by standard elliptic regularity, since $V$ is non-characteristic on $\wt\Sigma$. Away from the characteristic set of $\Box_{g(u)}$,\footnote{Notice that $P_1$ and $\Box_{g(u)}$ have the same characteristic set.} we simply use $P_1 Vu\in\Hb^{s'-5/2,\alpha}$ and elliptic regularity for $P_1 V$ to deduce that $u'\in\Hb^{s'+1/2,\alpha}$ there;\footnote{Let us stress the importance of only using local rather than microlocal regularity information of $P_1Vu$, since the proof of Theorem~\ref{ThmEllipticReg}, giving elliptic regularity for $Vu$ solving $P_1(Vu)=f$, only works with local assumptions on $f$, see Remark~\ref{RmkEllipticLocalAssm}.} here, we would choose $V$ such that it is non-characteristic on a set disjoint from $\Sigma$. Putting all such pieces of regularity information together by choosing finitely many such sets $\wt\Sigma$, we obtain $u'\in\Hbloc^{s',\alpha}(\Omega)^{\bullet,-}$.
  
  We can make this is a global rather than local statement by extending $\Omega$ to the slightly larger domain $\Omega_{0,\delta_2}$, $\delta_2<0$, solving the quasilinear PDE there, and restricting back to $\Omega$; thus $u'\in\Hb^{s',\alpha}(\Omega)^{\bullet,-}$.
\end{proof}

To finish the proof, we need the following lemma, which we prove using ideas from \cite[Theorem~1.3]{BealsReedMicroNonsmooth}.
\begin{lemma}
\label{LemmaMicrolocalProduct}
  Let $\alpha\in\R$ and $s>n/2+1$. Then, in the notation of Definition~\ref{DefMlSpaces}, for $u\in\Hb^s$ and $v\in\Hb^{s-1,\alpha;s,\Gamma}$, we have $uv\in\Hb^{s-1,\alpha;s,\Gamma}$.
\end{lemma}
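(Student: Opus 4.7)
The plan is to verify the microlocal regularity of $uv$ at each point $(z_0,\zeta_0)\in\Gamma$ via the Fourier--Mellin characterization of Lemma~\ref{LemmaWFbCharacterization}. First I would reduce to $\alpha=0$: since multiplication by $u$ commutes with multiplication by $x^\alpha$, and $\WFb^{s,\alpha}(w)=\WFb^s(x^{-\alpha}w)$, it suffices to show $u\cdot(x^{-\alpha}v)\in\Hb^{s-1;s,\Gamma}$ given $u\in\Hb^s$ and $x^{-\alpha}v\in\Hb^{s-1;s,\Gamma}$. Global membership $uv\in\Hb^{s-1,\alpha}$ already follows from Corollary~\ref{CorHbModule} (since $s>n/2$ and $|s-1|\leq s$), so the work is purely microlocal.

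Fixing $(z_0,\zeta_0)\in\Gamma$, Lemma~\ref{LemmaWFbCharacterization} applied to $v$ produces $\phi\in\CI_\cl$ with $\phi(z_0)\neq 0$ and a conic neighborhood $K$ of $\zeta_0$ such that $\chi_K\la\cdot\ra^s\widehat{\phi v}\in L^2$. I would then choose $\phi_0\in\CI_\cl$ with $\phi_0(z_0)\neq 0$ and $\phi\equiv 1$ on $\supp\phi_0$, together with a strictly smaller conic neighborhood $K'\Subset K$ of $\zeta_0$. Because $\supp\phi_0$ and $\supp(1-\phi)$ are disjoint, $\phi_0(uv)=(\phi_0 u)(\phi v)$, and the Fourier--Mellin transform converts this product into the convolution
\[
\widehat{\phi_0 uv}(\zeta)=\int\widehat{\phi_0 u}(\zeta-\xi)\,\widehat{\phi v}(\xi)\,d\xi.
\]
The goal becomes showing $\chi_{K'}\la\cdot\ra^s\widehat{\phi_0 uv}\in L^2$; I would achieve this by splitting the $\xi$-integral at $\chi_K(\xi)$ and bounding the two pieces separately using Lemma~\ref{LemmaIntegralEstimate}.

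For the piece $\xi\in K$, the integrand factors as $G_1(\zeta,\xi)\,g(\zeta-\xi)\,f_1(\xi)$ where
\[
G_1(\zeta,\xi)=\chi_{K'}(\zeta)\chi_K(\xi)\frac{\la\zeta\ra^s}{\la\zeta-\xi\ra^s\la\xi\ra^s},\quad g(\eta)=\la\eta\ra^s\widehat{\phi_0 u}(\eta),\quad f_1(\xi)=\chi_K(\xi)\la\xi\ra^s\widehat{\phi v}(\xi).
\]
Here $g\in L^2$ (since $\phi_0 u\in\Hb^s$), $f_1\in L^2$ by the microlocal hypothesis, and $G_1\in L^\infty_\zeta L^2_\xi$ by Lemma~\ref{LemmaFractionEstimate} with $r=s>n/2$; Lemma~\ref{LemmaIntegralEstimate} then gives an $L^2_\zeta$ bound. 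For the piece $\xi\notin K$, I would instead only use $v\in\Hb^{s-1}$ globally, factoring the integrand as $G_2(\zeta,\xi)\,g(\zeta-\xi)\,f_2(\xi)$ with $f_2(\xi)=\la\xi\ra^{s-1}\widehat{\phi v}(\xi)\in L^2$ and
\[
G_2(\zeta,\xi)=\chi_{K'}(\zeta)(1-\chi_K(\xi))\frac{\la\zeta\ra^s}{\la\zeta-\xi\ra^s\la\xi\ra^{s-1}}.
\]
The key point, and the main technical issue, is the conic separation estimate: since $K'\Subset K$, for $\zeta\in K'$ and $\xi\notin K$ with $|\zeta|$ large, the angle between $\zeta$ and $\xi$ is bounded below, giving $|\zeta-\xi|\gtrsim\max(|\zeta|,|\xi|)$; in particular $\la\zeta-\xi\ra\gtrsim\la\zeta\ra$ uniformly on $\supp G_2$ (the bounded-$|\zeta|$ regime being trivial). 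Hence $G_2\lesssim\la\xi\ra^{-(s-1)}$, and because $s-1>n/2$ this places $G_2$ in $L^\infty_\zeta L^2_\xi$, so a second application of Lemma~\ref{LemmaIntegralEstimate} controls this piece as well.

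Combining the two bounds yields $\chi_{K'}\la\cdot\ra^s\widehat{\phi_0 uv}\in L^2$, so $(z_0,\zeta_0)\notin\WFb^s(uv)$ by Lemma~\ref{LemmaWFbCharacterization}, completing the proof since $(z_0,\zeta_0)\in\Gamma$ was arbitrary. The hard part is really just bookkeeping the conic-separation estimate for $G_2$ across the transition between the bounded and large-frequency regimes; everything else is a direct transcription of the unweighted Beals--Reed product argument into the b-setting, made possible because the Mellin--Fourier convolution formula parallels the Euclidean one and the key analytic estimate Lemma~\ref{LemmaFractionEstimate} is already available in the b-framework.
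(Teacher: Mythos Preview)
Your proposal is correct and follows essentially the same approach as the paper: reduce to $\alpha=0$, invoke Corollary~\ref{CorHbModule} for global $\Hb^{s-1}$-membership, then on the Fourier--Mellin side split the convolution according to whether $\xi$ lies in the good cone and use the conic separation estimate $\la\zeta-\xi\ra\gtrsim\la\zeta\ra$ together with Lemma~\ref{LemmaIntegralEstimate}. The only cosmetic difference is that the paper reduces $\Gamma$ to a product set $(\Rnhalf)_z\times K$ via a partition of unity rather than introducing your explicit nested cutoffs $\phi_0,\phi$ and $K'\Subset K$ at a fixed point, but the Fourier-side estimates are identical.
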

\begin{proof}
  Without loss, we may assume $\alpha=0$. By Corollary~\ref{CorHbModule}, $uv\in\Hb^{s-1}$, and we must prove the microlocal regularity of $uv$. Using a partition of unity, it suffices to assume $\Gamma=(\Rnhalf)_z\times K$ for a conic set $K\subset\R^n_\zeta\wozero$; moreover, since the complement of the wave front set is open, we can assume that $K$ is open. By assumption, we can then write
  \[
    |\wh u(\zeta)|=\frac{u_0(\zeta)}{\la\zeta\ra^s}, u_0\in L^2, \quad |\wh v(\zeta)|=\biggl(\frac{\chi_K(\zeta)}{\la\zeta\ra^s}+\frac{\chi_{K^c}(\zeta)}{\la\zeta\ra^{s-1}}\biggr)v_0(\zeta), v_0\in L^2,
  \]
  where $\chi_K$ denotes the characteristic function of $K$, and $K^c$ the complement of $K$. Now, let $K_0\subset K$ be closed and conic. Then
  \[
    \chi_{K_0}(\zeta)|\widehat{uv}(\zeta)|\la\zeta\ra^s \leq \int \frac{\chi_{K_0}(\zeta)\la\zeta\ra^s}{\la\zeta-\xi\ra^s}\biggl(\frac{\chi_K(\xi)}{\la\xi\ra^s}+\frac{\chi_{K^c}(\xi)}{\la\xi\ra^{s-1}}\biggr)u_0(\zeta-\xi)v_0(\xi)\,d\xi
  \]
  We want to use Lemma~\ref{LemmaIntegralEstimate} to show that this is an element of $L^2$, thus finishing the proof. But we have
  \[
    \frac{\la\zeta\ra^s}{\la\zeta-\xi\ra^s\la\xi\ra^s}\in L^\infty_\zeta L^2_\xi,
  \]
  and on the support of $\chi_{K_0}(\zeta)\chi_{K^c}(\xi)$, we have $|\zeta-\xi|\geq c|\zeta|$, $c>0$, thus
  \[
    \frac{\chi_{K_0}(\zeta)\chi_{K^c}(\xi)\la\zeta\ra^s}{\la\zeta-\xi\ra^s\la\xi\ra^{s-1}}\lesssim\frac{1}{\la\xi\ra^{s-1}}\in L^\infty_\zeta L^2_\xi,
  \]
  since $s>n/2+1$.
\end{proof}

%%%%%%%%%%%%%%%%%%%%%%%%%%%%%%%%%%%%%%%%%%%%%%%%%
\subsection{Conformal changes of the metric}
\label{SubsecAppConformal}

Reconsidering the proof of Theorem~\ref{ThmAppWaveExistence}, one \emph{cannot} bound
\[
  \|(S_{g(u)}-S_{g(v)})\|_{\cL(\Hb^{s-1,\alpha},\cX^{s,\alpha})}\lesssim\|u-v\|_{\cX^{s,\alpha}}
\]
in general,\footnote{Indeed, consider a similar situation for scalar first order operators $P_a:=\pa_t-a\pa_x$, $a\in\R$, on $[0,1]_t\times\R_x$. The forward solution operator $S_a$ is constructed by integrating the forcing term along the bicharacteristics $s\mapsto (s,x_0-as)$ of $P_a$, and it is easy to see that $S_a\in\cL(L^2,L^2)$. However, $S_a-S_b$ is constructed using the difference of integrals of the forcing $f$ along two different bicharacteristics, which one can naturally only bound using $df$, i.e.\ one only obtains the estimate $\|(S_a-S_b)f\|_{L^2}\lesssim|a-b|\|f\|_{H^1}$, which is an estimate with a loss of $2$ derivatives, similar to \eqref{EqResolventId}. The core of the problem is that there is no estimate of the form $\|f(\cdot+a)-f\|_{L^2}\lesssim|a|\|f\|_{L^2}$, although such an estimate holds if the norm on the right is replaced by the $H^1$-norm.} which however would immediately give uniqueness and stability of solutions to~\eqref{EqQuasilinearWavePDE} in the space $\cX^{s,\alpha}$. But there is a situation where we do have good control on $S_{g(u)}-S_{g(v)}$ as an operator from $\Hb^{s-1,\alpha}$ to $\cX^{s,\alpha}$, namely when $\Box_{g(u)}$ and $\Box_{g(v)}$ have the same characteristic set, since in this case, in \eqref{EqResolventId} the composition of $\Box_{g(v)}-\Box_{g(u)}$ with $S_{g(v)}$ loses no derivative (ignoring issues coming from the limited regularity of $g(u),g(v)$ for the moment -- they will turn out to be irrelevant). This situation arises if $g(u)=\mu(u)g(0)$ for $\mu(u)\in\CI(M)+\Hb^s(M)$; that this is in fact the only possibility is shown by a pointwise application of the following lemma.

\begin{lemma}
\label{LemmaSameLightCones}
  Let $d\geq 1$, and assume $g,g'$ are bilinear forms on $\R^{1+d}$ with signature $(1,d)$ such that the zero sets of the associated quadratic forms $q,q'$ coincide. Then $g=\mu g'$ for some $\mu\in\R^\times$.
\end{lemma}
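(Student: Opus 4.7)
The plan is to normalize $g$ to the standard Minkowski form and then use the vanishing of $q'$ on the Minkowski light cone to pin down $q'$ up to a scalar. First I would choose a basis of $\R^{1+d}$ in which $g$ is represented by $\mathrm{diag}(1,-1,\ldots,-1)$, so that
\[
  q(x_0,x) = x_0^2 - |x|^2, \qquad x\in\R^d.
\]
The null cone of $q$ is the set of $(x_0,x)$ with $x_0 = \pm|x|$, and in particular contains every point of the form $(1,\omega)$ with $\omega\in S^{d-1}\subset\R^d$.

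Next I would write $q'$ in the general form
\[
  q'(x_0,x) = A x_0^2 + 2 x_0\,B\cdot x + x^T C x,
\]
with $A\in\R$, $B\in\R^d$, and $C$ a symmetric $d\times d$ matrix. The hypothesis gives $q'(1,\omega)=0$ for all $\omega\in S^{d-1}$, i.e.
\[
  A + 2B\cdot\omega + \omega^T C\omega = 0 \qquad \text{for all } \omega\in S^{d-1}.
\]
Adding the relations for $\omega$ and $-\omega$ kills the linear term and yields $\omega^T C\omega = -A$ for all unit $\omega$; subtracting yields $B\cdot\omega = 0$ for all unit $\omega$, hence $B=0$.

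The remaining step is to deduce from $\omega^T C\omega=-A$ on $S^{d-1}$ that $C=-A\,I$. This is the only nontrivial point, but it is elementary: applying the identity in an orthonormal basis shows that all diagonal entries of $C$ equal $-A$, and polarizing (using $\omega=(e_i\pm e_j)/\sqrt{2}$) shows the off-diagonal entries vanish. Consequently
\[
  q'(x_0,x) = A x_0^2 - A|x|^2 = A\,q(x_0,x),
\]
so $g' = A g$. Finally, $A\neq 0$, for otherwise $q'\equiv 0$, contradicting the assumption that $g'$ has signature $(1,d)$ (hence is nondegenerate). Setting $\mu := 1/A \in \R^\times$ gives $g = \mu g'$, as required.
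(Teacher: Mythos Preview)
Your proof is correct and follows essentially the same approach as the paper: diagonalize one of the two forms to the standard Minkowski form, evaluate the other on the null vectors $(1,\pm\omega)$ with $\omega\in S^{d-1}$ to kill the mixed time--space entries, and then use a polarization argument on the sphere to identify the spatial block as a multiple of the identity. The only cosmetic differences are that the paper normalizes $g'$ rather than $g$, and it phrases the spatial step as ``$\tilde q(x')=-1 \iff \tilde q'(x')=-1$, hence $\tilde q=\tilde q'$ by scaling and polarization'' rather than your explicit entrywise computation of $C$.
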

\begin{proof}
  By a linear change of coordinates, we may assume that $g'$ is the Minkowski bilinear form on $\R^{1+d}$. Let $g_{ij}$, $0\leq i,j\leq d$, be the components of $g$, and let us write vectors in $\R^{1+d}$ as $(x_1,x')\in\R\times\R^d$. Since $g'(1,0)\neq 0$, we have $g(1,0)=g_{00}\neq 0$. Dividing $g$ by $\mu:=g_{00}$, we may assume $g_{00}=1$; we now show that $g=g'$. For all $x'\in\R^d$, $|x'|=1$ (Euclidean norm!), we have $q(1,x')=0$ and $q(1,-x')=0$, hence $q(1,x')-q(1,-x')=0$, in coordinates
  \[
    4\sum_{i\geq 1}g_{0i}x'_i=0, \quad |x'|=1,
  \]
  and thus $g_{0i}=0$ for all $i\geq 1$. Now let $\wt q(x'):=q(0,x')$ and $\wt q'(x'):=q'(0,x')$, then
  \[
    \wt q(x')=-1 \iff q(1,x')=0 \iff q'(1,x')=0 \iff \wt q'(x')=-1,
  \]
  thus by scaling $\wt q\equiv\wt q'$ on $\R^d$, hence by polarization $g_{ij}=g'_{ij}$ for $1\leq i,j\leq d$, and the proof is complete.
\end{proof}

In this restricted setting, we have the following well-posedness result; notice that the topology in which we have stability is stronger than in Theorem~\ref{ThmAppWaveExistence}, and we also allow more general non-linearities $q$.

\begin{thm}
\label{ThmAppWaveMultMetric}
  Let $s>n/2+6$, $0<\alpha<1$. Let
  \[
    g_0\in(\CI+\Hb^{s,\alpha})(M;\Sym^2\Tb^*M)
  \]
  be a metric satisfying the assumptions~\itref{EnumHypPDyn}-\itref{EnumHypNontrapping} in \S\ref{SubsecHypotheses} on $\Omega$, for example $g_0=g_\dS$ (see also Remark~\ref{RmkPoleAt0Fixed}) and let $\mu\colon\cX^{s,\alpha}\to\cX^{s,0}_\R$ be\footnote{$\cX^{s,\alpha}_\R$ was defined in~\eqref{EqcXR}.} a continuous map with $\mu(0)=1$ and
  \begin{equation}
  \label{EqMuCont}
    \|\mu(u)-\mu(v)\|_{\cX^{s,0}}\leq L_\mu(R)\|u-v\|_{\cX^{s,\alpha}}
  \end{equation}
  for all $u,v\in\cX^{s,\alpha}$ with norms $\leq R$, where $L_\mu\colon\R_{\geq 0}\to\R$ is continuous and non-decreasing. Put $g(u):=\mu(u)g_0$.
  \begin{enumerate}[leftmargin=\enummargin]
    \item Let
  \begin{equation}
  \label{EqQCont2}
    q\colon\cX^{s,\alpha}\times\Hb^{s-1,\alpha}(\Omega;\Tb^*_\Omega M)^{\bullet,-} \to \Hb^{s-1,\alpha}(\Omega)^{\bullet,-}
  \end{equation}
  be continuous with $q(0)=0$, satisfying
  \begin{equation}
  \label{EqQContQuant2}
    \|q(u,\bdiff u)-q(v,\bdiff v)\|_{\Hb^{s-1,\alpha}(\Omega)^{\bullet,-}}\leq L_q(R)\|u-v\|_{\cX^{s,\alpha}}
  \end{equation}
  for all $u,v\in\cX^{s,\alpha}$ with norms $\leq R$, where $L_q\colon\R_{\geq 0}\to\R$ is continuous and non-decreasing. Then there is a constant $C_L>0$ so that the following holds: If $L_q(0)<C_L$, then for small $R>0$, there is $C_f>0$ such that for all $f\in\Hb^{s-1,\alpha}(\Omega)^{\bullet,-}$ with norm $\leq C_f$, there exists a unique solution $u\in\cX^{s,\alpha}$ of the equation
  \begin{equation}
  \label{EqQuasilinearWavePDECopy}
    \Box_{g(u)}u=f+q(u,\bdiff u)
  \end{equation}
  with norm $\leq R$, which depends continuously on $f$.

  \item More generally, if
  \begin{equation}
  \label{EqQCont3}
    q\colon\cX^{s,\alpha}\times\Hb^{s-1,\alpha}(\Omega;\Tb^*_\Omega M)^{\bullet,-}\times\Hb^{s-1,\alpha}(\Omega)^{\bullet,-}\to\Hb^{s-1,\alpha}(\Omega)^{\bullet,-}
  \end{equation}
  is continuous with $q(0)=0$ and satisfies
  \begin{equation}
  \label{EqQContQuant3}
    \begin{split}
      \|q(u_1,\bdiff u_1,w_1)-&q(u_2,\bdiff u_2,w_2)\|_{\Hb^{s-1,\alpha}(\Omega)^{\bullet,-}} \\
	    &\leq L_q(R)\bigl(\|u_1-u_2\|_{\cX^{s,\alpha}}+\|w_1-w_2\|_{\Hb^{s-1,\alpha}(\Omega)^{\bullet,-}}\bigr)
    \end{split}
  \end{equation}
  for all $u_j\in\cX^{s,\alpha}$, $w_j\in\Hb^{s-1,\alpha}(\Omega)^{\bullet,-}$ with $\|u_j\|+\|w_j\|\leq R$, then there is a constant $C_L>0$ such that the following holds: If $L_q(0)<C_L$, then for small $R>0$, there is $C_f>0$ such that for all $f\in\Hb^{s-1,\alpha}(\Omega)^{\bullet,-}$ with norm $\leq C_f$, there exists a unique solution $u\in\cX^{s,\alpha}$ of the equation
  \begin{equation}
  \label{EqQuasilinearWavePDEQ}
    \Box_{g(u)}u=f+q(u,\bdiff u,\Box_{g(u)}u)
  \end{equation}
  with $\|u\|_{\cX^{s,\alpha}}+\|\Box_{g_0}u\|_{\Hb^{s-1,\alpha}}\leq R$, which depends continuously on $f$.
  \end{enumerate}
\end{thm}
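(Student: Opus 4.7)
The core observation driving the proof is that conformally related metrics have the same null-bicharacteristics up to reparametrization, and, more quantitatively, that the coefficient of the top-order part of $\Box_{g(u)}-\Box_{g(v)}$ comes entirely from the difference $\mu(u)^{-1}-\mu(v)^{-1}$, which lies in $\Hb^{s,0}$ rather than only in $\Hb^{s-1,\alpha}$. This extra half-calculus of regularity is what will allow us to close a contraction argument directly in the strong norm $\cX^{s,\alpha}$ rather than in $\cX^{s-1,\alpha}$ as in Theorem~\ref{ThmAppWaveExistence}. Concretely, using the conformal formula
\[
  \Box_{\mu g_0}=\mu^{-1}\Box_{g_0}+\tfrac{n-2}{2}\,\mu^{-3}\,g_0^{ij}(\bpa_i\mu)\bpa_j,
\]
combined with Propositions~\ref{PropCompWithAnalytic2} and \ref{PropCompWithSmooth2} (giving $\mu(u)^{\pm 1},\mu(u)^{-3}\in\CI+\Hb^{s,0}$ with Lipschitz dependence on $u$), Corollary~\ref{CorHbModule}, and the hypothesis \eqref{EqMuCont}, one obtains the bound
\[
  \|\Box_{g(u)}-\Box_{g(v)}\|_{\cL(\cX^{s,\alpha},\,\Hb^{s-1,\alpha})}\leq C_1\,L_\mu(R)\,\|u-v\|_{\cX^{s,\alpha}},
\]
for $u,v\in\cX^{s,\alpha}$ with norms $\leq R$. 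Note that on a constant $c\in\R$ both $\Box_{g(u)}$ and $\bpa_j$ vanish, so only the $\Hb^{s,\alpha}$-part of inputs contributes.

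I then verify that for $\|u\|_{\cX^{s,\alpha}}\leq R_S$ sufficiently small, $\Box_{g(u)}$ is a small perturbation of $\Box_{g_0}$ in the sense of assumption~\itref{EnumHypPCoeffs} of Section~\ref{SubsecHypotheses}, so the hypotheses~\itref{EnumHypPDyn}--\itref{EnumHypNontrapping} (in their relaxed, perturbation-stable form) are satisfied; Theorems~\ref{ThmWaveGlobalSolveCont} and \ref{ThmExistenceAndExpansion} then yield forward solution operators $S_{g(u)}\colon\Hb^{s-1,\alpha}(\Omega)^{\bullet,-}\to\cX^{s,\alpha}$ with uniformly bounded norm $C_S$ (here we use that the pole structure at $0$ of $\widehat{N(\Box_{g(u)})}^{-1}$ is preserved, since $\mu(u)$ is a conformal factor). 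Combining the above estimate with the resolvent identity $S_{g(u)}-S_{g(v)}=S_{g(u)}(\Box_{g(v)}-\Box_{g(u)})S_{g(v)}$, I then obtain the crucial \emph{no-derivative-loss} bound
\[
  \|S_{g(u)}-S_{g(v)}\|_{\cL(\Hb^{s-1,\alpha},\,\cX^{s,\alpha})}\leq C_S^2\,C_1\,L_\mu(R)\,\|u-v\|_{\cX^{s,\alpha}},
\]
in contrast to \eqref{EqResolventId}.

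For part (1), I set $\Phi(u):=S_{g(u)}(f+q(u,\bdiff u))$ on the closed ball $B_R\subset\cX^{s,\alpha}$. Using the previous display together with the Lipschitz assumption \eqref{EqQContQuant2} and $q(0)=0$, I get
\[
  \|\Phi(u)-\Phi(v)\|_{\cX^{s,\alpha}}\leq\bigl(C_S^2 C_1 L_\mu(R)(C_f+L_q(R)R)+C_S L_q(R)\bigr)\,\|u-v\|_{\cX^{s,\alpha}}.
\]
Choosing $C_L:=\tfrac{1}{2}C_S^{-1}$, then $R$ small enough that $L_q(R)<C_L$ and $C_S^2 C_1 L_\mu(R)R$ is small, and finally $C_f$ small, makes the bracket $<1$. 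The bound $\|\Phi(0)\|=\|S_{g(0)}f\|\leq C_S C_f$ ensures $\Phi$ maps $B_R$ into itself. Banach fixed point gives existence and uniqueness in $B_R$, and continuous dependence on $f$ in the \emph{strong} topology $\cX^{s,\alpha}$ follows from the same estimates applied to two forcing terms. For part (2), I work on the product $B_R\subset\cX^{s,\alpha}\oplus\Hb^{s-1,\alpha}(\Omega)^{\bullet,-}$ with the pair $(u,w)$ representing $(u,\Box_{g(u)}u)$, and define
\[
  \Psi(u,w):=\bigl(S_{g(u)}(f+q(u,\bdiff u,w)),\ f+q(u,\bdiff u,w)\bigr);
\]
a fixed point satisfies $w=\Box_{g(u)}u$ and solves \eqref{EqQuasilinearWavePDEQ}. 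The Lipschitz estimate for $\Psi$ in the sum norm combines the previous estimate on $S_{g(u)}-S_{g(v)}$ with \eqref{EqQContQuant3}, and contraction is obtained for small $L_q(0)$ and small $R$, $C_f$ exactly as before.

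\textbf{Main obstacle.} The principal technical point is the no-loss estimate on $\Box_{g(u)}-\Box_{g(v)}$; one must carefully track that every coefficient arising in the explicit conformal formula depends on $\mu(u)$ and $\bdiff\mu(u)$ through \emph{smooth} functions, apply Propositions~\ref{PropCompWithAnalytic2} and \ref{PropCompWithSmooth2} to each such nonlinear expression, and then use the multiplier properties of $\Hb^{s,0}$ on $\Hb^{s-1,\alpha}$ from Corollary~\ref{CorHbModule}. Once this is in hand, the fixed point arguments are routine; the conceptual novelty over Theorem~\ref{ThmAppWaveExistence} is that the resolvent identity does not degrade by a derivative, which is precisely what permits working in $\cX^{s,\alpha}$ and, in part (2), incorporating $\Box_{g(u)}u$ as a non-linear argument of $q$.
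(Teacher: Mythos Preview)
Your overall strategy is right, and your approach to part~(2) via the product space is equivalent to the paper's graph-norm space $\cY^{s,\alpha}$. However, there is a genuine gap in the key step.

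The operator norm bound
\[
  \|\Box_{g(u)}-\Box_{g(v)}\|_{\cL(\cX^{s,\alpha},\,\Hb^{s-1,\alpha})}\leq C_1\,L_\mu(R)\,\|u-v\|_{\cX^{s,\alpha}}
\]
is false as stated. The top-order term is $(\mu(u)^{-1}-\mu(v)^{-1})\Box_{g_0}$, and for a generic $w\in\cX^{s,\alpha}$ one only has $\Box_{g_0}w\in\Hb^{s-2,\alpha}$; multiplying by an element of $\cX^{s,0}$ does not improve this to $\Hb^{s-1,\alpha}$. Your remark that the coefficient lies in $\Hb^{s,0}$ rather than $\Hb^{s-1,\alpha}$ is correct but irrelevant: the obstacle is the loss of two derivatives from $\Box_{g_0}$, not the regularity of the multiplier.

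What actually makes the no-loss estimate work is an \emph{algebraic} observation, not a regularity one. In the resolvent identity one does not bound $\Box_{g(v)}-\Box_{g(u)}$ on its own, but the composition $(\Box_{g(v)}-\Box_{g(u)})S_{g(v)}$ as a map $\Hb^{s-1,\alpha}\to\Hb^{s-1,\alpha}$. The paper writes $\Box_{g(u)}=\tfrac{\mu(v)}{\mu(u)}\Box_{g(v)}+E_{u,v}$ with $E_{u,v}$ of \emph{first} order, so that
\[
  (\Box_{g(v)}-\Box_{g(u)})S_{g(v)}h=\Bigl(1-\tfrac{\mu(v)}{\mu(u)}\Bigr)h - E_{u,v}S_{g(v)}h;
\]
the second-order part collapses to a multiplier on $h$ itself because $\Box_{g(v)}S_{g(v)}=I$. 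Both terms on the right are manifestly in $\Hb^{s-1,\alpha}$ with the desired Lipschitz bound. Your conformal formula leads to the same conclusion once you observe that $\Box_{g_0}S_{g(v)}h=\mu(v)\bigl(h-(\text{first order})(S_{g(v)}h)\bigr)\in\Hb^{s-1,\alpha}$, but you must use this PDE information rather than factoring the resolvent identity through the (nonexistent) operator norm of $\Box_{g(u)}-\Box_{g(v)}$.
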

\begin{proof}
  First, note that $N(\Box_{g(u)})=\mu(u)|_Y N(\Box_{g_0})$, which is a constant multiple of $N(\Box_{g_0})$ by the definition of the space $\cX^{s,\alpha}$. Thus, as in the proof of Theorem~\ref{ThmAppWaveExistence}, there exists $R_S>0$ such that
  \[
    S_{g(u)}\colon\Hb^{s-1,\alpha}(\Omega)^{\bullet,-}\to\cX^{s,\alpha}
  \]
  is continuous with uniformly bounded operator norm
  \[
    \|S_{g(u)}\|\leq C_S;
  \]
  for $\|u\|_{\cX^{s,\alpha}}\leq R_S$; let us also assume that
  \begin{equation}
  \label{EqMuBound}
    |\mu(u)|\geq c_0>0,\quad \|u\|_{\cX^{s,\alpha}}\leq R_S.
  \end{equation}

  We now prove the first half of the theorem. Let $C_L:=C_S^{-1}$, and assume that $L_q(0)<C_L$, then $L_q(R_q)<C_L$ for $R_q>0$ small. Put $\wt R:=\min(R_S,R_q)$; let $0<R\leq\wt R$, to be specified later, and put and $C_f(R)=R(C_S^{-1}-L_q(R))$; let $f\in\Hb^{s-1,\alpha}(\Omega)^{\bullet,-}$ have norm $\leq C_f(R)$. Let $B(R)$ denote the metric ball of radius $R$ in $\cX^{s,\alpha}$, and define $T\colon B(R)\to B(R)$,
  \[
    Tu:=S_{g(u)}\bigl(f+q(u,\bdiff u)\bigr).
  \]
  By the choice of $R,C_L$ and $C_f$, $T$ is well-defined by the same estimate as in the proof of Theorem~\ref{ThmAppWaveExistence}. The crucial new feature here is that for $R$ sufficiently small, $T$ is in fact a contraction. This follows once we prove the existence of a constant $C_i>0$ such that for $u,v\in\cX^{s,\alpha}$ with norms $\leq R$, we have
  \begin{equation}
  \label{EqSolOpLipschitz}
    \|S_{g(u)}-S_{g(v)}\|_{\cL(\Hb^{s-1,\alpha},\cX^{s,\alpha})}\leq C_S C_i L_\mu(R) \|u-v\|_{\cX^{s,\alpha}}.
  \end{equation}
  Indeed, assuming this, we obtain
  \begin{align*}
    \|&Tu-Tv\|_{\cX^{s,\alpha}} \\
	  &\leq \left\|S_{g(u)}\bigl(q(u,\bdiff u)-q(v,\bdiff v)\bigr)\right\|_{\cX^{s,\alpha}}+\|(S_{g(u)}-S_{g(v)})(f+q(v,\bdiff v))\|_{\cX^{s,\alpha}} \\
	  & \leq \bigl(C_S L_q(R) + C_S C_i L_\mu(R) (C_f(R)+L_q(R)R)\bigr) \|u-v\|_{\cX^{s,\alpha}};
  \end{align*}
  and since $C_S L_q(R)\leq C_S L_q(\wt R)<\theta<1$ for $R\leq\wt R$, we can choose $R$ so small that
  \begin{equation}
  \label{EqSmallR}
    C_S C_i L_\mu(R) (C_f(R)+L_q(R)R) \leq \theta-C_S L_q(R),
  \end{equation}
  where we use that $C_f(R)\to 0$ as $R\to 0$. With this choice of $R$, $T$ is a contraction, thus has a unique fixed point $u\in\cX^{s,\alpha}$ which solves the PDE~\eqref{EqQuasilinearWavePDECopy}.

  Continuing to assume \eqref{EqSolOpLipschitz}, let us prove the continuous dependence of the solution $u$ on $f$. For this, let us assume that $u_j\in\cX^{s,\alpha}$, $j=1,2$, solves
  \[
    \Box_{g(u_j)}u_j=f_j+q(u_j,\bdiff u_j),
  \]
  where $f_j\in\Hb^{s-1,\alpha}$ has norm $\leq C_f$. Then, as in the proof of Theorem~\ref{ThmAppWaveExistence},
  \begin{align*}
    \|u_1-u_2\|_{\cX^{s,\alpha}}&\leq C_S\bigl(\|f_1-f_2\|_{\Hb^{s-1,\alpha}} \\
	  &\hspace{5ex}+ (L_q(R) + C_i L_\mu(R) (C_f+L_q(R)R))\|u_1-u_2\|_{\cX^{s,\alpha}}\bigr).
  \end{align*}
  Because of \eqref{EqSmallR}, the prefactor of $\|u_1-u_2\|$ on the right hand side is $\leq\theta<1$, hence we conclude
  \[
    \|u_1-u_2\|_{\cX^{s,\alpha}}\leq\frac{C_S}{1-\theta}\|f_1-f_2\|_{\Hb^{s-1,\alpha}},
  \]
  as desired.

  We now prove the crucial estimate~\eqref{EqSolOpLipschitz} by using the identity in \eqref{EqResolventId}, as follows: By definition of $\Box$, we have
  \begin{equation}
  \label{EqDiffOfBox}
    \Box_{g(u)}=\Box_{\mu(v)g_0\frac{\mu(u)}{\mu(v)}}=\frac{\mu(v)}{\mu(u)}\Box_{g(v)}+E_{u,v},
  \end{equation}
  where $E_{u,v}\in\Hb^{s-1,\alpha}\Vb$ satisfies the estimate\footnote{To define a norm of an element $E\in\Hb^{\sigma,\rho}\Vb(M)$, use a partition of unity on $M$ to reduce this task to a local one, and as the norm of $E\in\Hb^{\sigma,\rho}\Vb(\Rnhalf)$, take the sum of the $\Hb^{\sigma,\rho}$-norms of the coefficients of $E$.}
  \[
    \|E_{u,v}\|_{\Hb^{s-1,\alpha}\Vb}\leq C\left\|\bdiff\left(\frac{\mu(v)}{\mu(u)}\right)\right\|_{\Hb^{s-1}},
  \]
  where the constant $C$ is uniform for $\|u\|_{\cX^{s,\alpha}},\|v\|_{\cX^{s,\alpha}}\leq R$. Thus,
  \begin{align*}
    \|(\Box_{g(v)}&-\Box_{g(u)})S_{g(v)}\|_{\cL(\Hb^{s-1,\alpha})} \\
	  &\leq \left\|1-\frac{\mu(v)}{\mu(u)}\right\|_{\cL(\Hb^{s-1,\alpha})} + \|E_{u,v}\|_{\cL(\cX^{s,\alpha},\Hb^{s-1,\alpha})}\|S_{g(v)}\|_{\cL(\Hb^{s-1,\alpha},\cX^{s,\alpha})} \\
	  &\leq \left\|1-\frac{\mu(v)}{\mu(u)}\right\|_{\cX^{s-1,0}}+C C_S\left\|\bdiff\left(\frac{\mu(v)}{\mu(u)}\right)\right\|_{\Hb^{s-1}}.
  \end{align*}
  Now,
  \begin{equation}
  \label{EqMuFractionEstimate}
  \begin{split}
    \left\|1-\frac{\mu(v)}{\mu(u)}\right\|_{\cX^{s-1,0}} &\leq C'\left\|\frac{1}{\mu(u)}\right\|_{\cX^{s-1,0}}\|\mu(u)-\mu(v)\|_{\cX^{s-1,0}} \\
	  &\leq C'_i L_\mu(R) \|u-v\|_{\cX^{s,\alpha}},
  \end{split}
  \end{equation}
  where
  \[
    C'_i:=C'\sup_{\|w\|_{\cX^{s,\alpha}}\leq R}\left\|\frac{1}{\mu(w)}\right\|_{\cX^{s,0}}<\infty
  \]
  by assumption~\eqref{EqMuBound} and Lemma~\ref{LemmaRecHs}. Likewise, since $\bdiff(\mu(v)/\mu(u))=\bdiff\bigl(\mu(v)/\mu(u)-1\bigr)$,
  \[
    \left\|\bdiff\left(\frac{\mu(v)}{\mu(u)}\right)\right\|_{\Hb^{s-1}} \leq \left\|1-\frac{\mu(v)}{\mu(u)}\right\|_{\cX^{s,0}} \leq C'_i L_\mu(R) \|u-v\|_{\cX^{s,\alpha}};
  \]
  therefore,
  \begin{equation}
  \label{EqDiffOfBoxEstimate}
    \|(\Box_{g(v)}-\Box_{g(u)})S_{g(v)}\|_{\cL(\Hb^{s-1,\alpha})}\leq C_i L_\mu(R) \|u-v\|_{\cX^{s,\alpha}}
  \end{equation}
  for $C_i=C_i'(1+CC_S)$, and with $\|S_{g(u)}\|_{\cL(\Hb^{s-1,\alpha},\cX^{s,\alpha})}\leq C_S$ and the identity in \eqref{EqResolventId}, we finally obtain the estimate~\eqref{EqSolOpLipschitz}.

  We proceed to prove the second half of the theorem along the lines of the proof of \cite[Theorem~2.24]{HintzVasySemilinear}. We work on the Banach space
  \begin{equation}
  \label{EqSpacecY}
    \cY^{s,\alpha}:=\{u\in\cX^{s,\alpha}\colon \Box_{g_0}u\in\Hb^{s-1,\alpha}\}, \quad \|u\|_{\cY^{s,\alpha}}=\|u\|_{\cX^{s,\alpha}}+\|\Box_{g_0}u\|_{\Hb^{s-1,\alpha}}.
  \end{equation}
  The idea is that all operators $\Box_{g(u)}$ are (pointwise) multiples of each other modulo first order operators, thus $\Box_{g_0}$ is as good as any other such operator, and therefore $\Box_{g_0}$ in the third argument of the non-linearity $q$ acts as a first order operator on the successive approximations $T^k(0)$ in the iteration scheme implicit in the application of the Banach fixed point theorem used above to solve equation~\eqref{EqQuasilinearWavePDECopy}. Thus, let $B(R)$ denote the metric ball of radius $R\leq R_S$ in $\cY^{s,\alpha}$, and define $T\colon B(R)\to\cY^{s,\alpha}$,
  \[
    Tu:=S_{g(u)}\bigl(f+q(u)\bigr)
  \]
  where we write $q(u):=q(u,\bdiff u,\Box_{g(u)}u)$ to simplify the notation. We will prove that for $R>0$ small enough, the image of $T$ is contained in $B(R)$. We first estimate for $u\in B(R)$ and $w\in\cY^{s,\alpha}$, using \eqref{EqDiffOfBox} and an estimate similar to \eqref{EqMuFractionEstimate} (with $v=0$):
  \begin{align*}
    \|\Box_{g(u)}&w\|_{\Hb^{s-1,\alpha}}\leq\|\Box_{g(0)}w\|_{\Hb^{s-1,\alpha}}+\|(\Box_{g(u)}-\Box_{g(0)})w\|_{\Hb^{s-1,\alpha}} \\
	  &\leq\|w\|_{\cY^{s,\alpha}} + \wt C_i\|u\|_{\cX^{s,\alpha}}\|w\|_{\cY^{s,\alpha}} \leq (1+\wt C_i R)\|w\|_{\cY^{s,\alpha}}
  \end{align*}
  for some constant $\wt C_i>0$. For convenience, we choose $R\leq\wt C_i^{-1}$, thus
  \[
    \|\Box_{g(u)}w\|_{\Hb^{s-1,\alpha}}\leq 2\|w\|_{\cY^{s,\alpha}}, \quad w\in\cY^{s,\alpha}.
  \]
  Using this, we obtain for $u,v\in B(R)$:
  \begin{align*}
    \|\Box_{g(u)}u&-\Box_{g(v)}v\|_{\Hb^{s-1,\alpha}}\leq \|\Box_{g(u)}(u-v)\|_{\Hb^{s-1,\alpha}}+\|(\Box_{g(u)}-\Box_{g(v)})v\|_{\Hb^{s-1,\alpha}} \\
	  &\leq 2\|u-v\|_{\cY^{s,\alpha}}+\left\|\left(\Bigl(1-\frac{\mu(u)}{\mu(v)}\Bigr)\Box_{g(u)}-E_{v,u}\right)v\right\|_{\Hb^{s-1,\alpha}} \\
	  &\leq 2\|u-v\|_{\cY^{s,\alpha}}+C'L_\mu(R)\|u-v\|_{\cX^{s,\alpha}}\left(\|\Box_{g(u)}v\|_{\Hb^{s-1,\alpha}}+\|v\|_{\cX^{s,\alpha}}\right) \\
	  &\leq (2+3C'L_\mu(R)R)\|u-v\|_{\cY^{s,\alpha}} \leq 3\|u-v\|_{\cY^{s,\alpha}}
  \end{align*}
  for sufficiently small $R$, where $C'=C'_i(1+C)$. Thus, with $L'_q(R):=3L_q(R)$, we have
  \[
    \|q(u)-q(v)\|_{\Hb^{s-1,\alpha}}\leq L'_q(R)\|u-v\|_{\cY^{s,\alpha}}
  \]
  for $u,v\in\cY^{s,\alpha}$ with norm $\leq R$.
  
  We can now analyze the map $T$: First, for $u\in B(R)$ and $f\in\Hb^{s-1,\alpha}$, $\|f\|\leq C_f$, we have, recalling \eqref{EqDiffOfBoxEstimate}, here applied with $v=0$,
  \[
    \|Tu\|_{\cX^{s,\alpha}} \leq C_S(C_f+L'_q(R)R)
  \]
  and
  \begin{align*}
	\|\Box_{g(0)}Tu\|_{\Hb^{s-1,\alpha}}&\leq \|(\Box_{g(0)}-\Box_{g(u)})S_{g(u)}(f+q(u))\|_{\Hb^{s-1,\alpha}} \\
	  &\hspace{3ex}+\|f+q(u)\|_{\Hb^{s-1,\alpha}} \\
	  &\leq (1+C_iL_\mu(R)R)(C_f+L'_q(R)R).
  \end{align*}
  Thus, if $L'_q(0)<(1+C_S)^{-1}$, then
  \[
    C_f(R) := R\bigl((1+C_S+C_iL_\mu(R)R)^{-1}-L'_q(R)\bigr)
  \]
  is positive for small enough $R>0$. We conclude that for $f\in\Hb^{s-1,\alpha}$ with norm $\leq C_f(R)$, the map $T$ indeed maps $B(R)$ into itself. We next have to check that $T$ is in fact a contraction on $B(R)$, where we choose $R$ even smaller if necessary. As in the proof of the first half of the theorem, we can arrange
  \begin{equation}
  \label{EqTuTv}
    \|Tu-Tv\|_{\cX^{s,\alpha}}\leq\theta\|u-v\|_{\cY^{s,\alpha}},\quad u,v\in B(R)
  \end{equation}
  for some fixed $\theta<1$. Moreover, for $u,v\in B(R)$,
  \begin{equation}
  \label{EqBoxTuTv}
    \begin{split}
	  \|\Box_{g(0)}(Tu-Tv)\|_{\Hb^{s-1,\alpha}}&\leq \|\Box_{g(0)}S_{g(u)}(q(u)-q(v))\|_{\Hb^{s-1,\alpha}} \\
	    &\hspace{3ex}+\|\Box_{g(0)}(S_{g(u)}-S_{g(v)})(f+q(v))\|_{\Hb^{s-1,\alpha}}.
	\end{split}
  \end{equation}
  The first term on the right can be estimated by
  \begin{align*}
    \|q(u)-&q(v)\|_{\Hb^{s-1,\alpha}}+\|(\Box_{g(u)}-\Box_{g(0)})S_{g(u)}(q(u)-q(v))\|_{\Hb^{s-1,\alpha}} \\
	  &\leq L'_q(R)(1+C_iL_\mu(R)R) \|u-v\|_{\cY^{s,\alpha}}.
  \end{align*}
  For the second term on the right hand side of \eqref{EqBoxTuTv}, we use the algebraic identity
  \[
    \Box_{g(0)}(S_{g(u)}-S_{g(v)})=(I+(\Box_{g(0)}-\Box_{g(u)})S_{g(u)})(\Box_{g(v)}-\Box_{g(u)})S_{g(v)},
  \]
  which gives
  \[
    \|\Box_{g(0)}(S_{g(u)}-S_{g(v)})\|_{\cL(\cX^{s-1,\alpha})} \leq (1+C_iL_\mu(R)R) C_i L_\mu(R) \|u-v\|_{\cY^{s,\alpha}}.
  \]
  Plugging this into equation~\eqref{EqBoxTuTv}, we obtain
  \[
    \|\Box_{g(0)}(Tu-Tv)\|_{\Hb^{s-1,\alpha}}\leq C'(R)\|u-v\|_{\cY^{s,\alpha}}
  \]
  with
  \[
    C'(R)=(1+C_iL_\mu(R)R)\bigl(L'_q(R)+C_i L_\mu(R)(C_f(R)+L'_q(R)R)\bigr).
  \]
  Now if $L'_q(0)$ is sufficiently small, then since the second summand of the second factor of $C'(R)$ tends to $0$ as $R\to 0$, we can choose $R$ so small that $C'(R)<1-\theta$, and we finally get with \eqref{EqTuTv}:
  \[
    \|Tu-Tv\|_{\cY^{s,\alpha}}\leq\theta'\|u-v\|_{\cY^{s,\alpha}},\quad u,v\in B(R),
  \]
  for some $\theta'<1$, which proves that $T$ is a contraction on $B(R)$, thus has a unique fixed point, which solves the PDE~\eqref{EqQuasilinearWavePDEQ}. The continuous dependence on $f$ is shown as in the proof of the first half of the theorem.
\end{proof}

\begin{rmk}
  The space $\cY^{s,\alpha}$ introduced in the proof of the second part, see equation~\eqref{EqSpacecY}, which the solution $u$ of equation~\eqref{EqQuasilinearWavePDEQ} belongs to, is a coisotropic space similar to the ones used in \cite{VasyMicroKerrdS,HintzVasySemilinear}, with the difference being that here $\Box_{g_0}$ is allowed to have non-smooth coefficients. It still is a natural space in the sense that the space of elements of the form $c(\phi\circ\ft_1)+w$, $c\in\C,w\in\CIdotc$, is dense. Indeed, since $\Box_{g_0}$ annihilates constants, it suffices to check that $\CIdotc$ is dense in $\cY_0^{s,\alpha}:=\{u\in\Hb^{s,\alpha}\colon\Box_{g_0}u\in\Hb^{s-1,\alpha}\}$. Let $J_\eps$ be a mollifier as in Lemma~\ref{LemmaMollifier}. Given $u\in\cY_0^{s,\alpha}$, put $u_\eps:=J_\eps u$. Then $u_\eps\to u$ in $\Hb^{s,\alpha}$, and
  \[
    \Box_{g_0}u_\eps=J_\eps\Box_{g_0}u+[\Box_{g_0},J_\eps]u;
  \]
  the first term converges to $\Box_{g_0}u$ in $\Hb^{s-1,\alpha}$. To analyze the second term, observe that we have
  \[
    \Box_{g_0}J_\eps-J_\eps\Box_{g_0}=\Box_{g_0}(J_\eps-I)+(I-J_\eps)\Box_{g_0}\to 0\tn{ strongly in }\cL(\Hb^{s+1,\alpha},\Hb^{s-1,\alpha}),
  \]
  and since $\Hb^{s+1,\alpha}\subset\Hb^{s,\alpha}$ is dense, it suffices to show that $[\Box_{g_0},J_\eps]$ is a bounded family in $\cL(\Hb^{s,\alpha},\Hb^{s-1,\alpha})$. Write
  \[
    \Box_{g_0}=Q_1+Q_2+E,\quad Q_1\in\Diffb^2,Q_2\in\Hb^{s,\alpha}\Diffb^2, E\in(\CI+\Hb^{s-1,\alpha})\Diffb^1.
  \]
  Then $[Q_1,J_\eps]$ and $[E,J_\eps]$ are bounded in $\cL(\Hb^{s,\alpha},\Hb^{s-1,\alpha})$. Now $Q_2J_\eps$ can be expanded into a leading order term $Q'_\eps$ and a remainder $R_{1,\eps}$ which is uniformly bounded in $\Hb^s\Psib^1$; but also $J_\eps Q_2$ has an expansion by Theorem~\ref{ThmComp} \itref{ThmCompBR} (with $k=k'=1$) into the same leading order term $Q'_\eps$ and a remainder $R_{2,\eps}$ which is uniformly bounded in $\fpsib^{1;0}\Hb^{s-1}$. Hence $[Q_2,J_\eps]=R_{1,\eps}-R_{2,\eps}$ is bounded in $\cL(\Hb^{s,\alpha},\Hb^{s-1,\alpha})$ by Proposition~\ref{PropOpCont}, finishing the argument.
\end{rmk}

%%%%%%%%%%%%%%%%%%%%%%%%%%%%%%%%%%%%%%%%%%%%%%%%%
\subsection{Quasilinear Klein-Gordon equations}
\label{SubsecQlKleinGordon}

One has corresponding results to the theorems in the previous two sections for quasilinear Klein-Gordon equations, i.e.\ for Theorems~\ref{ThmAppWaveExistence}, \ref{ThmAppWavePoly} and \ref{ThmAppWaveMultMetric} with $\Box$ replaced by $\Box-m^2$; only the function spaces need to be adapted to the situation at hand, as follows: Denote $P:=\Box_{g_\dS}-m^2$ and let $(\sigma_j)_{j\in\N}$ be the sequence of poles of $\widehat P(\sigma)^{-1}$, with multiplicity, sorted by increasing $-\Im\sigma_j$.\footnote{See equation~\eqref{EqDSResonances} for the explicit formula. Also keep in mind that everything we do works in greater generality; we stick to the case of exact de Sitter space here for clarity.} Let us assume that the `mass' $m\in\C$ is such that $\Im\sigma_1<0$. A major new feature of Klein-Gordon equations as compared to wave equations is that non-linearities like $q(u)=u^p$ can be dealt with, more generally
\[
  q(u,\bdiff u)=\sum_j u^{e_j}\prod_{l=1}^{N_j} X_{jk}u,\quad e_j+N_r\geq 2, X_{jk}\in(\CI+\Hb^\infty)\Vb.
\]
See \cite[Theorem~2.24]{HintzVasySemilinear} for the related discussion of semilinear equations. We give an (incomplete) short list of possible scenarios and the relevant function spaces; for concreteness, we work on exact de Sitter space, but our methods work in much greater generality.

\begin{enumerate}[leftmargin=\enummargin]
  \item If $\Im\sigma_1\neq\Im\sigma_2$, as is e.g.\ the case for small mass $m^2<(n-1)^2/4$, let $\alpha_0=\min(1,\Im\sigma_1-\Im\sigma_2)$, and for $-\Im\sigma_1<\alpha<-\Im\sigma_1+\alpha_0$, put
    \[
	  \cX^{s,\alpha}:=\C(\tau^{i\sigma_1})\oplus\Hb^{s,\alpha}.
	\]
	We can then solve quasilinear equations of the form explained above with forcing in $\Hb^{s-1,\alpha}$ and get one term, $c\tau^{i\sigma_1}$, in the expansion of the solution. Notice that if the mass is real and small, then all $\sigma_j$ are purely imaginary, hence the term in the expansion is real as well if all data are, which is necessary for an analogue of Theorem~\ref{ThmAppWavePoly} to hold.
  \item If $\Im\sigma_1-\Im\sigma_2<1$, e.g.\ if $m^2\geq n(n-2)/4$, let $\alpha_0:=\min(1,\Im\sigma_1-\Im\sigma_3)$, and for $-\Im\sigma_2<\alpha<-\Im\sigma_1+\alpha_0$, put
  \begin{gather*}
    \cX^{s,\alpha}:=\C(\tau^{i\sigma_1})\oplus\C(\tau^{i\sigma_2})\oplus\Hb^{s,\alpha},\quad \sigma_2\neq\sigma_1, \\
	\cX^{s,\alpha}:=\C(\tau^{i\sigma_1})\oplus\C(\tau^{i\sigma_1}\log\tau)\oplus\Hb^{s,\alpha},\quad\sigma_2=\sigma_1,
  \end{gather*}
  then we can solve equations as above with forcing in $\Hb^{s-1,\alpha}$ and obtain two terms in the expansion. For masses $m^2>(n-1)^2/4$, we have $\Im\sigma_1=\Im\sigma_2=:-\sigma$ and $\Re\sigma_1=-\Re\sigma_2=:\rho$, hence the terms in the expansion for real data are a linear combination of $\tau^\sigma\cos(\rho\tau)$ and $\tau^\sigma\sin(\rho\tau)$.
  \item If the forcing decays more slowly than $\tau^{i\sigma_1}$, then with $0<\alpha<-\Im\sigma_1$, we can work on the space
    \[
	  \cX^{s,\alpha}:=\Hb^{s,\alpha},
	\]
	with forcing in $\Hb^{s-1,\alpha}$.
\end{enumerate}

To prove the higher regularity statement in Theorem~\ref{ThmAppWavePoly} for quasilinear Klein-Gordon equations, one first obtains higher regularity $\Hb^{s',\alpha}$ with $0\leq\alpha<-\Im\sigma_1$ and then, if the amount of decay of the forcing is high enough to allow for it, applies Theorem~\ref{ThmExistenceAndExpansion} to obtain a partial expansion of $u$.

In the third setting, the assumption that the mass $m$ is independent of the solution $u$ can easily be relaxed: Namely, assuming that $m=m(u)$ or $m=m(u,\bdiff u)$ with continuous (or Lipschitz) dependence on $u\in\cX^{s,\alpha}$, the poles of the inverse of the normal operator family of $\Box_{g(u)}-m(u)^2$ depend continuously on $u$, hence for small $u$, there is still no pole with imaginary part $\geq-\alpha$, therefore the solution operator produces an element of $\Hb^{s,\alpha}$ for small $u$; thus, well-posedness results analogous to Theorems~\ref{ThmAppWaveExistence} and \ref{ThmAppWaveMultMetric} continue to hold in this setting. If the forcing in fact does decay faster than $\tau^{i\sigma_1}$, these results can be improved in many cases: Once one has the solution $u\in\Hb^{s,\alpha}$, in particular the mass $m(u)$ is now fixed, one can apply Theorem~\ref{ThmExistenceAndExpansion} to obtain a partial expansion of $u$.

%%%%%%%%%%%%%%%%%%%%%%%%%%%%%%%%%%%%%%%%%%%%%%%%%
\subsection{Backward problems}
\label{SubsecBackward}

We briefly indicate how our methods also apply to backward problems on static patches of (asymptotically) de Sitter spaces; see Figure~\ref{FigBackward} for an exemplary setup.

\begin{figure}[!ht]
\centering
  \includegraphics{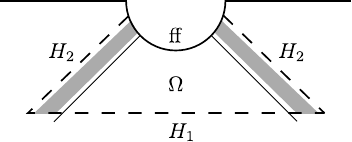}
  \caption{Setup for a backward problem on static de Sitter space: We work on spaces with high decay, consisting of functions supported at $H_2$ and extendible at $H_1$ (notice the switch compared to the forward problem). In the situation shown, we prescribe initial data at $H_2$ or, put differently, forcing in the shaded region.}
\label{FigBackward}
\end{figure}

We only state an analogue of Theorem~\ref{ThmAppWavePoly}, but remark that analogues of Theorems~\ref{ThmAppWaveExistence} and \ref{ThmAppWaveMultMetric} also hold. For simplicity, we again only work on static de Sitter spaces. We use the notation from \S\ref{SubsecHypotheses}.

\begin{thm}
\label{ThmAppWaveBack}
  Let $s>n/2+6$, $N,N'\in\N$, and suppose $c_k\in\CI(\R;\R)$, $g_k\in(\CI+\Hb^s)(M;\Sym^2\Tb^*M)$ for $1\leq k\leq N$; for $r\in\R$, define the map
  \[
    g\colon\Hb^{s,r}(\Omega)^{-,\bullet}\to(\CI+\Hb^{s,r})(M;\Sym^2\Tb^*M),\quad g(u)=\sum_{k=1}^N c_k(u)g_k,
  \]
  and assume $g(0)=g_\dS$. Moreover, define
  \[
    q(u,\bdiff u)=\sum_{j=0}^{N'} u^{e_j}\prod_{k=1}^{N_j} X_{jk}u,\quad e_j+N_j\geq 2, X_{jk}\in(\CI+\Hb^\infty)\Vb(M),
  \]
  and let further $L\in\Diffb^1$ with real coefficients. Then there is $r_*\in\R$ such that for all $r>r_*$, the following holds: For small $R>0$, there exists $C_f>0$ such that for all $f\in\Hb^{s-1,r}(\Omega;\R)^{-,\bullet}$ with norm $\leq C_f$, the equation
  \[
    (\Box_{g(u)}+L)u=f+q(u,\bdiff u)
  \]
  has a unique solution $u\in\Hb^{s,r}(\Omega;\R)^{-,\bullet}$ with norm $\leq R$, and in the topology of $\Hb^{s-1,r}(\Omega)^{-,\bullet}$, $u$ depends continuously on $f$. If one in fact has $f\in\Hb^{s'-1,r}(\Omega;\R)^{-,\bullet}$ for some $s'\in(s,\infty]$, then $u\in\Hb^{s',r}(\Omega;\R)^{-,\bullet}$.
\end{thm}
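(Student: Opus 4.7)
The strategy mirrors that of Theorem~\ref{ThmAppWavePoly}, the key task being to establish the appropriate linear solvability theory for the backward problem on $\Hb^{s,r}(\Omega)^{-,\bullet}$ with $r$ large. Given $v$ of small norm in $\Hb^{s,r}(\Omega;\R)^{-,\bullet}$, I would invert $P_v := \Box_{g(v)} + L$ by combining the energy estimate already contained in Lemma~\ref{LemmaWaveGlobalEnergy} (the `$-,\bullet$' version, applied to $P_v$) with the Hahn--Banach argument of Lemma~\ref{LemmaWaveGlobalSolve}, producing an initial solution on a lossy space. Microlocal regularity is then upgraded via elliptic regularity, real principal type propagation, and---crucially---radial point propagation. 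Since in the backward problem regularity is propagated from $H_2$ toward $H_1$, in the sense opposite to Section~\ref{SubsecHypotheses}, the applicable estimate at the radial set $L_\pm$ is case~\itref{EnumThmRadFromBdy} of Theorem~\ref{ThmRadialPoints}; with $m = 2$, $\tilde\beta = 1$, and the target regularity taken to be $\tilde s + m - 1 = s$, this gives the threshold
\[
  r > r_* := s - \tfrac{1}{2} + \sup_{L_\pm}\hat\beta,
\]
where $\hat\beta$ is the imaginary part of the subprincipal symbol of $P_v$ at $L_\pm$, uniformly bounded for small $v$ and depending on $L$. The high decay $\tau^r$ is then enforced by the normal operator contour shift of Theorem~\ref{ThmExistenceAndExpansion}: shifting the contour $\Im\sigma = -r$ past the finitely many poles of $\widehat{P_v}(\sigma)^{-1}$ in any horizontal strip, with $r$ chosen above $r_*$ and avoiding all pole lines of $\widehat{P_0}(\sigma)^{-1}$, yields a uniform bound $\|S_v\| \leq C_S$ for the solution operator $S_v \colon \Hb^{s-1,r}(\Omega)^{-,\bullet} \to \Hb^{s,r}(\Omega)^{-,\bullet}$.

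With the uniformly bounded linear solution operator in hand, I would run the Picard-type iteration $u_{k+1} = S_{u_k}(f + q(u_k, \bdiff u_k))$ exactly as in the proofs of Theorems~\ref{ThmAppWaveExistence} and \ref{ThmAppWavePoly}. The structural point specific to this high-decay setting is that $\Hb^{s,r}$ is an algebra for $s > n/2$ (Corollary~\ref{CorHbModule}) and, because $r > 0$, products \emph{gain} decay: each summand $u^{e_j}\prod_k X_{jk} u$ of $q(u, \bdiff u)$ is of total degree $e_j + N_j \geq 2$ in $u$, hence lies in $\Hb^{s-1, 2r} \subset \Hb^{s-1,r}$ with norm controlled by a constant times $\|u\|_{\Hb^{s,r}}^2$. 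This automatic quadratic smallness replaces the Lipschitz hypothesis $L_q(0) < C_L$ needed in Theorem~\ref{ThmAppWaveExistence} and makes the contraction condition trivial for small $R$. Uniqueness and continuous dependence on $f$ then follow via the resolvent identity \eqref{EqResolventId}, whose inputs (Lipschitz dependence of the metric coefficients on $u$) are supplied by Propositions~\ref{PropCompWithSmooth} and~\ref{PropCompWithSmooth2}; the higher regularity $s' > s$ statement is obtained by differentiating the PDE along non-characteristic b-vector fields and applying the linear regularity theory to the resulting wave-type equation for $V u$, exactly as in the proof of Theorem~\ref{ThmAppWavePoly}.

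The main obstacle I anticipate is the careful bookkeeping around the radial point threshold and the choice of $r_*$: the condition $r > r_*$ is imposed by Theorem~\ref{ThmRadialPoints} while the microlocal conditions $s > n/2 + 6$ and $\tilde s \leq s - 1$ must be simultaneously satisfied for the perturbed operator $P_v$; in particular, $\hat\beta$ must be controlled uniformly under the non-smooth perturbation from $v$ and under the first-order perturbation by $L$, and the contour $\Im\sigma = -r$ must be arranged to avoid the poles of $\widehat{P_v}(\sigma)^{-1}$ uniformly in $v$. Both issues are resolvable using the perturbation stability built into Section~\ref{SecGlobalForSecond}---the subprincipal symbol and the pole locations of $\widehat{P_v}(\sigma)^{-1}$ depend continuously on $v$---so for $v$ in a sufficiently small ball and $r$ fixed above $r_*$ away from the pole lines of $\widehat{P_0}(\sigma)^{-1}$, all estimates hold with uniform constants, and the remainder of the argument is an essentially mechanical transfer of the forward theory to the time-reversed setting.
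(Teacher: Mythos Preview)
Your overall strategy matches the paper's proof, but you have inserted an unnecessary step that reflects a misunderstanding of how the backward problem differs structurally from the forward one. The contour shift of Theorem~\ref{ThmExistenceAndExpansion} plays no role here. In the forward problem, the energy estimate of Lemma~\ref{LemmaWaveGlobalEnergy} only holds for weights $r\ll 0$, and one must then improve decay via the normal operator, picking up resonance terms along the way. In the backward problem the situation is reversed: the `$-,\bullet$' estimate in Lemma~\ref{LemmaWaveGlobalEnergy} already holds at the target weight $r>-r_0>0$, so the Hahn--Banach argument produces the solution in $\Hb^{0,r}(\Omega)^{-,\bullet}$ directly. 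Only the \emph{regularity} needs to be upgraded, and this is done---as you correctly describe---by elliptic regularity, real principal type propagation, and the below-threshold radial point estimate Theorem~\ref{ThmRadialPoints}\itref{EnumThmRadFromBdy}, the last of which forces $r>r_*$ depending on $s$; your threshold computation $r_*=s-\tfrac12+\sup_{L_\pm}\hat\beta$ is correct. No poles of $\widehat{P_v}(\sigma)^{-1}$ enter the argument at all, so your anticipated obstacle about arranging the contour to avoid pole lines uniformly in $v$ simply does not arise.

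A smaller point: $\hat\beta$ at $L_\pm$ is determined by the smooth part of $P_v$ at the boundary (the $\Hb^{s,r}$ contributions from $v$ vanish there for $r>0$), so it depends only on $g_\dS$ and $L$, not on $v$; there is no uniformity issue to manage. The remaining pieces of your plan---the Picard iteration, the quadratic decay gain of $q$ in $\Hb^{s-1,r}$ for $r>0$, uniqueness and continuous dependence via the resolvent identity, and higher regularity by differentiating along non-characteristic b-vector fields---are correct and are exactly what the paper does.
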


\begin{rmk}
\label{RmkBackwardEasy}
  Notice that the structure of lower order terms is completely irrelevant here! One could in fact let $L$ depend on $u$ in a Lipschitz fashion and still have well-posedness.
\end{rmk}

\begin{proof}[Proof of Theorem~\ref{ThmAppWaveBack}]
  Let $r_0<0$ as given by Lemma~\ref{LemmaWaveGlobalEnergy}, and suppose $r>-r_0$. As in the proof of Lemma~\ref{LemmaWaveGlobalSolve}, we obtain for $u\in\Hb^{s,r}(\Omega)^{-,\bullet}$ with $\|u\|\leq R$, $R>0$ sufficiently small, a \emph{backward} solution operator
  \[
    S_{g(u)}\colon\Hb^{-1,r}(\Omega)^{-,\bullet}\to\Hb^{0,r}(\Omega)^{-,\bullet}
  \]
  for $\Box_{g(u)}+L$, with uniformly bounded operator norm. Now, if we take $r>r_*$ with $r_*\geq -r_0$ sufficiently large, $S_{g(u)}$ restricts to an operator
  \[
    S_{g(u)}\colon\Hb^{s-1,r}(\Omega)^{-,\bullet}\to\Hb^{s,r}(\Omega)^{-,\bullet}.
  \]
  Indeed, given $v\in\Hb^{0,r}(\Omega)^{-,\bullet}$ solving $\Box_{g(u)}v\in\Hb^{s-1,r}(\Omega)^{-,\bullet}$, we apply the propagation near radial points, Theorem~\ref{ThmRadialPoints}, this time propagating regularity \emph{away from} the boundary, and the real principal type propagation and elliptic regularity iteratively to prove $v\in\Hb^{s,r}(\Omega)^{-,\bullet}$; the last application of the radial points result requires that $r$ be larger than an $s$-dependent quantity, hence the condition on $r_*$ in the statement of the theorem. From here, a Picard iteration argument, namely considering
  \[
    u\mapsto S_{g(u)}(f+q(u,\bdiff u)),
  \]
  gives existence and well-posedness. The higher regularity statement is proved as in the proof of Theorem~\ref{ThmAppWavePoly}.
\end{proof}

A slightly more elaborate version of this theorem, applied to the Einstein vacuum equations, should enable us to construct vacuum asymptotically de Sitter spacetimes as done in the Kerr setting in \cite{DafermosHolzegelRodnianskiKerrBw}. In fact, apart from constructing appropriate initial data, this should work in the Kerr-de Sitter setting as well, yielding the existence of dynamical vacuum black holes in de Sitter spacetimes; the point here is that for the backward problem, one works in decaying spaces, where one has non-trapping estimates in the smooth setting, as proved in \cite{HintzVasyNormHyp} by a positive commutator argument, which, along the lines of the proofs in \S\ref{SecPropagation}, holds in the non-smooth setting as well.\footnote{Indeed, the latter has recently been shown in \cite{HintzVasyQuasilinearKdS}.} We point out however that the authors of \cite{DafermosHolzegelRodnianskiKerrBw} consider a \emph{characteristic} problem, whereas our analysis, without further modifications, would require initial data on \emph{spacelike} hypersurfaces placed beyond the horizons, which makes the construction of initial data much more difficult.

%%%%%%%%%%%%%%%%%%%%%%%%%%%%%%%%%%%%%%%%%%%%%%%%%%%%%%%%%%%%%%%%%%%%%
\bibliographystyle{plain}
\bibliography{../bib/math,../bib/mathcheck}

\end{document}